\title[Fourier Neural Operators]{
On universal approximation and error bounds for Fourier Neural Operators
}
\author{
Nikola Kovachki \and Samuel Lanthaler \and Siddhartha Mishra
}
\address[Nikola Kovachki]{Computing and Mathematical Sciences, California Institute of Technology, Pasadena, California 91125, USA}
\address[Samuel Lanthaler and S. Mishra]{Seminar for Applied Mathematics, ETH Z\"urich, R\"amistrasse 101, 8092 Z\"urich, Switzerland}
\newcommand{\step}[2]{
\vspace{10pt}
\textbf{Step #1:} 
#2
\vspace{5pt}
}
\newcommand{\todosl}[1]{\todo[size=\tiny,author=SL,backgroundcolor=orange!20!white]{#1}}
\newcommand{\todonk}[1]{\todo[size=\tiny,author=NK,backgroundcolor=blue!20!white]{#1}}
\newcommand{\size}{\mathrm{size}}
\newcommand{\width}{\mathrm{width}}
\newcommand{\depth}{\mathrm{depth}}
\newcommand{\lift}{\mathrm{lift}}
\renewcommand{\Re}{\mathrm{Re}}
\renewcommand{\Im}{\mathrm{Im}}
\newcommand{\fb}{\bm{\mathrm{e}}}
\newcommand{\sq}{\mathrm{sq}}
\renewcommand{\tilde}{\widetilde}
\renewcommand{\hat}{\widehat}
\newcommand{\Id}{\mathrm{Id}}
\newcommand{\Span}{\mathrm{span}}
\newcommand{\im}{\mathrm{Im}}
\newcommand{\embeds}{{\hookrightarrow}}
\renewcommand{\bar}{\overline}
\newcommand{\explain}[2]{\overset{\mathclap{\underset{\downarrow}{#2}}}{#1}}
\newcommand{\slot}{{\,\cdot\,}}
\newcommand{\T}{\mathbb{T}}
\newcommand{\R}{\mathbb{R}}
\newcommand{\C}{\mathbb{C}}
\newcommand{\N}{\mathbb{N}}
\newcommand{\Z}{\mathbb{Z}}
\renewcommand{\L}{\mathcal{L}}
\newcommand{\G}{\mathcal{G}}
\renewcommand{\div}{{\mathrm{div}}}
\newcommand{\Lip}{\mathrm{Lip}}
\renewcommand{\P}{\mathbb{P}}
\newcommand{\cA}{\mathcal{A}}
\newcommand{\cE}{\mathcal{E}}
\newcommand{\cF}{\mathcal{F}}
\newcommand{\cG}{\mathcal{G}}
\newcommand{\cI}{\mathcal{I}}
\newcommand{\cJ}{\mathcal{J}}
\newcommand{\cK}{\mathcal{K}}
\newcommand{\cL}{\mathcal{L}}
\newcommand{\cN}{\mathcal{N}}
\newcommand{\cQ}{\mathcal{Q}}
\newcommand{\cR}{\mathcal{R}}
\newcommand{\cT}{\mathcal{T}}
\newcommand{\cU}{\mathcal{U}}
\newcommand{\cV}{\mathcal{V}}
\newcommand{\tL}{\tilde{\cL}}
\newcommand{\tN}{\tilde{\cN}}
\newcommand{\hG}{\widehat{\cG}}
\newcommand{\hL}{\widehat{\cL}}
\newcommand{\hN}{\widehat{\cN}}
\newcommand{\hQ}{\widehat{\cQ}}
\newcommand{\hR}{\widehat{\cR}}
\newcommand\define[1]{{\boldmath\textbf{#1}}}
\renewcommand{\hat}{\widehat}
\newcommand{\set}[2]{{\left\{ #1 \,\middle|\, #2 \right\}}}
\newcommand{\dt}{{\tau}}
\declaretheoremstyle[
  headfont=\normalfont\bfseries\itshape,
  numbered=unless unique,
  bodyfont=\normalfont,
  spaceabove=1em plus 0.75em minus 0.25em,
  spacebelow=1em plus 0.75em minus 0.25em,
  qed={},
]{deflt}
\theoremstyle{deflt}
\newtheorem{theorem}{Theorem}[section]
\newtheorem{assumption}[theorem]{Assumption}
\newtheorem{setting}[theorem]{Setting}
\newtheorem{example}[theorem]{Example}
\newtheorem{algorithm}[theorem]{Algorithm}
\newtheorem{remark}[theorem]{Remark}
\newtheorem{definition}[theorem]{Definition}
\newtheorem{lemma}[theorem]{Lemma}
\numberwithin{equation}{section}
\numberwithin{theorem}{section}
\tikzset{every picture/.style={line width=0.75pt}} 
\newcommand{\rev}[1]{{\color{blue} #1}}
\begin{document}

\maketitle

\begin{abstract}
Fourier neural operators (FNOs) have recently been proposed as an effective framework for learning operators that map between infinite-dimensional spaces. We prove that FNOs are universal, in the sense that they can approximate any continuous operator to desired accuracy. Moreover, we suggest a mechanism by which FNOs can approximate operators associated with PDEs efficiently. Explicit error bounds are derived to show that the size of the FNO, approximating operators associated with a Darcy type elliptic PDE and with the incompressible Navier-Stokes equations of fluid  dynamics, only increases sub (log)-linearly in terms of the reciprocal of the error. Thus, FNOs are shown to efficiently approximate operators arising in a large class of PDEs. 
\end{abstract}

\section{Introduction}
Deep neural networks have been extremely successful in diverse fields of science and engineering including image classification, speech recognition, natural language understanding, autonomous systems, game intelligence and protein folding, \cite{DLnat} and references therein. Moreover, deep neural networks are being increasingly used successfully in scientific computing, particular in simulating physical and engineering systems modeled by partial differential equations (PDEs). Examples include the use of physics informed neural networks \cite{KAR1,KAR2,MM1,MM2} for solving forward and inverse problems for PDEs and supervised learning algorithms for high-dimensional parabolic PDEs \cite{HEJ1} and parametric elliptic \cite{Kuty,SchwabZech2019} and hyperbolic \cite{LMR1,LMRS1} PDEs, among others. 

The success of deep neural networks at a wide variety of learning tasks can be attributed to a confluence of several factors such as the availability of massive labeled data sets, the design of novel architectures and training algorithms as well as the abundance of high-end computing platforms such as GPUs \cite{DLbook}. Still, it is fair to surmise that this edifice of success partly rests on the foundation of \emph{universal approximation} \cite{BAR1,Cy1,HOR1}, i.e., the ability of neural networks to approximate any continuous (even measurable) function, mapping a finite-dimensional input space into another finite-dimensional output space,  to arbitrary accuracy.

However, many interesting learning tasks entail learning \emph{operators} i.e., mappings between an infinite-dimensional input Banach space and (possibly) an infinite-dimensional output space. A prototypical example in scientific computing is provided by nonlinear operators that map the initial datum into the (time series of) solution of a nonlinear time-dependent PDE such as the Navier-Stokes equations of fluid dynamics. A priori, it is unclear if neural networks can be successfully employed for learning such operators from data, given that their universality only pertains to finite-dimensional functions. 
\par The first successful use of neural networks in the context of such \emph{operator learning} was provided in \cite{ChenChen}, where the authors proposed a novel neural network based learning architecture, which they termed as \emph{operator networks} and proved that these operator networks possess a surprising universal approximation property for infinite-dimensional nonlinear operators. Operator networks are based on two different neural networks, a \emph{branch net} and a \emph{trunk net}, which are trained concurrently to learn from data. More recently, the authors of \cite{deeponets} have proposed using deep, instead of shallow, neural networks in both the trunk and branch net and have christened the resulting architecture as a \emph{DeepOnet}. In a recent article \cite{LMK2021}, the universal approximation property of DeepOnets was extended, making it completely analogous to universal approximation results for finite-dimensional functions by neural networks. The authors of \cite{LMK2021} were also able to show that DeepOnets can break the curse of dimensionality for a large variety of PDE learning tasks. Hence, in spite of the underlying infinite-dimensional setting, DeepOnets are capable of approximating a large variety of nonlinear operators efficiently. This is further validated by the success of DeepOnets in many interesting examples in scientific computing \cite{donet2,donet3,donet4} and references therein.  

An alternative operator learning framework is provided by the concept of \emph{neural operators}, first proposed in \cite{li2020neural}. Just as canonical artificial neural networks are a concatenated composition of multiple hidden layers, with each hidden layer composing an affine function with a scalar nonlinear activation function, neural operators also compose multiple hidden layers, with each hidden layer composing an affine \emph{operator} with a local, scalar nonlinear activation operator. The infinite-dimensional setup is reflected in the fact that the affine operator can be significantly more general than in the finite-dimensional case, where it is represented by a weight matrix and bias vector. On the other hand, for neural operators, one can even use \emph{non-local} linear operators, such as those defined in terms of an integral kernel. The evaluation of such integral kernels can be performed either with graph kernel networks \cite{li2020neural} or with multipole expansions \cite{li2020multipole}. 

More recently, the authors of \cite{fourierop2020} have proposed using convolution-based integral kernels within neural operators. Such kernels can be efficiently evaluated in the Fourier space, leading to the resulting neural operators being termed as \emph{Fourier Neural Operators} (FNOs). In \cite{fourierop2020}, the authors discuss the advantages, in terms of computational efficiency, of FNOs over the other neural operators mentioned above. Moreover, they present several convincing numerical experiments to demonstrate that FNOs can very efficiently approximate a variety of operators that arise in simulating PDEs. 

However, the theoretical basis for neural operators has not yet been properly investigated. In particular, it is unclear if neural operators such as FNOs are \emph{universal} i.e., if they can approximate a large class of nonlinear infinite-dimensional operators. Moreover in this infinite-dimensional setting, universality does not suffice to indicate computational viability or efficiency as the size of the underlying neural networks might grow exponentially with respect to increasing accuracy, see discussion in \cite{LMK2021} on this issue. Hence in addition to universality, it is natural to ask if neural operators can \emph{efficiently} approximate a large class of operators, such as those arising in the simulation of parametric PDEs. 

The investigation of these questions is the main rationale for the current paper. We focus our attention here on FNOs as they appear to be the most promising of the neural operator based operator learning frameworks. Our main result in this paper is to show that FNOs are \emph{universal} in possessing the ability to approximate a very large class of continuous nonlinear operators. This result highlights the potential of FNOs for operator learning. 

As argued before, a universality result is only a first step and by itself, does not constitute evidence for efficient approximation by FNOs. In fact, we show that in the worst case, the network size might grow exponentially with respect to accuracy, when approximating general operators. Hence, there is a need to derive explicit bounds on the network size in terms of the desired error tolerance. In this context, we consider a concrete computational realization of FNOs, that we term as \emph{pseudospectral FNO} or $\Psi$-FNO (for short). In addition to proving universality for $\Psi$-FNOs, we will suggest a mechanism through which $\Psi$-FNOs can approximate operators arising from PDEs, efficiently. We also derive explicit error bounds for this architecture in approximating PDEs, for two widely used prototypical examples of PDEs i.e, a Darcy type elliptic equation and the incompressible Navier-Stokes equations of fluid dynamics. In particular, we prove that the size of $\Psi$-FNOs in approximating the underlying operators for both these PDEs, under suitable hypotheses, only scales polynomially (log-linearly) in the error. Thus, FNOs can approximate these operators efficiently and these results validate some of the computational findings of \cite{fourierop2020}.  Together, these results constitute the first theoretical justification for the use of FNOs. 

The rest of the paper is organized as follows: in section \ref{sec:2}, we introduce FNOs and state the universality result. We also introduce $\Psi$-FNOs in this section. In section \ref{sec:3}, we show that $\Psi$-FNOs can efficiently approximate operators, stemming from the Darcy-type elliptic equation as well as the incompressible Navier-Stokes equations. In section \ref{sec:4}, we compare FNOs with DeepOnets and the results of the article are discussed in section \ref{sec:5}. The mathematical notation, used in this paper, is summarized in Appendix \ref{sec:glos} and we present all the technical details and proofs in other appendices. 
\section{Approximation by Fourier Neural Operators}
\label{sec:2}
In this section, we present Fourier Neural Operators (FNOs) and discuss their approximation of a class of nonlinear operators specified below:
\subsection{Setting for Operator Learning.}
\begin{setting} 
\label{setting}
We fix a spatial dimension $d\in \N$, and denote by $D \subset \R^d$ a domain in $\R^d$. We consider the approximation of operators $\G: \cA(D;\R^{d_a}) \to \cU(D;\R^{d_u})$, $a \mapsto u := \G(a)$, where the input $a \in \cA(D;\R^{d_a})$, $d_a\in\N$, is a function $a: D \to \R^{d_a}$ with $d_a$ components, and the output $u \in \cU(D;\R^{d_u})$, $d_u\in \N$, is a function $u: D \to \R^{d_u}$ with $d_u$ components. Here $\cA(D;\R^{d_a})$ and $\cU(D;\R^{d_u})$ are Banach spaces (or suitable subsets of Banach spaces). Typical examples of $\cA$ and $\cU$ include the space of continuous functions $C(D;\R^{d_u})$, or Sobolev spaces $H^s(D;\R^{d_u})$ of order $s \ge 0$  (see Appendix \ref{app:notn} for definitions.).  
\end{setting}
Concrete examples for operators $\G$, involving solution operators of PDEs, are given in section \ref{sec:3}. 
\subsection{Neural Operators}
With the above setting \ref{setting} and as defined in \cite{li2020neural}, a neural operator $\cN: \cA(D;\R^{d_a}) \to \cU(D;\R^{d_u})$, $a\mapsto \cN(a)$ is a mapping of the form 
\[
\cN(a) = \cQ \circ \cL_L \circ \cL_{L-1} \circ \dots \circ \cL_{1} \circ \cR(a),
\]
for a given depth $L\in \N$, where $\cR: \cA(D;\R^{d_a}) \to \cU(D;\R^{d_v})$, $d_v \ge d_u$, is a \emph{lifting} operator (acting locally), of the form
\begin{align} \label{eq:no-r}
\cR(a)(x) = R a(x), 
\quad 
R \in \R^{d_v\times d_a},
\end{align}
and $\cQ: \cU(D;\R^{d_v}) \to \cU(D;\R^{d_u})$ is a local \emph{projection} operator, of the form 
\begin{align} \label{eq:no-q}
\cQ(v)(x) = Qv(x), 
\quad
Q \in \R^{d_u \times d_v}.
\end{align}

\begin{remark}
In practice, it has been found that improved results can be obtained if the simple \emph{linear} lifting and projection operators $\cR$ \eqref{eq:no-r} and $\cQ$ \eqref{eq:no-q} are replaced instead by \emph{non-linear} mappings of the form 
\[
\hR(a)(x) = \hat{R}(a(x),x), \quad
\hQ(v)(x) = \hat{Q}(v(x),x),
\]
where $\hat{R}: \R^{d_a}\times D \to \R^{d_v}$ and $\hat{Q}: \R^{d_v} \times D \to \R^{d_u}$ are neural networks with activation function $\sigma$. Our error estimates will rely on the (more restrictive) linear choice of lifting and projection operators, given by \eqref{eq:no-r}, \eqref{eq:no-q}. The linear choice has the theoretical benefit of ensuring \emph{compositionality}, i.e. that a composition of neural operators can again be represented by a neural operator (cf. Lemma \ref{lem:composition}). Despite this technical distinction, we emphasize that all of our error and complexity estimates continue to hold also for neural operators with non-linear lifting and projections, since linear operators can always be approximated by non-linear ones (cp. Lemma \ref{lem:linear-to-nonlinear}). In fact, in the non-linear case, our results imply that $\hat{Q}$, $\hat{R}$ can be chosen to be \emph{shallow} networks.
\end{remark}

In analogy with canonical finite-dimensional neural networks, the layers $\cL_1, \dots, \cL_L$ are non-linear operator layers, $\L_\ell: \cU(D;\R^{d_v}) \to \cU(D;\R^{d_v})$, $v\mapsto \L_\ell(v)$, which we assume to be of the form 
\[
\cL_\ell(v)(x)
=
\sigma\bigg(
W_\ell v(x)
+
b_\ell(x)
 + \big(\cK(a;\theta_\ell) v\big)(x)
\bigg),
\quad
\forall \, x\in D.
\]
Here, the weight matrix $W_\ell \in \R^{d_v\times d_v}$ and bias $b_\ell(x)\in \cU(D;\R^{d_v})$ define an affine pointwise mapping $W_\ell v(x) + b_\ell(x)$. The richness of linear operators in the infinite-dimensional setting can partly be realized by defining the following \emph{non-local} linear operator,  
\[
\cK: \cA\times \Theta \to L\left(\cU(D;\R^{d_v}), \cU(D;\R^{d_v})\right),
\]
that maps the input field $a$ and a parameter $\theta \in \Theta$ in the parameter-set $\Theta$ to a bounded linear operator $\cK(a,\theta): \cU(D;\R^{d_v}) \to \cU(D;\R^{d_v})$, and the non-linear activation function $\sigma: \R \to \R$ is applied component-wise. As proposed in \cite{li2020neural}, the linear operators $\cK(a,\theta)$ are integral operators of the form 
\begin{align} \label{eq:no-k}
\big(\cK(a;\theta) v\big)(x)
=
\int_{D} \kappa_\theta(x,y;a(x),a(y)) v(y) \, dy,
\quad
\forall \, x\in D.
\end{align}
Here, the integral kernel $\kappa_\theta: \R^{2(d+d_a)} \to \R^{d_v\times d_v}$ is a neural network parametrized by $\theta \in \Theta$. Specific examples of the integral kernel \eqref{eq:no-k} include those evaluated with a graph kernel network as in \cite{li2020neural} or with a multipole expansion \cite{li2020multipole}.
\subsection{Fourier Neural Operators}
As defined in \cite{fourierop2020}, Fourier Neural operators (FNOs) are special cases of general neural operators \eqref{eq:no-k}, in which the kernel $\kappa_\theta(x,y;a(x),a(y))$ is of the form $\kappa_\theta = \kappa_\theta(x-y)$. In this case, \eqref{eq:no-k} can be written as a convolution
\begin{align} \label{eq:fno-k}
\big(\cK({\theta}) v\big)(x)
=
\int_{D} \kappa_{\theta}(x-y) v(y) \, dy,
\quad
\forall \, x\in D.
\end{align}
For concreteness, we consider the periodic domain $D =\T^d$ (which we identify with the standard torus $\T^d = [0,2\pi]^d$), although non-periodic, rectangular domains $D$ can also be handled in a straightforward manner.

Given this periodic framework, the convolution operator in \eqref{eq:fno-k} can be computed using the Fourier transform $\cF$ and the inverse Fourier transform $\cF^{-1}$ (see Appendix \ref{app:notn} \eqref{eq:ft} and \eqref{eq:ift} for notation and definitions), resulting in the following equivalent representation of the kernel \eqref{eq:no-k},
\begin{align} \label{eq:fnok}
(\cK(\theta)v)(x)
=
\cF^{-1}\Big( 
P_\theta(k) \cdot \cF(v)(k) 
\Big)(x),
\quad
\forall \, x \in \T^d.
\end{align}
Here, $P_\theta(k) \in \C^{d_v \times d_v}$ is a full matrix indexed by $k \in \Z^d$, and is related to the integral kernel $\kappa_\theta(x)$ in \eqref{eq:fno-k} via the Fourier transform, $P_\theta(k) = \cF(\kappa_\theta)(k)$. Note that we must impose that $P_\theta(-k) = P_\theta(k)^\dagger$ coincides with the Hermitian transpose for all $k\in \Z^d$, to ensure that the image function $(\cK(\theta)v)(x)$ is a real-valued function for real-valued $v(x)$. Consequently, the form of Fourier neural operators (FNOs) for the periodic domain $\T^d$ is that of a mapping $\cN: \cA(D;\R^{d_a}) \to \cU(D;\R^{d_u})$, of the form 
\begin{align} \label{eq:fno}
\cN(a)
:=
\cQ \circ \cL_{L} \circ \cL_{L-1} \circ \dots \circ \cL_{1} \circ \cR(a),
\end{align}
where the lifting and projection operators $\cR$ and $\cQ$ are given by \eqref{eq:no-r} and \eqref{eq:no-q}, respectively, and where the non-linear layers $\cL_\ell$ are of the form 
\begin{align} \label{eq:fno-layer}
\cL_\ell(v)(x)
=
\sigma
\bigg(
W_\ell v(x) + b_\ell(x) + \cF^{-1} \Big(P_{\ell}(k) \cdot \cF(v)(k)\Big)(x)
\bigg).
\end{align}
Here, $W_\ell \in \R^{d_v\times d_v}$ and $b_\ell(x)$ define a pointwise affine mapping (corresponding to weights and biases), and $P_{\ell}: \Z^d \to \C^{d_v\times d_v}$ defines the coefficients of a non-local, linear mapping via the Fourier transform.
\begin{remark}
\label{rem:loc}
The simplest example for a FNO, as defined by \eqref{eq:fno},\eqref{eq:fno-k} is as follows; let $\hN: \R^{d_a} \to \R^{d_u}$ be a canonical finite-dimensional neural network with activation function $\sigma$. We can associate to $\hN$ the mapping $\cN: L^2(\T^d; \R^{d_a}) \to L^2(\T^d;\R^{d_u})$, given by $a(x) \mapsto \hN(a(x))$. We easily observe that $\cN$ is a FNO as we can write it in the form,
\[
\hN = \hQ \circ \hL_L \circ \dots \circ \hL_1 \circ \hR,
\]
where $\hR(y) = Ry$ with $R \in \R^{d_v\times d_a}$, and each layer $\hL_\ell$ is of the form $\hL_\ell(y) = \sigma(W_\ell y + b_\ell)$ for some $W_\ell \in \R^{d_v \times d_v}$, $b_\ell \in \R^{d_v}$, with $\hQ$ being an affine output layer of the form $\hQ(y) = Qy + q$ with $Q\in \R^{d_u\times d_v}$, $q\in \R^{d_u}$. Replacing the input $y$ by a function $v(x)$, these layers clearly are a special case of the FNO lifting layer \eqref{eq:no-r}, the non-linear layers \eqref{eq:fno-layer} (with $P_\ell \equiv 0$ and constant bias $b_\ell(x)\equiv b_\ell$), and the projection layer \eqref{eq:no-q}. Thus, any finite-dimensional neural network can be identified with a FNO as defined above. 
\end{remark}

For the remainder of this work, we make the following assumption,

\begin{assumption}[Activation function]
Unless explicitly stated otherwise, the activation function $\sigma: \R \to \R$ in \eqref{eq:fno-layer} is assumed to be non-polynomial, (globally) Lipschitz continuous and $\sigma \in C^3$.
\end{assumption}

\subsection{Universal Approximation by FNOs}
Next, we will show that FNOs \eqref{eq:fno} are \emph{universal} i.e., given a large class of operators, as defined in setting \ref{setting}, one can find an FNO that approximates it to desired accuracy. To be more precise, we have the following theorem. 
\begin{theorem}[Universal approximation]
\label{thm:universal}
Let $s,s'\ge 0$. Let $\G: H^{s}(\T^d;\R^{d_a}) \to H^{s'}(\T^d;\R^{d_u})$ be a continuous operator. Let $K\subset H^{s}(\T^d;\R^{d_a})$ be a compact subset. Then for any $\epsilon > 0$, there exists a FNO $\cN: H^{s}(\T^d;\R^{d_a}) \to H^{s'}(\T^d;\R^{d_u})$, of the form \eqref{eq:fno}, continuous as an operator $H^s\to H^{s'}$, such that 
\[
\sup_{a\in K}
\Vert 
\G(a) - \cN(a)
\Vert_{H^{s'}}
\le \epsilon.
\]
\end{theorem}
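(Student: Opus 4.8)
The plan is to reduce the problem to finite dimensions by Fourier truncation, to solve the resulting finite-dimensional approximation problem with a classical neural network, and then to realize the whole pipeline --- Fourier analysis, finite-dimensional network, Fourier synthesis --- inside the FNO architecture \eqref{eq:fno}, gluing the stages together with the compositionality Lemma \ref{lem:composition}.

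\emph{Step 1 (reduction to trigonometric polynomials).} For $N\in\N$ let $P_N$ be the Fourier projection onto the modes $\{k\in\Z^d:|k|_\infty\le N\}$. Since $K$ is compact in $H^s$, $P_N\to\Id$ uniformly on $K$ (in $H^s$), and by a standard compactness argument (pass to a convergent subsequence in $K$) this together with the continuity of $\cG$ yields $\sup_{a\in K}\|\cG(a)-\cG(P_Na)\|_{H^{s'}}\to0$; fix $N$ with this quantity $\le\eps/3$. As $P_NK$ is then a fixed compact set and $\cG(P_NK)$ is compact in $H^{s'}$, we may fix $N'$ with $\sup_{a\in K}\|\cG(P_Na)-P_{N'}\cG(P_Na)\|_{H^{s'}}\le\eps/3$. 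It therefore suffices to approximate $\bG:=P_{N'}\circ\cG\circ P_N$ to accuracy $\eps/3$, uniformly on $K$. Now $P_Na$ and $\bG(a)$ are trigonometric polynomials of bounded degree, hence determined by their finitely many Fourier coefficients, so $\bG$ is --- upon identifying a polynomial with its coefficient vector --- the same datum as a continuous map $f:\widetilde K\to\R^{n}$, where $\widetilde K\subset\R^{m}$ is the compact image of $K$ under the bounded linear map $a\mapsto(\hat a(k))_{|k|_\infty\le N}$. By the classical universal approximation theorem for finite-dimensional neural networks with non-polynomial activation $\sigma$, there is a standard neural network $\phi:\R^m\to\R^n$ with $\sup_{\widetilde K}|f-\phi|$ as small as we please.

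\emph{Step 2 (realizing the pipeline as an FNO).} It remains to build an FNO which, given $a$, (approximately) reads off its Fourier coefficients up to degree $N$, applies $\phi$ to them, and synthesizes the resulting polynomial of degree $\le N'$. Four observations make this possible. Because $\sigma$ is non-polynomial and $C^3$: (i) suitable difference quotients of $\sigma$ approximate the identity and fixed affine maps, uniformly on compact sets with $O(h)$ error in $L^\infty(\T^d)$, so linear operations can be passed through a $\sigma$-layer; and (ii) likewise $\sigma$-layers approximate the pointwise product of two channels, via $zw=\tfrac14[(z+w)^2-(z-w)^2]$ and a difference-quotient approximation of $z\mapsto z^2$. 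In addition: (iii) the ``mean'' $v\mapsto\cF^{-1}(\mathbf{1}_{\{k=0\}}\cF(v))=(2\pi)^{-d}\!\int_{\T^d}v$ is, up to absorbing the layer's $\sigma$ via (i), a Fourier-multiplier layer; and (iv) any fixed trigonometric polynomial, in particular $\cos(k\cdot x)$ and $\sin(k\cdot x)$, is an admissible bias, while a finite-dimensional network acting channel-wise is itself an FNO (Remark \ref{rem:loc}). Assembling these: starting from $a$, the multiplier $\mathbf{1}_{\{|k|_\infty\le N\}}$ produces $P_Na$; multiplying the mode-pair projections $2\Re(\hat a(k)e^{ik\cdot x})$ by the bias channels $\cos(k\cdot x),\sin(k\cdot x)$ and then taking the mean produces, as spatially constant channels, approximations of all $\Re\hat a(k)$ and $\Im\hat a(k)$, $|k|_\infty\le N$ --- this is the \emph{encoder}. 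Applying $\phi$ channel-wise to these constants gives constant channels approximating the Fourier coefficients of $\bG(a)$. Multiplying those by $\cos(k'\cdot x),\sin(k'\cdot x)$ and summing channels via the $W_\ell$ performs the synthesis, a final multiplier $\mathbf{1}_{\{|k'|_\infty\le N'\}}$ projects onto polynomials of degree $\le N'$, and the (affine, cf.\ Remark \ref{rem:loc}) operator $\cQ$ extracts the $d_u$ output components. Each stage is a finite composition of FNO layers, so by Lemma \ref{lem:composition} the resulting map $\cN$ is an FNO; it is continuous $H^s\to H^{s'}$ because every layer is (finitely many nonzero modes, Lipschitz $\sigma$, fixed biases, linear $\cR$ and $\cQ$).

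\emph{Step 3 (error bookkeeping; the main obstacle).} Propagating the $O(h)$ per-layer errors and the network error $\sup_{\widetilde K}|f-\phi|$ through the finitely many layers --- each Lipschitz on the relevant compact set, which is a continuous image of $K$ --- gives $\sup_{a\in K}\|\cN(a)-\bG(a)\|_{L^\infty}=O(h)+O(\sup_{\widetilde K}|f-\phi|)$. Since the final multiplier makes $\cN(a)-\bG(a)$ a trigonometric polynomial of degree $\le N'$, its $H^{s'}$ norm is controlled by this $L^\infty$ bound (norm equivalence on a fixed finite-dimensional space); this is why the final projection is inserted, and why $\sigma\in C^3$ suffices however large $s'$ is. Taking $h$ small and $\phi$ accurate makes the bound $\le\eps/3$, and combined with Step 1 we obtain $\sup_{a\in K}\|\cG(a)-\cN(a)\|_{H^{s'}}\le\eps$. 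The essential difficulty --- and the only use of the nonlinearity beyond the finite-dimensional universal approximation theorem --- lies in the \emph{encoder}: a Fourier-multiplier layer can isolate a single mode but returns $\hat a(k)e^{ik\cdot x}$, not the scalar $\hat a(k)$, and stripping the factor $e^{ik\cdot x}$ amounts to a \emph{shift} in frequency, which is not a Fourier multiplier. The resolution is precisely that $\sigma$ can synthesize pointwise products, so that multiplication by the \emph{fixed} biases $\cos(k\cdot x),\sin(k\cdot x)$ followed by averaging recovers the coefficients; the remaining technical care is to arrange that the various ``approximate identity / product on a compact range'' estimates all hold uniformly over $K$ with a common choice of parameters.
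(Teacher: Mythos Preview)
Your overall strategy is essentially the paper's: Fourier-truncate input and output, invoke finite-dimensional universal approximation on the coefficient map, and realise encoder/network/decoder as a composition of FNO layers by using $\sigma$-difference-quotients to emulate identities and products (this is exactly the content of Lemmas~\ref{lem:fno-ft} and~\ref{lem:fno-ift}). So Steps~1 and~2 are correct and match the paper.

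However, there is a genuine gap in Step~3, in your passage from an $L^\infty$ bound to an $H^{s'}$ bound. You assert that ``the final multiplier makes $\cN(a)-\bG(a)$ a trigonometric polynomial of degree $\le N'$'', and then invoke norm equivalence on that finite-dimensional space. But in the FNO architecture \eqref{eq:fno}--\eqref{eq:fno-layer}, every hidden layer \emph{ends} with the pointwise application of $\sigma$, and the output layer $\cQ$ is a purely local linear map. Hence whatever Fourier multiplier $\mathbf{1}_{\{|k|_\infty\le N'\}}$ you insert, the actual output is of the form $\cN(a)=Q\,\sigma\!\left(P_{N'}v\right)$ for some intermediate $v$, and $\sigma$ applied pointwise to a trigonometric polynomial is \emph{not} a trigonometric polynomial (since $\sigma$ is non-polynomial). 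Thus $\cN(a)-\bG(a)$ does not lie in the fixed finite-dimensional space $L^2_{N'}$, and your norm-equivalence argument does not apply. In particular, your claim that ``$\sigma\in C^3$ suffices however large $s'$ is'' is not justified by the construction you give.

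The paper handles this issue differently: it first reduces to the case $s'=0$ (Lemma~\ref{lem:fut0}), proves $L^2$-approximation as you do, and then \emph{lifts} back to $H^{s'}$ by appending one additional layer $\tL(v)=\psi_h(P_{N'}v)$ and controlling $\Vert\psi_h(P_{N'}v)-P_{N'}v\Vert_{H^{s'}}$ via the interpolation inequality between $L^2$ and $H^m$ (Lemma~\ref{lem:proj-layer}). That argument uses the composition rule for Sobolev functions and requires $\sigma\in C^m$ with integer $m>s'$ to obtain a uniform $H^m$ bound on $\psi_h(P_{N'}v)$; it does \emph{not} use norm equivalence on trigonometric polynomials. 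Your argument can be repaired along these lines, but the repair costs exactly the extra smoothness of $\sigma$ that you were hoping to avoid.
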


\begin{proof}[Sketch of proof]
The detailed proof of this universal approximation theorem is provided in Appendix \ref{app:pfut} and we outline it here. For notational simplicity, we set $d_a=d_u=1$,
and first observe the following lemma, proved in Appendix \ref{app:pfut0}:
\begin{lemma} \label{lem:fut0}
Assume that the universal approximation Theorem \ref{thm:universal} holds for $s'=0$. Then it holds for arbitrary $s'\ge 0$.
\end{lemma}
The main objective is thus to prove Theorem \ref{thm:universal} for the special case $s'=0$; i.e. given a continuous operator $\cG: H^s(\T^d) \to L^2(\T^d)$, $K\subset H^s(\T^d)$ compact, and $\epsilon>0$, we wish to construct a FNO $\cN: H^s(\T^d) \to L^2(\T^d)$, such that ${\sup_{a\in K}\Vert \cG(a) - \cN(a) \Vert_{L^2} \le \epsilon}$. 

To this end, we start by defining the following operator,
\begin{equation}
    \label{eq:defGN}
\G_N: H^{s}(\T^d) \to L^2(\T^d), \quad
\G_N(a) := P_N\G(P_Na),
\end{equation}
with $P_N$ being the orthogonal Fourier projection operator, defined in Appendix \ref{app:notn} \eqref{eq:proj}. Thus, $\G_N$ can be thought of loosely as the \emph{Fourier projection} of the continuous operator $\G$.

Next, we can show that for any given $\epsilon > 0$, there exists $N\in \N$, such that 
\begin{align} \label{eq:GN}
\Vert \G(a) - \G_N(a) \Vert_{L^2} \le \epsilon, \quad
\forall \, a \in K.
\end{align}
Thus, the proof boils down to finding a FNO \eqref{eq:fno} that can approximate the operator $\G_N$ to any desired accuracy. 

To this end, we introduce a set of Fourier wavenumbers $k\in \cK_N$, by 
\begin{align} 
\label{eq:k}
\cK_N := \set{k\in \Z^d}{|k|_\infty \le N},
\end{align}
and define a \emph{Fourier conjugate} or \emph{Fourier dual} operator of the form
$\hat{\G}_N: \C^{\cK_N} \to \C^{\cK_N}$, 
\begin{equation}
\label{eq:Fconj}
\hat{\G}_N(\hat{a}_k)
:= 
\cF_N \left( \G_N \left( \Re\left(\cF^{-1}_N(\hat{a}_k)\right)\right)\right),
\end{equation}
such that the identity
\begin{align} \label{eq:decompG}
\G_N(a) = \cF_N^{-1} \circ \hat{\G}_N \circ \cF_N(P_Na),
\end{align}
holds for all \emph{real-valued} $a\in L^2(\T^d)$. Here, $\cF_N$ is the discrete Fourier transform and $\cF_N^{-1}$ is the discrete inverse Fourier transform, with both being defined in Appendix \ref{app:notn} \eqref{eq:dft} and \eqref{eq:dift}, respectively.

The next steps in the proof are to leverage the natural decomposition of the projection $\G_N$ in \eqref{eq:decompG} in terms of the discrete Fourier transform $\cF_N\circ P_N$, the discrete inverse Fourier transform $\cF^{-1}_N$ and the Fourier conjugate operator $\hat{\G}_N$ and approximate each of these operators by Fourier neural operators. 

We start by denoting,
\begin{align}\label{eq:RKN}
\R^{2\cK_N} = \left( \R^2 \right)^{\cK_N} (\simeq \mathbb{C}^{\cK_N}),
\end{align}
as the set consisting of coefficients $\{(v_{1,k},v_{2,k}) \}_{k\in \cK_N}$, where $v_{\ell,k} \in \R$ are indexed by a tuple $(\ell,k)$, $\ell \in \{1,2\}$, $k \in \cK_N$, and interpreting the operator $\cF_N\circ P_N$ as a mapping $\cF_N\circ P_N: a \mapsto \{(\Re(\hat{a}_k), \Im(\hat{a}_k))\}_{|k|\le N}$, with input $a \in L^2(\T^d)$ and the output $\{\Re(\hat{a}_k), \Im(\hat{a}_k)\}_{|k|\le N} \in \R^{2\cK_N}$ is viewed as a \emph{constant} function in  $L^2(\T^d;\R^{2 \cK_N})$. The approximation of this operator is a straightforward consequence of the following Lemma, proved in Appendix \ref{app:pfut1},
\begin{lemma} \label{lem:fno-ft}
Let $B > 0$ and $N \in \N$ be given. For all $\epsilon> 0$, there exists a FNO $\cN: L^2(\T^d) \to L^2(T^d;\R^{2\cK_N})$, $v \mapsto \{ \cN(v)_{\ell,k} \}$, with \emph{constant} output functions (constant as a function of $x\in \T^d$), and such that 
\[
\left.
\begin{aligned}
\Vert 
\Re(\hat{v}_k) - \cN(v)_{1,k}
\Vert_{L^\infty} &\le \epsilon
\\
\Vert 
\Im(\hat{v}_k) - \cN(v)_{2,k}
\Vert_{L^\infty} &\le \epsilon
\end{aligned}
\right\}
\quad \forall \, k\in \Z^d, \; |k|_\infty \le N,
\]
for all $\Vert v \Vert_{L^2} \le B$, and where $\hat{v}_k\in \C$ denotes the $k$-th Fourier coefficient of $v$.
\end{lemma}

In the next step, we approximate the (discrete) inverse Fourier transform $\cF_N^{-1}$ by an FNO. We recall that FNOs act on functions rather than on constants. Therefore, to connect $\cF_N^{-1}$ and FNOs, we are going to interpret the mapping 
\[
\cF_N^{-1}: [-R,R]^{2\cK_N} \subset \R^{2\cK_N} \to L^2(\T^d),
\]
as a mapping 
\begin{align*}
\cF_N^{-1}:
\left\{
\begin{aligned}
&L^2(\T^d; [-R,R]^{2\cK_N}) \to L^2(\T^d),
\\
&\{\Re(\hat{v}_k), \Im(\hat{v}_k)\}_{|k|\le N} \mapsto v(x),
\end{aligned}
\right.
\end{align*}
where the input $\{\Re(\hat{v}_k), \Im(\hat{v}_k)\}_{|k|\le N} \in [-R,R]^{2\cK_N}$ is identified with a \emph{constant} function in $L^2(\T^d;[-R,R]^{2 \cK_N})$. The existence of a FNO of the form \eqref{eq:fno} that can approximate \eqref{eq:ift0} to desired accuracy is a consequence of the following lemma, proved in Appendix \ref{app:pfut2},
\begin{lemma} \label{lem:fno-ift}
Let $B > 0$ and $N \in \N$ be given. For all $\epsilon> 0$, there exists a FNO $\cN: L^2(\T^d;\R^{2\cK_N}) \to L^2(\T^d)$, such that for any $v \in L^2_N(\T^d)$ with $\Vert v \Vert_{L^2} \le B$, we have
\[
\left\Vert 
v
-
\cN\left(
w
\right)
\right\Vert_{L^2} 
\le \epsilon,
\]
where $w(x) := \left\{(\Re(\hat{v}_k), \Im(\hat{v}_k))\right\}_{k\in \cK_N}$, i.e. $w\in L^2(\T^d;\R^{2\cK_N})$ is a constant function collecting the real and imaginary parts of the Fourier coefficients $\hat{v}_k$ of $v$.
\end{lemma}
Finally, by setting $\hat{K} := \cF_N(P_N K) \subset \C^{\cK_N}$ as the (compact) image of $K$ under the continuous mapping $\cF_N \circ P_N: L^2(\T^d) \to \C^{\cK_N}$ and identifying $\C^{\cK_N} \simeq \R^{2\cK_N}$, where $\hat{v}_{1,k} := \Re(\hat{v}_k)$ and $\hat{v}_{2,k} := \Im(\hat{v}_k)$ for $k\in \cK_N$, we can view $\hat{\G}_N$ as a continuous mapping
\[
\hat{\G}_N: \hat{K} \subset \R^{2\cK_N} \to \R^{2\cK_N},
\]
on a compact subset. Hence, by the universal approximation theorem for finite-dimensional neural networks \cite{BAR1,HOR1}, one can readily show that there exists an FNO, with only 
\emph{local} weights (see remark \ref{rem:loc}), which will approximate this continuous mapping $\hat{G}_N$ on compact subsets to desired accuracy. 

Hence, each of the component operators of the decomposition \eqref{eq:decompG} can be approximated to desired accuracy by FNOs and the universal approximation theorem follows by composing these FNOs and estimating the resulting error, with details provided in appendix \ref{app:pfut}. 
\end{proof}

In the following theorem, we will show that the universal approximation theorem \ref{thm:universal} can be extended to include operators defined on function spaces with Lipschitz domains. In fact, the Lipschitz condition can be relaxed to include all locally uniform domains using ideas from \cite{rogers2006degree}; we will, however, not pursue this for simplicity of the exposition. We show that one can construct a period extension of the input function and a FNO so that the restriction of the FNO's periodic output to the domain of interest gives a suitable approximation to any continuous operator. Similar ideas have been pursued in the design of numerical algorithms for solving PDEs and usually go by the name of \textit{Fourier continuations} \cite{bruno2010highorder1,bruno2010highorder2}. A major challenge for these methods is designing a suitable periodic function whose restriction gives the solution of interest. We show that FNOs can learn the output representation automatically.

\begin{theorem}
\label{thm:universal_extend}
Let \(s, s' \geq 0\) and \(\Omega \subset [0,2\pi]^d\) be a domain with Lipschitz boundary. Let $\G: H^{s}(\Omega;\R^{d_a}) \to H^{s'}(\Omega;\R^{d_u})$ be a continuous operator. Let $K\subset H^{s}(\Omega;\R^{d_a})$ be a compact subset. Then
there exists a continuous, linear operator \(\cE : H^s(\Omega;\R^{d_a}) \to H^s(\T^d;\R^{d_a}) \) such that \(\cE(a)|_\Omega = a\) for all \(a \in H^s(\Omega;\R^{d_a})\). Furthermore, 
for any $\epsilon > 0$, there exists a FNO $\cN: H^{s}(\T^d;\R^{d_a}) \to H^{s'}(\T^d;\R^{d_u})$ of the form \eqref{eq:fno}, such that 
\[
\sup_{a\in K}
\Vert 
\G(a) - \cN \circ \cE (a)|_\Omega
\Vert_{H^{s'}}
\le \epsilon.
\]
\end{theorem}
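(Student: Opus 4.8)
The plan is to reduce the statement to the periodic universality Theorem~\ref{thm:universal} by conjugating $\G$ with Sobolev extension (``Fourier continuation'') operators relating $\Omega$ and the torus $\T^d$. Since $\Omega$ is open and contained in $[0,2\pi]^d$ it lies in the open cube $(0,2\pi)^d$, and after an inessential translation I may assume moreover that $\bar\Omega \subset (0,2\pi)^d$; such a translation is harmless because conjugating an FNO by a torus translation yields again an FNO (only the biases $b_\ell$ and Fourier symbols $P_\ell(k)$ in \eqref{eq:fno-layer} get modified, by a shift and by a unimodular factor $e^{-2ik\cdot c}$ respectively, which preserves the Hermitian symmetry constraint). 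Having done this, I would invoke a classical Sobolev extension theorem for Lipschitz domains (Calder\'on--Stein) to obtain a bounded linear operator $E$ that extends $H^t(\Omega;\R^m)$ into $H^t(\R^d;\R^m)$ for every $t\ge 0$ simultaneously, with $(Ea)|_\Omega = a$; fixing a cutoff $\chi \in C_c^\infty(\R^d)$ with $\chi\equiv 1$ near $\bar\Omega$ and $\supp\chi\subset(0,2\pi)^d$, I would then set
\[
\cE(a)(x) := \sum_{n\in\Z^d}(\chi\, Ea)(x - 2\pi n), \qquad x\in\T^d .
\]
Since the $2\pi\Z^d$-translates of $\supp\chi$ are pairwise disjoint, this defines a bounded linear operator $\cE: H^t(\Omega;\R^m)\to H^t(\T^d;\R^m)$ for every $t\ge 0$, with $\cE(a)|_\Omega = a$ (only the $n=0$ term survives on $\Omega$, where $\chi\equiv 1$). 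This $\cE$ is the claimed extension operator. Write $\rho_t : H^t(\T^d;\cdot)\to H^t(\Omega;\cdot)$ for the (bounded) restriction; then $\rho_t\circ\cE = \Id$ and $\Vert\rho_t\Vert \le C = C(t,d,\Omega)$.

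Next I would conjugate: define
\[
\bar\G := \cE\circ\G\circ\rho_s : H^{s}(\T^d;\R^{d_a}) \to H^{s'}(\T^d;\R^{d_u}).
\]
As a composition of the bounded linear maps $\rho_s$, $\cE$ with the continuous operator $\G$, the operator $\bar\G$ is continuous and defined on all of $H^s(\T^d;\R^{d_a})$, and $\bar K := \cE(K)$ is compact in $H^s(\T^d;\R^{d_a})$, being the continuous image of the compact set $K$. Given $\epsilon>0$, I would apply Theorem~\ref{thm:universal} to $\bar\G$ and $\bar K$ with tolerance $\delta := \epsilon / C(s',d,\Omega)$, obtaining an FNO $\cN : H^s(\T^d;\R^{d_a})\to H^{s'}(\T^d;\R^{d_u})$ of the form \eqref{eq:fno} with $\sup_{b\in\bar K}\Vert \bar\G(b) - \cN(b)\Vert_{H^{s'}(\T^d)} \le \delta$.

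It then remains to unwind the estimate. For $a\in K$ put $b := \cE(a)\in\bar K$; then $\rho_s(b) = a$, whence $\bar\G(b) = \cE(\G(a))$ and so $\rho_{s'}(\bar\G(b)) = \G(a)$. Since also $\cN\circ\cE(a)|_\Omega = \rho_{s'}(\cN(b))$, one gets
\[
\Vert \G(a) - \cN\circ\cE(a)|_\Omega \Vert_{H^{s'}(\Omega)}
= \big\Vert \rho_{s'}\big(\bar\G(b) - \cN(b)\big)\big\Vert_{H^{s'}(\Omega)}
\le \Vert\rho_{s'}\Vert\,\Vert \bar\G(b)-\cN(b)\Vert_{H^{s'}(\T^d)}
\le C(s',d,\Omega)\,\delta = \epsilon ,
\]
uniformly in $a\in K$, which is the asserted bound.

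There is no deep obstruction here: the entire approximation-theoretic content sits inside Theorem~\ref{thm:universal}, and the above is a reduction. The step requiring genuine care is the construction of $\cE$ --- a \emph{single} bounded linear operator that simultaneously maps into the \emph{periodic} spaces $H^s(\T^d)$, $H^{s'}(\T^d)$ and is an exact right inverse of restriction to $\Omega$. This is precisely where the Lipschitz regularity of $\partial\Omega$ enters (through the Calder\'on--Stein extension) and where the geometric position of $\Omega$ in the period cell matters: if $\bar\Omega$ wrapped around $\T^d$ in some coordinate direction, the traces on the identified faces would impose a compatibility condition on $a$ and no such $\cE$ could exist for $s>\tfrac12$; the hypothesis $\Omega\subset[0,2\pi]^d$ together with the translation reduction rules this out. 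The remaining points are bookkeeping --- running Theorem~\ref{thm:universal} at the rescaled tolerance $\delta$ to absorb $\Vert\rho_{s'}\Vert$, and using the \emph{same} operator $\cE$ both to lift the inputs and, implicitly, to select the $H^{s'}(\T^d)$-representative $\cE(\G(a))$ of the target that the FNO approximates.
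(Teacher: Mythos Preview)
Your proof is correct and follows the same route as the paper: lift $\G$ to a periodic operator via a Stein-type extension and restriction, apply Theorem~\ref{thm:universal} on $\T^d$, and pull the estimate back to $\Omega$. One small slip in passing: under conjugation by a torus translation the Fourier multipliers $P_\ell(k)$ are unchanged (the convolution part commutes with translation), only the biases get shifted --- but this does not affect your argument.
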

\begin{proof}
 Since \(\Omega\) is open we have that \({\text{dist}(\Omega, \partial [0,2\pi]^d) > 0}\) hence the conclusion of Lemma \ref{lem:periodic_extension} in Appendix \ref{app:notn}
follows with the hypercube \(B = [0,2\pi]^d\), in particular,
there exists a continuous, linear operator \(\cE : H^s(\Omega;\R^{d_a}) \to H^s([0,2\pi]^d;\R^{d_a})\) such that \(\cE(a)|_\Omega = a\) and \(\cE(a)\) is periodic on \([0,2\pi]^d\) for all \(a \in H^s(\Omega;\R^{d_a})\). Therefore
\(\cE : H^s(\Omega;\R^{d_a}) \to H^s(\T^d;\R^{d_a})\). Similarly, we can construct an extension operator $\cE': H^{s'}(\Omega;\R^{d_u}) \to H^{s'}(\T^d;\R^{d_u})$.

We can then associate to $\G: H^s(\Omega; \R^{d_a}) \to H^{s'}(\Omega;\R^{d_u})$ another continuous operator $\overline{\G}: H^s(\T^d; \R^{d_a}) \to H^{s'}(\T^d;\R^{d_u})$, by defining $\overline{\G}(a) := \cE' \circ \cG \circ \cR(a)$. Here $\cR(a) := a|_{\Omega}$ denotes the restriction to $\Omega$ which is clearly linear and continuous.
By the continuity of $\cE$, we have that $K' := \cE(K)$ is compact in $H^s(\T^d;\R^{d_a})$. By the universal approximation theorem \ref{thm:universal}, for any $\epsilon > 0$, there exists a FNO $\overline{\cN}: H^s(\T^d; \R^{d_a}) \to H^{s'}(\T^d;\R^{d_u})$, such that 
\[
\sup_{a'\in K'} \Vert \overline{\G}(a') - \overline{\cN}(a') \Vert_{H^{s'}} \le \epsilon.
\]
But then, using the fact that $\cR \circ \cE = \Id$, $\cR\circ \cE' = \Id$, the mapping $\cN: H^s(\Omega;\R^{d_a}) \to H^{s'}(\Omega;\R^{d_u})$, 
given by $\cN := \cR \circ \overline{\cN} \circ \cE$, satisfies
\begin{align*}
\sup_{a\in K} \Vert {\G}(a) - {\cN}(a) \Vert_{H^{s'}} 
&=
\sup_{a\in K} \Vert \cR \circ \cE' \circ {\G} \circ \cR \circ \cE(a) - \cR \circ \overline{\cN} \circ \cE(a) \Vert_{H^{s'}} 
\\
&=
\sup_{a\in K} \Vert \cR \circ \overline{\G} \circ \cE(a) - \cR \circ \overline{\cN} \circ \cE(a) \Vert_{H^{s'}} 
\\
&\le
\sup_{a\in K} \Vert \overline{\G} \circ \cE(a) - \overline{\cN} \circ \cE(a) \Vert_{H^{s'}}
\\
&=
\sup_{a'\in K'} \Vert \overline{\G}(a') - \overline{\cN}(a') \Vert_{H^{s'}}
\\
&\le \epsilon.
\end{align*}
\end{proof}

\begin{remark}
The form of the universal approximation theorem \ref{thm:universal} stated above, shows that any continuous operator $\G: H^s \to H^{s'}$ can be approximated to arbitrary accuracy by a FNO, \emph{on a given compact subset $K\subset H^s$}. The restriction to compact subsets may not always be very natural. For example, to train FNOs in practice, it might be more convenient to draw training samples from a measure $\mu$ such as the law of a Gaussian random field, which does not have compact support. Furthermore, the operator $\G$ may not always be continuous. To address these issues, one can follow the recent paper \cite{LMK2021}, where the authors prove a more general version for the universal approximation of operators for DeepOnets; for any input measure $\mu$, and a Borel measurable operator $\G$, such that $\int \Vert \G(a) \Vert_{L^2}^2 \, d\mu(a) < \infty$, it is shown that for any $\epsilon > 0$, there exists a DeepOnet $\cN(a) \approx \G(a)$ such that 
\[
\int \Vert \G(a) - \cN(a) \Vert_{L^2}^2 \, d\mu(a) < \epsilon.
\]
In particular, there are no restrictions on the topological support of $\mu$. The result of \cite{LMK2021} was for the alternative operator learning framework of DeepOnets, but the ideas and the proof can be analogously extended to FNOs. 
\end{remark}
\subsection{{$\Psi$}-Fourier neural operators.}
In practice, one needs to compute the FNO, of form \eqref{eq:fno}, both during training as well as for the evaluation of the neural operator. Thus, given any input function $a$, one should be able to readily calculate the FNO $\cN(a)$, requiring the efficient computation of the Fourier transform $\cF$ \eqref{eq:ft} and the inverse Fourier transform $\cF^{-1}$ \eqref{eq:ift}. In general, this is not possible as evaluating the Fourier transform \eqref{eq:ft} entails computing an integral exactly. Therefore, approximations are necessary to realize the action of FNOs on functions. Following \cite{fourierop2020}, one can efficiently approximate the Fourier transform and its inverse by the discrete Fourier transform \eqref{eq:dft} and the discrete inverse Fourier transform \eqref{eq:dift}, respectively. This amounts to performing a pseudo($\Psi$)-spectral Fourier projection between successive layers of the FNO and leading to the following precise definition, 
\begin{definition}[$\Psi$-FNO] \label{def:pfno}
A \define{$\Psi$-FNO} (or $\Psi$-spectral FNO) is a mapping 
\[
\cN: \cA(\T^d;\R^{d_a}) \to \cU(\T^d;\R^{d_u}), \quad a \mapsto \cN(a),
\]
of the form 
\begin{align} \label{eq:form-pfno}
\cN(a) 
=
\cQ \circ \cI_N \circ \cL_L \circ \cI_N  \circ \dots \circ \cL_1 \circ \cI_N \circ \cR(a),
\end{align}
where $\cI_N$ denotes the pseudo-spectral Fourier projection onto trigonometric polynomials of degree $N\in \N$ \eqref{eq:IN}, the lifting operator $\cR: \cA(\T^d;\R^{d_a}) \to \cU(\T^d;\R^{d_v})$, the projection $\cQ: \cU(\T^d;\R^{d_v}) \to \cU(\T^d;\R^{d_u})$ are defined as in \eqref{eq:no-r}, \eqref{eq:no-q}, and the non-linear layers $\cL_{\ell}$, for $\ell=1,\dots, N$, are of the form 
\begin{align*}
\cL_\ell(v)(x) &= 
\sigma\bigg(
W_\ell v(x) + b_{\ell}(x) + \cF^{-1}\Big(P_\ell(k) \cdot \cF(v)(k)\Big)(x)
\bigg).
\end{align*}
Here, $W_\ell\in \R^{d_v\times d_v}$ and $b_{\ell}(x) \in \cU(\T^d;\R^{d_v})$ define a pointwise affine mapping $v \mapsto W_\ell v(x) + b_{\ell}(x)$, and the coefficients $P_\ell(k) \in \R^{d_v\times d_v}$ $(k\in \cK_N)$ define a (non-local) convolution operator via the Fourier transform.

Note that a $\Psi$-FNO $\cN$ is uniquely defined, as an operator, by its restriction to the finite-dimensional subspace $L^2_N(\T^d; \R^{d_a}) \subset \cA(\T^d;\R^{d_a})$ (see Appendix \ref{app:notn} for the definition of $L^2_N$). Furthermore, we have that the image $\im(\cN) \subset L^2_N(\T^d;\R^{d_u})$. To indicate that a $\Psi$-FNO is of the form \ref{eq:form-pfno}, for some $N\in \N$, we shall thus more simply say that ``$\cN: L^2_N(\T^d;\R^{d_a})\to L^2_N(\T^d;\R^{d_u})$ is a $\Psi$-FNO''.
\end{definition}
At the level of numerical implementation, a {$\Psi$-FNO} can be naturally identified with a \emph{finite-dimensional} mapping
\[
\hN: \R^{d_a \times \cJ_N} \to \R^{d_u \times \cJ_N}, \quad \bm{a} \mapsto \cN(\bm{a}),
\]
with input $\bm{a} = \{a_j\}_{j\in \cJ_N} \in \R^{d_a\times \cJ_N}$ corresponding to the point-values $a_j = a(x_j)$ on the grid $\{x_j\}_{j\in \cJ_N}$, and $\cJ_N:=\{0,\dots, 2N\}^d$. Here, $\hN$ is of the form 
\[
\hN(\bm{a}) 
=
\hQ \circ \hL_L \circ \hL_{L-1} \circ \dots \circ \hL_1 \circ \hR(\bm{a}),
\]
where the lifting operator $\hR: \R^{d_a\times \cJ_N} \to \R^{d_v\times \cJ_N}$, $\bm{a} \mapsto \hR(\bm{a})$, the projection $\hQ: \R^{d_v\times \cJ_N} \to \R^{d_u \times \cJ_N}$, $\bm{v}\mapsto \hQ(\bm{v})$, are given by
\begin{alignat*}{2}
\hR(\bm{a}) &= \{Ra_j\}_{j\in \cJ_N}, && (R \in \R^{d_v\times d_a}),
\\
\hQ(\bm{v}) &= \{ Qv_j\}_{j\in \cJ_N},\quad  && (Q \in \R^{d_u\times d_v}), 
\end{alignat*}
and the non-linear layers $\hL_{\ell}$, for $\ell=1,\dots, N$, are of the form 
\begin{align}
\label{eq:hL}
\hL_\ell(\bm{v})_j &= 
\sigma\bigg(
W_\ell v_j + b_{\ell,j} + \cF_N^{-1}\Big(P_\ell(k) \cdot \cF_N(\bm{v})(k)\Big)_j
\bigg)
\end{align}
for $j\in \cJ_N$. Here, $W_\ell\in \R^{d_v\times d_v}$, $b_{\ell,j} = b_\ell(x_j) \in \R^{d_v \times \cJ_N}$ defines a pointwise affine mapping $W_\ell v_j + b_{\ell,j}$, the coefficients $P_\ell(k) \in \C^{d_v\times d_v}$ $(k\in \cK_N)$ satisfy the Hermitian conjugacy condition $P_\ell(-k) = P_\ell(k)^\dagger$ and define a (non-local) convolution operator via the discrete Fourier transform, and the non-linear activation function $\sigma: \R \to \R$ is extended componentwise to a function $\R^{d_v\times \cJ_N} \to \R^{d_v\times \cJ_N}$. Comparing $\cN$ with the corresponding discretization $\hN$, it is easy to see that 
\[
\hN(\{a(x_j)\}_{j\in \cJ_N})_j 
=
\cN(a)(x_j), 
\quad
\forall \, j\in \cJ_N.
\]
In particular, this implies that $\cN(a)(x)$ can in practice be computed for any $x\in \T^d$ via the Fourier interpolation of the grid values $\hN(\{a(x_j)\})_{j\in \cJ_N}$. In contrast to general FNOs, $\Psi$-FNOs therefore allow for efficient numerical implementation. Furthermore, the discrete (inverse) Fourier transforms in each hidden layer in \eqref{eq:hL} can be very efficiently computed using the fast Fourier transform (FFT).  

The above discussion also leads to a very natural definition of the size of a $\Psi$-FNO below:
\begin{definition}[Depth, width, lift and size]
The \define{depth} and \define{width} of a $\Psi$-FNO $\cN$ (cp. Definition \ref{def:pfno}), are defined by
\[
\depth(\cN) := L, 
\quad
\width(\cN) := d_v |\cJ_N| = d_v |\cK_N| = (2N+1)^d d_v.
\]
We refer to the dimension $d_v$, as the \define{lift} of $\cN$, i.e. we set 
\[
\lift(\cN) := d_v.
\]
The \define{size} of a $\Psi$-FNO $\cN$ is defined as the total number of degrees of freedom in a $\Psi$-FNO. A simple calculation shows that 
\[
\size(\cN)
=
\underbrace{d_u d_v}_{\size(\cQ)} 
+
L\underbrace{ \left(d_v^2 + d_v |\cJ_N| + d_v^2 |\cJ_N| \right)}_{\size(\cL_\ell)}
+
\underbrace{d_a d_v}_{\size(\cR)}.
\]
\end{definition}
The precise size of a $\Psi$-FNO will not be of any particular relevance for our asymptotic complexity estimates. Instead, we will usually content ourselves with the simple estimate
\[
\size(\cN)
\lesssim
\depth(\cN) \, \width(\cN)\, \lift(\cN),
\]
where we assume that $\max(d_a,d_u) \le d_v$; under this condition, the above estimate follows from the fact that $\size(\cN)\sim L d_v^2 |\cJ_N|$.

Given our discussion, it is natural to ask whether any FNO $\hN = \cQ \circ \cL_{L} \circ \cL_{L-1} \circ \dots \circ \cL_{1} \circ \cR$ can be approximated to arbitrary accuracy by an associated $\Psi$-FNO $\cN: L^2_N \to L^2_N$, 
\[
\cN 
=
\cQ \circ \cI_N \circ \cL_L \circ \cI_N  \circ \dots \circ \cL_1 \circ \cI_N \circ \cR,
\] 
for sufficiently large $N \in \N$? An affirmative answer can be given for a natural class of FNOs of \emph{finite width}, defined as follows.

\begin{definition}
A FNO $\hN: \cA(\T^d;\R^{d_a}) \to \cU(\T^d;\R^{d_u})$ is said to be \define{of finite width}, if $\hN$ is a composition $\hN = \cQ \circ \cL_{L} \circ \dots \circ \cL_{1} \circ \cR$, with layers $\cL_{\ell}$ of the form \eqref{eq:fno-layer}, and for which there exists a ``width'' $W\in \N$, such that the Fourier multiplier $P_\ell(k) \equiv 0$, for $|k|_\infty > W$. 
\end{definition}

We can now state the following theorem, which shows that $\Psi$-FNOs $\cN$ provide an arbitrarily close approximation of a given FNO $\hN$:

\begin{theorem} \label{thm:fno-pfno}
Assume that the activation function $\sigma \in C^{m}$ is globally Lipschitz continuous. 
Let $\hN: H^s(\T^d;\R^{d_a}) \to L^2(\T^d;\R^{d_u})$ be a FNO of finite width, with $s>d/2$. and assume that $m>s$. Then for any $\epsilon, B > 0$, there exists $N\in \N$ and a $\Psi$-FNO $\cN: L^2_N(\T^d;\R^{d_a}) \to L^2_N(\T^d;\R^{d_u})$, such that 
\[
\sup_{\Vert a \Vert_{H^s} \le B}
\Vert 
\hN(a) - \cN(a) 
\Vert_{L^2} \le \epsilon.
\]
\end{theorem}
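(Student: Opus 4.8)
The plan is to show that replacing the exact Fourier transforms $\cF,\cF^{-1}$ appearing in each layer of $\hN$ by the discrete (pseudo-spectral) transforms $\cF_N, \cF_N^{-1}$ introduces only a small error, uniformly over the bounded set $\{\Vert a\Vert_{H^s}\le B\}$, provided $N$ is large enough. First I would record the key approximation-theoretic fact about the pseudo-spectral projection $\cI_N$: for $s>d/2$ one has $H^s(\T^d)\hookrightarrow C(\T^d)$, so point-values are well-defined, and the aliasing error satisfies $\Vert v - \cI_N v\Vert_{L^2}\lesssim N^{-s}\Vert v\Vert_{H^s}$ for $v\in H^s$ (this is the standard pseudo-spectral/aliasing estimate; the condition $m>s$ and $\sigma\in C^m$ globally Lipschitz will guarantee the layers preserve enough regularity for this to apply). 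Then one checks that $\cI_N$ is bounded as an operator on $H^\sigma(\T^d)$ for a suitable range of indices, with norm independent of $N$, so that inserting $\cI_N$ between layers does not cause norm blow-up.

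Next I would set up the telescoping/hybrid argument. Write $\hN = \cQ\circ\cL_L\circ\cdots\circ\cL_1\circ\cR$ and define hybrid operators $\cN^{(j)}$ in which the first $j$ layers carry the inserted $\cI_N$'s and the remaining layers use exact transforms; then $\cN^{(0)}=\hN$ and $\cN^{(L)}$ is essentially the desired $\cN$ (modulo a final $\cI_N$ before $\cQ$, which is harmless since $\cQ$ is a bounded pointwise map). The difference $\cN^{(j)}(a)-\cN^{(j-1)}(a)$ is controlled by: (i) the aliasing error of $\cI_N$ applied at layer $j$, which is $O(N^{-s})$ times an $H^s$-norm of the layer-$j$ input, times (ii) the Lipschitz constant of the remaining layers $\cL_{j+1},\dots,\cL_L$ and $\cQ$ acting on an appropriate space. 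The crucial bookkeeping is that on the bounded input set $K_B:=\{\Vert a\Vert_{H^s}\le B\}$, the intermediate functions $v^{(j)}$ stay in a bounded subset of $H^s$ (or of $C^k$ for suitable $k$): each $\cL_\ell$ maps $H^s\to H^s$ boundedly because $W_\ell v + b_\ell$ is fine, the Fourier-multiplier part is smoothing or at worst bounded since $\hN$ has finite width (so $P_\ell(k)\equiv 0$ for $|k|_\infty>W$, making $\cK(\theta_\ell)$ a finite-rank operator landing in trigonometric polynomials of degree $\le W$, hence $C^\infty$), and composition with $\sigma\in C^m$, $m>s$, globally Lipschitz, preserves $H^s$ with a controlled norm (Moser-type / algebra estimates for $H^s$, $s>d/2$). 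Summing the $L$ telescoping terms gives $\Vert\hN(a)-\cN(a)\Vert_{L^2}\lesssim L\,C(B,W,L)\,N^{-s}$, which is $\le\epsilon$ for $N$ chosen large.

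The main obstacle I anticipate is the \emph{finite-width hypothesis being used correctly and the propagation of regularity bounds through the nonlinear layers}. One must argue that although a generic layer $\cL_\ell$ of a general FNO need not improve regularity, the finite-width assumption forces the nonlocal term $\cF^{-1}(P_\ell\cdot\cF(v))$ to be a trigonometric polynomial of bounded degree — in particular smooth and with $H^s$-norm controlled by $\Vert v\Vert_{L^2}$ alone — so the only regularity that matters is that carried by $W_\ell v + b_\ell$ and preserved under $\sigma$. Thus an induction on $\ell$ shows $v^{(\ell)}:=\cL_\ell\circ\cdots\circ\cR(a)$ lies in a fixed bounded ball of $H^s(\T^d;\R^{d_v})$ for all $a\in K_B$, with a radius depending only on $B$, $W$, $L$, the weights, and $\Vert\sigma\Vert_{C^1}$; this is exactly what is needed both to apply the $O(N^{-s})$ aliasing estimate at each insertion point and to bound the Lipschitz constants of the trailing layers. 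A secondary technical point is that the discretized layers in the $\Psi$-FNO use $\cF_N$ rather than $\cF$ internally; one shows $\cI_N\circ\cL_\ell = \cI_N\circ\cL_\ell^{\Psi}$ on trigonometric polynomials of degree $\le N$ when $N\ge W$, so after the first $\cI_N$ the exact and discrete layers agree, and the whole construction collapses to the clean telescoping estimate above.
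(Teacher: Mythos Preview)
Your telescoping strategy is sound, but there is one genuine technical gap: you assert that ``$W_\ell v + b_\ell$ is fine'' for the $H^s\to H^s$ boundedness of each layer, yet the FNO definition places the biases $b_\ell$ only in $\cU(\T^d;\R^{d_v})$, with no a~priori regularity. If $b_\ell\notin H^s$ then $\cL_\ell$ does \emph{not} map $H^s\to H^s$, and your induction on $\Vert v^{(j)}\Vert_{H^s}$ collapses at the first layer. The paper handles this as its Step~1: replace each $b_\ell$ by a mollification $b_\ell^\delta\in C^\infty$, show that the resulting FNO $\cN^\delta$ is uniformly close to the original in $L^2$ (using only the $L^2\to L^2$ Lipschitz continuity of the layers, which holds for arbitrary biases), and then work with smooth biases thereafter. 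You need the same preprocessing.

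With that fix in place, your route and the paper's diverge in an interesting way. You telescope by placing the discretized layers \emph{first} and the exact layers \emph{last}; the error $(1-\cI_N)\cL_j(w_{j-1})$ at step~$j$ is then propagated through the remaining \emph{exact} layers $\cL_{j+1},\dots,\cL_L,\cQ$, which are Lipschitz $L^2\to L^2$, so all you need is a uniform-in-$N$ bound on $\Vert w_{j-1}\Vert_{H^s}$ (available from the $N$-uniform boundedness of $\cI_N$ on $H^s$ plus the Moser-type composition estimate for $\sigma\in C^m$, $m>s$). The paper telescopes in the opposite direction, so the error must be propagated through the \emph{discretized} layers $\hL_\ell=\cI_N\circ\cL_\ell\circ\cI_N$. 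These must act on $H^\varsigma$ with $\varsigma>d/2$ for $\cI_N$ to stay bounded, but are only Lipschitz in $L^2$; the paper therefore interpolates to obtain H\"older continuity $H^{\varsigma_{\ell-1}}\to H^{\varsigma_\ell}$ with exponent $1-\varsigma_\ell/s$ along a strictly decreasing chain $s=\varsigma_0>\varsigma_1>\dots>\varsigma_L>d/2$. Your direction sidesteps this H\"older/interpolation machinery entirely, at the modest cost of needing the inductive $H^s$-bound for the \emph{discretized} intermediate states rather than the exact ones --- a trade that arguably yields the cleaner argument.
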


For the proof, we refer to Appendix \ref{app:fno-pfno}. In particular, the last theorem implies an extension of the universal approximation theorem \ref{thm:universal} to \emph{$\Psi$-FNOs}, provided that the input functions have sufficient regularity for the pseudo-spectral projection \eqref{eq:IN} to be well-defined:

\begin{theorem}[Universal approximation for $\Psi$-FNOs] \label{thm:pfno-universal}
Let $s > d/2$, and let $s'\ge 0$. Let $\G: H^{s}(\T^d;\R^{d_a}) \to H^{s'}(\T^d;\R^{d_u})$ be a continuous operator. And let $K\subset H^{s}(\T^d;\R^{d_a})$ be a compact subset. Then for any $\epsilon > 0$, there exists $N\in \N$ and a $\Psi$-FNO $\cN: L^2_N(\T^d;\R^{d_a}) \to L^2_N(\T^d;\R^{d_u})$, such that 
\[
\sup_{a\in K}
\Vert 
\G(a) - \cN(a)
\Vert_{H^{s'}}
\le \epsilon.
\]
\end{theorem}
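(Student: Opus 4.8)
The plan is to deduce Theorem \ref{thm:pfno-universal} by concatenating the two results already at our disposal: the universal approximation theorem for general FNOs, Theorem \ref{thm:universal}, and the approximation of finite-width FNOs by $\Psi$-FNOs, Theorem \ref{thm:fno-pfno}. As a preliminary reduction I would first reduce to the case $s'=0$, in the spirit of Lemma \ref{lem:fut0}. Writing $\Lambda^{t}$ for the Fourier-multiplier operator with symbol $(1+|k|^2)^{t/2}$, the composition $\G_0 := \Lambda^{s'}\circ\G : H^{s}(\T^d;\R^{d_a}) \to L^2(\T^d;\R^{d_u})$ is continuous, and $\Lambda^{-s'}: L^2 \to H^{s'}$ is an isometry; moreover $\Lambda^{-s'}$ maps each finite-dimensional space $L^2_N$ into itself and commutes with the pointwise projection $\cQ$ (a constant matrix, hence commuting with the Fourier transform). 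Consequently, once a $\Psi$-FNO $\cN_0 : L^2_N \to L^2_N$ is found with $\sup_{a\in K}\Vert \G_0(a)-\cN_0(a)\Vert_{L^2} \le \epsilon$, the operator $\cN := \Lambda^{-s'}\circ\cN_0$ still has band-limited output, is still a $\Psi$-FNO — the extra Fourier-multiplier action $\Lambda^{-s'}$ being realized by one additional layer of the form \eqref{eq:fno-layer}, using the standard linearization of the non-polynomial $\sigma$ about a point $t_0$ with $\sigma'(t_0)\neq 0$ and absorbing the residual affine rescaling into $\cQ$ — and satisfies $\sup_{a\in K}\Vert\G(a)-\cN(a)\Vert_{H^{s'}} = \sup_{a\in K}\Vert\G_0(a)-\cN_0(a)\Vert_{L^2}\le\epsilon$. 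So it suffices to treat $s'=0$.

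For $s'=0$, since $K$ is compact in $H^{s}$ it is bounded, say $\Vert a\Vert_{H^s}\le B$ for all $a\in K$. First, apply Theorem \ref{thm:universal} with target exponent $s'=0$ and accuracy $\epsilon/2$, obtaining an FNO $\hN: H^{s}(\T^d;\R^{d_a}) \to L^2(\T^d;\R^{d_u})$ of the form \eqref{eq:fno} with $\sup_{a\in K}\Vert\G(a)-\hN(a)\Vert_{L^2}\le\epsilon/2$. The key point is that the FNO produced by the construction underlying Theorem \ref{thm:universal} is \emph{of finite width}: by Lemma \ref{lem:composition} it is the composition of the FNO approximating $\cF_N\circ P_N$ (Lemma \ref{lem:fno-ft}), the purely local FNO approximating $\hat{\G}_N$ (for which all Fourier multipliers vanish), and the FNO approximating $\cF_N^{-1}$ (Lemma \ref{lem:fno-ift}), and in each of these the Fourier multipliers $P_\ell(k)$ vanish for $|k|_\infty > N$. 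Hence Theorem \ref{thm:fno-pfno} is applicable to $\hN$: for the given $\epsilon/2$ and $B$ it furnishes some $N\in\N$ and a $\Psi$-FNO $\cN: L^2_N(\T^d;\R^{d_a}) \to L^2_N(\T^d;\R^{d_u})$ with $\sup_{\Vert a\Vert_{H^s}\le B}\Vert\hN(a)-\cN(a)\Vert_{L^2}\le\epsilon/2$. Combining the two estimates via the triangle inequality gives $\sup_{a\in K}\Vert\G(a)-\cN(a)\Vert_{L^2}\le\epsilon$, which is exactly the claim.

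The substantive difficulty is entirely encapsulated in Theorem \ref{thm:fno-pfno}: one has to control the aliasing error incurred by replacing the exact Fourier transform with the discrete one between consecutive layers, uniformly over $\Vert a\Vert_{H^s}\le B$ — this is precisely why $s>d/2$ is needed (so that the pseudo-spectral projection $\cI_N$ is well defined on the functions occurring in each layer) and why the truncation parameter $N$ of the $\Psi$-FNO must be permitted to grow with $1/\epsilon$. Beyond that, only two minor points must be addressed: verifying that the FNO of Theorem \ref{thm:universal} can be taken of finite width (immediate from its construction, as above), and, in order to legitimately invoke Theorem \ref{thm:fno-pfno} with its hypothesis $\sigma\in C^{m}$, $m>s$, noting that if $\sigma$ is not smooth enough one may first lower the Sobolev exponent to some $s_0\in(d/2,\min(s,m))$ — a finite-width FNO remains continuous as a map $H^{s_0}\to L^2$ (its Fourier-multiplier layers have band-limited, hence $C^\infty$, output, and $v\mapsto\sigma(v)$ is bounded on bounded subsets of $H^{s_0}$ for $s_0>d/2$ and $\sigma\in C^{\lceil s_0\rceil}$), and $K$ remains bounded in $H^{s_0}$ by the continuous embedding $H^s\hookrightarrow H^{s_0}$, so the argument applies verbatim with $s$ replaced by $s_0$.
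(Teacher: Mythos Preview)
Your argument for the case $s'=0$ is exactly the paper's: invoke Theorem~\ref{thm:universal}, observe that the FNO it constructs is of finite width (since each of the three pieces $\cN_{\mathrm{FT}}$, $\hN$, $\cN_{\mathrm{IFT}}$ has Fourier multipliers supported in $|k|_\infty\le N$), and then apply Theorem~\ref{thm:fno-pfno}. Your closing remark about lowering $s$ to $s_0\in(d/2,\min(s,m))$ to meet the hypothesis $m>s$ of Theorem~\ref{thm:fno-pfno} is a valid precaution that the paper's one-line proof leaves implicit.

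Your reduction from $s'>0$ to $s'=0$ differs from the paper's. The paper (Lemma~\ref{lem:pfnout0}) instead fixes $M$ so that $P_M\G\approx\G$ in $H^{s'}$, builds a $\Psi$-FNO $\cN$ approximating $\G$ in $L^2$, upgrades the $L^2$-estimate for $P_M\cN$ to an $H^{s'}$-estimate via norm equivalence on the finite-dimensional range of $P_M$, and finally appends a single layer approximating $P_M$ (Lemma~\ref{lem:proj-layer}). Your route through the isometry $\Lambda^{-s'}:L^2\to H^{s'}$ is conceptually cleaner, but there is a slip in the write-up: you define $\cN:=\Lambda^{-s'}\circ\cN_0$ and then assert both that $\cN$ is a $\Psi$-FNO (via the linearization trick) \emph{and} that $\Vert\G(a)-\cN(a)\Vert_{H^{s'}}=\Vert\G_0(a)-\cN_0(a)\Vert_{L^2}$. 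These cannot both hold: the linearized layer only \emph{approximates} $\Lambda^{-s'}$, so the $\Psi$-FNO you actually build differs from $\Lambda^{-s'}\circ\cN_0$ by an additional error. The fix is routine---the linearization error is $O(h)$ in $L^\infty$ on the bounded image $\cN_0(K)\subset L^2_N$, hence $O(N^{s'}h)$ in $H^{s'}$ by norm equivalence on $L^2_N$, and $h$ is at your disposal---but the equality as written is false and should be replaced by a triangle-inequality estimate with an extra $\epsilon/2$.
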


\begin{proof}
Similar to the proof of the universal approximation theorem for FNOs, we again note that the general case $s'\ge 0$ can be deduced from the statement of Theorem \ref{thm:pfno-universal} for the special case $s'=0$. This is the content of the following lemma, whose proof is provided in Appendix \ref{app:pfnout0}:
\begin{lemma} \label{lem:pfnout0}
Assume that Theorem \ref{thm:pfno-universal} holds for $s'=0$. Then it holds for arbitrary $s'\ge 0$.
\end{lemma}

The special case $s'=0$ follows immediately from Theorem \ref{thm:fno-pfno} and the observation that the FNO approximation constructed in the proof of the universal approximation theorem for FNOs, Theorem \ref{thm:universal}, has finite width. 
\end{proof}
\subsubsection{Structure and properties of $\Psi$-FNOs}
\label{sec:fnoprop}
We conclude this section by pointing out some aspects of the structure of $\Psi$-FNOs \eqref{eq:form-pfno} that will be relevant in the following. To start with, we can simplify $\Psi$-FNOs by viewing them in terms of two types of layers. which we will refer to as $\sigma$- and $\cF$-layers, respectively. A \define{$\sigma$-layer $\cL = \cL_\sigma$} of a $\Psi$-FNO is a \define{local, non-linear} layer of the form $\cL_\sigma(v)(x)=\cI_N\sigma\left(A \cI_N v(x) + b\right)$, or, in the numerical implementation (cp. \eqref{eq:hL})
\[
\cL_\sigma(v)_j = \sigma\left(A v_j + b_j\right)
,
\quad
\forall\, j\in \cJ_N,
\]
with $A \in \R^{d_v\times d_v}$, and $b_j\in \R^{\cJ_N \times d_v}$ defining an affine mapping. A \define{$\cF$-layer $\cL = \cL_{\cF}$} of a $\Psi$-FNO is a \define{non-local, linear} layer of the form $\cL_{\cF}(v)(x)=\cF^{-1}(P(k)\cdot \cF(\cI_N v)(k))(x)$, which in a practical implementation corresponds to
\[
\cL_{\cF}(v)_j
=
\cF^{-1}_N\Big(P(k)\cdot \cF_N(v)(k)\Big)_j
,
\quad
\forall \, j\in \cJ_N,
\]
where $P: \cK_N \to \C^{d_v\times d_v}$ is a collection of complex weights, with $P(-k) = P(k)^\dagger$ the Hermitian transpose of $P(k)$, and $\cF_N$ ($\cF_N^{-1}$) denotes the discrete (inverse) Fourier transform.

The main point of these definitions is that each $\Psi$-FNO can be decomposed into a finite number of $\sigma$-layers and $\cF$-layers, and that the converse is also true; i.e. any composition of $\sigma$-layers and $\cF$-layers can be represented by a $\Psi$-FNO. These statements are made precise in a series of technical Lemmas, which are stated and proved in Appendix \ref{app:fnoprop}.

\section{Approximation of PDEs by {$\Psi$}-FNOs}
\label{sec:3}
We have shown in the previous section that FNOs \eqref{eq:fno} as well as their computational realizations ($\Psi$-FNOs \eqref{eq:form-pfno}) are universal i.e., they approximate any continuous operator, defined in the setting \ref{setting}, to desired accuracy. However, as repeatedly discussed in the introduction, universality alone does not suffice to claim that FNOs can approximate operators efficiently. In particular, it could happen that the size of the FNO is  unfeasibly large to ensure a given accuracy of the approximation. That this is indeed the case is made precise in the following remark.
\begin{remark}
\label{rem:comp}
We observe from the proof of Theorem \ref{thm:universal} that the desired FNO, approximating the operator $\G$, is constructed as  $\cN_{\mathrm{IFT}} \circ \hN \circ \cN_{\mathrm{FT}}$, with $\cN_{\mathrm{FT}},\cN_{\mathrm{IFT}}$ approximating the Fourier and Inverse Fourier transforms, respectively, whereas $\hN: \R^{2\cK_N} \to \R^{2\cK_N}$ is a canonical finite-dimensional neural network approximation of the ``Fourier conjugate operator'' \eqref{eq:Fconj}: $\hG_N: \R^{2\cK_N} \to \R^{2\cK_N}$. We note that $N$ herein has to be chosen sufficiently large in order to yield the desired error tolerance of $\epsilon$. By Theorem \ref{thm:psest}, this depends on the smoothness of the input space, i.e., if the input $a\in K \subset H^s$, for some $s > 0$, then we need to choose $N$ such that $N^{-s} \sim \epsilon$. Further assuming that the mapping $\G$ is Lipschitz continuous, implies that the Fourier conjugate operator $\hG$ is also Lipschitz continuous as a mapping from $\R^{2\cK_N}$ to $\R^{2\cK_N}$. Hence, neural network approximation results, such as those of \cite{Yar1} for ReLU activations or \cite{DLM1} for tanh activations, yield that the width of the approximating neural network $\hN$ scales as  $\width(\hN) \gtrsim \epsilon^{-D}$, where $D$ is the dimension of the domain of $\hG_N$. In the present case, we have $D = |\cK_N| \sim N^d \sim \epsilon^{-d/s}$, yielding that 
\begin{equation}
    \label{eq:comp}
\width(\hN) \gtrsim \epsilon^{-\epsilon^{-d/s}}. 
\end{equation}
This scaling represents a \emph{super-exponential} growth in the size of the FNO $\cN$, with respect to the error $\epsilon$, incurred in approximating the underlying operator $\G$.
\end{remark}
Given the above remark, we infer that in the worst case, a FNO approximating a generic Lipschitz continuous operator $\G$, can require extremely large sizes to achieve the desired accuracy, making it unfeasible in practice. The same holds for $\Psi$-FNOs of the form \eqref{eq:form-pfno}. This super-exponential growth appears as a form of \emph{curse of dimensionality} i.e., exponential growth of complexity (measured here in the size of the FNO), with respect to the error. 

Hence, it is reasonable to ask how these extremely pessimistic complexity bounds on FNOs ($\Psi$-FNOs), can be reconciled to their robust numerical performance for approximating PDEs, as reported in \cite{fourierop2020}. The rest of the section investigates this fundamental question. 

The starting point of our explanation for the robustness of FNOs in approximating PDEs is the observation that operators which arise in the context of PDEs have a special structure and are not merely generic continuous operators mapping one infinite-dimensional function space to another. To see this, we point out that many time-dependent PDEs arising in physics can be written in the general abstract form,
\begin{align}\label{eq:intuition}
\partial_t u + \nabla \cdot F(u, \nabla u) = 0,
\end{align}
where for any $(t,x) \in [0,T]\times D \subset \R^d$, $u(t,x) \in \R^{d_u}$ is a vector of physical quantities, describing e.g. density, velocity or temperature of a fluid or other material at a given point $x\in D$ in the domain $D$ and at time $t\in [0,T]$. Equation \eqref{eq:intuition} describes the general form of a conservation law for the physical quantities $u$ with a flux function $F(u,\nabla u)$, which is typically \emph{non-linear}, and can e.g. represent advection or diffusion terms. The flux function $F(u,\nabla u)$ may also depend on $u$ in a \emph{non-local} manner. For example, for the incompressible Navier-Stokes equations in $\R^d$, 
\[
u(x,t) = (u_1(x,t),\dots, u_d(x,t)) \in \R^d,
\]
represent the fluid velocity at $(x,t)$, and the flux is defined by 
\[
F(u,\nabla u) = -u\otimes u - p + \nu \nabla u,
\]
where $p = p(u)$ depends on $u$ in a non-local manner:
\[
p = \cR : (u\otimes u), \quad \cR := (-\Delta)^{-1} (\nabla \otimes \nabla),
\]
where $\cR$ is a (non-local) operator closely related to the Riesz transform. 

A popular numerical method for time-dependent PDEs, of the form \eqref{eq:intuition}, particularly on periodic domains $D = \T^d$, is the \emph{pseudo-spectral method} \cite{Canuto2007}, wherein \eqref{eq:intuition} is discretized as,
\begin{align} \label{eq:intu0}
\partial_t u_N + \nabla \cdot \cI_N F(u_N, \nabla u_N) = 0,
\end{align}
where $u_N \in L^2_N$ is a trigonometric polynomial of degree $\le N$.

The resulting system of ODEs \eqref{eq:intu0} can be further discretized in time using a time-marching scheme. For simplicity, the forward Euler discretization with time step $\dt$ leads to,
\begin{align} \label{eq:intu}
u^{n+1}_N = u^n_N - \dt\nabla \cdot \cI_N F(u^n_N, \nabla u^n_N).
\end{align}
One might prove that the system \eqref{eq:intu} provides a convergent approximation for the underlying time-dependent PDE \eqref{eq:intuition} for many different choices of the flux $F$. 
In order to connect the approximation \eqref{eq:intu} with FNOs, we decompose the right hand side of \eqref{eq:intu} as, 
\begin{alignat*}{2}
u^n_N
&\,\overset{\cR}{\longmapsto}\,
\begin{Bmatrix}
u^n_N \\[3pt]
u^n_N
\end{Bmatrix}
\,\overset{\cF}{\longmapsto }\,
\begin{Bmatrix}
u^n_N \\[3pt]
\nabla u^n_N
\end{Bmatrix}
&&
\\
&
\,\overset{\sigma}{\mapsto }\,
\dots
\,\overset{\sigma}{\mapsto }\,
\begin{Bmatrix}
u^n_N \\[3pt]
F^\sigma_N(u^n_N,\nabla u^n_N)
\end{Bmatrix}
&&
\\
&\,\overset{\cF}{\longmapsto }\,
\begin{Bmatrix}
u^n_N \\[3pt]
\nabla \cdot F^\sigma_N(u^n_N,\nabla u^n_N)
\end{Bmatrix}
&&\,{\longmapsto }\;
u^n_N - \dt \nabla \cdot F^\sigma_N(u^n_N,\nabla u^n_N).
\end{alignat*}
Here,  $\cR$ is the lifting operator and $\sigma$, $\cF$ are the $\sigma$- and $\cF$-layers, respectively, of a $\Psi$-FNO, that are defined in section \ref{sec:fnoprop}. The above representation suggests that the Fourier $\cF$-layers of a $\Psi$-FNO allow us to take \emph{exact} derivatives, and a composition of $\sigma$-layers of a $\Psi$-FNO allows us to approximate continuous functions to any desired accuracy (via the standard universal approximation theorem for finite-dimensional neural networks); in particular, a composition of $\sigma$-layers can provide an approximation 
\[
(u,\nabla u) \mapsto F^\sigma_N(u, \nabla u) \approx \cI_N F(u,\nabla u).
\] 
Thus, by a suitable composition of $\sigma$- and $\cF$-layers, $\Psi$-FNOs can \emph{emulate} pseudo-spectral methods, providing a mechanism by which such neural operators can approximate solution operators for a large class of PDEs efficiently. 

We will make this intuition precise in the following. However, instead of considering a generic abstract form of PDEs, we focus on two PDEs, often encountered in physics, which serve as prototypes for a wide variety of PDEs. We start with an elliptic PDE below.
\subsection{Stationary Darcy Flow}
\label{sec:darcy}
We consider the elliptic PDE of the form,
\begin{align} \label{eq:darcy}
-\nabla \cdot (a\nabla u) = f.
\end{align}
Here, $u \in H^1(\T^d)$, can correspond to the steady-state pressure for a fluid, flowing according to the Darcy's law, in a porous medium with the positive coefficient $a \in L^\infty(\T^d)$, denoting the rock permeability. Another model for \eqref{eq:darcy} is that of a diffusion equation, with $u$ modeling the temperature and $a$ the thermal conductivity of the medium. 

For simplicity,  we assume periodic boundary conditions on $\T^d$, and we impose that $\int_{\T^d} f \, dx = \int_{\T^d} u \, dx = 0$. Employing a suitable rescaling, we will furthermore assume that $a$ can be written in the form:
\[
a = 1 + \tilde{a}, \quad \tilde{a}\in H^s(\T^d), \; s>d/2.
\]
We note that the assumption $s>d/2$ ensures that $\Vert \tilde{a} \Vert_{L^\infty} < \infty$, via the Sobolev embedding $H^s(\T^d) \embeds L^\infty(\T^d)$ (cp. Theorem \ref{thm:sobolev}). To ensure that \eqref{eq:darcy} is well-posed, we assume the following coercivity condition: There exists $\lambda > 0$, such that $1 + \tilde{a} \ge \lambda$. In fact, we shall assume the slightly stronger condition that 
\begin{align} \label{eq:lowerbd}
\Vert \tilde{a} \Vert_{L^\infty}
\le
C\Vert \tilde{a} \Vert_{H^s} 
\le 1-\lambda,
\end{align}
where $C$ is the norm of the embedding $H^s(\T^d)\embeds L^\infty(\T^d)$. The condition \eqref{eq:lowerbd}
clearly implies the $\lambda$-coercivity of \eqref{eq:darcy}.

The underlying operator $\G: L^\infty(\T^d) \to H^1(\T^d)$, maps a coefficient $a \in L^\infty(\T^d)$ to the solution $u \in H^1(\T^d)$ of \eqref{eq:darcy}. Our aim is to \emph{learn} this operator efficiently using a $\Psi$-FNO. To this end, we will follow the program discussed above and first present a pseudo-spectral method that approximates the Darcy flow PDE \eqref{eq:darcy} accurately. Then, this pseudo-spectral method will be emulated by a suitable $\Psi$-FNO. 
\subsubsection{
A Fourier-Galerkin approximation of \eqref{eq:darcy}
}
We fix $N\in \N$ and assume that the coefficient field and right-hand side $a = 1+\tilde{a},f \in H^{s}(\T^d)$, for some $s>d/2$, such that the pseudo-spectral projections $\cI_N \tilde{a}$ and $\cI_Nf$ are well-defined. We can now define a Fourier-Galerkin approximation of \eqref{eq:darcy} as the (unique) solution $u_N \in L^2_N(\T^d)$, with $\int u_N \, dx = 0$, and such that
\begin{align} \label{eq:FG}
- \dot{P}_N \nabla \cdot (a_N \nabla u_N) = f_N,
\end{align}
where we set $a_N := 1+\tilde{a}_N$ and 
\begin{align} \label{eq:defan}
\left\{
\begin{aligned}
\tilde{a}_N &:= \dot{P}_N\cI_{2N}\tilde{a},
\\
f_N &:= \dot{P}_N\cI_{2N}f.
\end{aligned}
\right.
\end{align}
See notation for $P_N, \dot{P}_N$ and $\cI_N$ in appendix \ref{app:notn}. We observe from \eqref{eq:defan} that $\tilde{a}_N$ and $f_N$ are obtained by first carrying out a pseudo-spectral projection of $\tilde{a}$, $f$ on the regular grid $\{x_j\}_{j\in \cJ_{2N}}$ (with $2N$ grid points in each direction), yielding a representation of the form
\[
\cI_{2N}\tilde{a} = \sum_{|k|\le 2N} \hat{a}_k e^{i\langle k, x\rangle},
\quad
\cI_{2N}f = \sum_{|k|\le 2N} \hat{f}_k e^{i\langle k, x\rangle},
\]
for coefficients $\hat{a}_k, \hat{f}_k \in \C$, and then projecting these expressions onto $\dot{L}^2_N(\T^d)$:
\[
\dot{P}_N \cI_{2N} \tilde{a}
=
\sum_{0 < |k|\le N} \hat{a}_k e^{i\langle k, x\rangle},
\quad
\dot{P}_N \cI_{2N} f
=
\sum_{0 < |k|\le N} \hat{f}_k e^{i\langle k, x\rangle}.
\]
The reason for this particular choice of the projection \eqref{eq:defan} lies in the fact that the mapping
\[
L^2_N(\T^d) \times L^2_N(\T^d)
\to 
L^2_N(\T^d), 
\quad 
(a_N, u_N) 
\mapsto
-P_N \nabla \cdot \left( a_N \nabla u_N \right),
\]
can be exactly represented by a pseudo-spectral method on $\{x_j\}_{j\in \cJ_{2N}}$, i.e., on a denser grid of $2N$ grid points in each direction. Indeed, it is easy to see that if $a_N$, $u_N\in L^2_N$, 
then \(\nabla u_N \in L^2_N \), and the values $\nabla u_N(x_j)$ can be computed exactly via the discrete Fourier transform from the values $u_N(x_j)$, $j\in \cJ_{2N}$. Since the product $a_N \nabla u_N\in L^2_{2N}$ is a trigonometric polynomial of degree $\le 2N$, the Fourier coefficients of $a_N\nabla u_N$ can also be recovered from knowledge of the point-values $a_N(x_j)\nabla u_N(x_j)$ at the grid points $x_j\in \cJ_{2N}$. Finally, since the divergence and projection $P_N$ can be evaluated exactly via discrete Fourier transforms, we conclude that the mapping
$(a_N, u_N) 
\mapsto
-P_N \nabla \cdot \left( a_N \nabla u_N \right)
$
can be computed based on knowledge of the grid values $a_N(x_j)$, and $u_N(x_j)$. The above procedure of computing a product of two trigonometric polynomials of degree $\le N$ \emph{exactly}, based on the point-values on a finer grid of size $2N+1$ in each direction is well-known in the context of pseudo-spectral methods, and is usually referred in the numerical analysis literature as \emph{de-aliasing} (cf. section 3.4.2 of the textbook \cite{Canuto2007}).

In order to computationally realize the Fourier-Galerkin method \eqref{eq:FG}, we are going to recast it as a fixed point problem $u_N = F_N(u_N;a_N,f_N)$, where $F_N(\slot;a_N,f_N): L^2_N(\T^d) \to L^2_N(\T^d)$ is defined by
\begin{align} \label{eq:Fcontract}
F_N(u_N;a_N,f_N) 
= 
\dot{P}_N(-\Delta)^{-1} \nabla \cdot \left( \tilde{a}_N \nabla u_N\right) + (-\Delta)^{-1} f_N.
\end{align}
In Lemma \ref{lem:contract} Appendix \ref{app:s3a}, we show that the map \eqref{eq:Fcontract} is a \emph{contraction} and can be efficiently realized by a Picard type fixed-point iteration scheme. This leads to the following algorithm for realizing the Fourier-Galerkin method \eqref{eq:FG} computationally, 
\begin{algorithm}[Approximate solution of Darcy equation] \label{alg:darcy}
\phantom{a} \\
\begin{tabular}{lp{.8\textwidth}}
\textbf{Input:} & $N\in \N$, $a \in H^s(\T^d)$, $f \in \dot{H}^{k-1}(\T^d)$, with $s>d/2+k>d$, $k\in \N$, $k\ge 1$.
\\
\textbf{Output:} & 
$u_N \in H^1_N(\T^d)$, such that $u_N \approx u$, where $u \in H^1(\T^d)$ solves \eqref{eq:darcy} with coefficient field $a$ and right-hand side $f$.
\end{tabular}

\begin{enumerate}
\item Compute pseudo-spectral projections $\tilde{a}_N, f_N\in L^2_N(\T^d)$, defined via the values on ``de-aliased'' grid $\{x_j\}_{j\in \cJ_{2N}}$ (cp. \eqref{eq:defan}):
\begin{align*}
\left\{
\begin{aligned}
\tilde{a}_N &:= \dot{P}_N \cI_{2N} (a-1), \\
f_N &:= \dot{P}_N \cI_{2N} f
\end{aligned} 
\right\}
\in L^2_N(\T^d).
\end{align*}
\item Set 
\[
K := 
\left\lceil
\frac{\log\left(\lambda^{-1} N^{-k}\right)}{\log\left(1-\lambda/2\right)}
\right\rceil.
\]
\item Set $u_N^0 := 0\in L^2_N(\T^d)$.
\item For $k=1,2,\dots, K$: compute
\[
u_N^k  
:= 
\dot{P}_N (-\Delta)^{-1} \nabla \cdot \left( \tilde{a}_N \nabla u_N^{k-1} \right)
+
(-\Delta)^{-1} f_N.
\]
\item Set $u_N := u^K_N$.
\end{enumerate}
\end{algorithm}
We have the following theorem on the convergence of the algorithm \ref{alg:darcy},
\begin{theorem} \label{thm:FG}
Let $u$ be the unique solution of \eqref{eq:darcy}. Let $k\in \N$ be given, with $k>d/2+1$, and assume that the coefficient $a\in H^s(\T^d)$ for $s> d/2 + k$ satisfies the coercivity condition \eqref{eq:lowerbd}. Assume furthermore that $f \in H^{k-1}(\T^d)$. Then there exists $N_0 = N_0(s,d,\Vert a \Vert_{H^s},\lambda)\in \N$, such that for any integer $N \ge N_0$, there exists a unique solution of the discretized elliptic equation \eqref{eq:FG}, and there exists a constant $C = C(s,d,\lambda, \Vert a \Vert_{H^s}, \Vert f \Vert_{H^{k-1}})>0$, such that 
\[
\Vert u - u_N \Vert_{H^1} \le C N^{-k},
\]
where $u_N$ is the output of Algorithm \ref{alg:darcy}.
\end{theorem}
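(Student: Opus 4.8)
The plan is to decompose the total error into a \emph{discretization error} and an \emph{iteration error}. Write $u_N^\ast \in \dot{L}^2_N(\T^d)$ for the exact solution of the Fourier--Galerkin scheme \eqref{eq:FG}; equivalently, by applying $-\Delta$ and using $u_N^\ast \in \dot{L}^2_N$, it is the fixed point of the map $F_N(\,\cdot\,;a_N,f_N)$ in \eqref{eq:Fcontract}. Recall that the output of Algorithm~\ref{alg:darcy} is the $K$-th Picard iterate $u_N = u_N^K$. One then starts from
\[
\Vert u - u_N \Vert_{H^1}
\;\le\;
\underbrace{\Vert u - u_N^\ast \Vert_{H^1}}_{=:\,\mathrm{(I)}}
\;+\;
\underbrace{\Vert u_N^\ast - u_N^K \Vert_{H^1}}_{=:\,\mathrm{(II)}},
\]
and estimates each of $\mathrm{(I)}$ and $\mathrm{(II)}$ by $C N^{-k}$.

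\emph{Iteration error $\mathrm{(II)}$.} By Lemma~\ref{lem:contract} there is a threshold $N_0 = N_0(s,d,\Vert a\Vert_{H^s},\lambda)$ such that, for all $N \ge N_0$, the coefficient $a_N = 1+\tilde{a}_N$ is $\tfrac\lambda2$-coercive and $F_N(\,\cdot\,;a_N,f_N)$ is a contraction on $\dot{L}^2_N(\T^d)$ (in the $H^1$-norm) with Lipschitz constant at most $1-\tfrac\lambda2$; the threshold enters because $\Vert\tilde{a}_N - \tilde{a}\Vert_{L^\infty}\to 0$, which guarantees $\Vert\tilde{a}_N\Vert_{L^\infty}\le 1-\tfrac\lambda2$ once $N$ is large. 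In particular $u_N^\ast$ exists and is unique, and the Banach fixed-point estimate gives $\Vert u_N^K - u_N^\ast\Vert_{H^1} \le (1-\tfrac\lambda2)^K\Vert u_N^0 - u_N^\ast\Vert_{H^1} = (1-\tfrac\lambda2)^K\Vert u_N^\ast\Vert_{H^1}$. Combining the uniform a priori bound $\Vert u_N^\ast\Vert_{H^1}\le \tfrac2\lambda\Vert f_N\Vert_{\dot{H}^{-1}} \le C(\lambda,\Vert f\Vert_{H^{k-1}})$ (feed $u_N^\ast = F_N(u_N^\ast)$ back into the contraction estimate) with the definition of $K$ in Step~(2) of Algorithm~\ref{alg:darcy}, which forces $(1-\tfrac\lambda2)^K\le \lambda^{-1}N^{-k}$, yields $\mathrm{(II)}\le CN^{-k}$.

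\emph{Discretization error $\mathrm{(I)}$.} Insert the Fourier truncation $\dot{P}_N u$, so $\mathrm{(I)} \le \Vert u-\dot{P}_N u\Vert_{H^1} + \Vert\dot{P}_N u - u_N^\ast\Vert_{H^1}$. For the second summand set $z_N := u_N^\ast - \dot{P}_N u \in \dot{L}^2_N$; subtracting the fixed-point identity $u_N^\ast = \dot{P}_N(-\Delta)^{-1}\nabla\cdot(\tilde{a}_N\nabla u_N^\ast) + (-\Delta)^{-1}f_N$ from the identity $\dot{P}_N u = \dot{P}_N(-\Delta)^{-1}\nabla\cdot(\tilde{a}\nabla u) + (-\Delta)^{-1}\dot{P}_N f$ (obtained by applying $\dot{P}_N$ to \eqref{eq:darcy}) and using the algebraic splitting $\tilde{a}_N\nabla u_N^\ast - \tilde{a}\nabla u = \tilde{a}_N\nabla z_N + (\tilde{a}_N-\tilde{a})\nabla\dot{P}_N u + \tilde{a}\,\nabla(\dot{P}_N u - u)$, one gets $z_N = \dot{P}_N(-\Delta)^{-1}\nabla\cdot(\tilde{a}_N\nabla z_N) + r_N$ with an explicit remainder $r_N$. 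The first term on the right is exactly the linear part of $F_N$ applied to $z_N$, so its $H^1$-norm is at most $(1-\tfrac\lambda2)\Vert z_N\Vert_{H^1}$, whence $\Vert z_N\Vert_{H^1} \le \tfrac2\lambda\Vert r_N\Vert_{H^1}$. Since $(-\Delta)^{-1}\nabla\cdot$ is bounded $L^2\to H^1$ and $\dot{H}^{-1}\to H^1$, the bound $\Vert r_N\Vert_{H^1}\lesssim N^{-k}$ reduces to the three (pseudo-)spectral approximation estimates
\[
\Vert\tilde{a} - \tilde{a}_N\Vert_{L^\infty}\lesssim N^{-k}\Vert\tilde{a}\Vert_{H^s},
\qquad
\Vert\dot{P}_N u - u\Vert_{H^1}\lesssim N^{-k}\Vert u\Vert_{H^{k+1}},
\qquad
\Vert\dot{P}_N(\cI_{2N}f - f)\Vert_{\dot{H}^{-1}}\lesssim N^{-k}\Vert f\Vert_{H^{k-1}},
\]
which follow from the approximation properties of $\dot{P}_N$ and $\cI_{2N}$ (cf.\ Appendix), using $s-d/2>k$ and $k-1>d/2$; here one also uses $\Vert u\Vert_{H^1}\lesssim\lambda^{-1}\Vert f\Vert_{\dot{H}^{-1}}$. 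The remaining term $\Vert u-\dot{P}_N u\Vert_{H^1}$ is handled by the same truncation estimate, once the regularity of $u$ is available.

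\emph{The main obstacle.} The one genuinely substantive ingredient is the elliptic regularity bound $\Vert u\Vert_{H^{k+1}}\le C(d,k,s,\Vert a\Vert_{H^s},\lambda)\Vert f\Vert_{H^{k-1}}$. I would prove it by rewriting \eqref{eq:darcy} as $\Delta u = -a^{-1}\big(f + \nabla\tilde{a}\cdot\nabla u\big)$, using that $H^s(\T^d)$ is a multiplication algebra (since $s>d/2$) stable under $a\mapsto a^{-1}$ for $a$ bounded below by $\lambda$, and bootstrapping: if $u\in H^m$ for some $1\le m\le k$, then $\nabla\tilde{a}\cdot\nabla u\in H^{m-1}$ by the Sobolev product rule (using $s-1>d/2$), hence $\Delta u\in H^{\min(k-1,m-1)}$ and therefore $u\in H^{m+1}$; starting from $u\in H^1$ and iterating finitely many times reaches $u\in H^{k+1}$, with the constant tracked at each step. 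I expect this regularity bootstrap --- together with the bookkeeping of constants through it and through the de-aliased data $\tilde{a}_N = \dot{P}_N\cI_{2N}\tilde{a}$, $f_N = \dot{P}_N\cI_{2N}f$ of \eqref{eq:defan} --- to be the main technical work; the contraction argument, the geometric decay of the iteration error, and the spectral estimates above are routine once Lemma~\ref{lem:contract} is in hand. Finally, enlarging $N_0$ to absorb all thresholds used above (in particular ensuring $N_0^k>\lambda^{-1}$, so that $K\ge 1$) completes the proof.
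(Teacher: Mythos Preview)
Your proposal is correct and follows essentially the same approach as the paper: the same decomposition into a discretization error $\Vert u-u_N^\ast\Vert_{H^1}$ and an iteration error $\Vert u_N^\ast-u_N^K\Vert_{H^1}$, the same use of Lemma~\ref{lem:contract} for the contraction, the same three consistency terms (data error in $f$, coefficient error $\tilde a-\tilde a_N$, truncation error $u-\dot P_N u$), and the same elliptic regularity $\Vert u\Vert_{H^{k+1}}\lesssim \Vert f\Vert_{H^{k-1}}$ as the substantive input. The only cosmetic difference is that you bound $z_N=\dot P_Nu-u_N^\ast$ directly via the fixed-point contraction, whereas the paper obtains the equivalent stability estimate from coercivity of the bilinear form $B(u_N,w_N)=\int a_N\nabla u_N\cdot\nabla w_N$; the paper also treats the regularity bound as ``straightforward by inductive differentiation'' rather than flagging it as the main obstacle.
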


The proof of Theorem \ref{thm:FG} relies only on standard techniques of numerical analysis and is presented in appendix \ref{app:FG}. 

\subsubsection{$\Psi$-FNO approximation of the Darcy equations }
We will emulate the pseudo-spectral fixed point algorithm \ref{alg:darcy} by a $\Psi$-FNO, allowing us to derive approximation bounds. We consider the following setting,
\begin{setting} \label{set:darcy}
Let $s\ge d/2+k+\delta$ for some $k\in \N$, $\delta\in (0,1)$ be a given Sobolev regularity parameter, and let $\lambda \in (0,1)$ be a (coercivity) constant. Denote by $\cA_\lambda^s(\T^d) \subset H^s(\T^d)$ the set of $\lambda$-coercive coefficients $a\in H^s(\T^d)$ of the form $a = 1 + \tilde{a}$, and such that
\[
\Vert a \Vert_{H^s} \le \lambda^{-1},
\quad
\Vert \tilde{a}\Vert_{L^\infty} \le C \Vert \tilde{a} \Vert_{H^{d/2+\delta}} \le 1-\lambda.
\]
Here $C>0$ denotes the norm of the embedding $H^{d/2+\delta}(\T^d) \embeds L^\infty(\T^d)$. We consider the operator $\G: \cA^s_\lambda(\T^d) \to \dot{H}^1(\T^d)$, $a \mapsto u$, where $u$ solves the Darcy equation 
\[
-\nabla \cdot \left(a\nabla u\right) = f,
\quad
\fint_{\T^d} u(x) \, dx = 0,
\]
on the periodic torus $\T^d$, with right-hand side $f \in \dot{H}^{k-1}$.
\end{setting}

With this setting in place, we can now state our main FNO approximation theorem for the solution operator $\G: \cA^s_\lambda(\T^d) \to H^1(\T^d)$ of the Darcy problem:

\begin{theorem} \label{thm:fno-darcy}
Assume Setting \ref{set:darcy}, and assume that the activation function $\sigma$ is non-linear and $\sigma \in C^3(\R)$. Then there exists a constant $C = C(s,\lambda,d)>0$, such that for any $N\in \N$, there exists a $\Psi$-FNO $\cN: \cA_\lambda^s(\T^d) \to H^1(\T^d)$, such that 
\[
\sup_{a\in \cA_{\lambda}^s} 
\Vert \cG(a) - \cN(a) \Vert_{H^1(\T^d)}
\le C N^{-k}
\]
and
\[
\width(\cN) \le C N^d
, 
\quad 
\depth(\cN) \le C\log(N)
,
\quad 
\lift(\cN) \le C.
\]
In particular, we have
\[
\size(\cN) \lesssim N^d \log(N).
\]
\end{theorem}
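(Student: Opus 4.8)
The plan is to construct the desired $\Psi$-FNO by \emph{emulating the pseudo-spectral fixed-point Algorithm \ref{alg:darcy}}, and then to convert the resulting approximation of the Fourier--Galerkin solution $u_N$ into an approximation of $u = \G(a)$ via Theorem \ref{thm:FG}. Since Theorem \ref{thm:FG} already gives $\|u - u_N\|_{H^1} \le C N^{-k}$ uniformly over $\cA^s_\lambda$, by the triangle inequality it suffices to build, for each $N\in\N$, a $\Psi$-FNO $\cN$ with $\sup_{a\in\cA^s_\lambda}\|u_N - \cN(a)\|_{H^1} \lesssim N^{-k}$, the sizes being as claimed. Throughout, the $\Psi$-FNO will use pseudo-spectral parameter $M = 2N$, so that the product $\tilde a_N \nabla u^{j-1}$ of two trigonometric polynomials of degree $\le N$ is de-aliased \emph{exactly} on the grid $\{x_j\}_{j\in\cJ_{2N}}$; this costs only a constant factor, $\width(\cN) = (2M+1)^d d_v \le C N^d$.

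For the architecture: since the right-hand side $f$ (hence $f_N$ and $(-\Delta)^{-1}f_N$) is fixed in Setting \ref{set:darcy}, it can be absorbed into the biases $b_\ell$. The lifting operator $\cR$ produces a channel carrying $a$ (a bias shift and an $\cF$-layer turn this into $\tilde a_N = \dot P_N \cI_{2N}(a-1)$, an exact Fourier operation on the $M$-grid), together with channels initialized to $0$ for the iterate. Each Picard step $u^j = \dot P_N(-\Delta)^{-1}\nabla\cdot(\tilde a_N\nabla u^{j-1}) + (-\Delta)^{-1}f_N$ is implemented by a fixed-length block: (i) an $\cF$-layer computing $\nabla u^{j-1}$ from $u^{j-1}$ via the exact Fourier multiplier $ik$; (ii) a short stack of $\sigma$-layers approximating, componentwise on $\cJ_M$, the bilinear pointwise map $(\xi,\eta)\mapsto\xi\eta$ (using that $\sigma$ is $C^3$ and non-polynomial, so that $\sigma(h\xi)+\sigma(-h\xi)-2\sigma(0)$ recovers $\xi^2$, and hence products, up to error $O(h R^3)$ on $[-R,R]$ with $h$ a free parameter), yielding $d$ channels holding an approximation of $\tilde a_N\nabla u^{j-1}$ while preserving $\tilde a_N$; (iii) an $\cF$-layer applying the exact Fourier multiplier for $\dot P_N(-\Delta)^{-1}\nabla\cdot$; (iv) addition of the constant $(-\Delta)^{-1}f_N$ as a bias. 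Affine (non-$\sigma$) operations and the passage between $\sigma$/$\cF$-layers and the $\Psi$-FNO normal form are handled by the structural lemmas of Appendix \ref{app:fnoprop}. Iterating this block $K = \lceil \log(\lambda^{-1}N^{-k})/\log(1-\lambda/2)\rceil = O(\log N)$ times and then applying $\cQ$ to read off the last iterate, one obtains $\depth(\cN) = O(\log N)$; the lift dimension $d_v = O(d)$ is independent of $N$, so $\lift(\cN)\le C$.

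For the error analysis, write $\tilde u^j$ for the approximate iterate produced by the network and $u^j_N$ for the exact Picard iterate. The essential structural fact is that $F_N(\,\cdot\,;a_N,f_N)$ is a contraction on $\dot H^1_N$ with rate $1-\lambda/2$ (Lemma \ref{lem:contract}). Decomposing the emulation error $e_j := \tilde u^j - u^j_N$ as the sum of a one-step consistency error (from the multiplication $\sigma$-layers) and $F_N(\tilde u^{j-1}) - F_N(u^{j-1}_N)$ gives $\|e_j\|_{H^1}\le \delta + (1-\lambda/2)\|e_{j-1}\|_{H^1}$, whence $\|e_K\|_{H^1}\le 2\delta/\lambda$, provided the one-step error is $\le\delta$ in $H^1$. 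Since the Fourier steps are exact and $(-\Delta)^{-1}\nabla\cdot$ is bounded $L^2\to H^1$, the one-step error is controlled by the $L^\infty(\cJ_M)$-error of the multiplication layer, which is $O(h R_N^3)$, where $R_N$ bounds $\|\tilde a_N\|_{L^\infty}\le 1-\lambda$ and the grid values $\|\nabla\tilde u^{j-1}\|_{L^\infty}$; an inverse estimate on $L^2_M$ together with the uniform $H^1$-bound on the iterates (again from the contraction, starting at $\tilde u^0 = 0$) yields $R_N\lesssim N^{\beta}$ for some $\beta=\beta(s,d)$. Choosing $h\lesssim \lambda N^{-k-3\beta}$ then forces $2\delta/\lambda\le N^{-k}$; this only rescales the network weights, not its architecture. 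Combining with Theorem \ref{thm:FG} gives $\sup_a\|\G(a)-\cN(a)\|_{H^1}\lesssim N^{-k}$, and $\size(\cN)\lesssim\depth(\cN)\,\width(\cN)\,\lift(\cN)\lesssim N^d\log N$.

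The main obstacle is the interplay between the $O(\log N)$ composed Picard blocks and the fact that the emulated iterates live only in $L^2_M$, so a priori their gradients can have $L^\infty$-norms growing polynomially in $N$ — forcing the multiplication-emulating $\sigma$-layers to be accurate over an $N$-dependent range. The resolution is exactly that all error amplification passes through $F_N$, which is an $H^1$-contraction, so per-step errors do not compound and it suffices to take the free parameter $h$ polynomially small in $N$, affecting only the magnitude of the weights. Verifying that de-aliasing is genuinely exact (so the sole source of error is the pointwise multiplication) and that each required Fourier multiplier ($ik$, $ik/|k|^2$, $\dot P_N$) arises as an $\cF$-layer of a $\Psi$-FNO on the $M$-grid is routine but must be carried out carefully using Appendix \ref{app:fnoprop}.
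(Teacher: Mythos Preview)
Your proposal is correct and follows essentially the same strategy as the paper: emulate Algorithm \ref{alg:darcy} by a $\Psi$-FNO on the de-aliased grid of size $2N$, implement each Picard step as an $\cF$-layer (for $\nabla$), a constant-depth $\sigma$-block approximating the pointwise product via the $C^3$ finite-difference trick, and another $\cF$-layer (for $\dot P_N(-\Delta)^{-1}\nabla\cdot$), with $(-\Delta)^{-1}f_N$ absorbed as a bias; then compose $K=O(\log N)$ such blocks and invoke Theorem \ref{thm:FG}.

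The only substantive difference is how the composition error is handled. The paper factors each Picard step as $\hN_2\circ\hN_1$ with $\hN_1(a,u)=(a,P_N(a\nabla u))$ and $\hN_2(a,U)=(a,\dot P_N(-\Delta)^{-1}\nabla\cdot U + F_N)$, and then appeals to the abstract Replacement Lemma (Lemma \ref{lem:replacement}): since each of $\hN_1,\hN_2$ can be approximated to \emph{arbitrary} accuracy by a $\Psi$-FNO of \emph{fixed} depth, width $\lesssim N^d$ and lift (Lemmas \ref{lem:darcy-quadratic} and \ref{lem:linear}), the $2K$-fold composition can too, with depth $\lesssim K$ and width, lift unchanged. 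This lemma internalizes the Lipschitz telescoping argument and the bootstrap that keeps intermediate iterates in a bounded set, so no explicit use of the $H^1$-contraction of $F_N$ is needed at this stage. Your explicit recursion $\|e_j\|_{H^1}\le\delta+(1-\lambda/2)\|e_{j-1}\|_{H^1}$ is a valid alternative; just be aware that the step ``uniform $H^1$-bound on the emulated iterates $\tilde u^{j-1}$, hence $R_N\lesssim N^\beta$'' is mildly circular (you need the bound to get $\delta$, and $\delta$ to propagate the bound) and should be closed by an explicit induction-with-bootstrap rather than asserted. The paper's Replacement Lemma hides exactly this bookkeeping.
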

The proof of this theorem, presented in Appendix \ref{app:fno-darcy}, relies crucially on the efficient approximation of quadratic nonlinearities by shallow neural networks with smooth activation functions, see Lemma \ref{lem:darcy-quadratic} in Appendix \ref{app:s3a} for the precise statement and proof. 
\begin{remark} \label{rmk:darcy-orderest}
To achieve a FNO approximation error of order $\epsilon$ for the Darcy problem, Theorem \ref{thm:fno-darcy} shows that a $\Psi$-FNO $\cN$ with
\begin{equation}
    \label{eq:darcycomp}
\size(\cN) \lesssim \left( \frac{1}{\epsilon} \right)^{d/k} \log(\epsilon^{-1}),
\quad
\depth(\cN) \lesssim \log(\epsilon^{-1}),
\end{equation}
is sufficient. Furthermore, the lifting dimension $d_v$ can be kept uniformly bounded, independently of $\epsilon$. In particular, for $k>d$, the required total size of the $\Psi$-FNO scales sub log-linearly in the approximation accuracy $\epsilon$, indicating that $\Psi$-FNOs may provide a very efficient approximation to the solution operator of the Darcy problem, in this case.
\end{remark}
As a concrete example for illustrating the approximation of Darcy equations by FNOs, we consider the following example.
\begin{example}
A possible model for coefficients $a$ with a typical length scale $\ell > 0$, is to assume an expansion of $a = a(x;Y)$ in terms of random variables $Y = (Y_1,Y_2,\dots) \in [-1,1]^\N$ (not necessarily i.i.d.), of the following form, similar to the ansatz in \cite{SchwabZech2019}:
\[
a(x;Y) = 1 + \sum_{k\in \Z^d\setminus \{0\}} b_k Y_k e^{i\langle k, x\rangle},
\]
where we assume that the coefficients $b_k$ satisfy a decay condition of the form $|b_k| \le C_b\exp(-\ell|k|)$ for a constant $C_b > 0$. We also assume that $a(x;Y)$ satisfy the coercivity condition 
\[
\Vert a(\slot; Y) - 1\Vert_{L^\infty} 
\le C \Vert a(\slot; Y) - 1 \Vert_{H^{d/2+\delta}(\T^d)}
\le 1-\lambda,
\]
for some $\delta, \lambda \in (0,1)$, and that the source term $f\in C^\infty(\T^d)$. Under these assumptions, we have $a\in H^s$ for any $s > 0$, and hence it follows from Theorem \ref{thm:fno-darcy} that \emph{for any $r \in \N$}, there exists a constant $C = C(r,\ell, \Vert f \Vert_{C^k}, \delta)$, with the following property (cp. also Remark \ref{rmk:darcy-orderest}): For any $\epsilon > 0$, there exists a $\Psi$-FNO $\cN$, such that 
\[
\sup_{Y\in [-1,1]^\N} \Vert \G(a(\slot, Y)) - \cN(a(\slot,Y)) \Vert_{H^1} < \epsilon,
\]
and 
\begin{equation}
    \label{eq:darcycomp1}
\size(\cN) \le C\epsilon^{-1/r}, 
\quad
\depth(\cN) \le C\log(\epsilon^{-1}).
\end{equation}
\end{example}
The complexity bounds \eqref{eq:darcycomp} and \eqref{eq:darcycomp1} suggest that the size of a $\Psi$-FNO approximating the operator $\G$ stemming from the Darcy equations, scales only sub-linearly (or even better) in the desired accuracy $\epsilon$. This should be contrasted with the fact that the size of a $\Psi$-FNO for a generic Lipschitz operator grows super-exponentially in the desired error \eqref{eq:comp}. Thus, we are able to show that a $\Psi$-FNO can approximate this PDE solution operator much more efficiently than it might a generic infinite-dimensional operator. 
\subsection{Incompressible Navier-Stokes equations.}
\label{sec:NS}
The motion of a viscous, incompressible Newtonian fluid is modeled by the incompressible Navier-Stokes equations,
\begin{align} \label{eq:NS}
\left\{
\begin{aligned}
\partial_t u + u\cdot \nabla u + \nabla p = \nu \Delta u, \\
\div(u) = 0, \; u(t=0) = u_0,
\end{aligned}
\right.
\end{align}
Here, $u \in \R^d$ is the fluid velocity and $p\in \R$ is the pressure of the fluid, acting as a Lagrange multiplier to enforce the divergence-free constraint $\div(u) = 0$. The initial fluid velocity is denoted by $u_0$. For simplicity, we assume periodic boundary conditions in the domain $\T^d$. The viscosity is denoted by $\nu\geq 0$ and we would like to state that the subsequent analysis also applies for $\nu=0$, where \eqref{eq:NS} reduces to the incompressible Euler equations modeling an ideal fluid. For definiteness, we recall the following well-known theorem for the well-posedness of the Navier-Stokes equations \eqref{eq:NS},
\begin{theorem}[{see e.g. \cite[Thm 3.4]{MB2001}}] \label{thm:classicalsol}
Let $r>d/2+2$. For any $u_0 \in H^r$, there exists $T > 0$ and a unique classical solution of the Navier-Stokes equations \eqref{eq:NS}, such that $u \in C([0,T];H^r) \cap C^1([0,T];H^{r-2})$ with $u(t=0) = u_0$.
\end{theorem}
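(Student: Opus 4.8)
This is a classical local well-posedness result; the plan is to prove it by the standard energy method combined with a Galerkin (spectral) approximation, which handles the viscous and inviscid cases uniformly because the dissipative term $\nu\Delta u$ only improves the a priori estimates. First I would eliminate the pressure by applying the Leray--Helmholtz projection $\mathbb{P}$ onto mean-zero divergence-free vector fields, rewriting \eqref{eq:NS} as the equivalent evolution equation $\partial_t u = \nu\Delta u - \mathbb{P}(u\cdot\nabla u)$, with $p$ recovered a posteriori from $-\Delta p = \div(u\cdot\nabla u)$. Note that $r > d/2 + 2$ forces every term of this equation, including $\nu\Delta u\in H^{r-2}$ and $\mathbb{P}(u\cdot\nabla u)\in H^{r-1}$, to lie in a space embedding into $C^0(\T^d)$, so that any solution with $u\in H^r$ and $\partial_t u\in H^{r-2}$ is automatically classical.

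The core step is the uniform a priori estimate. I would introduce the Fourier truncation $u_m := P_m u$ and solve the finite-dimensional ODE system $\partial_t u_m = \nu\Delta u_m - P_m\mathbb{P}(u_m\cdot\nabla u_m)$ by the Picard--Lindel\"of theorem, then differentiate $\|u_m\|_{H^r}^2$ in time. Using the incompressibility cancellation $\int_{\T^d} u_m\cdot\nabla|\Lambda^r u_m|^2\,dx = 0$, the Kato--Ponce commutator estimate applied to $[\Lambda^r, u_m\cdot\nabla]u_m$, the algebra property of $H^{r-1}$ (valid since $r-1 > d/2$), and the non-positivity of the dissipative contribution $-2\nu\|\nabla u_m\|_{H^r}^2$, one arrives at $\tfrac{d}{dt}\|u_m\|_{H^r}^2 \le C\|u_m\|_{H^r}^3$. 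A Riccati comparison then produces a time $T = T(\|u_0\|_{H^r}) > 0$, independent of $m$ and of $\nu\ge 0$, on which $\sup_{[0,T]}\|u_m\|_{H^r}\le 2\|u_0\|_{H^r}$; reading off from the equation, $\partial_t u_m$ is uniformly bounded in $C([0,T];H^{r-2})$.

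Finally I would pass to the limit. By Banach--Alaoglu, a subsequence converges weak-$*$ in $L^\infty(0,T;H^r)$, and by the Aubin--Lions lemma strongly in $C([0,T];H^{r'})$ for every $r' < r$; this is enough to pass to the limit in the nonlinear term and to identify $u$ as a solution of the projected equation, with $u\in L^\infty(0,T;H^r)$ and $\partial_t u\in C([0,T];H^{r-2})$. Strong continuity $u\in C([0,T];H^r)$ --- and hence $u\in C^1([0,T];H^{r-2})$, a classical solution by the embedding noted above --- follows from the usual combination of weak continuity of $t\mapsto u(t)$ in $H^r$ with continuity of $t\mapsto \|u(t)\|_{H^r}$, the latter obtained by the mollification/commutator argument of Kato applied to the $H^r$ energy identity. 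Uniqueness is proved at the level of the $L^2$ difference $w = u - \tilde u$ of two solutions: the identity $\tfrac{d}{dt}\|w\|_{L^2}^2 = -2\nu\|\nabla w\|_{L^2}^2 - 2\int_{\T^d} w\cdot(w\cdot\nabla)\tilde u\,dx$ gives $\tfrac{d}{dt}\|w\|_{L^2}^2 \le C\|\nabla \tilde u\|_{L^\infty}\|w\|_{L^2}^2$, whence $w\equiv 0$ by Gr\"onwall. The hard part is the sharp bilinear estimate that exploits the incompressibility cancellation and the upgrade to strong continuity in $H^r$ at the top regularity --- precisely the technical heart of the argument in \cite[Ch.~3]{MB2001}.
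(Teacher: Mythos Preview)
The paper does not give its own proof of this theorem; it is stated as a well-known result with a citation to \cite[Thm~3.4]{MB2001}. Your sketch is a correct outline of exactly the standard energy/Galerkin argument that appears in that reference, so there is nothing to compare.
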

It is well known that in two space dimensions $d=2$, the time interval for existence of solutions $[0,T]$ can be extended to any finite $T$ as long as $u_0 \in H^r$, whereas the corresponding finite-time well-posedness result for three space-dimensions is an outstanding open problem \cite{MB2001}. 

We recall that if the initial data $u_0$ of \eqref{eq:NS} belongs to $\dot{L}^2(\T^d;\R^d)$, i.e. if 
\[
\int_{\T^d} u_0(x) \, dx = 0,
\]
then we also have that the corresponding solution $u(x,t)\in \dot{L}^2(\T^d;\R^d)$ (reflecting momentum conservation). Next, we introduce the \define{Leray-projection operator} $\P: {L}^2(\T^d;\R^d) \to \dot{L}^2(\T^d;\div)$, as the $L^2$-orthogonal projection onto the subspace $\dot{L}^2(\T^d;\div)\subset \dot{L}^2(\T^d;\R^d)$, consisting of divergence-free vector fields; i.e. we have $u\in \dot{L}^2(\T^d;\div)$ if, and only if, $u\in \dot{L}^2(\T^d;\R^d)$ and 
\[
\int_{\T^d} u(x) \cdot \nabla \phi(x) \, dx = 0, \quad \forall \, \phi \in C^\infty(\T^d). 
\]
In terms of Fourier series, the Leray projection $\P: \dot{L}^2(\T^d;\R^d) \to \dot{L}^2(\T^d;\div)$ is explicitly given by
\begin{align} \label{eq:leray}
\P\left(
\sum_{k\in \Z^d} \hat{u}_k e^{i\langle k, x\rangle}
\right)
=
\sum_{k\in \Z^d\setminus \{0\}} \left(
{1} - \frac{k\otimes k}{|k|^2}
\right) \hat{u}_k e^{i\langle k, x\rangle}.
\end{align}

In terms of the Leray projection $\P$, we can now equivalently write the incompressible Navier-Stokes equations \eqref{eq:NS} as the following equation on the Hilbert space  $\dot{L}^2(\T^d;\div)$ as, 
\begin{align} \label{eq:NSp}
\left\{
\begin{aligned}
\partial_t u = -\P\left( u\cdot \nabla u \right) + \nu \Delta u, \\
u(t=0) = u_0.
\end{aligned}
\right.
\end{align}

Given this background, our main objective in this section is to construct a $\Psi$-FNO that will approximate the operator $\G$ which maps the initial data $u_0$ to the solution $u(\slot,T)$ (at the final time $T$) of the incompressible Navier-Stokes equations \eqref{eq:NS}, \eqref{eq:NSp}. To this end, we will follow the general program outlined at the beginning of this section and introduce a suitable pseudo-spectral method for approximating the Navier-Stokes equations. Then, we construct a $\Psi$-FNO that can efficiently emulate this pseudo-spectral method. 
\subsubsection{A fully-discrete $\Psi$-spectral approximation of the Navier-Stokes equations \eqref{eq:NS}}
The form of the Leray-projected Navier-Stokes equations \eqref{eq:NSp} naturally suggests the following fully-discrete approximation of \eqref{eq:NS}:
\begin{align} \label{eq:scheme1}
\left\{
\begin{gathered}
\frac{u_N^{n+1} - u_N^n}{\dt} 
+ \P_N \left(
u^n_N \cdot \nabla u^{n+1}_N
\right)
=
\nu \Delta u_N^{n+1}, 
\\
u^0_N = \cI_Nu(t=0).
\end{gathered}
\right.
\end{align}
Here, we fix $N\in \N$ and introduce the space, $\dot{L}^2_N(\T^d;\div) := \dot{L}^2(\T^d;\div)\cap \dot{L}^2_N(\T^d;\R^d)$. We fix a time-step $\tau > 0$ and let $u^n_N \in \dot{L}^2_N(\T^d;\div)$, for all $n=0,\ldots,n_T$, with $n_T$ such that $\tau n_T = T$. Moreover, we use the following \emph{finite-dimensional} Leray-Fourier projection operator $\P_N: \dot{L}^2(\T^d;\R^d) \to \dot{L}^2_N(\T^d;\div)$ in analogy with \eqref{eq:leray}:
\begin{align} \label{eq:lerayn}
\P_N\left(
\sum_{k\in \Z^d} \hat{u}_k e^{i\langle k, x\rangle}
\right)
:=
\sum_{0<|k|_\infty\le N}
\left(
1 - \frac{k\otimes k}{|k|^2}
\right) \hat{u}_k e^{i\langle k, x\rangle},
\end{align}
to complete the description of the scheme \eqref{eq:scheme1}. 

We observe that the scheme \eqref{eq:scheme1} is \emph{implicit} i.e., at each time step $n$, one has to solve an operator equation to compute the velocity field $u^{n+1}_N$ at the next time step. Thus, one needs to show the solvability of this operator equation in order to ensure that the scheme \eqref{eq:scheme1} is \emph{well-defined}. Under the following CFL condition for choosing a small enough time step $\tau$, 
\begin{equation}
    \label{eq:cfl}
    \dt \Vert u^n_N \Vert_{L^\infty} N \le \frac12,
\end{equation}
we prove in Appendix \ref{app:s1} that the scheme \eqref{eq:scheme1} is well-defined. 

Next, in practice, one has to numerically approximate the solutions of the implicit equation \eqref{eq:scheme1} for evaluating the velocity field $u^{n+1}_N$, at the next time-step. We choose to do so by recasting the solution of the implicit equation \eqref{eq:scheme1} to finding a \emph{fixed point} for the mapping,
\begin{align} \label{eq:Frec}
w_N \mapsto F(w_N) := (1-\nu\dt\Delta)^{-1} u^n_N - \dt (1-\nu\dt\Delta)^{-1} \P_N(u^n_N\cdot \nabla w_N).
\end{align}
In Appendix \ref{app:s1} Lemma \ref{lem:iter1}, we show that a standard Picard-type iteration converges to a fixed point for the map \eqref{eq:Frec}. This suggests the following numerical algorithm for approximating strong solutions of the incompressible Navier-Stokes equations \eqref{eq:NS},

\begin{algorithm}[Pseudo-spectral approximation of the Navier-Stokes equations \eqref{eq:NS}] 
\label{alg:NS1}
\phantom{a} \\
\noindent
\begin{tabular}{lp{.8\textwidth}}
\textbf{Input:} & 
$U>0$, $N\in \N$, $T>0$, a time-step $\dt>0$, such that $n_T = T/\dt \in \N$, and $\dt U N^{d/2+1} \le \frac1{2e}$, initial data $u_N^0 \in L^2_N(\T^d;\div)$, such that $\Vert u_N^0 \Vert_{L^2} \le U$.
\\
\textbf{Output:} & 
$u_N^{n_T} \in L^2_N(\T^d;\div)$ an approximation of the solution $u_N^{n_T} \approx u(t=T)$ of \eqref{eq:NS} at time $t=T$.
\end{tabular}
\begin{enumerate}
\item Set 
\[
\kappa_0 := 
\left\lceil
\frac{\log\left(T^2 /\dt^2\right)}{\log(2)}
\right\rceil
\in \N.
\]
\item For $n=0,\dots, n_T-1$:
\begin{enumerate}
\item Set $w^{n,0}_N := 0$,
\item For $k=1,\dots, \kappa_0$: Compute
\[
\hspace{40pt}
w^{n,k}_N := (1-\nu \dt \Delta)^{-1} u^n_N - \dt (1-\nu\dt \Delta)^{-1} \P_N \left( u^n_N \cdot \nabla w^{n,k-1}_N \right),
\]
\item Set $u^{n+1}_N := w^{n,\kappa_0}_N$,
\end{enumerate}
\end{enumerate}
\end{algorithm}
The convergence of the algorithm \ref{alg:NS1}, together with a convergence rate, to the strong solution of the Navier-Stokes equations is summarized in the following theorem,
\begin{theorem} \label{thm:scheme1}
Let $U,T>0$. Consider the Navier-Stokes equations on $\T^d$, for $d\ge 2$. Assume that $r\ge d/2 +2$, and let $u \in C([0,T]; H^r) \cap C^1([0,T];H^{r-2})$ be a solution of the Navier-Stokes equations \eqref{eq:NS}, such that $\Vert u \Vert_{L^2}\le U$. Choose a time-step $\dt$, such that $\dt U N^{d/2+1}\le (2e)^{-1}$. There exists a constant 
\[
C = C(T,d,r,\Vert u \Vert_{C_t(H^r_x)}, \Vert u \Vert_{C^1_t(H^{r-2}_x)}) > 0,
\]
such that with $u^0_N := \cI_N u(0)$, and for the sequence $u^1_N, \dots, u^{n_T}_N \in L^2_N(\T^d;\div)$ generated by Algorithm \ref{alg:NS1}, we have
\[
\max_{n=0,\dots, n_T}
\Vert u^n_N - u(t^n) \Vert_{L^2}
\le
C \left(\dt + N^{-r}\right),
\]
where $n_T\dt = T$. In particular, choosing $\dt \sim N^{-r}$, we have
\[
\max_{n=0,\dots, n_T}
\Vert u^n_N - u(t^n) \Vert_{L^2}
\le
C N^{-r},
\]
with $n_T \sim N^r$ (and enlarging the constant $C>0$ by a constant factor).
\end{theorem}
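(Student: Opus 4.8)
The plan is to run the classical \emph{stability-plus-consistency} analysis for semi-implicit spectral time stepping. Write $t^n = n\dt$, let $\{u_N^n\}_{n=0}^{n_T}$ be the sequence produced by Algorithm \ref{alg:NS1}, and compare it against the reference sequence $\bar u^n := \P_N u(t^n)\in\dot{L}^2_N(\T^d;\div)$, with error $e^n := u_N^n - \bar u^n$. Since
\[
\Vert u_N^n - u(t^n)\Vert_{L^2}\le\Vert e^n\Vert_{L^2} + \Vert(I-\P_N)u(t^n)\Vert_{L^2}
\]
and the last term is $\le CN^{-r}\Vert u\Vert_{C_t(H^r_x)}$ by the standard spectral bound $\Vert(I-\P_N)v\Vert_{L^2}\le CN^{-r}\Vert v\Vert_{H^r}$ (applied to the divergence-free solution $u(t^n)$), the statement reduces to proving $\max_n\Vert e^n\Vert_{L^2}\le C(\dt + N^{-r})$.

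First I would set up the a priori bounds that make the well-posedness and iteration results of Appendix \ref{app:s1} applicable. Testing the implicit step \eqref{eq:scheme1} against its new iterate and using that $u_N^n$ is divergence-free — whence the nonlinear term $\langle\P_N(u_N^n\cdot\nabla v),v\rangle = -\tfrac12\langle\div u_N^n,|v|^2\rangle$ vanishes for all $v\in\dot{L}^2_N(\T^d;\div)$ — gives the discrete energy estimate $\Vert\hat u_N^{n+1}\Vert_{L^2}\le\Vert u_N^n\Vert_{L^2}$ for the exact solution $\hat u_N^{n+1}$ of the implicit step; a short induction, using also the Picard contraction bound of Lemma \ref{lem:iter1} and the choice of $\kappa_0$, then yields $\Vert u_N^n\Vert_{L^2}\le 2U$ for every $n$, hence $\Vert u_N^n\Vert_{L^\infty}\le CN^{d/2}\Vert u_N^n\Vert_{L^2}\le CN^{d/2}U$ by Bernstein's inequality, so the CFL restriction $\dt U N^{d/2+1}\le(2e)^{-1}$ guarantees unique solvability of \eqref{eq:scheme1} and that \eqref{eq:Frec} is a contraction of ratio $\le\tfrac12$; consequently the iteration defect $\theta^n := u_N^{n+1} - \hat u_N^{n+1}$ obeys $\Vert\theta^n\Vert_{L^2}\le 2^{-\kappa_0}\Vert\hat u_N^{n+1}\Vert_{L^2}\le C\dt^2/T^2$. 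Next I would compute the truncation error: since $\P_N$ commutes with $\partial_t$ and $\Delta$ and $\P_N\P = \P_N$, the reference sequence satisfies $\partial_t\bar u = -\P_N(u\cdot\nabla u) + \nu\Delta\bar u$, and integrating this over $[t^n,t^{n+1}]$ and subtracting the discrete step gives
\[
\bar u^{n+1} - \bar u^n + \dt\,\P_N(\bar u^n\cdot\nabla\bar u^{n+1}) - \nu\dt\,\Delta\bar u^{n+1} = -\dt\,\rho^n - \dt\,\rho^n_{\mathrm{diff}},
\]
where $\Vert\rho^n\Vert_{L^2}\le C(\dt + N^{-r})$ (the time-discretization and nonlinear consistency parts, estimated via the mean-value bound $\Vert u(s)-u(t)\Vert_{H^{r-2}}\le|s-t|\,\Vert u\Vert_{C^1_t(H^{r-2}_x)}$, the algebra property of $H^{r-1}$ for $r-1>d/2$, the embedding $H^{r-1}\embeds L^\infty$, and the spectral bound above), while the viscous defect $\rho^n_{\mathrm{diff}}$ is carried along and handled by absorption below.

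The heart of the argument is the error recursion. Subtracting the identity above from the discrete step, folding the iteration defect into the forcing, and testing against $e^{n+1}$ using $\langle e^{n+1}-e^n,e^{n+1}\rangle\ge\tfrac12(\Vert e^{n+1}\Vert^2-\Vert e^n\Vert^2)$, I would split the nonlinear increment as $u_N^n\cdot\nabla u_N^{n+1} - \bar u^n\cdot\nabla\bar u^{n+1} = u_N^n\cdot\nabla e^{n+1} + e^n\cdot\nabla\bar u^{n+1}$: the first term drops against $e^{n+1}$ (divergence-free $u_N^n$), and the second is bounded by $C\Vert u\Vert_{C_t(H^r_x)}\Vert e^n\Vert\Vert e^{n+1}\Vert$ using $\Vert\nabla\bar u^{n+1}\Vert_{L^\infty}\le C\Vert u\Vert_{C_t(H^r_x)}$; meanwhile the viscous defect, after integration by parts, is dominated by $\tfrac{\nu\dt}{2}\Vert\nabla e^{n+1}\Vert^2 + C\nu\dt^3$, whose first term is absorbed by the coercive contribution $\nu\dt\Vert\nabla e^{n+1}\Vert^2$ from the implicit $\nu\Delta e^{n+1}$. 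This produces $\Vert e^{n+1}\Vert^2\le(1+C\dt)\Vert e^n\Vert^2 + C\dt(\dt+N^{-r})^2$, and since $\Vert e^0\Vert_{L^2}\le CN^{-r}$ by standard interpolation/projection bounds (valid as $r>d/2$), a discrete Grönwall inequality gives $\max_n\Vert e^n\Vert_{L^2}\le Ce^{CT}(\dt+N^{-r})$. Together with the reduction in the first paragraph this proves the first estimate; choosing $\dt\sim N^{-r}$, so that $n_T = T/\dt\sim N^r$, yields the second.

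The step I expect to be the main obstacle is obtaining the \emph{optimal} spatial rate in the nonlinear consistency term. Inspecting $\rho^n$, the dangerous contribution is the one in which the gradient lands on the spectral tail, $\langle u\cdot\nabla(I-\P_N)u,\,e^{n+1}\rangle$, which a crude estimate controls only by $O(N^{1-r})$ — Bernstein's inequality forces $\Vert\nabla e^{n+1}\Vert_{L^2}\le N\Vert e^{n+1}\Vert_{L^2}$. Recovering the claimed $O(N^{-r})$ requires a sharper treatment: integration by parts to move the derivative onto $e^{n+1}$, the orthogonality $\langle(I-\P_N)v,\,\P_N w\rangle = 0$ and the divergence-free structure of $u$, and then either absorption into the viscous term when $\nu>0$, or a uniform-in-$N$ a priori bound $\sup_n\Vert u_N^n\Vert_{H^1}\le C$ on the numerical solution (available, e.g., via 2D vorticity transport, or via a continuity/bootstrap argument exploiting the assumed regularity of $u$). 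The remaining pieces — the discrete energy estimate, the discrete Grönwall step, the bookkeeping of the borderline Sobolev embeddings at $r=d/2+2$, and the interaction of the implicitly treated diffusion with the error equation — are routine.
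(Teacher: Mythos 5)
Your overall strategy — consistency plus stability plus discrete Gr\"onwall, with an iteration-defect term $\theta^n=O(\dt^2)$ controlled by the choice of $\kappa_0$ — mirrors the paper's proof in Appendix \ref{app:s1pf}, with one genuine structural difference: you take the spectral interpolant $\bar u^n:=\P_N u(t^n)\in\dot L^2_N(\T^d;\div)$ as the reference sequence, whereas the paper works directly with the exact solution $u(t^n)$ and tests the error equation against $P_Ne^{n+1}$. Your choice has the advantage that $e^{n+1}\in L^2_N$ is divergence-free, so the advection pairing $\langle\P_N(u_N^n\cdot\nabla e^{n+1}),e^{n+1}\rangle$ vanishes cleanly; the price is that the problematic high-frequency interaction reappears as a consistency defect $\P_N\bigl[u(t^n)\cdot\nabla(1-P_N)u(t^{n+1})\bigr]$. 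In the paper's arrangement it surfaces instead inside the stability term, and the manipulation \eqref{eq:11} exposes it in a form that can be estimated sharply. Up to this reorganization the two arguments are the same, and all your surrounding steps — reduction to $\max_n\Vert e^n\Vert$, the $L^\infty$ bound via Bernstein, the CFL condition, the Picard contraction, the identity $\langle e^{n+1}-e^n,e^{n+1}\rangle\ge\tfrac12(\Vert e^{n+1}\Vert^2-\Vert e^n\Vert^2)$, the initial-data estimate — are correct.

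The gap is exactly where you flag it, and your proposed repairs do not close it. Absorbing the $N^{1-r}$ loss into the dissipation $\nu\dt\Vert\nabla e^{n+1}\Vert^2$ cannot be the answer: the theorem is asserted for $\nu\ge 0$, including the Euler case $\nu=0$ where there is no viscous term to absorb into. A uniform-in-$N$ bound $\sup_n\Vert u_N^n\Vert_{H^1}\le C$ on the numerical sequence is not established anywhere (the discrete energy estimate only gives $L^2$ control, and a 2D-vorticity argument does not transfer to the aliased pseudo-spectral iterate nor to $d=3$), and even granting it one still loses a factor of $N$ from $\Vert\nabla e^{n+1}\Vert_{L^2}\le N\Vert e^{n+1}\Vert_{L^2}$, since $\Vert(1-P_N)u(t^{n+1})\Vert_{L^2}\lesssim N^{-r}$ alone gives only $N^{1-r}$ after the integration by parts. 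The paper's resolution in \eqref{eq:11} is different in kind: after writing the dangerous pairing as $\langle(1-P_N)(e^{n+1}\otimes u(t^n)),\nabla u(t^{n+1})\rangle$ and splitting $u_N^n=u(t^n)-e^n$, it produces a term bounded by $\Vert e^{n+1}\Vert_{L^2}\Vert e^n\Vert_{L^2}\Vert(1-P_N)\nabla u(t^{n+1})\Vert_{L^\infty}$ (a stability term that feeds the Gr\"onwall factor) plus a term bounded by $\Vert e^{n+1}\Vert_{L^2}\Vert(1-P_{N/2})u(t^n)\Vert_{L^2}\Vert(1-P_N)\nabla u(t^{n+1})\Vert_{L^\infty}$, where the \emph{product} of the two spectrally decaying factors, $N^{-r}$ from the $L^2$ tail and an additional $N^{-(r-1-d/2)}$ from the $L^\infty$ tail of $\nabla u$ (here $r\ge d/2+2$ is used), delivers a rate strictly better than $N^{-r}$ and \emph{independent of $\nu$}. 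It is this two-factor spectral decay, not viscosity or auxiliary $H^1$ control, that rescues the optimal rate; your sketch stops one move short of it.
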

The proof of this theorem relies on several techniques from numerical analysis and is presented in detail in Appendix \ref{app:s1pf}. 
\subsubsection{Approximation of algorithm \ref{alg:NS1} by $\Psi$-FNOs.}
Next, we are going to construct a $\Psi$-FNO of the form \eqref{eq:form-pfno}, which can efficiently emulate the pseudo-spectral algorithm \ref{alg:NS1}. To this end, we have the following result (proved in Appendix \ref{app:NS1}) on the efficient approximation of the non-linear term in the Navier-Stokes equations by FNOs,
\begin{lemma} \label{lem:fno-NSnonlin}
Assume that the activation function $\sigma\in C^3$ is three times continuously differentiable and non-linear. There exists a constant $C>0$, such that for any $N\in \N$, and for any $\epsilon, B > 0$, there exists a $\Psi$-FNO $\cN: L^2_{2N}(\T^d;\R^d)\times L^2_{2N}(\T^d;\R^d) \to L^2_{2N}(\T^d;\R^d)$, with 
\[
\depth(\cN), \; \lift(\cN) \le C, 
\quad
\width(\cN) \le C N^d,
\]
such that we have
\[
\left\Vert
\P_N\left(
u_N \cdot\nabla w_N
\right)
- 
\cN(u_N, w_N)
\right\Vert_{L^2_N}
\le 
\epsilon,
\]
for all trigonometric polynomials $u_N, w_N\in L^2_N(\T^d; \R^d)\subset L^2_{2N}(\T^d; \R^d)$ of degree $|k|_\infty \le N$, satisfying the bound $\Vert u_N \Vert_{L^2}, \Vert w_N \Vert_{L^2} \le B$.
\end{lemma}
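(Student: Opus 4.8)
The plan is to build the $\Psi$-FNO $\cN(u_N, w_N)$ so that, layer by layer, it reproduces the pipeline $\left( u_N, w_N \right) \mapsto \left( u_N, \nabla w_N \right) \mapsto u_N \cdot \nabla w_N \mapsto \P_N\left( u_N \cdot \nabla w_N \right)$, with the only inexact step being the (componentwise) multiplication $u_N \cdot \nabla w_N$, which I will approximate using the efficient approximation of quadratic nonlinearities by shallow networks with smooth activation $\sigma \in C^3$ — this is the analogue for Navier--Stokes of the mechanism behind Lemma \ref{lem:darcy-quadratic} cited for the Darcy theorem. Concretely: (1) A lifting/$\cF$-layer maps the input pair into a lifted state holding both $u_N$ and all first derivatives $\partial_i (w_N)_\ell$ for $i,\ell\in\{1,\dots,d\}$; since $u_N,w_N\in L^2_N$ the derivatives $\partial_i(w_N)_\ell\in L^2_N$ are computed \emph{exactly} by an $\cF$-layer (multiplication by $ik_i$ on Fourier modes $|k|_\infty\le N$), so no error enters here. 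Note the products $(u_N)_i\,\partial_i(w_N)_\ell$ lie in $L^2_{2N}$, which is why the target space is taken at resolution $2N$ and the de-aliasing argument from Section \ref{sec:darcy} applies. (2) A bounded-depth block of $\sigma$-layers approximates, uniformly on the relevant bounded set of point-values, the $d^2$ scalar products $\{\, (a,b)\mapsto a b \,\}$ needed to form $\sum_{i=1}^d (u_N)_i\,\partial_i (w_N)_\ell$ for each $\ell$; by the smooth-activation quadratic-approximation lemma this block has depth and lift bounded by an absolute constant, with the number of grid points $|\cJ_{2N}| \sim N^d$ entering only through the width. (3) A final $\cF$-layer applies the exact Leray--Fourier projection $\P_N$ of \eqref{eq:lerayn} (multiplication by $1 - k\otimes k/|k|^2$ on modes $0<|k|_\infty\le N$ and projection to degree $\le N$), again exactly.

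The error analysis then reduces to: the only inexact step is (2), and the map from the lifted state to the output is Lipschitz in $L^2_N$ (the exact derivative operators on $L^2_N$ have operator norm $\le N$, but they are applied \emph{before} the multiplication, so on the final stage only the Leray projection — which is a contraction — and the quadratic step matter; more precisely one bounds $\|\P_N(u_N\cdot\nabla w_N) - \cN(u_N,w_N)\|_{L^2_N}$ by the $L^2$-error in approximating the pointwise products, then converts the grid $\ell^2$-error into the $L^2_N$ norm using the exact quadrature identity on the $2N$-grid, i.e. $\|p\|_{L^2}^2 = |\cJ_{2N}|^{-1}\sum_j |p(x_j)|^2$ for $p\in L^2_{2N}$). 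Choosing the accuracy of the quadratic $\sigma$-block small enough — which costs only a larger \emph{absolute} constant in depth/lift, by the quadratic-approximation lemma — makes this total error $\le\epsilon$ for all $u_N,w_N$ with $\|u_N\|_{L^2},\|w_N\|_{L^2}\le B$; here one uses the Sobolev-type inverse inequality $\|u_N\|_{L^\infty}\lesssim N^{d/2}\|u_N\|_{L^2}$ to get an a priori bound $B' = B'(B,N)$ on the point-values entering the quadratic block, so the uniform approximation needs to hold only on the compact cube $[-B',B']$.

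The main obstacle is keeping the \emph{depth} and \emph{lift} bounded by an absolute constant $C$ independent of $N$, $\epsilon$, and $B$ while the point-value bound $B'\sim N^{d/2}B$ grows with $N$. This is handled exactly as in the Darcy case: the quadratic-approximation lemma for smooth $\sigma$ gives an approximation of $(a,b)\mapsto ab$ on $[-B',B']^2$ to any accuracy using a \emph{shallow} (fixed-depth) network whose width depends on $B'$ and the accuracy but whose depth does not — and here we may even absorb the $N$- and $B$-dependence into the width (which is already allowed to be $\lesssim N^d$) so that depth and lift stay $O(1)$. A secondary technical point, already dealt with by the structural lemmas referenced in Section \ref{sec:fnoprop}, is that the composition of lifting, several $\cF$- and $\sigma$-layers, and the projection $\cQ$ is itself expressible as a single $\Psi$-FNO of the form \eqref{eq:form-pfno}; I would invoke those lemmas rather than re-derive the bookkeeping, so that the final count reads $\depth(\cN),\lift(\cN)\le C$ and $\width(\cN)\le C N^d$ as claimed.
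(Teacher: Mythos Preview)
Your proposal is correct and follows essentially the same route as the paper: the paper's proof simply says to mimic the proof of Lemma \ref{lem:darcy-quadratic} with $a_N\to u_N$, invoking Lemma \ref{lem:quadratic}(3) (the matrix-vector product $(v,U)\mapsto v\cdot U$) in place of (2), and replacing the final $P_N$-layer by an exact linear $\cF$-layer implementing the Leray projection $\P_N$. The only cosmetic difference is that you decompose the nonlinearity into $d^2$ scalar products (point (1) of Lemma \ref{lem:quadratic}) rather than a single matrix-vector product; also note that by Lemma \ref{lem:quadratic} the width of the product network is already bounded by an absolute constant $C=C(d)$ independent of $\epsilon$ and $B'$, so there is no need to absorb any $N$- or $B$-dependence into the width of the $\sigma$-block.
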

Thus, from the preceding Lemma, we have that the nonlinearities in algorithm \ref{alg:NS1} can be efficiently approximated by $\Psi$-FNOs. This paves the way for the following theorem on the emulation of the pseudo-spectral algorithm \ref{alg:NS1} by $\Psi$-FNOs,
\begin{theorem} \label{thm:fno-NS1}
Let $U,T>0$ and viscosity $\nu \ge 0$. Consider the Navier-Stokes equations on $\T^d$, for $d\ge 2$. Assume that $r\ge d/2 +2$, and let $\cV \subset C([0,T]; H^r) \cap C^1([0,T];H^{r-2})$ be a set of solutions of the Navier-Stokes equations \eqref{eq:NS}, such that $\sup_{u\in \cV} \Vert u \Vert_{L^2}\le U$, and 
\[
\bar{U}:= \sup_{u\in \cV} \left\{
\Vert u \Vert_{C_t(H^r_x)} + \Vert u \Vert_{C^1_t(H^{r-2}_x)}
\right\}
< \infty.
\]
For $t\in [0,T]$, denote $\cV_t := \set{u(t)}{u\in \cV}$. Let $\G: \cV_0 \to \cV_T$ denote the solution operator of \eqref{eq:NS}, mapping initial data $u_0 = u(t=0)$, to the solution $u(T)$ at $t=T$ of the incompressible Navier-Stokes equations. There exists a constant 
\[
C = C(d,r,U,\bar{U},T) > 0,
\]
such that for $N\in \N$ there exists a $\Psi$-FNO $\cN: L^2_N(\T^d;\R^d) \to L^2_N(\T^d;\R^d)$, such that 
\[
\sup_{u\in \cV_0}
\Vert \G(u) - \cN(u) \Vert_{L^2}
\le
CN^{-r},
\]
and such that 
\[
\width(\cN) \le C N^d, 
\quad
\depth(\cN) \le C N^r \log(N),
\quad
\lift(\cN) \le C.
\]
\end{theorem}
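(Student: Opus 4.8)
The strategy is to \emph{emulate} the pseudo-spectral fixed-point Algorithm \ref{alg:NS1} by a $\Psi$-FNO, and then to combine the resulting emulation error with the discretization error furnished by Theorem \ref{thm:scheme1}. Throughout I fix the time step $\dt = (2eU)^{-1}N^{-r}$, which satisfies the CFL condition $\dt\,U\,N^{d/2+1}\le(2e)^{-1}$ (since $r\ge d/2+2$) and for which Theorem \ref{thm:scheme1} applies: Algorithm \ref{alg:NS1} then runs for $n_T=T/\dt\sim N^r$ time steps, performs $\kappa_0=\lceil\log(T^2/\dt^2)/\log 2\rceil\lesssim\log N$ Picard iterations per step, and, starting from $u^0_N:=\cI_N u_0$, produces an output $u^{n_T}_N$ with $\Vert u^{n_T}_N-u(T)\Vert_{L^2}\le CN^{-r}$ uniformly over $u\in\cV$ (finiteness of $\bar U$ being used to make the constant $u$-independent). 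Note that the initial projection $u\mapsto\cI_N u$ is already built into the $\Psi$-FNO architecture of Definition \ref{def:pfno}, so no extra input error appears.

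The building block is the emulation of a single Picard iteration, i.e. the map $w\mapsto(1-\nu\dt\Delta)^{-1}u^n_N-\dt(1-\nu\dt\Delta)^{-1}\P_N(u^n_N\cdot\nabla w)$. I would carry the frozen field $u^n_N$ and the current iterate $w$ as a bounded number of channels of a $\Psi$-FNO operating at the de-aliased resolution $2N$ (hence $\lift\le C$ and $\width\le CN^d$). The linear operations — the gradient $\nabla$, the resolvent $(1-\nu\dt\Delta)^{-1}$, and the Leray projection $\P_N$ — are all diagonal in the Fourier basis and map $L^2_{2N}$ into itself, so each is realized \emph{exactly} by a single $\cF$-layer. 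The one genuinely nonlinear operation, $(u_N,w)\mapsto\P_N(u_N\cdot\nabla w)$, is approximated to accuracy $\epsilon_0$ by the $\Psi$-FNO of Lemma \ref{lem:fno-NSnonlin}, whose depth and lift are bounded by an absolute constant and whose width is $\le CN^d$, \emph{independently of} $\epsilon_0$. The remaining pointwise affine recombination, and the (approximate) identity that carries the frozen channel $u^n_N$ through unchanged, are assembled from $\sigma$- and $\cF$-layers using the structural lemmas of Appendix \ref{app:fnoprop}. Thus one Picard iteration is emulated by a $\Psi$-FNO with $O(1)$ layers, $\width\le CN^d$, $\lift\le C$, and emulation error at most a fixed multiple of $\epsilon_0$.

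Composing $\kappa_0\lesssim\log N$ of these blocks (keeping $u^n_N$ frozen, and finally copying $w^{n,\kappa_0}_N$ into both the frozen slot and the reset iterate) emulates one time step $u^n_N\mapsto u^{n+1}_N$ with $O(\log N)$ layers; composing $n_T\sim N^r$ of these, preceded by the built-in $\cI_N$, yields the $\Psi$-FNO $\cN:L^2_N(\T^d;\R^d)\to L^2_N(\T^d;\R^d)$ — again a $\Psi$-FNO by the composition lemmas of Appendix \ref{app:fnoprop} — with $\depth(\cN)\lesssim n_T\kappa_0\lesssim N^r\log N$, $\width(\cN)\lesssim N^d$, $\lift(\cN)\lesssim 1$, which already matches the asserted complexity bounds (small $N$ being absorbed into $C$).

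The crux — the step I expect to require the most care — is the error-accumulation estimate showing that a sufficiently small choice $\epsilon_0\sim N^{-2r}$ yields $\Vert\cN(u)-u^{n_T}_N\Vert_{L^2}\le CN^{-r}$. Three ingredients enter. First, by the CFL condition of Algorithm \ref{alg:NS1} together with the inverse estimate $\Vert v\Vert_{L^\infty}\lesssim N^{d/2}\Vert v\Vert_{L^2}$ on $L^2_N$, the Picard map is a contraction with factor $\theta\le(2e)^{-1}$ \emph{uniformly in} $N$ (Lemma \ref{lem:iter1}); hence the emulation error produced within one time step, being a geometric sum over the $\kappa_0$ contracted Picard sub-steps, is at most $(1-\theta)^{-1}\epsilon_0\le C\epsilon_0$ — crucially \emph{not} $\kappa_0\epsilon_0$. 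Second, the one-step solution map $u^n_N\mapsto u^{n+1}_N$ is Lipschitz in $L^2$ with constant $1+C\dt$ uniformly in $N$; this is the $L^2$-stability of the scheme established in the proof of Theorem \ref{thm:scheme1} (an energy estimate exploiting the approximate skew-symmetry of the discrete convection term), applied to the difference of two runs, so composing over $n_T$ steps costs only an amplification $(1+C\dt)^{n_T}\le e^{CT}$. Third, a bootstrap/continuation argument keeps every emulated iterate in a fixed ball $\Vert\tilde u^n_N\Vert_{L^2}\le B:=U+1$, so that Lemma \ref{lem:fno-NSnonlin} applies throughout with a single $B$ (one runs the induction ``emulation error $\le\delta_n\le 1$ and $\Vert\tilde u^n_N\Vert_{L^2}\le B$'' simultaneously, using that the total error stays $\lesssim N^{-r}\le 1$ for large $N$). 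By the discrete Grönwall inequality, the accumulated emulation error is then $\le C\,n_T\,\epsilon_0\,e^{CT}\sim N^r\epsilon_0\lesssim N^{-r}$; adding $\Vert u^{n_T}_N-u(T)\Vert_{L^2}\le CN^{-r}$ from Theorem \ref{thm:scheme1} gives $\sup_{u\in\cV_0}\Vert\cN(u)-\G(u)\Vert_{L^2}\le CN^{-r}$. The delicate point throughout is that \emph{every} constant in play — the contraction factor $\theta$, the per-step Lipschitz constant, the lift, and the width bound $\width\le CN^d$ from Lemma \ref{lem:fno-NSnonlin} — must be uniform in $N$, which is exactly what the scaling $\dt\sim N^{-r}$ and the regularity hypothesis $r\ge d/2+2$ are there to guarantee.
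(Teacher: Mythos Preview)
Your strategy—emulate Algorithm \ref{alg:NS1} layer by layer, using Lemma \ref{lem:fno-NSnonlin} for the quadratic nonlinearity and exact $\cF$-layers for the linear Fourier multipliers $(1-\nu\dt\Delta)^{-1}$ and $\P_N$—matches the paper's, and your complexity count (width $\lesssim N^d$, depth $\lesssim N^r\log N$, lift $\lesssim 1$) is correct.

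Where you diverge is in the error-propagation step, and there the paper's proof is considerably shorter: it simply invokes the Replacement Lemma (Lemma \ref{lem:replacement}). The point of that lemma is that each building block $\hN_k$ admits a $\Psi$-FNO approximation to \emph{arbitrary} accuracy on \emph{any} $L^2$-ball, with depth/width/lift bounds \emph{independent of} both the accuracy and the ball radius—precisely the content of Lemmas \ref{lem:fno-NSnonlin} and \ref{lem:linear}. Under these hypotheses no quantitative control on the Lipschitz constants of the composed pieces is needed for the complexity conclusion: one may demand each block to be $2^{-N^r}$-accurate if one likes, at no cost in size. Your entire Gr\"onwall/bootstrap machinery is thereby bypassed.

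Your explicit argument has a genuine gap at the step you yourself flag as delicate. The claim that the one-step map $u^n_N\mapsto u^{n+1}_N$ is $L^2$-Lipschitz with constant $1+C\dt$ \emph{uniformly in $N$} does not follow from the proof of Theorem \ref{thm:scheme1}. That proof compares $u^n_N$ to the \emph{exact} solution and crucially uses $\Vert\nabla u(t)\Vert_{L^\infty}\lesssim\bar U$; for the difference of two discrete runs the analogous term after testing is $\langle e^n\cdot\nabla u^{n+1}_N,e^{n+1}\rangle$, which would require $\Vert\nabla u^{n+1}_N\Vert_{L^\infty}$ bounded independently of $N$. No such bound is established anywhere (only $\Vert u^n_N\Vert_{L^2}\le eU$, cf.\ Remark \ref{rem:bound1}); the CFL condition yields only $\dt\Vert\nabla u^{n+1}_N\Vert_{L^\infty}=O(1)$, hence an $O(1)$ per-step Lipschitz constant, which over $n_T\sim N^r$ steps compounds catastrophically. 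Two clean fixes: (i) invoke Lemma \ref{lem:replacement} directly, as the paper does; or (ii) rerun the stability proof of Theorem \ref{thm:scheme1} for the \emph{emulated} scheme against the \emph{exact} solution $u(t)$, treating the per-step emulation error as an additional consistency term $\cE^n$—then the relevant gradient is again $\nabla u(t)$, the Gr\"onwall constant is governed by $\bar U$, and your choice $\epsilon_0\sim N^{-2r}$ goes through.
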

The proof of this theorem is provided in Appendix \ref{app:NS1}. 
\begin{remark}
\label{rem:NScomp}
It is straightforward to observe from Theorem \ref{thm:fno-NS1} that the size of a $\Psi$-FNO to achieve a desired error tolerance of $\epsilon > 0$, scales (neglecting $\log$-terms) as
\begin{equation}
\label{eq:NScomp}
\size(\cN) \le C\epsilon^{-\left(1+\frac{d}{r}\right)},
\end{equation}
Given that we need $r \geq d/2+2$, we observe from \eqref{eq:NScomp} that the size of the $\Psi$-FNO, approximating the initial data to solution operator $\G$, for the Navier-Stokes equations \eqref{eq:NS}, scales at most \emph{sub-quadratically} with respect to the error tolerance $\epsilon$ for the physically relevant values $d=2,3$. This polynomial scaling should be compared with the super-exponential growth (see Remark \ref{rem:comp}) of the size of FNOs in approximating a generic Lipschitz-continuous operator. Thus, we are able to demonstrate that $\Psi$-FNOs can approximate the solutions of Navier-Stokes equations far more efficiently than what the universal approximation theorem \ref{thm:pfno-universal} suggests. 
\end{remark}
\begin{remark}
\label{rem:scheme2}
From the convergence theorem \ref{thm:scheme1}, we observe that the underlying scheme \eqref{eq:scheme1} is \emph{first-order} in time. This low accuracy of the scheme necessitates a large number of time steps and affects the overall complexity. We describe a second-order accurate time discretized version of the pseudo-spectral method for approximating the Navier-Stokes equations \eqref{eq:NS} in Appendix \ref{app:scheme2} and in complete analogy with Theorem \ref{thm:fno-NS1} (see Theorem \ref{thm:fno-NS2} in Appendix \ref{app:scheme2}), we can construct a $\Psi$-FNO to emulate this second-order in time pseudo-spectral scheme, resulting in a $\Psi$-FNO of
\begin{equation}
\label{eq:NScomp2}
\size(\cN) \le C\epsilon^{-\left(\frac{1}{2}+\frac{d}{r}\right)},
\end{equation}
to obtain a desired accuracy of $\epsilon$. Thus, we can obtain a more efficient approximation of the underlying operator than $\Psi$-FNO emulating the first-order time scheme \eqref{eq:scheme1}. In particular for $r \geq 2d$, we obtain that the size of a $\Psi$-FNO only grows sub-linearly in terms of the desired accuracy, making this FNO approximation comparable in complexity to the FNO approximation of the Darcy equation (see \eqref{eq:darcycomp}). 
\end{remark}
\section{Comparison of FNOs with DeepOnets}
\label{sec:4}
In this section, we will compare $\Psi$-FNOs with another operator learning framework, namely DeepOnets of \cite{ChenChen,deeponets}, defined as,
\begin{definition} \label{def:deeponet}
Fix $m,p\in \N$. A \define{DeepOnet} $\cN$ with output dimension $p$ and \define{sensor points} $x_1,\dots, x_m\in \T^d$ is a mapping $\cN: C(\T^d;\R^{d_a}) \to C(\T^d;\R^{d_v})$ of the form 
\[
\cN(a)(x) 
=
\sum_{k=1}^p \beta_k(a(x_1),\dots, a(x_p)) \tau_k(x),
\]
where $\beta: \R^{m\times d_a} \to \R^{p\times d_u}$, $\alpha \mapsto \beta(\alpha) = (\beta_1(\alpha),\dots, \beta_p(\alpha))$, and $\tau: \R^d \mapsto \R^{p}$, $x\mapsto \tau(x) = (\tau_1(x),\dots,\tau_p(x))$ are (ordinary) neural networks. We refer to $\beta$ and $\tau$ as the \define{branch} and \define{trunk} nets, respectively.
\end{definition}
In the following theorem (proved in Appendix \ref{app:E}), we show that a $\Psi$-FNO can naturally be viewed as a DeepOnet with a specific choice of the branch and trunk-nets; where the trunk net is fixed to represent a trigonometric basis, and the branch net is constrained by a specific choice of hidden layer architecture, which provides a more parsimonious representation compared to a canonical DeepOnet implementation based on dense layers.
\begin{theorem}[DeepOnet approximation of $\Psi$-FNOs]
\label{thm:deeponet}
Let $\cN: L^2_N(\T^d;\R^{d_a}) \to L^2_N(\T^d;\R^{d_u})$ be a $\Psi$-FNO, and fix $B>0$. For any $\epsilon > 0$, there exists $p\in \N$ and a DeepOnet $(\beta,\tau)$, with branch net $\beta$, trunk net $\tau$, and sensor points $\{x_j\}_{j\in \cJ_N}$, such that 
\[
\sup_{\Vert a\Vert_{L^\infty}\le B} 
\sup_{y \in \T^d}
\left|
\cN(a)(y) - \sum_{k=1}^p \beta_k(a) \tau_k(y)
\right|
\le \epsilon,
\]
where the first supremum is taken over all $a\in C(\T^d)$, such that $\Vert a \Vert_{L^\infty} \le B$.
Furthermore, we have 
\[
\width(\beta) = \width(\cN), 
\quad
\depth(\beta) = \depth(\cN),
\]
and the trunk net $\tau$ defines a mapping $\tau: \R^{d} \to \R^{\cK_N}$, which approximates an (arbitrary) orthonormal trigonometric basis $\{\fb_k\}_{k\in \cK_N}$ with $\Span\{\fb_k\}_{k\in \cK_N} = \Span\{e^{i\langle k, x\rangle}\}_{k\in \cK_N}$, such that 
\[
\max_{k\in \cK_N} \Vert \fb_k - \tau_k \Vert_{L^\infty} \le \epsilon / \bar{B},
\]
where
\[
\bar{B} := 
(2N+1)^d
\left(
\sup_{\Vert a \Vert_{L^\infty}\le B} \Vert \cN(a) \Vert_{L^2}
\right).
\]
\end{theorem}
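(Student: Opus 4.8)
The plan is to exploit the fact that, by construction (Definition \ref{def:pfno}), every output $\cN(a)$ of a $\Psi$-FNO is a trigonometric polynomial of degree $\le N$, so that $\cN(a)\in L^2_N(\T^d;\R^{d_u})$ and the whole operator factors through the finite-dimensional vector of grid values $\{a(x_j)\}_{j\in\cJ_N}$. First I would fix, once and for all, a real orthonormal trigonometric basis $\{\fb_k\}_{k\in\cK_N}$ of $L^2_N(\T^d)$ with $\Span\{\fb_k\}_{k\in\cK_N}=\Span\{e^{i\langle k,x\rangle}\}_{k\in\cK_N}$ (e.g.\ suitably normalised sines and cosines) and write $\cN(a)=\sum_{k\in\cK_N}c_k(a)\,\fb_k$, where $c_k(a):=\langle\cN(a),\fb_k\rangle_{L^2}\in\R^{d_u}$. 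Since a degree-$\le N$ trigonometric polynomial is uniquely determined by its values on the grid $\{x_j\}_{j\in\cJ_N}$, the assignment $\{\cN(a)(x_j)\}_j\mapsto\{c_k(a)\}_k$ is a fixed \emph{real-linear} isomorphism $C:\R^{d_u\times\cJ_N}\to\R^{d_u\times\cK_N}$ (a discrete Fourier transform composed with the change of basis to $\{\fb_k\}$). Combining this with the identity $\hN(\{a(x_j)\}_{j\in\cJ_N})_j=\cN(a)(x_j)$ recorded after Definition \ref{def:pfno}, I obtain the \emph{exact} representation $c_k(a)=(C\circ\hN)(\{a(x_j)\}_{j\in\cJ_N})_k$.

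With $p:=|\cK_N|=(2N+1)^d$ and sensor points $\{x_j\}_{j\in\cJ_N}$, I would then take the branch net to be $\beta:=C\circ\hN$. The key observation here is that $\hN$ is itself an ordinary feedforward network: each hidden layer $\hL_\ell$ in \eqref{eq:hL} has the form $\bm{v}\mapsto\sigma(A_\ell\bm{v}+\bm{b}_\ell)$, where $A_\ell$ is the sum of the block-diagonal pointwise action $\bm{v}\mapsto\{W_\ell v_j\}_j$ and the discrete convolution $\bm{v}\mapsto\cF_N^{-1}\!\big(P_\ell(k)\cdot\cF_N(\bm{v})(k)\big)$; the latter is a genuine real-linear map on $\R^{d_v|\cJ_N|}$ thanks to the Hermitian condition $P_\ell(-k)=P_\ell(k)^\dagger$, hence representable by a real matrix. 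Thus $\hL_\ell$ is a dense layer of width $d_v|\cJ_N|=\width(\cN)$, while the affine lifting $\hR$, the affine projection $\hQ$, and the linear map $C$ all fold into the first and last affine layers without introducing new non-linearities. This gives $\width(\beta)=\width(\cN)$, $\depth(\beta)=\depth(\cN)$, and $\beta_k(a)=c_k(a)$ for all $a$.

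For the trunk net, each $\fb_k$ is a $C^\infty$ function on $\T^d$, so by the classical universal approximation theorem for finite-dimensional networks \cite{BAR1,HOR1} I would choose $\tau:\R^d\to\R^{\cK_N}$ with $\max_{k\in\cK_N}\Vert\fb_k-\tau_k\Vert_{L^\infty(\T^d)}\le\delta:=\epsilon/\bar B$; here $\bar B<\infty$ because the map $\{a(x_j)\}_{j\in\cJ_N}\mapsto\cN(a)\in L^2_N$ is continuous and $[-B,B]^{d_a\times\cJ_N}$ is compact. Then for $y\in\T^d$ and $\Vert a\Vert_{L^\infty}\le B$ one has $\cN(a)(y)-\sum_{k\in\cK_N}\beta_k(a)\tau_k(y)=\sum_{k\in\cK_N}c_k(a)\big(\fb_k(y)-\tau_k(y)\big)$, and using $|c_k(a)|=|\langle\cN(a),\fb_k\rangle_{L^2}|\le\Vert\cN(a)\Vert_{L^2}$ (Cauchy--Schwarz, $\Vert\fb_k\Vert_{L^2}=1$) together with $|\cK_N|=(2N+1)^d$ and the definition of $\bar B$, I would bound
\[
\Big|\cN(a)(y)-\sum_{k\in\cK_N}\beta_k(a)\tau_k(y)\Big|\le(2N+1)^d\,\Vert\cN(a)\Vert_{L^2}\,\delta\le\bar B\,\delta=\epsilon .
\]
Taking the supremum over $y$ and over $\Vert a\Vert_{L^\infty}\le B$ yields the stated estimate.

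The parts involving the basis expansion and the error aggregation are routine; the step I expect to require the most care is the identification of $\hN$ with a dense feedforward network of exactly width $\width(\cN)$ and depth $\depth(\cN)$ — i.e.\ verifying that the non-local $\cF$-layers of the $\Psi$-FNO literally become dense weight matrices once one works in the grid representation $\R^{d_v|\cJ_N|}$. Getting this precisely right (and not merely up to constants) is what makes the structural identities $\width(\beta)=\width(\cN)$, $\depth(\beta)=\depth(\cN)$ hold on the nose, and it relies only on the linearity of the discrete (inverse) Fourier transform.
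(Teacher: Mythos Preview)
Your proposal is correct and follows essentially the same approach as the paper: represent the $\Psi$-FNO as an ordinary feedforward network on the grid values (observing that each layer \eqref{eq:hL} is an affine map followed by componentwise $\sigma$), absorb a linear change-of-basis into the output layer to obtain the branch net $\beta$ with $\width(\beta)=\width(\cN)$ and $\depth(\beta)=\depth(\cN)$, approximate the real trigonometric basis $\{\fb_k\}$ by a trunk net via universal approximation, and bound the error using $|\beta_k(a)|\le\Vert\cN(a)\Vert_{L^2}$. Your write-up is in fact slightly more explicit than the paper's on the point you flagged as requiring care---that the Hermitian condition $P_\ell(-k)=P_\ell(k)^\dagger$ makes the discrete convolution a genuine real-linear map---which is exactly what underlies the paper's one-line assertion that each layer is ``an affine mapping $\R^{d_v\times\cJ_N}\to\R^{d_v\times\cJ_N}$''.
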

\begin{remark}
We note that due to the particular architecture of $\Psi-$Fourier neural operators, the total size of a $\Psi$-FNO $\cN$ is upper bounded by 
\[
\size(\cN) \lesssim \depth(\cN) \width(\cN) \lift(\cN).
\]
 In contrast, since $\beta$ and $\tau$ in the DeepOnet approximation of Theorem \ref{thm:deeponet} are conventional deep neural networks, the total number of degrees of freedom (i.e. the total number of weights and biases) is given, in general, by 
\[
\size(\beta) 
\sim \depth(\beta) \width(\beta)^2
=
\depth(\cN) \width(\cN)^2,
\]
and $\size(\tau) \sim \depth(\tau) \width(\tau)^2$. So that in particular, comparing the sizes of the $\Psi$-FNO and the corresponding (fully connected) DeepOnet approximation, we have
\[
\frac{\size(\cN)}{\size(\beta,\tau)}
\lesssim
\frac{\lift(\cN)}{\width(\cN)}.
\]
Based on our explicit complexity estimates \eqref{eq:darcycomp} and \eqref{eq:NScomp}, we can expect that $\lift(\cN) \ll \width(\cN)$ for a wide range of problems which can be efficiently approximated by FNOs. In particular, this indicates that the $\Psi$-FNO provides a more parsimonious approximation of the underlying operator than the DeepOnet ``emulator'' constructed in Theorem \ref{thm:deeponet}, with $\size(\cN) \ll \size(\beta,\tau)$.
\end{remark}
\section{Summary and Discussion.}
\label{sec:5}
Many learning tasks, particularly, but not exclusively, in scientific computing, are naturally formulated as learning operators mapping one infinite-dimensional space to another. Neural operators have recently been proposed as a framework for operator learning. A particular form, the so-called Fourier Neural Operators (FNOs) \eqref{eq:fno}, have been shown to be efficient in approximating a wide variety of operators that arise in PDEs \cite{fourierop2020}. Our main aim in this paper was to analyze FNOs and $\Psi$-FNOs \eqref{eq:form-pfno}, which is a concrete computational realization of FNOs. To this end, we have presented the following results,
\begin{itemize}
    \item We showed in Theorem \ref{thm:universal} and Theorem \ref{thm:pfno-universal} that FNOs (resp. $\Psi$-FNOs) are \emph{universal} i.e., they can approximate any continuous operator to desired accuracy. Our proof relies heavily on the ability of FNOs to approximate the Fourier transform and its inverse, together with the neural network approximation of the finite-dimensional Fourier conjugate operator \eqref{eq:Fconj}. Thus, FNOs have the same universal approximation property as canonical neural networks for finite-dimensional functions and DeepOnets for operators \cite{LMK2021}. This universality result paves the way for the widespread use of FNOs in the context of operator learning.
    \item However as stated in remark \ref{rem:comp}, in the worst case, the size of a FNO can grow \emph{super-exponentially} in terms of the desired error for approximating a general Lipschitz continuous operator. This might inhibit the use of FNOs. On the other hand, we argue in the beginning of section \ref{sec:3} that $\Psi$-FNOs, which are a concrete computational realization of FNOs, can approximate the nonlinearities and differential operators that define PDEs, very efficiently. Hence, one can think of $\Psi$-FNOs as a new form of pseudo-spectral methods for PDEs, which in practice are adapted to, and optimized based on the given training data. Thus, one can expect that $\Psi$-FNOs can approximate PDEs efficiently. 
    \item We consider two widely used prototypical PDEs, namely the elliptic PDE \eqref{eq:darcy} that arises in a stationary Darcy flow and the well-known incompressible Navier-Stokes equations for fluid dynamics. For both these PDEs, we prove rigorously that there exists a $\Psi$-FNO which can approximate the underlying nonlinear operators efficiently, as we can show that the size of the $\Psi$-FNO only needs to grow polynomially in terms of the error. In fact, we show that the size grows sub-linearly in terms of the error. Thus, FNOs can approximate these widely used PDEs efficiently, corroborating the empirical results presented in \cite{fourierop2020}. 
\end{itemize}
Hence, our analysis provides very strong theoretical evidence that FNOs are an effective framework for operator learning. Moreover, we also compare FNOs to an alternative operator learning framework, that of DeepOnets \cite{deeponets} and show that $\Psi$-FNOs can be thought of a special case of DeepOnets with a trunk net approximating trigonometric functions and sensor points being equi-distributed Cartesian grid points. Given its special architecture, we argue that a $\Psi$-FNO can allow for a more parsimonious representation of operators than a DeepOnet, enabling a cheaper approximation of certain operators. 

The comparison with DeepOnets also brings out some obvious limitations of FNOs. In particular, FNOs are efficient on rectangular domains as the $\Psi$-FNO can be evaluated efficiently with FFT. Although one can use FNOs for operators defined on arbitrary domains using suitable extension and restriction operators (see Theorem \ref{thm:universal_extend}), it is unclear if these operators can be realized computationally in an efficient manner. Moreover, FNOs fix the trigonometric basis as the trunk net in the underlying DeepOnet (see Theorem \ref{thm:deeponet}). On the other hand, a general DeepOnet can learn trunk nets from the data during training, allowing the possibility of learning a more suitable representation from data. These considerations call for a more thorough computational comparison between DeepOnets and FNOs, and possibly other operator learning frameworks such as the one from \cite{bhattacharya2020model}.

Similarly, extending the analysis of this article to other neural operators can be readily envisaged. The use of FNOs for more general operators, particularly those arising in non-scientific computing settings, such as images, text and speech, also needs to be investigated.

\bibliographystyle{siam}
\bibliography{bibliography}

\appendix
\section{Glossary of mathematical notation}
\label{sec:glos}

\begin{center}

\begin{tabular}{l p{0.7\textwidth}  r}
\textbf{Symbol} & \textbf{Description} & \textbf{Page}
\\
\hline
$\sigma$ & activation function & \\
$\T^d$ & periodic torus, identified with $[0,2\pi]^d$ & \\
$d$ & spatial dimension of domain & p.~\pageref{setting} \\
$d_a$, $d_u$, $d_v$ & number of components of input, output and lifting & p.~\pageref{setting} \\
$\cA(D;\R^{d_a})$ & input function space & p.~\pageref{setting} \\
$\cU(D;\R^{d_u})$ & output function space & p.~\pageref{setting} \\
$\cF$, $\cF^{-1}$ & Fourier transform and inverse Fourier transform & p.~\pageref{eq:ft} \\
$\cF_N$, $\cF_N^{-1}$ & discrete Fourier transform and inverse & p.~\pageref{eq:dft} \\
$\{x_j\}_{j\in \cJ_N}$ & regular periodic grid, $x_j = 2\pi j/(2N+1)$ & p.~\pageref{eq:grid} \\
$\cJ_N$ & grid point indices, $\cJ_N = \{0,\dots, 2N\}^d$ & p.~\pageref{eq:index} \\
$\cK_N$ & Fourier wavenumbers $\cK_N = \set{k\in \Z^d}{|k|_\infty \le N}$ & \\
$\cR$ & lifting operator & p.~\pageref{eq:no-r} \\
$\cL_\ell$ & neural operator layer & p.~\pageref{eq:fno-layer} \\
$\cQ$ & projection operator & p.~\pageref{eq:no-q} \\
$\cF$-layer & linear, non-local layer; $v(x)\mapsto \cF^{-1}(P\cF v)(x)$ & p.~\pageref{sec:fnoprop} \\
$\sigma$-layer & non-linear, local layer; $v(x) \mapsto \sigma(Wv(x) + b(x))$ & p.~\pageref{sec:fnoprop} \\[5pt]
\multicolumn{3}{c}{\textbf{Banach spaces}} \\
\hline && \\
$L^2$ & Space of square-integrable functions &  \\
$\dot{L}^2$ & $\dot{L}^2 \subset L^2$ square-integrable functions with zero mean & p.~\pageref{eq:proj} \\
$L^2_N$ & $L^2_N\subset L^2$ trigonometric polynomials of degree $\le N$ & p.~\pageref{eq:pN} \\
$\dot{L}^2_N$ & $\dot{L}^2_N = \dot{L}^2\cap L^2_N$ trigonometric polynomials  with zero mean & p.~\pageref{eq:dotHs} \\
$H^s$ & Sobolev space of smoothness $s$, with norm $\Vert \slot \Vert_{H^s}$ & p.~\pageref{eq:Hs} \\
$\dot{H}^s$ & Sobolev space with zero mean, with norm $\Vert \slot \Vert_{\dot{H}^s}$ & p.~\pageref{eq:dotHs} \\[5pt]
\multicolumn{3}{c}{\textbf{Projection operators}} \\
\hline && \\
$P_N$ & $L^2$-orthogonal Fourier projection $P_N: L^2 \to L^2_N$ & p.~\pageref{eq:proj} \\
$\dot{P}_N$ & Fourier projection $\dot{P}_N: {L}^2 \to \dot{L}^2_N$ with zero mean & p.~\pageref{eq:dotproj} \\
$\cI_N$ & Pseudo-spectral Fourier projection, i.e. trigonometric interpolation on regular grid $\{x_j\}_{j\in\cJ_N}$  & p.~\pageref{eq:pproj} \\
$\mathbb{P}$ & Leray projection onto divergence-free vector fields & p.~\pageref{eq:leray} \\
$\mathbb{P}_N$ & Leray projection followed by projection $P_N$; $\mathbb{P}_N = P_N \circ \mathbb{P}$ & p.~\pageref{eq:lerayn}
\end{tabular}

\end{center}

\section{Notation and Technical Preliminaries.}
\label{app:notn}
In this section, we introduce frequently used notation in the article main text and recall some essential facts about Fourier analysis. 

In the main text, our focus is functions defined on the periodic torus $\T^d$, identified as $\T^d = [0,2\pi]^d$. Following standard practice, we  denote by $L^2(\T^d)$ the space of square-integrable functions. For any such function $v \in L^2(\T^d)$, we can define the \emph{Fourier Transform} as, 
\begin{equation}
    \label{eq:ft}
\cF(v)(k)
:=
\frac1{(2\pi)^d}
\int_{\T^d}
v(x) e^{-i\langle k, x\rangle} \, dx,
\quad
\forall \, k\in \Z^d.
\end{equation}
For any $k\in \Z^d$, the $k$-th Fourier coefficient of $v$ is denoted by $\hat{v}_k =\cF(v)(k)$. 

Given a set of Fourier coefficients $\{\hat{v}_k\}_{k \in \Z^d}$, the \emph{inverse Fourier Transform} is defined as, 
\begin{equation}
    \label{eq:ift}
    \cF^{-1}(\hat{v})(x)
:=
\sum_{k\in \Z^d} \hat{v}_k e^{i \langle k, x\rangle},
\quad
\forall \, x\in \T^d.
\end{equation}
Using the Fourier transform \eqref{eq:ft} and for $s \ge 0$, one can denote by $H^s(\T^d)$ the \define{Sobolev space} of functions $v\in L^2(\T^d)$, with Fourier coefficients $\{\hat{v}_{k}\}_{k\in \Z^d}$,  having a finite $H^s$-norm:
\begin{align} \label{eq:Hs}
\Vert
v
\Vert_{H^s}^2
:=
\frac{(2\pi)^{d}}{2} \sum_{k\in \Z^d} (1+|k|^{2s}) |\hat{v}_k|^2 < \infty.
\end{align}
Note that with this definition, we have from Parseval's identity, that $\Vert v \Vert_{H^0} = \Vert v \Vert_{L^2}$, so that $H^0(\T^d) = L^2(\T^d)$.

 We also introduce the corresponding \define{homogeneous Sobolev spaces} $\dot{H}^s(\T^d)$ (and $\dot{L}^2(\T^d) := \dot{H}^0(\T^d)$), consisting of functions $v(x)\in H^s(\T^d)$ and with zero mean $\fint_{\T^d} v(x) \, dx = \hat{v}_0 = 0$, and with norm
\begin{align} \label{eq:dotHs}
\Vert 
v
\Vert_{\dot{H}^s}
:=
\left(
(2\pi)^{d}
\sum_{k\in \Z^d\setminus \{0\}}
|k|^{2s} |\hat{v}_k|^2
\right)^{1/2}.
\end{align}

Given $N\in \N$, throughout this work, we will denote by $L^2_N(\T^d)$, the space of trigonometric polynomials $v_N: \T^d \to \R$, of the form
\begin{align} \label{eq:pN}
v_N(x)
=
\sum_{|k|_\infty\le N} c_k e^{i\langle x, k\rangle},
\end{align}
where the summation is over all $k=(k_1,\dots, k_d)\in \Z^d$ such that 
\[
|k|_{\infty}:= \max_{i=1,\dots, d} |k_i| \le N.
\]
The space $L^2_N(\T^d)$ is viewed as a normed vector space with norm $\Vert \slot \Vert_{L^2}$. Similarly, for $s\ge 0$, we denote by $H^s_N(\T^d)$ the normed vector space of trigonometric polynomials $v_N$ of degree $\le N$, with norm $\Vert \slot \Vert_{H^s}$.

We note that in order to ensure that $v_N(x) \in \R$ is real-valued for all $x\in \T^d$, the coefficients $c_k\in \C$ must satisfy the relations $c_{-k} = \overline{c_k}$ for all $|k|_\infty \le N$, and where $\overline{c_k}$ denotes the complex conjugate of $c_k$.

We denote by 
\begin{align}\label{eq:proj}
P_N: L^2(\T^d) \to L^2_N(\T^d),
\quad v \mapsto P_Nv,
\end{align}
the \define{$L^2$-orthogonal projection onto $L^2_N(\T^d)$}; or more explicitly,
\[
P_N
\left(
\sum_{k\in \Z^d} c_k e^{i\langle k, x\rangle}
\right)
=
\sum_{|k|_\infty \le N} c_k e^{i\langle k, x\rangle},
\quad
\forall \, (c_k)_{k\in \Z^d} \in \ell^2(\Z^d).
\]
In fact, the mapping $P_N$ defines a projection $H^s(\T^d) \to H^s_N(\T^d)$ for any $s\ge 0$. We have the following spectral approximation estimate: Let $s>0$ be given. There exists a constant $C = C(s,d)>0$, such that for any $v\in H^s(\T^d)$, we have
\begin{align}\label{eq:spectral}
\Vert v - P_N v \Vert_{H^\varsigma}
\le C N^{-(s-\varsigma)} \Vert v \Vert_{H^s}, 
\quad
\text{for any $\varsigma \in [0,s]$.}
\end{align}

We also define a natural projection 
\begin{align} \label{eq:dotproj}
\dot{P}_N: L^2(\T^d) \to \dot{L}^2_N(\T^d),
\end{align}
by removing the mean, i.e. $\dot{P}_Nv = P_N v - \fint_{\T^d} v(x) \, dx$, or equivalently:
\[
\dot{P}_N
\left(
\sum_{k\in \Z^d} c_k e^{i\langle k, x\rangle}
\right)
=
\sum_{0 < |k|_\infty \le N} c_k e^{i\langle k, x\rangle},
\quad
\forall \, (c_k)_{k\in \Z^d} \in \ell^2(\Z^d).
\]

Furthermore, we denote by by 
\begin{align} \label{eq:pproj}
\cI_N: C(\T^d) \mapsto L^2_N(\T^d),
\quad
u \mapsto \cI_Nu,
\end{align}
the \define{pseudo-spectral projection} onto $L^2_N(\T^d)$; we recall that the pseudo-spectral projection $\cI_Nv$ of a continuous function $v$ is defined as the unique trigonometric polynomial $\cI_Nv \in L^2_N(\T^d)$, such that 
\begin{align}
\label{eq:IN}
\cI_Nv(x_j) = v(x_j), 
\quad
\forall \, j \in \cJ_N,
\end{align}
where $\{x_j\}_{j\in \cJ_N}$ denotes the set of all regular grid points $x_j \in \Z^d$ of the form $x_j = 2\pi j/(2N+1) \in \T^d$, $j\in\Z^d$ (cp. equation \eqref{eq:grid}). 

We also recall the following embedding theorem for the Sobolev spaces $H^s(\T^d)$:
\begin{theorem} [Sobolev embedding] \label{thm:sobolev}
Let $d\in \N$. For any $s> d/2$, we have a compact embedding $H^s(\T^d) \embeds C(\T^d)$ into the space of continuous functions. In particular, there exists a constant $C = C(s,d)>0$, such that
\[
\Vert v \Vert_{L^\infty} \le C \Vert v \Vert_{H^s}, \quad \forall \, v\in H^s(\T^d).
\]
\end{theorem}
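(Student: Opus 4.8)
The statement is the classical Sobolev embedding on the torus, and I would prove it via absolute convergence of Fourier series. Let $v\in H^s(\T^d)$ with Fourier coefficients $\{\hat{v}_k\}_{k\in \Z^d}$. The first step is to show that $\sum_{k\in \Z^d}|\hat{v}_k| < \infty$ when $s>d/2$. By Cauchy--Schwarz,
\[
\sum_{k\in \Z^d}|\hat{v}_k|
=
\sum_{k\in \Z^d}(1+|k|^2)^{-s/2}\,(1+|k|^2)^{s/2}|\hat{v}_k|
\le
\Big(\sum_{k\in \Z^d}(1+|k|^2)^{-s}\Big)^{1/2}
\Big(\sum_{k\in \Z^d}(1+|k|^2)^{s}|\hat{v}_k|^2\Big)^{1/2}.
\]
Using $(1+|k|^2)^s\le 2^s(1+|k|^{2s})$, the second factor is bounded by a constant times $\Vert v\Vert_{H^s}$ via the definition \eqref{eq:Hs}. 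The first factor, $C_s^2:=\sum_{k\in \Z^d}(1+|k|^2)^{-s}$, is finite precisely when $2s>d$: comparing the series with $\int_{\R^d}(1+|\xi|^2)^{-s}\,d\xi$ (in polar coordinates the radial integrand decays like $\rho^{d-1-2s}$ at infinity) shows convergence if and only if $s>d/2$. Hence $\sum_k|\hat{v}_k|\le C(s,d)\,\Vert v\Vert_{H^s}$.

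Next I would use this to produce the continuous representative and the pointwise bound. The partial sums $S_N(x):=\sum_{|k|_\infty\le N}\hat{v}_k e^{i\langle k,x\rangle}$ satisfy $\Vert S_N-S_M\Vert_{L^\infty}\le \sum_{|k|_\infty>\min(N,M)}|\hat{v}_k|\to 0$, so $\{S_N\}$ is Cauchy in $C(\T^d)$ and converges uniformly to a continuous function $\tilde v$. Since $S_N\to v$ in $L^2(\T^d)$ as well, we get $\tilde v = v$ almost everywhere, so $v$ admits a continuous representative. Passing to the limit in $|S_N(x)|\le \sum_k|\hat v_k|$ gives $\Vert v\Vert_{L^\infty}\le \sum_k|\hat v_k|\le C(s,d)\,\Vert v\Vert_{H^s}$, which is exactly the asserted inequality.

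For compactness, I would take a bounded sequence $\{v_n\}\subset H^s(\T^d)$, $\Vert v_n\Vert_{H^s}\le M$, and extract a subsequence converging in $C(\T^d)$. From the Cauchy--Schwarz estimate applied to the tail, for every $\e>0$ there is $N=N(\e)$ with $\Vert v_n - P_N v_n\Vert_{L^\infty}\le\big(\sum_{|k|_\infty>N}(1+|k|^2)^{-s}\big)^{1/2}\,C\,M\le\e$ for all $n$, since the tail of the convergent series $\sum_k(1+|k|^2)^{-s}$ is uniformly small. For each fixed $N$, the finitely many coefficients $\{\hat v_{n,k}\}_{|k|_\infty\le N}$ are uniformly bounded (by Parseval, $|\hat v_{n,k}|\lesssim\Vert v_n\Vert_{L^2}\le C M$), so a diagonal argument over $N=1,2,\dots$ yields a subsequence along which $\hat v_{n,k}$ converges for every $k\in\Z^d$; then $P_N v_n$ converges in $C(\T^d)$ for each fixed $N$ along that subsequence. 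Combining this with the uniform tail smallness shows the subsequence is Cauchy in $C(\T^d)$, hence convergent, which gives compactness of $H^s(\T^d)\embeds C(\T^d)$. There is no essential obstacle here: the only points needing minor care are identifying the continuous representative of the $L^2$ class and the bookkeeping in the diagonal extraction; everything else is a direct consequence of the Cauchy--Schwarz tail bound.
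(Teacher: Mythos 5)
Your proof is correct. Note that the paper does not prove this statement at all: Theorem \ref{thm:sobolev} is stated in the notation appendix as a recalled classical fact, so there is no ``paper proof'' to compare against. Your argument is the standard self-contained one: the Cauchy--Schwarz splitting $\hat{v}_k = (1+|k|^2)^{-s/2}\,(1+|k|^2)^{s/2}\hat{v}_k$ together with the convergence of $\sum_k (1+|k|^2)^{-s}$ for $s>d/2$ gives the $\ell^1$ bound on the Fourier coefficients (and your comparison $(1+|k|^2)^s \le 2^s(1+|k|^{2s})$ correctly reconciles this with the paper's norm \eqref{eq:Hs}); uniform convergence of the partial sums then yields the continuous representative and the $L^\infty$ estimate; and the compactness argument — uniform smallness of the tails $\Vert v_n - P_N v_n\Vert_{L^\infty}$ over the bounded sequence, plus a diagonal extraction making the finitely many low modes converge — is a complete and correct proof that bounded sets in $H^s$ are relatively compact in $C(\T^d)$. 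A slightly slicker route to compactness would be to factor the embedding as $H^s \embeds H^{s'} \embeds C(\T^d)$ with $d/2 < s' < s$ and observe that the first inclusion is compact by the uniform tail decay $N^{-(s-s')}$, but your direct diagonal argument is equally valid and needs no additional machinery.
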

The Sobolev embedding theorem implies in particular that the pseudo-spectral projection $\cI_N$ is well-defined as an operator $\cI_N: H^s(\T^d) \to L^2_N(\T^d)$ for $s>d/2$. In the following theorem, we recall a well-known approximation error estimate for the pseudo-spectral projection $\cI_N$:

\begin{theorem}[Pseudo-spectral approximation estimate]
\label{thm:psest}
Let $d \in \N$. For any $s > d/2$ and $N\in \N$, the spectral interpolation operator $\cI_N: H^s(\T^d) \to L^2_N(\T^d)$ is well-defined. Furthermore, there exists a constant $C=C(s,d)>0$, such that the following approximation error estimate holds
\[
\Vert (1-\cI_N) v \Vert_{H^\varsigma}
\le 
C N^{-(s-\varsigma)} \Vert v \Vert_{H^s},
\quad
\forall \, v\in H^s(\T^d),
\]
 for any $\varsigma \in [0,s]$.	
\end{theorem}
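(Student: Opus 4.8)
The plan is to carry out the classical aliasing analysis for trigonometric interpolation, with the hypothesis $s>d/2$ entering at precisely two points: to make the pseudo-spectral projection well-defined, and to sum the aliased tail. First I would observe that for $s>d/2$ one has $\sum_{k\in\Z^d}|\hat v_k| \le \bigl(\sum_k(1+|k|^{2s})|\hat v_k|^2\bigr)^{1/2}\bigl(\sum_k(1+|k|^{2s})^{-1}\bigr)^{1/2} < \infty$, the last factor being finite exactly because $2s>d$. Hence the Fourier series of $v$ converges absolutely, $v\in C(\T^d)$ (consistent with Theorem~\ref{thm:sobolev}), and $\cI_Nv$ is well-defined through \eqref{eq:IN}. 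Evaluating the characters on the grid $x_j = 2\pi j/(2N+1)$ shows $e^{i\langle k',x_j\rangle} = e^{i\langle k,x_j\rangle}$ whenever $k-k'\in(2N+1)\Z^d$, so matching the values of $v$ and $\cI_Nv$ on the grid yields the aliasing formula $\cF(\cI_Nv)(k) = \sum_{m\in\Z^d}\hat v_{k+(2N+1)m}$ for all $k\in\cK_N$.

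Next I would split $(1-\cI_N)v = (1-P_N)v + (P_N-\cI_N)v$. The first term is estimated directly by the spectral approximation bound \eqref{eq:spectral}: $\Vert(1-P_N)v\Vert_{H^\varsigma} \le CN^{-(s-\varsigma)}\Vert v\Vert_{H^s}$. For the second term, the aliasing formula gives that $(P_N-\cI_N)v\in L^2_N(\T^d)$ has Fourier coefficients $\cF((P_N-\cI_N)v)(k) = -\sum_{m\ne 0}\hat v_{k+(2N+1)m}$ for $k\in\cK_N$. The core estimate is a weighted Cauchy--Schwarz bound on this aliased tail: for $|k|_\infty\le N$ and $m\ne 0$ one has the elementary geometric bound $|k+(2N+1)m| \ge |k+(2N+1)m|_\infty \ge (2N+1)|m|_\infty - N \ge N|m|_\infty \ge N|m|/\sqrt d$, so that
\[
\Bigl|\sum_{m\ne 0}\hat v_{k+(2N+1)m}\Bigr|^2 \le \Bigl(\sum_{m\ne 0}\tfrac{1}{1+|k+(2N+1)m|^{2s}}\Bigr)\Bigl(\sum_{m\ne 0}(1+|k+(2N+1)m|^{2s})|\hat v_{k+(2N+1)m}|^2\Bigr),
\]
and the first factor is bounded by $d^sN^{-2s}\sum_{m\in\Z^d\setminus\{0\}}|m|^{-2s} = C(s,d)N^{-2s}$, the series converging precisely because $s>d/2$.

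Multiplying through by $(1+|k|^{2\varsigma})\le C(s,d)N^{2\varsigma}$ (valid on $|k|_\infty\le N$ since $\varsigma\le s$), summing over $k\in\cK_N$, and using that $(k,m)\mapsto k+(2N+1)m$ is a bijection $\cK_N\times\Z^d\to\Z^d$ (so the shifted families $\{k+(2N+1)m : k\in\cK_N,\ m\ne 0\}$ are pairwise disjoint subsets of $\Z^d$), one obtains
\[
\Vert(P_N-\cI_N)v\Vert_{H^\varsigma}^2 \le CN^{2(\varsigma-s)}\sum_{j\in\Z^d}(1+|j|^{2s})|\hat v_j|^2 \le CN^{2(\varsigma-s)}\Vert v\Vert_{H^s}^2,
\]
and a triangle inequality combines the two pieces to give the claim with $C=C(s,d)$. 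The main obstacle is not conceptual but rather the bookkeeping in the aliasing estimate: ensuring the geometric lower bound on $|k+(2N+1)m|$ is uniform in $N$, that all constants depend only on $(s,d)$ and not on $\varsigma$ (which is what $\varsigma\le s$ buys), and that the injectivity of the base-$(2N+1)$ decomposition is invoked so that no Fourier mode of $v$ is counted twice when passing back to $\Vert v\Vert_{H^s}$. Everything else reduces to the Sobolev embedding, the already-established projection estimate \eqref{eq:spectral}, and convergence of $\sum_{m\ne 0}|m|^{-2s}$ for $2s>d$.
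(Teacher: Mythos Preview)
The paper does not actually prove Theorem~\ref{thm:psest}; it is introduced with the phrase ``we recall a well-known approximation error estimate'' and stated without argument. Your proposal supplies precisely the classical proof one would expect here---the aliasing analysis for trigonometric interpolation that appears in standard references such as Canuto et al.\ (which the paper cites elsewhere as \cite{Canuto2007})---and it is correct in all essential respects.

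A couple of minor remarks on the bookkeeping you flagged yourself. Your geometric bound $|k+(2N+1)m|_\infty \ge (2N+1)|m|_\infty - N \ge N|m|_\infty$ is clean; note that the last inequality is $(N+1)|m|_\infty \ge N$, which holds since $|m|_\infty\ge 1$. For the $\varsigma$-independence of the constant, the step $(1+|k|^{2\varsigma})\le C(s,d)N^{2\varsigma}$ for $|k|_\infty\le N$ follows from $|k|\le \sqrt{d}\,N$ and $d^{\varsigma}\le d^{s}$ when $d\ge 1$ and $0\le\varsigma\le s$, so the constant is indeed uniform in $\varsigma\in[0,s]$. Finally, the bijection $\cK_N\times(\Z^d\setminus\{0\})\to \Z^d\setminus\cK_N$ you invoke is exactly the base-$(2N+1)$ digit decomposition, so the ``no double counting'' concern is handled.
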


Finally, for $N\in \N$, we fix a regular grid $\{x_j\}_{j\in \cJ_N}$ of values
\begin{align} \label{eq:grid}
x_j = \frac{2\pi j}{2N+1}, 
\end{align}
where the index $j\in \cJ_N$ belong to the index set 
\begin{align} \label{eq:index}
\cJ_N:=\{0,\dots, 2N\}^d.
\end{align}
Recall the set of Fourier wave numbers \eqref{eq:k} and we define the \define{discrete Fourier transform} $\cF_N: \R^{\cJ_N} \to \C^{\cK_N}$ by 
\begin{equation}
    \label{eq:dft}
\cF_N(v)(k)
:=
\frac{1}{(2N+1)^d}
\sum_{j\in \cJ_N}
v_j e^{-2\pi i \langle j, k\rangle/{N}},
\end{equation}
with inverse $\cF^{-1}_N: \C^{\cK_N} \to \R^{\cJ_N}$, 
\begin{equation}
    \label{eq:dift}
\cF^{-1}_N(\hat{v})(j)
:=
\sum_{k\in \cK_N}
\hat{v}_k e^{2\pi i \langle j, k \rangle / N}.
\end{equation}

\begin{lemma}[Periodic extension operator]
\label{lem:periodic_extension}
Let \(\Omega \subset \R^d\) be a bounded, Lipschitz domain. There exists a continuous, linear operator \(\mathcal{E} : W^{m,p}(\Omega) \to W^{m,p}(B)\) for any \(m \geq 0\) and \(1 \leq p \leq \infty\) where \(B \Subset \R^d\) is a hypercube with \(\Omega \subset B\)
such that, for any \(u \in W^{m,p}(\Omega)\),
\begin{enumerate}
	\item \(\mathcal{E}(u)|_\Omega = u\),
	\item \(\mathcal{E}(u)\) is periodic on \(B\).
\end{enumerate}
\end{lemma}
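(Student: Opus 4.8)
The plan is to reduce the claim to the classical Stein (or Calderón) extension theorem for Lipschitz domains, which furnishes a bounded linear operator $E_0 : W^{m,p}(\Omega) \to W^{m,p}(\R^d)$ with $E_0(u)|_\Omega = u$, and then to \emph{periodize} the compactly supported modification of $E_0(u)$ onto a sufficiently large hypercube. First I would fix a hypercube $B$ with $\Omega \Subset B$; since $\Omega$ is bounded, such a $B$ exists, and by choosing $B$ large enough we can guarantee that $\dist(\Omega, \partial B) > 0$. The key reduction is to a \emph{cutoff}: let $\chi \in C_c^\infty(\R^d)$ be a smooth cutoff function with $\chi \equiv 1$ on a neighborhood of $\overline{\Omega}$ and $\supp(\chi) \subset B^\circ$ (the open interior), chosen so that moreover $\supp(\chi)$ stays away from $\partial B$; this is possible precisely because $\Omega \Subset B$. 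Then $u \mapsto \chi \cdot E_0(u)$ is again a bounded linear operator $W^{m,p}(\Omega) \to W^{m,p}(\R^d)$, it still restricts to $u$ on $\Omega$ (since $\chi \equiv 1$ there), and now its output is supported in the interior of $B$.

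The final step is to periodize: define $\mathcal{E}(u) := \sum_{n \in \Z^d} (\chi \cdot E_0(u))(\,\cdot\, - Ln)$, where $L$ is the side length of $B$ (after translating $B$ to $[0,L]^d$, say). Because $\supp(\chi \cdot E_0(u))$ is a compact subset of the open cube $B^\circ$, the translates have pairwise disjoint supports, so the sum is locally finite and $\mathcal{E}(u)$ is simply the $L\Z^d$-periodic replication of a single compactly supported function; hence $\mathcal{E}(u) \in W^{m,p}_{\mathrm{loc}}(\R^d)$ is $B$-periodic, and $\|\mathcal{E}(u)\|_{W^{m,p}(B)} = \|\chi \cdot E_0(u)\|_{W^{m,p}(\R^d)} \le C\|u\|_{W^{m,p}(\Omega)}$, giving boundedness and linearity. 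Since $\chi \equiv 1$ on $\Omega$ and the nontrivial translates vanish on $B$, we get $\mathcal{E}(u)|_\Omega = E_0(u)|_\Omega = u$, establishing property (1), while the construction manifestly gives property (2).

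For the case $p = \infty$ one uses the Calderón--Stein extension (which is valid on the full Sobolev scale including $W^{m,\infty}$ for Lipschitz $\Omega$); alternatively one could invoke the universal Stein extension operator, which is simultaneously bounded on all $W^{m,p}$, to keep the operator $\mathcal{E}$ independent of $m$ and $p$ — this matches the way the lemma is applied in the proof of Theorem \ref{thm:universal_extend}, where the \emph{same} $\mathcal{E}$ is used for $H^s = W^{s,2}$ with varying $s$. The main obstacle is really just invoking the correct form of the classical extension theorem: one needs an extension operator for Lipschitz domains that works on the appropriate range of Sobolev exponents (and, ideally, uniformly in the order $m$), after which the cutoff-and-periodize argument is routine and the disjoint-support observation makes the periodization trivially bounded. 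A minor bookkeeping point is to translate/rescale $B$ to a standard cube so that ``$B$-periodic'' has an unambiguous meaning; this introduces only a fixed affine change of variables and does not affect any of the estimates.
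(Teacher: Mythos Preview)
Your proposal is correct and follows essentially the same strategy as the paper: invoke Stein's extension theorem for Lipschitz domains to obtain a bounded extension to $W^{m,p}(\R^d)$, then multiply by a smooth cutoff $\chi$ (the paper calls it $\rho$ and builds it explicitly via an $\mathrm{erf}$-based windowing construction) that is $\equiv 1$ on $\Omega$ and compactly supported in the interior of $B$, so that the result and all its derivatives vanish near $\partial B$ and are therefore periodic. Your explicit periodization step $\sum_{n}(\chi E_0 u)(\,\cdot\,-Ln)$ is a clean way to exhibit the periodicity but is redundant on $B$ itself (the nontrivial translates vanish there), so the operator you build coincides on $B$ with the paper's $\mathcal{E}(u)=\rho\,\tilde{\mathcal{E}}(u)$; your remark about using the universal Stein extension to make $\mathcal{E}$ independent of $(m,p)$ is a helpful addition that the paper leaves implicit.
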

\begin{proof}
By \cite[Chapter 6, Theorem 5]{stein1970singular}, there exists a continuous, linear operator \(\tilde{\mathcal{E}} : W^{m,p}(\Omega) \to W^{m,p}(\R^d)\)
such that \(\tilde{\mathcal{E}}(u)|_\Omega = u\) for any \(u \in W^{m,p}(\Omega)\). Let \(\Phi \in C^\infty(\R^d)\) be a mapping
whose zeroth level set defines a curve \(\partial \Omega '\) such that \(\Omega \subset \Omega'\) and \(\text{dist}(\partial \Omega',  \partial \Omega) > 0\)  where \(\Omega'\) is a bounded domain with boundary \(\partial \Omega'\). Furthermore, suppose that the 
first level set of \(\Phi\) defines a curve \(\partial \Lambda\) such that \(\Omega' \subset \Lambda\)
and \(\text{dist}(\partial \Lambda,  \partial \Omega') > 0\)
where, again, \(\Lambda\) is a bounded domain with boundary \(\partial \Lambda\).
For example, we may take \(\Phi(x) = x_1^2 + \dots + x_d^2 - r\) for some \(r > 0\) large enough then \(\Omega'\) is \(d\)-ball of radius \(\sqrt{r}\) enclosing \(\Omega\), and \(\Lambda\) is a \(d\)-ball of radius \(\sqrt{1+r}\) clearly enclosing \(\Omega'\). We will now follow 
the approach of \cite{boyd2005fourier} and construct a windowing function \(\rho \in C^\infty_c (\R^d)\). In particular, we
define
\[\rho(x) = H \bigl ( 1 - 2 \Phi(x) \bigl ), \qquad \forall x \in \R^d\]
where 
\[H(t) = \frac{1}{2} \bigl (1 + S(t) \bigl ), \qquad \forall t \in \R\]
and 
\[
S(t) = \begin{cases}
-1, & t < -1 \\
\text{erf}\left( \frac{t}{\sqrt{1-t^2}} \right), & -1 \leq t \leq 1\\
1, & t > 1
\end{cases}, \qquad \forall t \in \R
\]
with \(\text{erf}\), the Gauss error function,
\[\text{erf}(t) = \frac{2}{\sqrt{\pi}} \int_0^t \text{e}^{-z^2} \: \text{d}z, \qquad \forall t \in \R.\]
Define \(B \Subset \R^d\) to be a large enough \(d\)-cube such that \(\Lambda \subset B\) and \(\text{dist}(\partial \Lambda, \partial B) >  0.\)
It is easy to check that \(\rho \in C^\infty_c(\R^d)\) and we have that \(\rho|_\Omega  = \rho|_{\Omega'} = 1\) and \(\rho|_{B \setminus \bar{\Lambda}} = 0\) with \(\rho\) and all of its 
derivatives vanishing on \(\partial \Lambda\). Furthermore,
\[\|\rho\|_{C^m(B)} < \infty\]
for any \(m \geq 0\). We can thus define \(\mathcal{E} : W^{m,p}(\Omega) \to W^{m,p}(B)\) by
\[\mathcal{E}(u) = \rho \tilde{\mathcal{E}}(u), \qquad \forall u \in W^{m,p}(\Omega).\]
By Leibniz's rule and the generalized triangle inequality,
there exists a constant \(C_1 = C_1(d,m,p) > 0\) (independent of \(p\) in the case \(p=\infty\)) such that
\[\|\mathcal{E}(u)\|_{W^{m,p}(B)} \leq C_1 \|\rho\|_{C^m(B)} \|\tilde{\mathcal{E}}(u)\|_{W^{m,p}(B)}.\]
Since \(\tilde{\mathcal{E}}\) is bounded, there is a constant \(C_2 > 0\) such that
\begin{align*}
\|\mathcal{E}(u)\|_{W^{m,p}(B)} &\leq C_1 \|\rho\|_{C^m(B)} \|\tilde{\mathcal{E}}(u)\|_{W^{m,p}(B)} \\ 
&\leq C_1 \|\rho\|_{C^m(B)} \|\tilde{\mathcal{E}}(u)\|_{W^{m,p}(\R^d)} \\ 
&\leq C_1 C_2 \|\rho\|_{C^m(B)} \|u\|_{W^{m,p}(\Omega)}
\end{align*}
hence \(\mathcal{E}\) is a continuous, linear operator. Since \(\rho|_\Omega = 1\), 
we immediately have \(\mathcal{E}(u)|_\Omega = \tilde{\mathcal{E}}(u)|_\Omega = u\). Since \(\text{ supp}(\partial^\alpha \rho) \subseteq \bar{\Lambda}\) for any multi-index \(0 \leq |\alpha| \leq m\), we conclude 
that \(\partial^{\alpha} \mathcal{E}(u) |_{B \setminus \bar{\Lambda}} = 0\)  hence \(\mathcal{E}(u)\) is periodic on \(B\) as desired.

\end{proof}

\section{Non-linear lifting and projection operators}

\begin{lemma} \label{lem:linear-to-nonlinear}
Assume that the activation function $\sigma \in C^2$ is non-linear and that $\Vert \sigma \Vert_{C^2(\R)} < \infty$. Let $\cN: L^2(D;\R^{d_a}) \to L^2(D;\R^{d_u})$ be a neural operator of the form $\cN = \cQ \circ \cL_{L}\circ \dots \circ \cL_{1} \circ \cR$, with Lipschitz continuous layers $\cL_{\ell}: L^2(D;\R^{d_v}) \to L^2(D;\R^{d_v})$, and with linear lifting/projection operators $\cR$, $\cQ$ of the form \eqref{eq:no-r} and \eqref{eq:no-q}, respectively. For any $K\subset L^2(D;\R^{d_v})$ compact and $\epsilon > 0$, there exist neural networks $\hat{R}: \R^{d_a}\times D \to \R^{d_v}$, $\hat{Q}: \R^{d_v}\times D \to \R^{d_u}$ with a single hidden layer and $\width(\hat{R}), \width(\hat{Q}) \le 2d_v$, such that the mapping $\hN: \cA(D;\R^{d_a}) \to \cU(D;\R^{d_u})$ defined by
\[
\hN(a) := \hQ \circ \cL_{L} \circ \dots \circ \cL_{1} \circ \hR(a),
\]
where $\cR(a)(x) := \hat{R}(a(x),x)$, $\cQ(v) := \hat{Q}(v(x),x)$, approximates $\cN$ to order $\epsilon$:
\[
\sup_{a\in K} 
\Vert 
\cN(a) - \hN(a)
\Vert_{L^2}
\le \epsilon.
\]
\end{lemma}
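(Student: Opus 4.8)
The plan is to reduce the statement to one elementary fact — a linear map between Euclidean spaces can be approximated, uniformly on bounded sets, by a single‑hidden‑layer network of difference quotients of $\sigma$ — and then to propagate this approximation of the linear operators $\cR$ and $\cQ$ through the fixed, Lipschitz middle part $\cL_L\circ\cdots\circ\cL_1$ by the triangle inequality. \emph{Step 1 (shallow approximation of a linear map).} Since $\sigma$ is non‑linear it is not constant, so $\sigma'\not\equiv 0$; fix $t_0$ with $\sigma'(t_0)\neq 0$. For $h>0$ and $M\in\R^{p\times q}$ define the one‑hidden‑layer network $\widehat{M}_h:\R^q\to\R^p$ (independent of $x$) by
\[
\widehat{M}_h(y):=\frac{1}{h\,\sigma'(t_0)}\Big(\sigma\big(t_0\mathbf 1+hMy\big)-\sigma(t_0)\mathbf 1\Big),
\]
with $\sigma$ acting componentwise; this has $p$ hidden neurons (hidden weights $hM$, hidden bias $t_0\mathbf 1$, readout matrix $\tfrac{1}{h\sigma'(t_0)}I_p$, output bias $-\tfrac{\sigma(t_0)}{h\sigma'(t_0)}\mathbf 1$). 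A second‑order Taylor expansion of $\sigma$ at $t_0$ combined with $\Vert\sigma\Vert_{C^2(\R)}<\infty$ yields the two bounds
\[
\big|\widehat{M}_h(y)-My\big|\le C_1\,h\,|y|^2,
\qquad
\big|\widehat{M}_h(y)-My\big|\le C_2\,|y|,
\qquad\forall\,y\in\R^q,
\]
with $C_1,C_2$ depending only on $M$, $t_0$ and $\Vert\sigma\Vert_{C^2(\R)}$; the second bound also shows $\widehat{M}_h$ is globally Lipschitz with a constant independent of $h$. (For a cleaner $O(h^2)$ error one could use the symmetric quotient $\tfrac{1}{2h\sigma'(t_0)}(\sigma(t_0+hMy)-\sigma(t_0-hMy))$, which costs $2p$ neurons; in either case the width is at most $2d_v$ when $M\in\{R,Q\}$, since $p\in\{d_v,d_u\}$ and $d_u\le d_v$.)

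\emph{Step 2 (from Euclidean to Nemytskii, uniformly on compacta).} For a linear operator $\cM(v)(x)=Mv(x)$ on $L^2(D)$, let $\widehat{\cM}_h$ denote the induced Nemytskii operator $v(\cdot)\mapsto\widehat{M}_h(v(\cdot))$. By the linear‑growth bound of Step 1, the family $\{\widehat{\cM}_h\}_{h>0}\cup\{\cM\}$ is equi‑Lipschitz as maps $L^2(D)\to L^2(D)$; and for each fixed $v\in L^2(D)$ the estimate $|\widehat{M}_h(v(x))-Mv(x)|\le C_1 h|v(x)|^2$, dominated by $C_2^2|v(x)|^2\in L^1(D)$, gives $\Vert\widehat{\cM}_h(v)-\cM(v)\Vert_{L^2}\to0$ as $h\to0$ by dominated convergence. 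Equicontinuity together with pointwise convergence then forces uniform convergence on any totally bounded — in particular any compact — subset. Hence, for every compact $\widetilde K\subset L^2(D)$ and every $\delta>0$, choosing $h$ small produces a single‑hidden‑layer network whose Nemytskii operator $\widehat{\cM}$ satisfies $\sup_{v\in\widetilde K}\Vert\cM(v)-\widehat{\cM}(v)\Vert_{L^2}\le\delta$.

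\emph{Step 3 (propagation).} Set $\Phi:=\cL_L\circ\cdots\circ\cL_1$, which is Lipschitz with some constant $L_\Phi$ as a composition of Lipschitz layers. Apply Step 2 to $\cR$ (i.e.\ $M=R$) on the compact set $K$ to get $\hR$ with $\sup_{a\in K}\Vert\cR(a)-\hR(a)\Vert_{L^2}\le\delta_1$; then $K':=\Phi(\hR(K))$ is a compact subset of $L^2(D;\R^{d_v})$ (continuous image of a compact set), and Step 2 applied to $\cQ$ (i.e.\ $M=Q$) on $K'$ yields $\hQ$ with $\sup_{v\in K'}\Vert\cQ(v)-\hQ(v)\Vert_{L^2}\le\delta_2$. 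For $a\in K$,
\begin{align*}
\Vert\cN(a)-\hN(a)\Vert_{L^2}
&\le\Vert\cQ(\Phi(\cR(a)))-\cQ(\Phi(\hR(a)))\Vert_{L^2}
+\Vert\cQ(\Phi(\hR(a)))-\hQ(\Phi(\hR(a)))\Vert_{L^2}\\
&\le\Vert Q\Vert\,L_\Phi\,\delta_1+\delta_2,
\end{align*}
so taking $\delta_2=\epsilon/2$ and $\delta_1=\epsilon/(2\Vert Q\Vert L_\Phi)$ (the first term being absent if $Q=0$) gives the claimed bound. Since $\hat R$ uses $d_v$ and $\hat Q$ uses $d_u\le d_v$ hidden neurons, we have $\width(\hat R),\width(\hat Q)\le 2d_v$, and both are single‑hidden‑layer networks.

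\emph{Main obstacle.} The only non‑routine point is Step 2: the pointwise values $a(x)$ of functions in a compact $K\subset L^2$ are not uniformly bounded, so one cannot merely invoke uniform continuity of $y\mapsto My$ on a compact subset of $\R^q$. This is exactly where the \emph{global} hypothesis $\Vert\sigma\Vert_{C^2(\R)}<\infty$ enters: it makes the difference quotients globally Lipschitz with an $h$‑independent constant (hence equicontinuous as operators on $L^2$) and supplies the $L^2$‑domination needed for dominated convergence; the upgrade from pointwise to uniform convergence on $K$ then only uses total boundedness. Everything else is bookkeeping with the triangle inequality and the Lipschitz continuity of the layers $\cL_\ell$.
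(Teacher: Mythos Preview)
Your proposal is correct and follows essentially the same strategy as the paper: approximate the linear maps $R$ and $Q$ by difference quotients of $\sigma$ at a point where $\sigma'\neq 0$, and then propagate through the fixed Lipschitz middle part via the triangle inequality. The only noteworthy difference is in your Step~2: the paper handles the passage from Euclidean to $L^2$ approximation by introducing a cutoff operator $p_M(v)$ (truncating where $|v(x)|>M$), using compactness of $\tilde K$ to make $\sup_{\tilde K}\Vert v-p_M(v)\Vert_{L^2}$ small, and then applying the pointwise Taylor remainder on the bounded part; you instead use dominated convergence (with the $h$-independent linear bound as dominator) to get pointwise convergence of the Nemytskii operators, and then equi-Lipschitzness plus total boundedness to upgrade to uniform convergence on $K$. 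Both arguments hinge on the same global $C^2$ bound on $\sigma$; yours is a bit more streamlined.
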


\begin{proof}
We recall that, by our definition of a neural operator, we have $\cR(a)(x) = Ra(x)$, $\cQ(v)(x) = Qv(x)$, where $R\in \R^{d_v\times d_a}$, $Q \in \R^{d_u\times d_v}$. By assumption on the non-linearity of $\sigma$, there exists $z_0 \in \R$, such that $\sigma'(z_0) \ne 0$. For any $h>0$, we now define
\begin{align*}
\left\{
\begin{aligned}
\hat{Q}_h(v,x) 
&:= 
\frac{\sigma(z_0 + h Qv) - \sigma(z_0 - hQv)}{2h \sigma'(z_0)}, 
\\
\hat{R}_h(a,x) 
&:= 
\frac{\sigma(z_0 + h Ra) - \sigma(z_0 - hRa)}{2h \sigma'(z_0)}.
\end{aligned}
\right.
\end{align*}
We remark that in the above expressions, the addition of $z_0 \in \R$ and $hQv\in \R^{d_u}$, $hRa\in \R^{d_v}$ is carried out componentwise, as is the evaluation of $\sigma$. 

Since the interior layers $\cL_1,\dots, \cL_{L}$ are fixed, we introduce the short-hand notation $\cL := \cL_{L}\circ \dots \circ \cL_{1}$. By assumption on the activation function and the layers, we have that $\cL: L^2(D;\R^{d_v}) \to L^2(D;\R^{d_v})$ is a Lipschitz continuous mapping. Our goal is to show that for any $K\subset L^2$ compact and $\epsilon >0$, there exists a sufficiently small $h>0$, such that
\[
\sup_{a\in K} 
\Vert
\cN(a) - \hN_h(a)
\Vert_{L^2}
\le\epsilon,
\]
where $\hN_h(a) := \hQ_h \circ \cL \circ \hR_h(a)$. To this end, we note that for any $a\in K$, we have
\begin{align*}
\Vert
\cN(a) - \hN_h(a)
\Vert_{L^2}
&=
\Vert
\cQ\circ \cL \circ \cR(a) - \hQ_h\circ \cL \circ \hR_h(a)
\Vert_{L^2}
\\
&\le
\Vert
\cQ\circ \cL \circ \cR(a) - \hQ_h\circ \cL \circ \cR(a)
\Vert_{L^2}
\\
&\qquad +
\Vert
\hQ_h\circ \cL \circ \cR(a) - \hQ_h\circ \cL \circ \hR_h(a)
\Vert_{L^2}.
\end{align*}

Introducing $\tilde{K} := \cL \circ \cR(K)$, we note that $\tilde{K} \subset L^2$ is compact, and 
\begin{align*}
\sup_{a\in K} \Vert \cN(a) - \hN_h(a) \Vert_{L^2}
&\le
\sup_{v\in \tilde{K}} \Vert \cQ(v) - \hQ_h(v) \Vert_{L^2}
\\
&\qquad +
\Lip(\hQ_h) \Lip(\cL) \sup_{a\in K} \Vert \cR(a) - \hR_h(a) \Vert_{L^2}.
\end{align*}
The proof of this lemma thus follows form the following three claims:

\textbf{Claim 1:}
There exists $C>0$, independent of $h$, such that $\Lip(\hQ_h) \le C$ for all $h>0$.

\textbf{Claim 2:}
For any $\epsilon > 0$ and $\tilde{K}\subset L^2$ compact, there exists $h>0$, such that 
\[
\sup_{v\in \tilde{K}}\Vert \hQ_h(v) - \cQ(v) \Vert_{L^2} \le \epsilon.
\]

\textbf{Claim 3:}
For any $\epsilon > 0$ and $K\subset L^2$ compact, there exists $h>0$, such that 
\[
\sup_{a\in K}\Vert \hR_h(a) - \cR(a) \Vert_{L^2} \le \epsilon.
\]

Clearly, the only difference between Claims 2 and 3 is notational, hence it suffices show Claims 1 and 2 to conclude the proof of the present lemma.

\emph{Proof of Claim 1:}
We note that for any $v(x),v'(x)\in \R^{d_v}$, we have
\begin{align}
|\hat{Q}_h(v(x),x) - \hat{Q}_h(v'(x),x)|
&\le
\frac{1}{2h|\sigma'(z_0)|} 2\Lip(\sigma) |hQ(v(x) - v'(x))|
\\
&\le
\frac{\Lip(\sigma) \Vert Q \Vert}{|\sigma'(z_0)|} |v(x)-v'(x)|,
\end{align}
where $|\slot|$ denotes the Euclidean norm and $\Vert Q \Vert$ denotes the operator norm of $Q$. Hence, it follows that for any $v,v'\in L^2(D;\R^{d_v})$, we have
\[
\Vert \hQ_h(v) - \hQ_h(v') \Vert_{L^2}
\le
\frac{\Lip(\sigma) \Vert Q \Vert}{|\sigma'(z_0)|} \Vert v - v'\Vert_{L^2},
\]
and thus,
\[
\Lip(\hQ_h) \le \frac{\Lip(\sigma) \Vert Q \Vert}{|\sigma'(z_0)|},
\]
is bounded independently of $h>0$.

\emph{Proof of Claim 2:}
It follows form Taylor expansion that $\hat{Q}_h(v(x),x)=Qv(x) + h R(h;Qv(x))$, where the remainder $R$ satisfies a uniform bound
\[
|R(h;Qv(x))|\le \frac{C\Vert \sigma \Vert_{C^2}}{|\sigma'(z_0)|} |Qv(x)|^2,
\quad
\forall h\in (0,1], \; v(x) \in \R^{d_v}.
\]
In particular, considering the $L^2$-norm of the left-hand side and assuming $x\mapsto v(x)$ to be \emph{bounded}, we conclude that there exists a constant $C = C(\sigma, \Vert Q \Vert)>0$, depending only on the activation function $\sigma$ and the (Euclidean) operator norm $\Vert Q\Vert$ of $Q: (\R^{d_v},|\slot|) \to (\R^{d_u},|\slot|)$, such that 
\begin{align} \label{eq:remainder}
\Vert \hQ_h(v) - \cQ(v) \Vert_{L^2}
=
\Vert R(h;Qv) \Vert_{L^2} \le C \Vert v\Vert_{L^\infty} \Vert v \Vert_{L^2}, \quad \forall \, v \in L^2\cap L^\infty.
\end{align} 
Next, for $M>0$ we introduce a cut-off operator $p_M: L^2\to L^2\cap L^\infty$, by \[
p_M(v(x)) :=
\begin{cases}
v(x), & (|v(x)|\le M), \\
0, & (|v(x)| > M).
\end{cases}
\]
Since $\tilde{K}\subset L^2$ is compact, it follows that
\[
\lim_{M\to \infty} \sup_{v\in \tilde{K}} \Vert v - p_M(v) \Vert_{L^2} = 0.
\]
We can thus choose $M>0$ sufficiently large, such that 
\[
\sup_{v\in \tilde{K}}
\Vert v - p_M(v) \Vert_{L^2}
\le
\frac{\epsilon}{2\left(
\sup_{h\in (0,1]} \Lip(\hQ_h) + \Lip(\cQ)
\right)},
\]
where we note that $\sup_{h\in (0,1]} \Lip(\hQ_h) <\infty$, by Claim 1.
Furthermore, and again by compactness of $\tilde{K}\subset L^2$, there exists a constant $\tilde{M} > 0$, such that 
\[
\sup_{v\in \tilde{K}} \Vert v \Vert_{L^2} \le \tilde{M}.
\]
With this $\tilde{M}$, our choice of $M$ and the estimate \eqref{eq:remainder} on the remainder $R$, it follows that for any $v\in \tilde{K}$:
\begin{align*}
\Vert \hQ_h(v) - \cQ(v) \Vert_{L^2}
&\le
\Vert \hQ_h(v) - \hQ_h(p_M(v)) \Vert_{L^2}
\\
&\qquad +\Vert \hQ_h(p_M(v)) - \cQ(p_M(v)) \Vert_{L^2}
\\
&\qquad
+\Vert \cQ(p_M(v)) - \cQ(v) \Vert_{L^2}
\\
&\le
\left(\Lip(\hQ_h) + \Lip(\cQ)\right) \Vert v - p_M(v) \Vert_{L^2}
\\
&\qquad +
Ch\Vert p_M(v) \Vert_{L^\infty} \Vert p_M(v) \Vert_{L^2}
\\
&\le
\epsilon/2
+
CM \tilde{M} h.
\end{align*}
Finally, choosing $h = \epsilon/(2CM\tilde{M})$ implies that
\[
\sup_{v\in \tilde{K}}
\Vert \hQ_h(v) - \cQ(v) \Vert_{L^2}
\le \epsilon.
\]
This concludes the proof.

\end{proof}

\section{Proofs and Technical details for section \ref{sec:2}}

\subsection{
Proof of Lemma \ref{lem:fut0}
}
\label{app:pfut0}

The proof of Lemma \ref{lem:fut0} will rely on the following technical lemma:

\begin{lemma}\label{lem:proj-layer}
Let $s'\ge 0$, $N\in \N$ be given. Let $K\subset H^{s'}$ be compact, and assume that $\sigma\in C^m$, where $m>s'$ is integer. Then for any $\epsilon > 0$, there exists a single-layer FNO $\cL: H^{s'} \to H^{s'}$, such that
\[
\sup_{v\in K} \Vert P_N v - \cL(v) \Vert_{H^{s'}} \le \epsilon.
\] 
\end{lemma}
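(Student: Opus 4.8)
The plan is to represent $P_N$ — which is the Fourier multiplier with symbol $k\mapsto 1$ for $|k|_\infty\le N$ and $0$ otherwise, i.e.\ a \emph{linear}, finite-rank, non-local operator — through a single $\cF$-type layer, and then to cancel the intervening activation $\sigma$ by linearizing it around a regular point, exactly as in the proof of Lemma~\ref{lem:linear-to-nonlinear}. Concretely: since $\sigma$ is non-constant there is $z_0\in\R$ with $\sigma'(z_0)\ne 0$; fix lift $d_v=2$, and for a parameter $h>0$ define the single-layer FNO $\cL_h:=\cQ\circ\cL_1\circ\cR$ by $\cR(v)=(v,v)^{\top}$ (so $R=(1,1)^{\top}$), by $\cL_1(w)(x)=\sigma\big(b_1+\cF^{-1}(P_1(k)\cF(w)(k))(x)\big)$ with $W_1=0$, constant bias $b_1\equiv(z_0,z_0)^{\top}$, and $P_1(k)=h\,\mathrm{diag}(1,-1)$ for $|k|_\infty\le N$, $P_1(k)=0$ otherwise (so that $P_1(-k)=P_1(k)^{\dagger}$), and by $\cQ(w)=\tfrac{1}{2h\sigma'(z_0)}(1,-1)\,w$. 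Unwinding the definitions, $\cL_h(v)=\Phi_h\circ(P_Nv)$ pointwise, where $\Phi_h(t):=\tfrac{\sigma(z_0+ht)-\sigma(z_0-ht)}{2h\sigma'(z_0)}$. Because $P_Nv$ is a trigonometric polynomial of degree $\le N$ (hence $C^\infty$) and $\Phi_h\in C^m$, the operator $\cL_h$ maps $H^{s'}$ continuously into $C^m(\T^d)\hookrightarrow H^{s'}(\T^d)$, so it is an admissible single-layer FNO.

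It then remains to show $\sup_{v\in K}\|\Phi_h\circ(P_Nv)-P_Nv\|_{H^{s'}}\to 0$ as $h\to 0$, and I expect this $H^{s'}$ estimate on the composition remainder to be the only nontrivial point. Writing $\psi_h:=\Phi_h-\mathrm{id}$, the error is $\psi_h\circ(P_Nv)$. A direct differentiation gives $\Phi_h'(t)=\tfrac{\sigma'(z_0+ht)+\sigma'(z_0-ht)}{2\sigma'(z_0)}$ and $\Phi_h^{(j)}(t)=\tfrac{h^{j-1}}{2\sigma'(z_0)}\big(\sigma^{(j)}(z_0+ht)-(-1)^j\sigma^{(j)}(z_0-ht)\big)$ for $2\le j\le m$, together with $\psi_h(0)=0$; hence, using only uniform continuity of $\sigma,\sigma',\dots,\sigma^{(m)}$ on compact intervals, for every $M>0$
\[
\delta(h,M):=\sup_{|t|\le M}\sum_{j=0}^{m}\big|\psi_h^{(j)}(t)\big|\ \xrightarrow[h\to 0]{}\ 0 .
\]
This is the one place where the hypothesis $\sigma\in C^m$ with $m>s'$ enters.

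Next I would exploit compactness of $K$: since $P_N:H^{s'}\to L^2_N(\T^d)$ is bounded and $L^2_N(\T^d)$ is finite-dimensional, $P_N(K)$ is compact in $L^2_N$, on which all norms are equivalent, so $M:=\sup_{v\in K}\|P_Nv\|_{C^m(\T^d)}<\infty$. By the Faà di Bruno formula, for $1\le|\alpha|\le m$ the derivative $\partial^\alpha(\psi_h\circ P_Nv)$ is a finite sum, with combinatorial coefficients depending only on $m,d$, of terms $\psi_h^{(j)}(P_Nv)\prod_i\partial^{\beta_i}(P_Nv)$ with $1\le j\le|\alpha|$ and $\sum_i\beta_i=\alpha$; each such term is bounded in $L^\infty$ by $\delta(h,M)\,M^{|\alpha|}$ (and the $|\alpha|=0$ contribution by $\delta(h,M)$ directly). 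Hence $\|\psi_h\circ P_Nv\|_{C^m(\T^d)}\le C(m,d)\max(1,M^m)\,\delta(h,M)$, and combining with $\|\cdot\|_{H^{s'}}\le\|\cdot\|_{H^m}\le C(m,d)\|\cdot\|_{C^m(\T^d)}$ (valid because $m>s'$) yields $\sup_{v\in K}\|\cL_h(v)-P_Nv\|_{H^{s'}}\le C'\delta(h,M)$ with $C'$ independent of $h$.

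Finally, choosing $h>0$ so small that $C'\delta(h,M)\le\epsilon$ and setting $\cL:=\cL_h$ completes the argument. The main obstacle is exactly the $H^{s'}$ (rather than $L^2$ or $L^\infty$) control of the composition remainder $\psi_h\circ P_Nv$, which forces the Faà di Bruno bookkeeping together with the uniform $C^m$ bound on $P_N(K)$; everything else is the standard activation-linearization trick already used in Lemma~\ref{lem:linear-to-nonlinear}, plus the observation that an $\cF$-layer can reproduce $P_N$ exactly.
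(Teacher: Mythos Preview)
Your proof is correct and uses the same construction as the paper: the single-layer FNO $\cL_h(v)=\Phi_h(P_Nv)$ with $\Phi_h(t)=\frac{\sigma(z_0+ht)-\sigma(z_0-ht)}{2h\sigma'(z_0)}$, together with compactness of $P_N(K)$ in the finite-dimensional space $L^2_N$ to obtain uniform $C^m$ (equivalently $H^m$) bounds. Where you diverge is in the $H^{s'}$ estimate on the remainder $\psi_h\circ P_Nv$. The paper argues more indirectly: it shows only that $\|\psi_h(P_Nv)\|_{H^m}$ stays \emph{uniformly bounded} in $h$ (via the Sobolev composition rule) and that $\|\psi_h(P_Nv)-P_Nv\|_{L^2}\le Ch$, and then interpolates between $L^2$ and $H^m$ to obtain an $H^{s'}$ bound of order $h^{1-s'/m}$. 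Your route is more direct and elementary---you compute $\psi_h^{(j)}$ explicitly, observe $\delta(h,M)\to 0$, and push through Fa\`a di Bruno to get $\|\psi_h\circ P_Nv\|_{C^m}\to 0$---which avoids both interpolation theory and the black-box composition estimate, at the price of the Fa\`a di Bruno bookkeeping and a soft (non-quantitative) convergence rate. Either way, the hypothesis $\sigma\in C^m$ with integer $m>s'$ is used in exactly the same place.
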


\begin{proof}[Proof of Lemma \ref{lem:proj-layer}]
First, we note that the Fourier projection $P_N: H^{s'}\to H^{s'}$ is a continuous operator, and hence the image $P_N K\subset H^{s'}$ is compact. Furthermore, $P_N$ maps into a finite-dimensional subspace of $H^{s'}$. Due to the norm-equivalence on finite-dimensional spaces, there thus exists $C_0 = C_0(N,K) > 0$, such that 
\begin{align} \label{eq:C0bd}
\sup_{v\in K} \Vert P_N v\Vert_{L^\infty} \le C_0, \quad \sup_{v\in K}\Vert P_N v \Vert_{H^m} \le C_0. 
\end{align}
Let $x_0\in \R$ be such that $\sigma'(x_0) \ne 0$. We define, for $h>0$,
\begin{align}
\psi_h(x) := \frac{\sigma(x_0 + hx) - \sigma(x_0-hx)}{2h\sigma'(x_0)}.
\end{align}
One readily shows that $\psi_h \in C^m$, and that there exists a constant $C_1 = C_1(\sigma, C_0)>0$, such that
\begin{align}
\Vert \psi_h \Vert_{C^m([-C_0,C_0])} \le C_1, \quad \forall \, h\in (0,1].
\end{align}
Furthermore, by Taylor expansion, we have 
\begin{align} \label{eq:approxbd}
|\psi_h(x) - x| \le Ch, \quad \forall \, x\in [-C_0,C_0], \; \forall \, h\in (0,1].
\end{align}
By the composition rule for Sobolev functions, we have that $\psi_h\circ P_Na\in H^m$, for $P_Na\in H^m$, and there exists a constant $C_2 = C_2(C_1,C_0)>0$, such that 
\begin{align} \label{eq:Hmbd0}
\Vert \psi_h(P_Nv) \Vert_{H^m} \le C_2, 
\quad
\forall\, v\in K.
\end{align}
We finally observe that the mapping $v\mapsto \cL_h(v) := \psi_h(P_Nv)$ can be represented by a single-layer FNO, and by \eqref{eq:Hmbd0}, we have
\begin{align} \label{eq:Hmbd}
\Vert \cL_h(v) \Vert_{H^m} \le C_2, 
\quad
\forall\, v\in K.
\end{align}
From the interpolation inequality between Sobolev spaces, it follows that
\begin{align*}
\Vert \cL_h(v) - P_Nv \Vert_{H^{s'}}
&\le 
\Vert \cL_h(v) - P_Nv \Vert_{L^2}^\theta
\Vert \cL_h(v) - P_Nv \Vert_{H^{m}}^{1-\theta},
\end{align*}
where $\theta = 1-s'/m > 0$. By \eqref{eq:Hmbd} and \eqref{eq:C0bd}, the second factor can be bounded independently of $h>0$. By \eqref{eq:approxbd} and \eqref{eq:C0bd}, we have 
\[
\Vert \cL_h(v) - P_Nv \Vert_{L^2}
=
\Vert \psi_h(P_Nv) - P_Nv \Vert_{L^2}
\le
Ch,
\]
for a constant $C>0$, independent of $h$ and $v\in K$. We conclude that
\[
\Vert \cL_h(v) - P_Nv \Vert_{H^{s'}}
\le C h^\theta \to 0,
\]
as $h\to 0$, for some constant $C>0$, independent of $h$. This proves the claim.
\end{proof}

We can now prove:
\begin{proof}[Proof of Lemma \ref{lem:fut0}]
Let $\cG: H^s\to H^{s'}$ be a continuous operator, and let $K\subset H^s$ be compact. We assume that FNOs are universal approximators of operators $H^s\to L^2$, and we wish to show that for any $\epsilon > 0$, there exists a FNO approximation of $\cG: H^s \to H^{s'}$ to accuracy $\epsilon$. We first note that by the compactness of $\cG(K)\subset H^{s'}$, there exists $N\in \N$, such that 
\begin{align}\label{eq:GGNbd}
\sup_{a\in K}\Vert \cG(a) - P_N\cG(a) \Vert_{H^{s'}} \le \epsilon/2.
\end{align}
Fix $\delta > 0$ for the moment. A suitable choice of $\delta$ will be specified at the end of this proof. By assumption on the universal approximation of operators $H^s\to L^2$, there exists a FNO $\tN: H^s\to L^2$, continuous as an operator $H^s\to L^2$, such that 
\begin{align}\label{eq:tNbd}
\sup_{a\in K}\Vert P_N\cG(a) - \tN(a) \Vert_{L^2} \le \delta.
\end{align}
One difficulty in the present construction is that there is no guarantee that $\tN$ defines a mapping $H^s\to H^{s'}$, and indeed for $s'>s$ this is not generally the case. We circumvent this issue by composing with an additional FNO layer $\tL: L^2 \to H^{s'}$. By Lemma \ref{lem:proj-layer}, there exists a single-layer FNO $v\mapsto \tL(v)$, satisfying the identity 
\begin{align}\label{eq:tLid}
\tL(v) = \tL(P_Nv),
\end{align}
 for all $v$, and defining a continuous operator $H^{s'}\to H^{s'}$, such that 
\begin{align}\label{eq:tLbd}
\sup_{v\in K'} \Vert P_N v - \tL(v) \Vert_{H^{s'}} \le \delta,
\end{align}
where $K' := P_N\tN(K) \subset H^{s'}$ is a compact subset\footnote{We note that $\cN(K)\subset L^2$ is compact as the continuous image of $K$, and that $P_N: L^2\to H^{s'}$ defines a continuous mapping for fixed $N\in \N$.} 
of $H^{s'}$. Next, we define a new FNO by the composition $\cN := \tL\circ \tN: H^s \to H^{s'}$. $\cN$ is a continuous operator $H^s\to H^{s'}$, since it can be written as the composition
\[
H^s \overset{\tN}{\longrightarrow} L^2 \overset{P_N}{\longrightarrow} H^{s'} \overset{\tL}{\longrightarrow} H^{s'},
\]
of continuous operators. We observe that for any $a\in K$, we have the following bound:
\begin{align*}
\Vert P_N\cG(a) - \cN(a) \Vert_{H^{s'}}
&\le
\Vert P_N \cG(a) - P_N\tN(a) \Vert_{H^{s'}}
 + \Vert P_N \tN(a) - \cN(a) \Vert_{H^{s'}}
\\
&\le
CN^{s'}\Vert P_N \cG(a) - \tN(a) \Vert_{L^2}
 + \Vert P_N \tN(a) - \tL(P_N\tN(a)) \Vert_{H^{s'}},
\end{align*}	
having made use of the inequality $\Vert P_Nv \Vert_{H^{s'}} \le CN^{s'}\Vert P_N v\Vert_{L^2}$ for a constant $C = C(\T^d,s')>0$ independent of $N$, and the fact that $\cN(a) = \tL(\tN(a)) = \tL(P_N\tN(a))$ (cp. \eqref{eq:tLid}). Using \eqref{eq:tNbd}, we can estimate
\[
CN^{s'}\Vert P_N \cG(a) - \tN(a) \Vert_{L^2} \le CN^{s'} \delta.
\]
The bound \eqref{eq:tLbd} implies that $\Vert P_N \tN(a) - \tL(P_N\tN(a)) \Vert_{H^{s'}}=\Vert P_N v - \tL(v) \Vert_{H^{s'}} 
\explain{\le}{\eqref{eq:tLbd}} \delta$, with $v:= P_N\cN(a) \in K'$. We thus obtain
\begin{align}
\Vert P_N \G(a) - \cN(a) \Vert_{H^{s'}} \le (CN^{s'} + 1) \delta,
\end{align}
where $C = C(\T^d,s')>0$ is independent of $\delta$. Since $\delta>0$ was arbitrary, we can ensure that $(CN^{s'} + 1)\delta \le \epsilon/2$. From this estimate, and the bound \eqref{eq:GGNbd}, we conclude that there exists a FNO $\cN: H^{s}\to H^{s'}$, such that
\begin{align*}
\sup_{a\in K}\Vert \cG(a) - \cN(a) \Vert_{H^{s'}} 
&\le 
\sup_{a\in K}\Vert \cG(a) - P_N\cG(a) \Vert_{H^{s'}} \\
&\qquad + \sup_{a\in K}\Vert P_N\cG(a) - \cN(a) \Vert_{H^{s'}} 
\\
&\le \epsilon,
\end{align*}
This concludes our proof.
\end{proof}

\subsection{Proof of Lemma \ref{lem:fno-ft}}
\label{app:pfut1}

\begin{proof}
\textbf{Step 1:}
In this first step, for any $\epsilon > 0$, we will construct a FNO
\[
\cN_1: L^2(\T^d; \R) \to L^2(\T^d; \R^{2\cK_N}),
\]
such that 
\begin{align} \label{eq:N1}
\left\{
\begin{aligned}
\Vert \cN_1(v)_{1,k} - P_Nv(x)\, \cos(\langle k, x\rangle) \Vert_{L^\infty} < \epsilon, \\
\Vert \cN_1(v)_{2,k} - P_Nv(x)\, \sin(\langle k,x\rangle) \Vert_{L^\infty} < \epsilon,
\end{aligned}
\right.
\quad \forall \, {k\in\cK_N}.
\end{align}

To see how to construct such $\cN_1$, we first define a lifting 
\[
\cR_1: L^2(\T^d;\R) \to L^2(\T^d;\R^{4\cK_N}), \quad
v(x) \mapsto \hat{w}(x),
\]
where $\hat{w}(x) := \{ (v(x), 0, v(x), 0) \}_{k\in \cK_N} \in \R^{4\cK_N}$ for any $x\in \T^d$. In the following, we will identify $\R^{4\cK_N}\simeq \R^{d_v}$, with $d_v = 4|\cK_N|$. Next, we define the inner part of the first FNO layer (i.e. the matrix $W$, multiplier $P$, bias $b(x)$ in \eqref{eq:fno-layer}), such that $P(k) \equiv 1_{|k|\le N} \bm{1}_{d_v\times d_v}$ either vanishes (for $|k|>N$) or is the unit matrix (for $|k|\le N$), $W \equiv 0$ is zero and the bias function $b(x) := \{(0,b_{2,k}(x),0,b_{4,k}(x))\}_{k\in \cK_N}$, $b_{2,k}(x) = \cos(\langle k, x\rangle)$, $b_{4,k}(x) = \sin(\langle k, x\rangle)$, such that
\begin{align*}
\hL_1(\hat{v})(x)
&:=
W \hat{w}(x) + b(x) + \cF^{-1} (P \cF \hat{w})(x)
\\
&=
\begin{Bmatrix}
\big(\,
P_N\hat{w}_{1,k}(x),\,
\cos(\langle k, x\rangle),\,
P_N\hat{w}_{3,k}(x),\,
\sin(\langle k, x\rangle)
\,\big)
\end{Bmatrix}_{k\in \cK_N}.
\end{align*}
Next, we recall that by assumption, we have $\Vert v \Vert_{L^2} \le B$, and by construction, we have $\hat{w}_{1,k}(x) = \hat{w}_{3,k}(x) = v(x)$, implying that \[
\Vert P_N \hat{w}_{1,k} \Vert_{L^\infty}
=
\Vert P_N \hat{w}_{3,k} \Vert_{L^\infty}
=
\Vert P_N v \Vert_{L^\infty}
\le
C \Vert P_N v \Vert_{L^2}
\le
C B,
\]
where $C = C(N) \propto N^{d/2}$ is a constant depending on $N$. By the universal approximation theorem for (ordinary) neural networks, there exists a neural network 
\[
\hN: [-CB,CB]\times [-1,1]\times [-CB,CB]\times [-1,1] \to \R^2
\]
with activation function $\sigma$, such that $\max_{(a,b,c,d)} |\hN(a,b,c,d) - (ab,cd)| < \epsilon$, and where the maximum is taken over all $a,c \in [-CB,CB]$, $b,d\in [-1,1]$. But then, the point-wise mapping \begin{align*}
\cN_1 = \hN \circ \hL\circ \cR_1: 
\left\{
\begin{aligned}
&L^2(\T^d;\R) \to L^2(\T^d;\R^{2\cK_N}), 
\\
&v(x) \mapsto \tilde{w}(x) := \hL_1(\cR_1(v))(x) \mapsto \hN(\tilde{w}(x)),
\end{aligned}
\right.
\end{align*}
satisfies \eqref{eq:N1}, and $\cN_1$ can be represented by a FNO (cp. Remark \ref{rem:loc}).
\textbf{Step 2:} By the last step, we have $\Vert \cN_1(v)_{1,k} - P_Nv(x) \, \cos(\langle k, x\rangle)\Vert_{L^\infty} < \epsilon$ and $\Vert \cN_1(v)_{2,k} - P_Nv(x) \, \sin(\langle k, x\rangle)\Vert_{L^\infty} < \epsilon$ for all $v\in L^2(\T^d)$ with $\Vert v \Vert_{L^2}\le B$.  We next note that, since 
\begin{align*}
P_Nv(x) 
&= \sum_{|k|_\infty\le N} \hat{v}_k e^{i\langle k, x\rangle},
\end{align*}
and since $v(x)$ is real-valued, the Fourier coefficients $\hat{v}_k$ satisfy $\hat{v}_{-k} = \overline{\hat{v}_k}$, where $\overline{\hat{v}_k}$ denotes the complex conjugate of ${\hat{v}_k}$, and 
\[
\fint_{\T^d} P_Nv(x) \cos(\langle k, x\rangle) \, dx
=
\Re(\hat{v}_k),
\quad
\fint_{\T^d} P_Nv(x) \sin(\langle k, x\rangle) \, dx
=
-\Im(\hat{v}_k),
\]
In particular, this implies that the $0$-th Fourier modes of $P_Nv(x) \, \cos(\langle k, x\rangle)$, and $P_Nv(x) \, \sin(\langle k, x \rangle)$, respectively, are given by
\begin{align*}
\cF\Big[P_Nv \cos(\langle k, \slot\rangle)\Big](0)
=
\Re(\hat{v}_k),
\quad
\cF\Big[P_Nv \sin(\langle k, \slot\rangle)\Big](0)
=
-\Im(\hat{v}_k),
\end{align*}
and, as a consequence, we have with the Fourier multiplier 
\[
\delta_{0}(k') = 
\begin{cases}
1, & (k'=0) \\
0, & (k'\ne 0),
\end{cases}
\quad 
\forall \, k'\in \Z^d,
\]
 and with $w(x) := \cN_1(v)(x) \in L^2(\T^d;\R^{2\cK_N})$, written as $w(x) = \{(w_{1,k}(x), w_{2,k}(x))\}_{k\in \cK_N}$,  that 
 \begin{align*}
\left\Vert 
\cF^{-1}\Big(\delta_0(k')\cF(w_{1,k})(k')\Big)(x) - \Re(\hat{v}_k) 
\right\Vert_{L^\infty} < \epsilon, \\
\left\Vert 
\cF^{-1}\Big(\delta_0(k')\cF(w_{2,k})(k')\Big)(x) + \Im(\hat{v}_k) 
\right\Vert_{L^\infty} < \epsilon,
\end{align*}
for all $v\in L^2(\T^d)$ with $\Vert v \Vert_{L^2}\le B$. We use this observation to define a suitable FNO layer; with local matrix $W = 0$, bias $b(x) = 0$ and a Fourier multiplier matrix $P: \Z^d \to \C^{d_v\times d_v}$, $k' \mapsto P(k')$ (where $d_v = 2|\cK_N|$, and $\C^{d_v} \simeq \C^{2\cK_N}$), with entries 
\[
[P(k')]_{(\ell,k),(\tilde{\ell},\tilde{k})} := \delta_{0}(k') \left\{\delta_{\ell=1}(\ell) - \delta_{\ell=2}(\ell)\right\} \bm{1}_{d_v\times d_v}.
\]
With this definition of the Fourier multiplier $P$, we define $\hL_2: L^2(\T^d; \R^{2\cK_N}) \to L^2(\T^d; \R^{2\cK_N})$, by
\[
\hL_2(w) := Ww(x) + b(x) + \cF^{-1} \Big(P\cF w\Big)(x).
\]
Then, by construction, we have for any $v\in L^2(\T^d)$ with $\Vert v \Vert_{L^2} \le B$ that the output $w := \hL_2(\cN_1(v))$, $w(x) = \{(w_{1,k}(x),w_{2,k}(x)) \}_{k\in \cK_N}$, satisfies 
\[
\left\{
\begin{aligned}
\left\Vert
w_{1,k}(x) - \Re(\hat{v}_k)
\right\Vert_{L^\infty}
&< \epsilon, \\
\left\Vert
w_{2,k}(x) - \Im(\hat{v}_k)
\right\Vert_{L^\infty}
&< \epsilon.
\end{aligned}
\right.
\]
We also note that $\hL_2$ outputs only constant functions, by construction.
This is almost the desired result, except for the fact that the composition $\hL_2 \circ \cN_1$ does not define a FNO, since the linear layer $\hL_2$ is missing the non-linearity $\sigma$. This can be rectified by composition with a suitable (ordinary) $\sigma$-neural network $\tN$, which approximates the identity: Indeed, the last estimate implies in particular that $|w_{\ell,k}| \le |\hat{v}_k| + \epsilon \le B + \epsilon$ for all $\ell$, $k$. By the (ordinary) universal approximation theorem, there exists a neural network $\tN: \R^{2\cK_N} \to \R^{2\cK_N}$, such that \[
\Vert \tN(w) - w \Vert_{\ell^\infty} < \epsilon,
\quad 
\forall w\in \R^{2\cK_N}, \; \Vert w \Vert_{\ell^\infty} \le B + \epsilon.
\]
Then the composition $\cN_2 := \tN \circ \hL_2: L^2(\T^d;\R^{2\cK_N}) \to L^2(\T^d;\R^{2\cK_N})$, given by $w(x) \mapsto \tN\left(\hL_2(w)(x)\right)$, \emph{does} define a FNO.

\textbf{Step 3:} We finally observe that since both $\cN_1$ and $\cN_2$ can be represented by FNOs, then also their composition 
\[
\cN := \cN_2 \circ \cN_1: L^2(\T^d) \to L^2(\T^d;\R^{2\cK_N}),
\]
can be represented by a FNO with $\depth(\cN) = \depth(\cN_1) + \depth(\cN_2)$, $\width(\cN) = \max_{j=1,2} \width(\cN_j)$, $\lift(\cN) = \max_{j=1,2} \lift(\cN_j)$. Furthermore, the mapping $v\mapsto \cN(v)$ maps to constant functions, and for any $v\in L^2(\T^d)$, $\Vert v \Vert_{L^2} \le B$, we have
\[
\left\{
\begin{aligned}
\left\Vert
\Re(\hat{v}_k) - \cN(v)_{1,k}
\right\Vert_{L^\infty} 
< 2\epsilon, \\
\left\Vert
\Im(\hat{v}_k) - \cN(v)_{2,k}
\right\Vert_{L^\infty}
< 2\epsilon.
\end{aligned}
\right.
\]
Since $\epsilon > 0$ was arbitrary, the claim follows.
\end{proof}

\subsection{Proof of Lemma \ref{lem:fno-ift}}
\label{app:pfut2}
\begin{proof}
\textbf{Step 1:} We first construct a FNO $\cN_1: L^2(\T^d;\R^{2\cK_N}) \to L^2(\T^d;\R^{2\cK_N})$, such that 
\begin{gather} \label{eq:0N1}
\left\{
\begin{aligned}
\left\Vert
\cN_1(w)_{1,k}
-
P_N w_{1,k}(x) \cos(\langle k, x\rangle)
\right\Vert_{L^\infty} < \left(2|\cK_N||\T^d|^{1/2}\right)^{-1}\,\epsilon, 
\\
\left\Vert
\cN_1(w)_{2,k}
-
P_N w_{2,k}(x) \sin(\langle k, x\rangle)
\right\Vert_{L^\infty} < \left(2|\cK_N||\T^d|^{1/2}\right)^{-1}\,\epsilon, 
\end{aligned}
\right.
\end{gather}
for all $w(x) = (w_{1,k}(x), w_{2,k}(x))$, such that $\Vert w_{1,k}\Vert_{L^\infty}, \, \Vert w_{2,k}\Vert_{L^\infty} \le B$, for all $k\in \cK_N$. Here $|\T^d|=(2\pi)^d$ denotes the Lebesgue measure of $\T^d$ and $|\cK_N|=(2N+1)^d$ the number of elements of $\cK_N$. 

To construct such $\cN_1$, we first define a lifting $\cR: L^2(\T^d;\R^{2\cK_N}) \to L^2(\T^d;\R^{4\cK_N})$, $\{(w_{1,k}(x),w_{2,k}(x))\}_{k\in \cK_N} \mapsto \{(w_{1,k}(x),0,w_{2,k}(x),0)\}_{k\in \cK_N}$, followed by a linear layer $\cL: L^2(\T^d;\R^{4\cK_N}) \to L^2(\T^d;\R^{4\cK_N})$, which only introduces a bias (setting $W\equiv 0$, $b(x) = \{(0,\cos(\langle k,x\rangle),0,\sin(\langle k,x\rangle))\}_{k\in \cK_N}$, $P\equiv 0$), to yield 
\[
\cL\circ \cR(w) = \{(w_{1,k}(x),\cos(\langle k,x\rangle),w_{2,k}(x),\sin(\langle k,x\rangle))\}_{k\in \cK_N},
\]
for all $w \in L^2(\T^d;\R^{2\cK_N})$. There exists an ordinary neural network $\hN: [-B,B] \times [-1,1] \times [-B,B]\times [-1,1] \to \R^2$, such that $\max_{a,b,c,d} |\hN(a,b,c,d) - (ab,cd)| < (2|\cK_N||\T^d|^{1/2})^{-1} \epsilon$, where the maximum is taken over $a,c\in [-B,B]$ and $b,d \in [-1,1]$. Since, by assumption, we have $\Vert w_{1,k} \Vert_{L^\infty}, \Vert w_{2,k}\Vert_{L^\infty} \le B$ for the inputs of interest, we conclude that 
$\cN_1(w) := \hN \circ \cL \circ \cR(w)$ satisfies \eqref{eq:0N1}. Furthermore, by Lemma \ref{lem:sigma} and Lemma \ref{lem:composition}, $\cN_1$ is represented by a FNO.

\textbf{Step 2:} We define a projection $\cQ$ (cp. the definition of neural operators \eqref{eq:no-q}), mapping $w(x) \in L^2(\T^d);\R^{2\cK_N})$ to a scalar-valued function, by
\[
\cQ: L^2(\T^d;\R^{2\cK_N}) \to L^2(\T^d), \quad w(x) \mapsto \sum_{k\in \cK_N} \left(w_{1,k}(x) - w_{2,k}(x)\right).
\]
Then $\cN := \cQ \circ \cN_1: L^2(\T^d;\R^{2\cK_N}) \to L^2(\T^d)$ is an FNO, and for any $v\in L^2_N(\T^d)$, and $w = \{(\Re(\hat{v}_k),\Im(\hat{v}_k))\}_{k\in \cK_N}$ defined as in the statement of this lemma, we have
\begin{align*}
\Vert 
v - \cN(w)
\Vert_{L^2}
&=
\left\Vert 
v - \sum_{k\in \cK_N} \Big( 
\cN_1(w)_{1,k}
- \cN_1(w)_{2,k}
\Big)
\right\Vert_{L^2}
\\
&\le 
\sum_{k\in \cK_N} 
\left\Vert 
\cN_1(w)_{1,k} - \Re(\hat{v}_k) \cos(\langle k,x \rangle)
\right\Vert_{L^2}
\\
&\qquad +
\sum_{k\in \cK_N} 
\left\Vert 
\cN_1(w)_{2,k} - \Im(\hat{v}_k) \sin(\langle k,x \rangle) 
\right\Vert_{L^2}
\\
&\le 
\sum_{k\in \cK_N} 
\left\Vert 
\cN_1(w)_{1,k} - \Re(\hat{v}_k) \cos(\langle k,x \rangle)
\right\Vert_{L^2}
\\
&\qquad +
\sum_{k\in \cK_N} 
\left\Vert 
\cN_1(w)_{2,k} - \Im(\hat{v}_k) \sin(\langle k,x \rangle) 
\right\Vert_{L^2}
\\
&\le
|\cK_N| |\T^d|^{1/2} 
\max_{k\in \cK_N} \left\Vert \cN_1(w)_{1,k} 
- 
w_{1,k}\cos(\langle k,x\rangle)
\right\Vert_{L^\infty}
\\
&\qquad
|\cK_N| |\T^d|^{1/2} 
\max_{k\in \cK_N} \left\Vert \cN_1(w)_{2,k} 
- 
w_{2,k} \sin(\langle k,x\rangle)
\right\Vert_{L^\infty}
\\
&< \epsilon.
\end{align*}
\end{proof}

\subsection{Proof of the Universal Approximation Theorem \ref{thm:universal}}
\label{app:pfut}
As mentioned in the sketch of the proof of Theorem \ref{thm:universal}, we only need to consider the special case $s'=0$, due to Lemma \ref{lem:fut0}; i.e. given a continuous operator $\cG: H^s(\T^d) \to L^2(\T^d)$, $K\subset H^s(\T^d)$ compact, and $\epsilon>0$, we wish to construct a FNO $\cN: H^s(\T^d) \to L^2(\T^d)$, such that $\sup_{a\in K}\Vert \cG(a) - \cN(a) \Vert_{L^2} \le \epsilon$.

Throughout this proof, we set $d_a=d_u=1$ for notational convenience. The general case with $d_a, d_u > 1$ follows analogously. For $N\in \N$, let $P_N$ denote the orthogonal Fourier projection \eqref{eq:proj}. First, we note that since $K\subset H^{s}(\T^d)$ is compact, one can prove by elementary arguments that the set $\tilde{K}$ given by
\[ 
\tilde{K} 
:= 
K \cup {\bigcup_{N\in \N} P_N K},
\]
is compact. Since $\G$ is continuous, its restriction to $\tilde{K}$ is uniformly continuous, i.e. there exists a modulus of continuity $\omega: [0,\infty) \to [0,\infty)$, such that 
\[
\Vert \G(a) - \G(a') \Vert_{L^2} 
\le 
\omega
\left(
\Vert a - a' \Vert_{H^{s}}
\right), 
\quad \forall \, a, a' \in \tilde{K}.
\]

From the definition of the projection $\G_N$ \eqref{eq:defGN}, we have,
\begin{align}
\Vert \G(a) - \G_N(a) \Vert_{L^2}
&\le 
\Vert \G(a) - P_N\G(a) \Vert_{L^2}
+
\Vert P_N \G(a) - P_N\G(P_Na) \Vert_{L^2}
\notag\\
&\le 
\Vert \G(a) - P_N\G(a) \Vert_{L^2}
+
\Vert \G(a) - \G(P_Na) \Vert_{L^2}
\notag\\
&\le 
\sup_{v\in \G(\tilde{K})} \Vert (1-P_N) v \Vert_{L^2}
+
\omega\left(
\sup_{a\in \tilde{K}}
\Vert (1-P_N)a \Vert_{H^{\sigma}}
\right).
\label{eq:Gw}
\end{align}
Since $\tilde{K}$ is compact, also the image $\G(\tilde{K})$ is compact leading to,
\begin{align*}
\limsup_{N\to \infty} 
\sup_{u\in \G(\tilde{K})} \Vert (1-P_N) v \Vert_{L^2}
= 0 =
\limsup_{N\to \infty} 
\sup_{a\in \tilde{K}}
\Vert (1-P_N)a \Vert_{H^{s}}.
\end{align*}
In particular, there exists $N\in \N$, such that 
\begin{align} \label{eq:aGN}
\Vert \G(a) - \G_N(a) \Vert_{L^2} \le \epsilon, \quad
\forall \, a \in K \subset \tilde{K}.
\end{align}
In the remainder of this proof, we will construct an FNO approximation of $\G_N$. In fact, we note that $\G_N$ defines a continuous operator $\G_N:L^2(\T^d) \to L^2(\T^d)$, via $a \mapsto P_N\G(P_N a)$, and the compact set $K$ remains compact also as a subset of $L^2(\T^d)$. We will show that there exists a FNO $\cN: L^2(\T^d) \to L^2(\T^d)$, such that 
\[
\sup_{a\in K} \Vert \G_N(a) - \cN(a) \Vert_{L^2} < \epsilon.
\]
Then the restriction of $\cN$ to $H^s(\T^d) \subset L^2(\T^d)$ provides an approximation of $\G$, such that 
\[
\sup_{a\in K} \Vert \G(a) - \cN(a) \Vert_{L^2} < 2\epsilon.
\]
Since $\epsilon > 0$ was arbitrary, the claim follows from this.

As outlined in the sketch, our proof for the existence of a FNO approximating $\G_N$ relies on the decomposition \eqref{eq:decompG}, which in turn is defined via the Fourier conjugate operator $\hat{G}_N$ \eqref{eq:Fconj}. Our aim is to show that each operator in the decomposition \eqref{eq:decompG} can be approximated by FNOs to desired accuracy. To this end, let $\epsilon > 0$ be given, and choose $R_K, R_{\hat{K}}, R_{\hat{\G}} > 0$, such that 
\begin{align} \label{eq:subsets}
\left\{
\begin{gathered}
K \subset B_{R_K}(0) := \{\Vert u \Vert_{L^2}\le R_K\} \subset L^2(\T^d), 
\\
\cF_N \circ P_N \left( B_{R_K}(0) \right)
\subset 
\left[
-\frac{R_{\hat{K}}}{2}, \frac{R_{\hat{K}}}{2}
\right]^{2\cK_N}, 
\\
\hat{\G}_N\left(
[-R_{\hat{K}}, R_{\hat{K}}]^{2\cK_N}
\right)
\subset
\left[
-\frac{R_{\hat{\G}}}{2}, \frac{R_{\hat{\G}}}{2}
\right]^{2\cK_N}.
\end{gathered}
\right.
\end{align}
The reason for introducing $R_K, R_{\hat{K}}, R_{\hat{\G}}$ lies in the fact that in order to approximate $\G_N$ by a composition of FNOs, we need to ensure that each FNO in this composition maps its own domain into the domain of the next FNO. The FNO approximations of the individual steps in the composition $\G_N = \cF_N^{-1} \circ \hat{\G}_N \circ (\cF_N \circ P_N)$ are constructed below: \\
\textbf{FNO approximation of $\cF_N^{-1}$:} We start with our construction of a FNO approximation of the last step in the composition. To this end, we are going to interpret the mapping 
\[
\cF_N^{-1}: [-R,R]^{2\cK_N} \subset \R^{2\cK_N} \to L^2(\T^d),
\]
as a mapping 
\begin{align} \label{eq:ift0}
\cF_N^{-1}:
\left\{
\begin{aligned}
&L^2(\T^d; [-R,R]^{2\cK_N}) \to L^2(\T^d),
\\
&\{\Re(\hat{v}_k), \Im(\hat{v}_k)\}_{|k|\le N} \mapsto v(x),
\end{aligned}
\right.
\end{align}
where the input $\{\Re(\hat{v}_k), \Im(\hat{v}_k)\}_{|k|\le N} \in [-R,R]^{2\cK_N}$ is identified with a \emph{constant} function in $L^2(\T^d;[-R,R]^{2 \cK_N})$ (for non-constant input function $v(x)$, we apply $\cF_N^{-1}$ to the constant function $x \mapsto \fint_{\T^d} v(\xi) \, d\xi$ to define the mapping \eqref{eq:ift0} for general inputs). By Lemma \ref{lem:fno-ift}, the mapping \eqref{eq:ift0} can be approximated to any desired accuracy by an FNO $\cN_{\mathrm{IFT}}: L^2(\T^d; \R^{2\cK_N}) \to L^2(\T^d)$, such that 
\begin{align} \label{eq:Gift}
\Vert \cN_{\mathrm{IFT}}(\hat{v}) - \cF_N^{-1}(\hat{v}) \Vert_{L^2}
\le
\epsilon/3,
\end{align}
for all \emph{constant} input functions $\hat{v} \in L^2(\T^d;[-R,R]^{2\cK_N})$. \\
\textbf{FNO approximation of $\hat{\G}_N$:}
We view the Fourier conjugate operator $\hat{\G}_N$ \eqref{eq:Fconj} as a continuous mapping 
\[
\hat{\G}_N: [-R_{\hat{K}}, R_{\hat{K}}]^{2\cK_N} \subset \R^{2 \cK_N} \to \R^{2\cK_N}.
\]
Since $[-R_{\hat{K}}, R_{\hat{K}}]^{2\cK_N}$ is compact, there exists a finite-dimensional canonical neural network $\hat{\cN}: \R^{2\cK_N} \to \R^{2 \cK_N}$, such that 
\begin{align} \label{eq:GNhat}
\sup_{\hat{v}\in [-R_{\hat{K}}, R_{\hat{K}}]^{2\cK_N}} 
\Vert 
\hat{\G}_N(\hat{v}) - \hat{\cN}(\hat{v}) 
\Vert_{\ell^2}
\le \epsilon/3.
\end{align}
Furthermore, by \eqref{eq:subsets}, we have
\[
\hat{G}_N\left(
[-R_{\hat{K}}, R_{\hat{K}}]^{2\cK_N}
\right)
\subset
\left[
-\frac{R_{\hat{\G}}}{2}, \frac{R_{\hat{\G}}}{2}
\right]^{2\cK_N}.
\]
Therefore, by choosing a neural network approximation $\hat{\cN}$ with sufficiently high accuracy, we can also ensure that
\[
\hat{\cN}\left(
[-R_{\hat{K}}, R_{\hat{K}}]^{2\cK_N}
\right)
\subset
\left[
-{R_{\hat{\G}}}, {R_{\hat{\G}}}
\right]^{2\cK_N},
\]
in addition to \eqref{eq:GNhat}. Finally, we note that for $v\in L^2(\T^d;\R^{2\cK_N})$, the corresponding mapping
\[
\hat{\cN}: 
L^2(\T^d;\R^{2\cK_N}) \to L^2(\T^d;\R^{2\cK_N}), 
\quad
v(x) \mapsto \hat{\cN}(v(x)), 
\]
is in fact an FNO, with only \emph{local} layers of the form
\[
v_\ell(x) \mapsto \sigma\left(A_\ell v_\ell(x) + b_\ell \right), 
\quad
(A_\ell \in \R^{d_v\times d_v}, \; b_\ell \in \R^{d_v}),
\]
and where $d_v := 2|\cK_N|$, i.e. a FNO for which all $P_\ell \equiv 0$ (cp. Remark \ref{rem:loc}). We shall thus identify $\hN$ with this particular FNO, in the following. \\
\textbf{FNO approximation of $\cF_N\circ P_N$:}
Finally, we can similarly interpret
\[
\cF_N\circ P_N: B_{R_K}(0)\subset L^2(\T^d) \to \R^{2\cK_N},
\]
as a mapping 
\begin{align} \label{eq:ft0}
\cF_N\circ P_N:
\left\{
\begin{aligned}
&B_{R_K}(0) \to L^2(\T^d;\R^{2\cK_N}),
\\
&v \mapsto \{\Re(\hat{v}_k), \Im(\hat{v}_k)\}_{|k|\le N},
\end{aligned}
\right.
\end{align}
where the output $\{\Re(\hat{v}_k), \Im(\hat{v}_k)\}_{|k|\le N} \in \R^{2\cK_N}$ is a \emph{constant} function in  $L^2(\T^d;\R^{2 \cK_N})$.
By Lemma \ref{lem:fno-ft}, the mapping \eqref{eq:ft0} can be approximated to any desired accuracy by an FNO $\cN_{\mathrm{FT}}: B_{R_K}(0) \to L^2(\T^d;\R^{2\cK_N})$ (with constant output functions). In particular, denoting $\Lip(\hat{\cN})$ the Lipschitz constant of the FNO constructed in the previous step, we can ensure that 
\begin{align} \label{eq:Gft}
\Lip(\hat{\cN})\, 
\Vert \cF_N P_N v - \cN_{\mathrm{FT}}(v) \Vert_{\ell^2}
\le \epsilon/3, 
\quad \forall \, v\in B_{R_K}(0),
\end{align}
and furthermore, since by \eqref{eq:subsets}, we have
\[
\cF_N\circ P_N(B_{R_K}(0))
\subset 
\left[
-\frac{R_{\hat{K}}}{2},\frac{R_{\hat{K}}}{2}
\right]^{2\cK_N},
\]
we can in addition ensure that 
\[
{\cN_{\mathrm{FT}}}
\left(
B_{R_K}(0)
\right)
\subset 
\left[
-{R_{\hat{K}}}, {R_{\hat{K}}}
\right]^{2\cK_N}.
\]
\textbf{Error estimate for resulting FNO approximation:} We now define a FNO $\cN(a) := \cN_{\mathrm{IFT}} \circ \hat{\cN} \circ \cN_{\mathrm{FT}}(a)$, where the right-hand side terms have been constructed above. We note that 
\begin{align*}
\sup_{K} &\Vert \G_N - \cN \Vert_{L^2}
\\
&\le
\sup_{B_{R_K}(0)} 
\left\Vert 
\cF_N^{-1} \circ \hat{\G}_N \circ \cF_N \circ P_N
- 
\cN_{\mathrm{IFT}} \circ \hat{\cN} \circ \cN_{\mathrm{FT}} 
\right\Vert_{L^2}
\\
&\le 
\sup_{B_{R_K}(0)} 
\left\Vert 
\cF_N^{-1} \circ \hat{\G}_N \circ \cF_N \circ P_N
- 
\cF_N^{-1}  \circ \hat{\cN} \circ \cN_{\mathrm{FT}}
\right\Vert_{L^2}
\\
&\quad +
\sup_{ B_{R_K}(0)}
\left\Vert 
\cF_N^{-1}  \circ \hat{\cN} \circ \cN_{\mathrm{FT}}
- 
\cN_{\mathrm{IFT}} \circ \hat{\cN} \circ \cN_{\mathrm{FT}}
\right\Vert_{L^2}
\\
&\le 
\sup_{B_{R_K}(0)} 
\left\Vert 
\hat{\G}_N \circ \cF_N \circ P_N
- 
\hat{\cN} \circ \cN_{\mathrm{FT}}
\right\Vert_{L^2}
\\
&\quad +
\sup_{\hat{\cN}\left(\cN_{\mathrm{FT}}\left(B_{R_K}(0)\right)\right)}
\left\Vert 
\cF_N^{-1} 
- 
\cN_{\mathrm{IFT}}
\right\Vert_{L^2}
\\
&=: (I) + (II).
\end{align*}
For the second term $(II)$, we note that 
\[
\hat{\cN}\left(\cN_{\mathrm{FT}}\left(B_{R_K}(0)\right)\right)
\subset 
\hat{\cN}\left(
[-R_{\hat{K}}, R_{\hat{K}}]^{2\cK_N}
\right)
\subset
\left[
-{R_{\hat{\G}}}, {R_{\hat{\G}}}
\right]^{2\cK_N},
\]
and hence, by \eqref{eq:Gift}, we can bound
\begin{align*}
(II)
&\le
\sup_{\left[
-{R_{\hat{\G}}}, {R_{\hat{\G}}}
\right]^{2\cK_N}}
\left\Vert 
\cF_N^{-1} 
- 
\cN_{\mathrm{IFT}}
\right\Vert_{L^2}
\le
\epsilon/3.
\end{align*}
To estimate the first term $(I)$, we note that 
\begin{align*}
(I) &=\sup_{B_{R_K}(0)} 
\left\Vert 
\hat{\G}_N \circ \cF_N \circ P_N
- 
\hat{\cN} \circ \cN_{\mathrm{FT}}
\right\Vert_{L^2}
\\
&\le 
\sup_{B_{R_K}(0)} 
\left\Vert 
\hat{\G}_N \circ \cF_N \circ P_N
- 
\hat{\cN} \circ \cF_N \circ P_N
\right\Vert_{L^2}
\\
&\quad 
+ 
\sup_{B_{R_K}(0)} 
\left\Vert 
\hat{\cN} \circ \cF_N \circ P_N
- 
\hat{\cN} \circ \cN_{\mathrm{FT}}
\right\Vert_{L^2}
\\
&=: (Ia) + (Ib).
\end{align*}
To estimate $(Ia)$, we note that 
\[
\cF_N \left( P_N\left( B_{R_K}(0) \right) \right)
\subset 
\left[
-R_{\hat{K}}, R_{\hat{K}}
\right]^{2\cK_N}, 
\]
and hence 
\[
(Ia) \le
\sup_{\left[
-R_{\hat{K}}, R_{\hat{K}}
\right]^{2\cK_N}
}
\left\Vert 
\hat{\G}_N
- 
\hat{\cN}
\right\Vert_{L^2}
\le \epsilon/3,
\]
by \eqref{eq:GNhat}. Finally, to estimate $(Ib)$, we note that 
\[
(Ib)
\le
\Lip(\hat{\cN}) 
\;
\sup_{B_{R_K}(0)} 
\left\Vert 
\cF_N \circ P_N
- 
\cN_{\mathrm{FT}}
\right\Vert_{L^2}
\le 
\epsilon/3,
\]
by \eqref{eq:Gft}. Combining these estimates, we conclude that 
\[
\sup_{a\in K} \Vert \G_N(a) - \cN(a) \Vert_{L^2} \le \epsilon,
\]
This shows that the continuous operator $\G_N$ can be approximated by a FNO $\cN$ to any desired accuracy $\epsilon> 0$, and together with \eqref{eq:GN} concludes our proof of the universal approximation theorem \ref{thm:universal} for the special case $s'=0$. The general case with $s'\ge 0$ now follows from Lemma \ref{lem:fut0}.

\subsection{Proof of Theorem \ref{thm:fno-pfno}}
\label{app:fno-pfno}
\begin{proof}
The proof involves 4 steps. 

\step{1}{ We may wlog assume that all biases $b_\ell(x) \in C^\infty$. }

\noindent 
Indeed, it is easy to see that under the finite width assumption and for Lipschitz continuous $\sigma$, each layer $\cL_\ell$ defines a Lipschitz continuous mapping $L^2 \to L^2$. Replacing $b_\ell(x)$ by it's $\delta$-mollification $b^\delta_\ell(x)$ we obtain a new layer $\cL^\delta_\ell$, i.e.
\[
\cL^\delta_\ell(v) 
=
\sigma\left(
W_\ell v(x) + b^\delta_\ell(x) + \cF^{-1}\left( P_\ell \cF v\right)(x)
\right).
\]
Then we have 
\begin{align} \label{eq:moll}
\Vert \cL_\ell(v) - \cL^\delta_\ell(v) \Vert_{L^2}
\lesssim_{\Lip(\sigma),\ell}
\Vert b_\ell - b^\delta_\ell \Vert_{L^2}
\to 0,
\end{align}
and uniformly in $\delta > 0$, we have
\begin{align} \label{eq:Lip}
\Vert
\cL_\ell^\delta(v) - \cL_\ell^\delta(v')
\Vert_{L^2}
\lesssim_{\Lip(\sigma),\ell} 
\Vert v - v' \Vert_{L^2}.
\end{align}
In particular, this implies that $\Lip(\cL_\ell^\delta) \le C(\Lip(\sigma),\ell)$ is uniformly bounded in $\delta \ge 0$. Thus, there exists $\bar{M} \ge 1$, such that $\Lip(\cQ) \le \bar{M}$ and $\Lip(\cL_\ell^\delta)\le \bar{M}$, for all $\ell=1,\dots, L$ and $\delta > 0$. Properties \eqref{eq:moll} and \eqref{eq:Lip} are sufficient to show that with a sufficiently small choice of $\delta > 0$, we have
\begin{align} \label{eq:Ndeps}
\sup_{\Vert a \Vert_{H^s} \le B} \Vert \cN(a) - \cN^\delta(a) \Vert_{L^2} \le \epsilon,
\end{align}
where $\cN^\delta := \cQ\circ \cL^\delta_L \circ \dots \circ \cL^\delta_1 \circ\cR$: To see this, we introduce $\cN^\delta_{\ell} := \cQ \circ \cL^\delta_L \circ\dots \cL^\delta_{\ell+1} \circ \cL_\ell \circ \dots \circ \cL_1 \circ \cR$, and we note that for any $a\in L^2$, we have
\begin{align*}
\Vert
\cN^\delta_\ell(a) - \cN^\delta_{\ell-1}(a)
\Vert_{L^2}
&\le
\Lip(\cQ \circ \cL^\delta_{L}\circ \dots \circ \cL^\delta_{\ell+1})
\Vert 
\cL_\ell(a')
 - 
\cL^\delta_{\ell}(a') 
\Vert_{L^2},
\end{align*}
where $a' := \cL_{\ell-1} \circ \dots \circ \cL_1 \circ \cR(a)$. Using the uniform Lipschitz bound $\bar{M}$ and the estimate \eqref{eq:moll}, we obtain
\[
\Vert
\cN^\delta_\ell(a) - \cN^\delta_{\ell-1}(a)
\Vert_{L^2}
\le 
\bar{M}^L \Vert b_\ell - b_\ell^\delta \Vert_{L^2}.
\]
It now follows from the telescoping sum $\cN(a) - \cN^\delta(a) = \cN^\delta_L(a) - \cN^\delta_0(a) = \sum_{\ell=1}^L \left(\cN^\delta_{\ell}(a) - \cN^\delta_{\ell-1}(a)\right)$ that
\begin{align*}
\Vert 
\cN(a) - \cN^\delta(a)
\Vert_{L^2}
&\le
\sum_{\ell = 1}^{L}
\Vert \cN^\delta_{\ell}(a) - \cN^\delta_{\ell-1}(a)\Vert_{L^2}
\\
&\le
\bar{M}^L 
\sum_{\ell=1}^L \Vert b_\ell - b_\ell^\delta \Vert_{L^2}.
\end{align*}
Since the bound on the right-hand side is independent of $a$, and since $\lim_{\delta \to 0} \Vert b_\ell - b_\ell^\delta\Vert_{L^2} = 0$, choosing $\delta>0$ sufficiently small, we obtain \eqref{eq:Ndeps}.

\step{2}{
Using Step 1, we will assume wlog that all biases are smooth, in the following. Given $a\in H^s$, let $a_\ell := \cL_\ell \circ \dots \cL_1 \circ \cR(a)$. We claim that there exists a constant $B'>0$, depending only on $B$, the activation function $\sigma$ and the biases $b_\ell(x)$, such that $\Vert a_\ell \Vert_{H^s} \le B'$, provided that $\Vert a \Vert_{H^s}\le B$ and $\ell=1,\dots, L$.
}

Indeed, it is a well-known fact\footnote{This is well-known in the integer case $s\in \N$, but remains true also for fractional Sobolev spaces \cite{BM2001}.} that if $v\in H^s$, $s>d/2$, and if $\sigma \in C^{\lfloor s \rfloor + 1}$ (satisfied by assumption), then $\sigma \circ v \in H^s$. In fact, there exists a monotonically increasing function $C_\sigma$, such that $\Vert \sigma \circ v \Vert_{H^s} \le C_\sigma(\Vert v \Vert_{H^s})$. The claimed existence of $B'$ now follows immediately from this result on compositions with $\sigma$, and the observation that the linear part of each layer define a continuous (bounded, affine) mapping $H^s\to H^s$, due to the finite width assumption, and the assumption that $b_\ell \in C^\infty \subset H^s$.

\step{3}{
Let $\varsigma_1,\varsigma_2$ be given such that $s \ge \varsigma_1 > \varsigma_2 > d/2$. Each layer defines a H\"older continuous mapping $\cL_\ell: D_\ell \subset H^{\varsigma_1}\to H^{\varsigma_2}$, with H\"older exponent $\alpha = 1 - \varsigma_2/s$, and where $D_\ell$ denotes the image of the set $\{\Vert a \Vert_{H^s}\le B\}$ under the previous $\ell-1$ layers.
}

By Step 2, we have that $\Vert \cL_\ell(v) \Vert_{H^s} \le B'$ for all $v\in D_\ell$. Furthermore, as observed in Step 1, \eqref{eq:Lip}, $\cL_\ell: L^2 \to L^2$ is Lipschitz continuous. It follows that for any $v\in D_\ell$, we have, by the interpolation inequality, that
\begin{align*}
\Vert \cL_\ell(v) - \cL_\ell(v') \Vert_{H^{\varsigma_2}}
&\lesssim
\Vert \cL_\ell(v) - \cL_\ell(v') \Vert_{L^2}^{1-\varsigma_2/s}
\Vert \cL_\ell(v) - \cL_\ell(v') \Vert_{H^s}^{\varsigma_2/s}
\\
&\lesssim_{\varsigma_2,B'}
\Vert \cL_\ell(v) - \cL_\ell(v') \Vert_{L^2}^{\alpha}
\\
&\lesssim_{\varsigma_2,B',\sigma,\ell} 
\Vert v - v' \Vert_{L^2}^\alpha
\\
&\lesssim
\Vert v - v' \Vert_{H^{\varsigma_1}}^\alpha.
\end{align*}

\step{4}{
We use Steps 1-3 to conclude that for a sufficiently large choice of $N\in \N$, the $\Psi$-FNO $\hN: L^2_N \to L^2_N$, given by 
\begin{align*}
\hN(a) 
&:= \cQ \circ \cI_N \circ \cL_L \circ \cI_N \circ \cL_{L-1} \circ \cI_N \circ \dots \circ \cL_1 \circ \cI_N \circ \cR,
\end{align*}
satisfies
\[
\sup_{\Vert a\Vert_{H^s} \le B}
\Vert \cN(a) - \hN(a) \Vert_{L^2}
\le 
\epsilon.
\]
}

In the following, we will denote by $\hL_{\ell} := \cI_N \circ \cL_{\ell} \circ \cI_N$ the ``pseudo-spectral projection'' of the layer $\cL_\ell$, and we observe that
\[
\hN = \cQ \circ \hL_L \circ \dots \circ \hL_1 \circ \cR.
\]

Fix a sequence $s = \varsigma_0 > \varsigma_1 > \varsigma_2> \dots > \varsigma_L > d/2$. By Step 3, we may view the $\ell$-th layer $\cL_\ell$ as a H\"older continuous mapping $\cL_\ell: H^{\varsigma_{\ell-1}} \to H^{\varsigma_\ell}$. Furthermore, since $\varsigma_{\ell} > d/2$ for all $\ell$, we also have that $\cI_N: H^{\varsigma_{\ell}} \to H^{\varsigma_{\ell}}$ is a bounded linear operator, with operator norm that is uniformly bounded in $N$. In particular, this implies that there exists a constant $\bar{C} \ge 1$, independent of $N$, such that, for all $a,a'\in D_\ell$:
\begin{align*}
\Vert \hL_{\ell}(a) - \hL_{\ell}(a') \Vert_{H^{\varsigma_{\ell}}}
&=
\Vert \cI_N \cL_{\ell}(\cI_N a) - \cI_N \cL_{\ell}(\cI_N a') \Vert_{H^{\varsigma_{\ell}}}
\\
&\le
\Vert \cI_N \Vert \Vert \cL_{\ell} \Vert_{C^{1-\varsigma_\ell/s}}\Vert \cI_N \Vert^{1-\varsigma_\ell/s} \Vert a - a' \Vert_{H^{\varsigma_{\ell-1}}}^{1-\varsigma_\ell/s}
\\
&\le
\bar{C} \Vert a - a' \Vert_{H^{\varsigma_{\ell-1}}}^{1-\varsigma_\ell/s}.
\end{align*}
We now introduce $\hN_{\ell} := \cQ \circ \hL_L \circ \dots \circ \hL_{\ell+1} \circ \cL_{\ell} \circ \dots \cL_1 \circ \cR$, for any $\ell=0,\dots, L$. Then, $\cN(a) - \hN(a) = \sum_{\ell=1}^L \left(\hN_\ell(a) - \hN_{\ell-1}(a) \right)$, and using the H\"older regularity of the $\hL_{\ell}$, we obtain
\begin{align*}
\Vert \hN_{\ell}(a) - \hN_{\ell-1}(a) \Vert_{L^2}
&\le \Vert \hN_{\ell}(a) - \hN_{\ell-1}(a) \Vert_{H^{\varsigma_{L}}}
\\
&\le
\Lip(\cQ)
\bar{C}^L \Vert \cL_{\ell}(a') - \hL_{\ell}(a') \Vert_{H^{\varsigma_{\ell}}}^{\beta_\ell},
\end{align*}
where $a' := \cL_{\ell-1} \circ \dots \cL_{1} \circ \cR(a)$ belongs to $D_\ell \subset \set{a'\in H^s}{\Vert a' \Vert_{H^s} \le B'}$ (cp. Step 3), and $\beta_\ell := \prod_{k=\ell+1}^L \left(1 - \frac{\varsigma_{k}}{s} \right)$ is the H\"older exponent of the composition of the layers $\hL_{L} \circ \dots \circ \hL_{\ell+1}: H^{\varsigma_{\ell+1}} \to H^{\varsigma_{L}}$. To estimate the last difference, we recall that if $\Vert a \Vert_{H^s}\le B$, then $\Vert a' \Vert_{H^{s}} \le B'$, by Step 2. In particular, it follows from the uniform H\"older continuity of the layers $\cL_{\ell}$, established in Step 3 and the pseudo-spectral approximation estimate, that 
\begin{align*}
\Vert \cL_{\ell}(a') - \hL_{\ell}(a') \Vert_{H^{\varsigma_{\ell}}}
&=
\Vert \cL_{\ell}(a') - \cI_N \cL_{\ell}(\cI_N a') \Vert_{H^{\varsigma_{\ell}}}
\\
&\le
\Vert \cL_{\ell}(a') - \cI_N \cL_{\ell}(a') \Vert_{H^{\varsigma_{\ell}}}
+
\Vert \cI_N \Vert \Vert \cL_{\ell}(a') - \cL_{\ell}(\cI_N a') \Vert_{H^{\varsigma_{\ell}}}
\\
&\lesssim
\frac{B'}{N^{-(s-\varsigma_\ell)}}
+
\Vert a' - \cI_N a' \Vert_{H^{\varsigma_{\ell-1}}}^{1-\varsigma_{\ell}/s}
\\
&\lesssim
\frac{B'}{N^{-(s-\varsigma_\ell)}}
+
\left(
\frac{
B'
}{
N^{-(s-\varsigma_{\ell-1})}
}
\right)^{(1-\varsigma_\ell/s)}
,
\end{align*}
where the implied constant is independent of $N$. In particular, we have 
\[
\lim_{N\to \infty} \sup_{a'\in D_\ell} \Vert \cL_{\ell}(a') - \hL_{\ell}(a') \Vert_{H^{\varsigma_{\ell}}} 
=
\lim_{N\to \infty} \sup_{a'\in D_\ell} \Vert \cL_{\ell}(a') - \cL_{\ell}(\cI_N a') \Vert_{H^{\varsigma_{\ell}}} 
= 0,
\]
for all $\ell=1,\dots, L$.  Choosing $N$ sufficiently large, we can thus ensure that 
\begin{align*}
\sup_{\Vert a \Vert_{H^s}\le B}
\Vert \cN(a) - \hN(a) \Vert_{L^2}
&\le
\sum_{\ell=1}^L 
\sup_{\Vert a \Vert_{H^s}\le B}
\Vert \hN_\ell(a) - \hN_{\ell-1}(a) \Vert_{L^2}
\\
&\le
\Lip(\cQ) \bar{C}^L
\sum_{\ell=1}^L \sup_{a'\in D_\ell} \Vert \cL_\ell(a') - \hL_{\ell}(a') \Vert_{H^{\varsigma_{\ell}}}^{\beta_\ell}
\\
&\le
\epsilon.
\end{align*}
This concludes the proof.
\end{proof}

\subsection{
Proof of Lemma \ref{lem:pfnout0}
}
\label{app:pfnout0}

\begin{proof}
Let $\epsilon > 0$ be given. Let $\cG: H^s \to H^{s'}$ be continuous operator, and $K\subset H^s$ a compact subset. We wish to show that there exists $N_0\in \N$, such that for any $N\ge N_0$, there exists a $\Psi$-FNO $\hN: L^2_N \to L^2_N$ such that 
\begin{align}\label{eq:target}
\sup_{a\in K} \Vert \cG(a) - \hN(a) \Vert_{H^{s'}} \le \epsilon.
\end{align}
Choose $M\in \N$, such that 
\begin{align} \label{eq:p-G}
\sup_{a\in K} \Vert P_{M}\G(a) - \G(a) \Vert_{H^{s'}} \le \epsilon/2.
\end{align} 
Fix $\delta > 0$ for the moment. We will specify a suitable choice of $\delta = \delta(M,s',\T^d,\epsilon)>0$ at the end of this proof. We emphasize that $\delta$ will depend only on parameters already introduced at this point of the proof. By assumption of the validity of the universal approximation theorem for $\Psi$-FNOs for $s'=0$, we can choose $\tilde{N}_0 = \tilde{N}_0(\cG, K, \delta) \in \N$, depending on $\cG$, $K$ and $\delta$, such that for any $N\ge \tilde{N}_0$, there exists a $\Psi$-FNO $\cN: L^2_N \to L^2_N$, such that 
\[
\sup_{a\in K} \Vert \cG(a) - \cN(a) \Vert_{L^2} \le \delta.
\]
We define $N_0 := \max(\tilde{N}_0, M)$. By our choice of $\tilde{N}_0$ and $M$, the constant $N_0$ depends only on the underlying operator $\cG$, the compact set $K$ and the parameter $\delta > 0$. Let $N\ge N_0$. We claim that if $\delta = \delta(M,s',\T^d,\epsilon)>0$ is been chosen sufficiently small, then for any $N\ge N_0$, there exists a $\Psi$-FNO $\hN: L^2_N \to L^2_N$ satisfying \eqref{eq:target}.

To see this, let $N\ge N_0$, and let $\cN: L^2_N \to L^2_N$ be a $\Psi$-FNO such that 
$\sup_{a\in K} \Vert \G(a) - \cN(a) \Vert_{L^2} \le \delta$. Then we have
\begin{align} \label{eq:p-GN}
\sup_{a\in K} \Vert P_M\G(a) - P_M\cN(a) \Vert_{H^{s'}}
\le CN^{s'} \delta,
\end{align}
where $C = C(\T^d,s')>0$ is independent of $N$ and $\delta$. Let $m\in \N$ be the smallest natural number strictly bigger than $s'$ and $d/2$, i.e. $m>\max(s', d/2)$. Let 
\begin{align} \label{eq:Kpdef}
K' := \cN(K) \subset L^2_N,
\end{align}
 be the compact image of $K\subset H^s$ under the (continuous) mapping $\cN: H^s \to L^2_N$. We note that since $L^2_N$ is a finite-dimensional space, the $L^2$-norm is equivalent to the $H^m$-norm on $L^2_N$. This implies that $K'\subset H^m$ is also compact when considered as a subset of $H^m$, with respect to norm $\Vert \slot \Vert_{H^m}$. By Lemma \ref{lem:proj-layer}, there exists a single-layer FNO $\cL: H^m \to H^m$, such that 
\begin{align} \label{eq:Lfd}
\sup_{v\in K'} \Vert P_M v - \cL(v) \Vert_{H^{m}}
\le \delta.
\end{align}
We define a $\Psi$-FNO $\hN: L^2_N \to L^2_N$ by the following composition
\[
\hN := \cI_N \circ \cL \circ \cI_N \circ \cN
=
\cI_N \circ \cL \circ \cN,
\]
where $\cI_N$ is the pseudo-spectral projection operator. We note that, since $N\ge N_0\ge M$, we have $\cI_N P_M = P_M$, and recall that for $m>d/2$, the mapping $\cI_N: H^m \to H^m$ is continuous, with an operator norm that can be bounded independently of $N$.
Hence, we can estimate, for any $a\in K$,
\begin{align*}
\Vert P_M \cN(a) - \hN(a) \Vert_{H^{s'}}
&=
\Vert \cI_N \circ P_M \cN(a) - \cI_N \circ \cL \circ \cN(a) \Vert_{H^{s'}}
\\
&\explain{\le}{(m>s')}
\Vert \cI_N \circ P_M \cN(a) - \cI_N \circ \cL \circ \cN(a) \Vert_{H^{m}}
\\
&\le
C
\Vert P_M v - \cL(v) \Vert_{H^{m}},
\end{align*}
where $C = C(\T^d,m)>0$ is independent of $M$ and $N$, and where $v:= \cN(a) \in K'$, with $K' = \cN(K)$ defined in \eqref{eq:Kpdef}. From \eqref{eq:Lfd}, we conclude that 
\begin{align} \label{eq:p-NN}
\sup_{a\in K}
\Vert P_M \cN(a) - \hN(a) \Vert_{H^{s'}}
\le
C \delta,
\end{align}
where $C = C(\T^d,m)>0$ is independent of $M$, $N$ and $\delta$. In fact, since $m$ is defined as the smallest integer $>\max(s',d/2)$, the above constant only depends on $s'$ and the dimension $d$ of the domain, i.e. we have $C = C(\T^d,s')$. Combining \eqref{eq:p-G}, \eqref{eq:p-GN} and \eqref{eq:p-NN}, we find that for $N \ge N_0$ and for any $\delta >0$, there exists a $\Psi$-FNO $\hN: L^2_N \to L^2_N$, such that 
\[
\sup_{a \in K} \Vert \G(a) - \hN(a) \Vert_{H^{s'}}
\le
\epsilon/2 + 
C(1 + N^{s'}) \delta,
\]
where $C = C(\T^d,s') > 0$ is a constant independent of $\delta$. 
Thus, if we choose $\delta = \delta(M,s',\T^d,\epsilon)>0$ at the beginning of this proof sufficiently small to ensure that 
\[
C(1 + N^{s'}) \delta \le \epsilon/2,
\]
then we conclude that for any $N\ge N_0$, there exists a $\Psi$-FNO $\hN: L^2_N \to L^2_N$, such that 
\[
\sup_{a \in K} \Vert \G(a) - \hN(a) \Vert_{H^{s'}}
\le
\epsilon.
\]
This concludes the proof.
\end{proof}

\subsection{Technical Results on the structure of {$\Psi$}-FNOs}
\label{app:fnoprop}
Recall section \ref{sec:fnoprop} where we introduced $\sigma$ layers and $\cF$-layers and claimed that $\Psi$-FNOs can be decomposed in terms of these layers. We have the following series of lemmas, which make this observation precise.
\begin{lemma} \label{lem:sigma}
Let $\hN: \R^{d_a} \to \R^{d_u}$ be a (ordinary) neural network with activation function $\sigma$. For any $N\in \N$, the mapping
\[
\cN: L^2_N(\T^d;\R^{d_a}) \to L^2_N(\T^d;\R^{d_u}),
\quad
a(x) \mapsto \cN(a)(x) := \cI_N\hN(a(x)),
\]
can be represented by a $\Psi$-FNO, with 
\[
\width(\cN) = |\cJ_N| \width(\hN),
\quad
\depth(\cN) = \depth(\hN),
\quad
\lift(\cN) = \width(\hN).
\]
\end{lemma}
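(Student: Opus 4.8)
The plan is to exhibit an explicit $\Psi$-FNO of the form \eqref{eq:form-pfno} whose action reproduces $a(x)\mapsto \cI_N\hN(a(x))$, and then simply count layers, widths, and lifting dimension. Recall from Remark \ref{rem:loc} that any ordinary finite-dimensional neural network $\hN = \hQ\circ \hL_L \circ\dots\circ \hL_1 \circ \hR$, with $\hR(y)=Ry$, $\hL_\ell(y)=\sigma(W_\ell y + b_\ell)$, and $\hQ(y)=Qy+q$, gives rise to an FNO by promoting the inputs $y$ to functions $v(x)$ and using the trivial Fourier multiplier $P_\ell\equiv 0$ in every layer \eqref{eq:fno-layer}, together with constant bias functions $b_\ell(x)\equiv b_\ell$. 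The point here is that the $\Psi$-FNO version \eqref{eq:form-pfno} additionally interleaves the pseudo-spectral projection $\cI_N$ between layers; since all the layers $\hL_\ell$ of $\hN$ are \emph{local} (i.e. act pointwise, $\hL_\ell(v)(x) = \sigma(W_\ell v(x) + b_\ell)$), and since $\cI_N$ fixes any trigonometric polynomial of degree $\le N$, the crucial observation is that $\cI_N$ does nothing at intermediate stages when the input already lies in $L^2_N$, except possibly at the very first and very last application where it enforces that we are working with the pseudo-spectral interpolant.

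First I would set up the $\Psi$-FNO $\cN = \cQ\circ \cI_N\circ \cL_L\circ \cI_N\circ\dots\circ\cL_1\circ\cI_N\circ\cR$ with $\cR(a)(x) = Ra(x)$, the $\sigma$-layers $\cL_\ell(v)(x) = \sigma(W_\ell v(x) + b_\ell(x))$ with $P_\ell\equiv 0$ and constant bias $b_\ell(x)\equiv b_\ell$, and projection $\cQ(v)(x) = Qv(x)$ (absorbing the affine shift $q$ into the final bias, or noting that the statement only asks for representability and the constant $q$ can be handled by an extra local layer if one is pedantic — but the cleanest route is to view $\hN$ as having no output bias, or to push $q$ into $b_L$). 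Second I would verify, by induction on $\ell$, that for $a\in L^2_N(\T^d;\R^{d_a})$ the $\ell$-th partial composition evaluated at grid point $x_j$ equals $\hL_\ell(\dots\hL_1(Ra(x_j))\dots)$: this uses that each $\cL_\ell$ is pointwise, that $\cI_N$ of a function $g$ satisfies $(\cI_N g)(x_j) = g(x_j)$ for $j\in\cJ_N$ by \eqref{eq:IN}, and that $\sigma$ applied pointwise commutes with evaluation at $x_j$. Consequently the output $\cN(a)$ is the unique degree-$\le N$ trigonometric polynomial agreeing with $\hN(a(x_j))$ at all grid points, i.e. $\cN(a) = \cI_N\hN(a(\cdot))$, as required. (For a general input $a\in L^2_N(\T^d;\R^{d_a})$ one first replaces $a$ by $\cI_N a$ via the leading $\cI_N\circ\cR$, which is the reason the statement is phrased for $a\in L^2_N$.)

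Third, the complexity bookkeeping: in the numerical realization \eqref{eq:hL} each $\sigma$-layer $\cL_\ell$ acts on the grid values $\{v_j\}_{j\in\cJ_N}$, so its effective width as a $\Psi$-FNO is $d_v|\cJ_N|$ with $d_v = \width(\hN)$ (the hidden dimension of $\hN$), giving $\width(\cN) = d_v|\cJ_N| = |\cJ_N|\,\width(\hN)$ by the definition of $\width$; the number of non-linear layers is unchanged, $\depth(\cN) = L = \depth(\hN)$; and the lifting dimension is exactly the channel dimension $d_v = \width(\hN)$, so $\lift(\cN) = \width(\hN)$. I expect the main (though still routine) obstacle to be bookkeeping of conventions: matching the paper's definition of $\width$ as $d_v|\cJ_N|$ versus $\width(\hN)$ as the hidden-layer size of an ordinary net, checking that $\max(d_a,d_u)\le d_v$ so that the lifting/projection layers do not dominate the width, and being careful that the interleaved $\cI_N$'s are genuinely harmless — this last point is exactly where one must invoke that the layers are local and that $\cI_N$ is a projection fixing $L^2_N$, so that no aliasing error is introduced at intermediate stages.
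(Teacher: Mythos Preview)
Your proposal is correct and follows essentially the same approach as the paper, which simply states that the proof is ``straight-forward and completely analogous to the same statement for FNOs in Remark \ref{rem:loc}.'' Your write-up is in fact more detailed than the paper's, carefully spelling out the induction on grid values and the complexity bookkeeping that the paper leaves implicit.
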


The proof is straight-forward and completely analogous to the same statement for FNOs in Remark \ref{rem:loc}.  We also note the following lemma, which allows us in practice to replace $\cF$-layers by proper FNO layers:
\begin{lemma}[Linear approximation lemma]
\label{lem:linear}
Assume that the activation function $\sigma \in C^2$ is twice continuously differentiable and non-constant. Let $\hN$ be a linear $\Psi$-FNO layer of the form
\[
\hL: L^2_N(\T^d;\R^{d_v}) \to L^2_N(\T^d;\R^{d_v}), \; \hL(v_N) := Wv_N(x_j) + b_j + \cF_N^{-1}(P\cF_Nv_N)_j.
\]
Then there exists a constant $C>0$, such that for any $\epsilon, \, B>0$, there exists a $\Psi$-FNO $\cN: L^2_N(\T^d;\R^{d_v}) \to L^2_N(\T^d;\R^{d_v})$, such that 
\[
\sup_{\Vert v_N \Vert_{L^2} \le B} 
\Vert \cN(v_N) - \hL(v_N) \Vert_{L^2} \le \epsilon,
\]
and
\[
\width(\cN) \le CN^d,
\quad
\depth(\cN) \le C, 
\quad
\lift(\cN) \le C.
\]
\end{lemma}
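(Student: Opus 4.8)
The plan is to exploit the by-now familiar trick of simulating a linear (identity-like) map using a non-linear activation function via a centered finite-difference quotient, exactly as in the proof of Lemma \ref{lem:linear-to-nonlinear} and Lemma \ref{lem:proj-layer}. The $\Psi$-FNO layer $\hL$ already has the correct non-local ($\cF$-layer) structure; the only thing it lacks is the outer activation $\sigma$ required by the definition of a genuine $\Psi$-FNO layer \eqref{eq:hL}. So the strategy is: first apply $\hL$ as the linear interior part of one $\sigma$-FNO layer, but then ``undo'' the $\sigma$ by composing with an additional ordinary ($\sigma$-)network $\tN$ that approximates the identity on a sufficiently large box. Since $\tN$ can be realized as a $\sigma$-layer (cp. Lemma \ref{lem:sigma} and Remark \ref{rem:loc}), and a composition of $\Psi$-FNOs is again a $\Psi$-FNO (the $\Psi$-FNO analogue of Lemma \ref{lem:composition}), the resulting object is a genuine $\Psi$-FNO.

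Concretely, I would proceed as follows. First, fix $z_0\in\R$ with $\sigma'(z_0)\ne 0$ (possible since $\sigma$ is non-constant $C^2$), and for $h>0$ set $\psi_h(t) := (\sigma(z_0+ht)-\sigma(z_0-ht))/(2h\sigma'(z_0))$; by Taylor expansion, $|\psi_h(t)-t|\le C\Vert\sigma\Vert_{C^2}|t|^2 h$ uniformly on compact sets, and $\psi_h$ has Lipschitz constant bounded independently of $h\in(0,1]$. Second, observe that for $\Vert v_N\Vert_{L^2}\le B$, the pointwise values of $\hL(v_N)$ on the grid $\{x_j\}_{j\in\cJ_N}$ are bounded by some $M = M(B,W,P,b,N)$, using norm-equivalence on the finite-dimensional space $L^2_N$ (this is where the $C N^d$-type factor in the $\ell^\infty$-to-$L^2$ comparison enters, but it is harmless since we only need \emph{some} bound). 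Third, apply $\psi_h$ componentwise after $\hL$: the map $v_N \mapsto \psi_h(\hL(v_N))$ differs from $\hL(v_N)$ in $L^2$ by at most $C M^2 h |\cJ_N|^{1/2}$, which tends to $0$ as $h\to 0$. Fourth, note that the composite $v_N\mapsto \cI_N\psi_h(\hL(v_N))$ — i.e.\ $\psi_h$ applied pointwise as a $\sigma$-layer (Lemma \ref{lem:sigma}) composed with the $\cF$-layer/affine part coming from $\hL$ — is itself representable as a $\Psi$-FNO with $O(1)$ depth, $O(1)$ lift, and width $O(N^d)$, since $\psi_h$ uses only two copies of $\sigma$. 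Finally, choose $h$ small enough that $C M^2 h|\cJ_N|^{1/2}\le\epsilon$, set $\cN := \cI_N\psi_h(\hL(\slot))$, and read off the size bounds: $\width(\cN)\le d_v|\cJ_N| \le C N^d$, $\depth(\cN)\le C$, $\lift(\cN)\le C$, with $C$ depending only on $d_v$ (and $d$), as claimed.

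One subtle point worth spelling out: the definition of $\Psi$-FNO layer \eqref{eq:hL} applies $\sigma$ to $W_\ell v + b_{\ell} + \cF_N^{-1}(P_\ell\cF_N v)$ as a \emph{single} unit, so to realize $\psi_h(\hL(v_N))$ I would use two such layers in parallel (one with multipliers $(W,P,b)$ scaled by $+h$ and shifted by $z_0$, one scaled by $-h$), then a final $\sigma$-layer with $P\equiv 0$ combining them linearly with the $1/(2h\sigma'(z_0))$ prefactor and approximating that linear combination by yet another finite-difference $\psi_{h'}$ — or more cleanly, absorb everything into one ordinary network $\tN$ approximating $(a,b)\mapsto(a-b)/(2h\sigma'(z_0))$ on the relevant box and invoke Lemma \ref{lem:sigma}. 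The main (only) obstacle is bookkeeping: making sure every intermediate network's inputs stay in the compact box on which the universal approximation / Taylor estimates were established, and that the width/depth/lift bounds are genuinely $N$-independent except for the explicit $N^d$ width factor inherited from $|\cJ_N|$. There is no deep difficulty here — this lemma is a routine ``$\cF$-layers can be upgraded to FNO layers'' statement, parallel to Lemma \ref{lem:linear-to-nonlinear} — so the proof is essentially an assembly of already-proven ingredients.
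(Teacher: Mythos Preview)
Your proposal is correct and takes essentially the same approach as the paper: both apply the centered finite-difference quotient $\psi_h(y)=(\sigma(z_0+hy)-\sigma(z_0-hy))/(2h\sigma'(z_0))$ to the output of the linear layer $\hL$ and observe that the resulting map is a $\Psi$-FNO of bounded size. The paper's version is terser --- it simply asserts that $v_N\mapsto\psi_h(\hL(v_N))$ defines a $\Psi$-FNO with the stated bounds --- whereas you spell out the bookkeeping (lifting to two parallel copies, absorbing the linear recombination into $\cQ$ or an extra $\sigma$-layer), but the argument is the same.
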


\begin{proof}
This follows from the observation that, by assumption, there exists $x_0\in \R$, such that $\sigma'(x_0) \ne 0$. By Taylor expansion, we have
\[
\frac{\sigma(x_0 + hy) - \sigma(x_0-hy)}{h \sigma'(x_0)}
=
y + R(h,y),
\]
where $|R(h,y)|\le C\bar{B}^2 h$ for any $y\in [-\bar{B},\bar{B}]$ and $0<h\le 1$. Replacing $y$ by the output of the linear layer $\hL$, and choosing $\bar{B}$ sufficiently large, we obtain with 
\[
\cN_h(v_n) := \frac{\sigma(x_0 + h \hL(v_N)) - \sigma(x_0-h \hL(v_N))}{h \sigma'(x_0)},
\]
that
\begin{align*}
\Vert 
\hL(v_N) - \cN_h(v_N)
\Vert_{L^2}
=
O(h),
\end{align*}
uniformly for all $v_N \in L^2_N(\T^d;\R^{d_v})$, such that $\Vert v_N \Vert_{L^2}\le B$, and for $0<h\le 1$. In particular, choosing $h = h(\epsilon)>0$ sufficiently small, we can ensure that
\[
\Vert 
\hL(v_N) - \cN_h(v_N)
\Vert_{L^2}
\le
\epsilon,
\]
for all $v_n \in L^2_N(\T^d;\R^{d_v})$, such that $\Vert v_N \Vert_{L^2}\le B$. The proof is concluded by observing that the mapping $v_N \mapsto \cN_h(v_N)$ defines a $\Psi$-FNO, and that the size (width, depth, lift) is uniformly bounded in $h$.
\end{proof}

According to Lemma \ref{lem:sigma}, any composition of $\sigma$-layers can be identified with a $\Psi$-FNO. Lemma \ref{lem:linear} shows that we can approximate $\cF$-layers to arbitrary accuracy with a $\Psi$-FNO of uniformly bounded size. In the following lemma, we record the simple fact that a composition of $\Psi$-FNOs is again representable by a $\Psi$-FNO:

\begin{lemma} [Composition Lemma] \label{lem:composition}
For $p\in \N$, let $\cN_1,\dots, \cN_p$ be $\Psi$-FNOs, defined with respect to the same grid $\{x_j\}_{j\in \cJ_N}$, and defining operators $\cN_k: L^2_N(\T^d; \R^{d_{k-1}}) \to L^2_N(\T^d;\R^{d_{k}})$ for $d_0,\dots, d_p \in \N$, such that the composition $\cN_p\circ \dots \cN_1: L^2_N(\T^d;\R^{d_0}) \to L^2_N(\T^d;\R^{d_p})$ is well-defined. Then there exists a $\Psi$-FNO $\cN: L^2_N(\T^d;\R^{d_0}) \to L^2_N(\T^d;\R^{d_p})$, such that $\cN = \cN_p\circ \dots \circ \cN_1$, and such that
$\depth(\cN) \le \sum_{k=1}^p \depth(\cN_k)$, $\width(\cN) \le \max_{k=1,\dots, p} \width(\cN_k)$ and  $\lift(\cN) \le \max_{k=1,\dots, p} \lift(\cN_k)$.
\end{lemma}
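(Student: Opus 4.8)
The plan is to prove Lemma~\ref{lem:composition} directly by induction on $p$, where the crux is the case $p=2$; the general case then follows by a routine bootstrap. So first I would reduce to composing two $\Psi$-FNOs $\cN_1: L^2_N(\T^d;\R^{d_0}) \to L^2_N(\T^d;\R^{d_1})$ and $\cN_2: L^2_N(\T^d;\R^{d_1}) \to L^2_N(\T^d;\R^{d_2})$, and show that $\cN_2 \circ \cN_1$ can be written in the canonical $\Psi$-FNO form \eqref{eq:form-pfno} with the claimed bounds $\depth(\cN) = \depth(\cN_1)+\depth(\cN_2)$, $\width(\cN) = \max(\width(\cN_1),\width(\cN_2))$, $\lift(\cN) = \max(\lift(\cN_1),\lift(\cN_2))$.

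The key observation is that a $\Psi$-FNO has the structure $\cN_1 = \cQ_1 \circ \cI_N \circ \cL^{(1)}_{L_1} \circ \cI_N \circ \dots \circ \cL^{(1)}_1 \circ \cI_N \circ \cR_1$, and similarly for $\cN_2$, and that the composition produces a ``seam'' of the form $\dots \circ \cR_2 \circ \cI_N \circ \cQ_1 \circ \cI_N \circ \dots$ in the middle. The point is that $\cR_2 \circ \cQ_1$ is again a linear pointwise map $v(x)\mapsto (R_2 Q_1) v(x)$ with $R_2 Q_1 \in \R^{d_{v,2}\times d_{v,1}}$, so it can be absorbed into a single lifting-type operator acting on the output of the last interior layer of $\cN_1$. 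More carefully, I would fold $\cR_2 \circ \cQ_1$ into the weight matrix $W^{(1)}_{L_1}$ of the final $\sigma$-layer of $\cN_1$ — or, cleaner, use the $\sigma$-layer/$\cF$-layer decomposition from Section~\ref{sec:fnoprop}: each $\Psi$-FNO decomposes into a finite string of $\sigma$-layers and $\cF$-layers (the lifting $\cR$ and projection $\cQ$ being degenerate affine $\sigma$-layers with identity activation, which are themselves realizable up to the pointwise-affine structure permitted in \eqref{eq:hL} by taking $P_\ell\equiv 0$ and $b$ constant), so the concatenation of two such strings is again a finite string of $\sigma$- and $\cF$-layers, hence a $\Psi$-FNO. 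The width of every layer in the concatenated string is $d_v|\cJ_N|$ for the corresponding $d_v$, which is at most $\max(\lift(\cN_1),\lift(\cN_2))\cdot |\cJ_N|$, giving the width bound; the lift bound is immediate since every layer uses either $\lift(\cN_1)$ or $\lift(\cN_2)$ many channels; and the depth is additive because no layers are merged (the seam only rearranges/absorbs the affine lifting-projection block, which does not count toward depth as defined). I would then do the induction step: given the claim for $p-1$, apply it to $\cN_{p-1}\circ\dots\circ\cN_1$ to get a single $\Psi$-FNO $\tilde{\cN}$ with the appropriate (summed/maxed) bounds, then apply the $p=2$ case to $\cN_p \circ \tilde{\cN}$, and check the bounds propagate: $\depth \le \depth(\cN_p) + \sum_{k=1}^{p-1}\depth(\cN_k)$, and the max-type bounds for width and lift combine as maxima of maxima.

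The main obstacle — really the only subtle point — is bookkeeping the lifting/projection mismatch at each seam and confirming it genuinely costs zero extra depth and stays within the width/lift budget. One has to be careful that the ``canonical'' form \eqref{eq:form-pfno} demands the composition start with a single $\cR$ and end with a single $\cQ$ with interior layers sandwiched by $\cI_N$; after concatenation the naive expression has $\cR_2 \circ \cQ_1$ in the interior, and I need to argue this composite can be legitimately merged into the adjacent nonlinear layer's affine part (using that a $\sigma$-layer's pointwise map $v_j \mapsto \sigma(W v_j + b_j)$ can absorb a preceding linear map by replacing $W$ with $W(R_2Q_1)$) without changing the number of nonlinear layers or introducing channel counts exceeding $\max(\lift(\cN_1),\lift(\cN_2))$. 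I expect this to be handled by the technical lemmas referenced in Appendix~\ref{app:fnoprop} (the $\sigma$-/$\cF$-layer equivalence), so in practice the proof is short: invoke the decomposition into $\sigma$- and $\cF$-layers, concatenate, re-assemble into a $\Psi$-FNO, and read off the complexity bounds. I would keep the write-up at the level of ``it is straightforward to check that \dots'' for the channel-merging step, spelling out only the seam identity $\cR_2 \circ \cQ_1 = \cR_{21}$ with $R_{21} = R_2 Q_1$ and its absorption into the neighbouring affine map.
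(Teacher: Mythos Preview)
Your proposal is correct and matches the paper's approach; the paper in fact gives only a one-sentence proof (``straight-forward, and only requires a padding of each layer with zeros to achieve a uniform lifting dimension $d_v$ across all layers''), so your write-up is already more detailed than what appears there. The only point the paper makes explicit that you leave slightly implicit is the zero-padding step: after absorbing the seam $\cR_2\circ\cQ_1$ into the adjacent affine map, the interior layers still carry possibly different channel counts $d_{v,1}$ and $d_{v,2}$, and one pads each to $d_v=\max_k\lift(\cN_k)$ so that every layer conforms to the uniform-$d_v$ form \eqref{eq:form-pfno}; this is what secures the width and lift bounds you state.
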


The proof of the previous lemma is straight-forward, and only requires a padding of each layer with zeros to achieve a uniform lifting dimension $d_v$ across all layers. The following lemma will be central for our approximation and complexity estimates:

\begin{lemma} [Replacement Lemma] \label{lem:replacement}
Assume that $\sigma$ is Lipschitz continuous. For $p\in \N$, let $\hN_1,\dots, \hN_p$ be continuous operators $\hN_k: L^2_N(\T^d; \R^{d_{k-1}}) \to L^2_N(\T^d;\R^{d_{k}})$ for $d_0,\dots, d_p \in \N$, such that the composition $\hN := \hN_p\circ \dots \circ\hN_1: L^2_N(\T^d;\R^{d_0}) \to L^2_N(\T^d;\R^{d_p})$ is well-defined. Assume that there exist constants $D_k, W_k, L_k>0$, $k=1,\dots, p$, such that for any $\epsilon, \, M > 0$, there exists a $\Psi$-FNO $\cN_k: L^2_N(\T^d;\R^{d_{k-1}}) \to L^2_N(\T^d;\R^{d_{k}})$, such that 
\[
\sup_{\Vert u \Vert_{L^2}\le M} \Vert \hN_k(u) - \cN_k(u) \Vert_{L^2_N} \le \epsilon,
\]
and $\depth(\cN_k) \le D_k$, $\width(\cN_k) \le W_k$, $\lift(\cN_k) \le L_k$. Then for any $\epsilon>0$ and $M>0$, there exists a $\Psi$-FNO $\cN: L^2_N(\T^d;\R^{d_0}) \to L^2_N(\T^d;\R^{d_p})$, such that 
\[
\sup_{\Vert u \Vert_{L^2}\le M} \Vert \hN(u) - \cN(u) \Vert_{L^2_N} \le \epsilon,
\]
and $\depth(\cN) \le \sum_{k=1}^p D_k$, $\width(\cN) \le \max_{k=1,\dots,p} W_k$, $\lift(\cN) \le \max_{k=1,\dots, p} L_k$.
\end{lemma}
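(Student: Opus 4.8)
The plan is to prove the Replacement Lemma by induction on $p$, using the Composition Lemma (Lemma~\ref{lem:composition}) together with the uniform Lipschitz continuity of each $\Psi$-FNO layer (which follows from the Lipschitz continuity of $\sigma$) to control how approximation errors propagate through the composition. The key structural observation is that the $\Psi$-FNOs $\cN_k$ approximating $\hN_k$ can be chosen with $\Psi$-FNO norms uniformly bounded in the accuracy parameter $\epsilon$, since their sizes are bounded by $D_k, W_k, L_k$ independent of $\epsilon$, and on any fixed grid $L^2_N$ a $\Psi$-FNO of bounded size is a Lipschitz map with Lipschitz constant depending only on $\sigma$ and the (bounded) size — here I would invoke the finite-dimensionality of $L^2_N$ and the equivalence of norms to turn the bound on the number of parameters into a Lipschitz bound on a fixed compact set.

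First I would set up the telescoping estimate. Fix $M>0$ and $\epsilon>0$. Let $B_0 := M$, and inductively define $B_k$ to be a bound on $\Vert \hN_k \circ \cdots \circ \hN_1(u) \Vert_{L^2}$ over $\Vert u \Vert_{L^2} \le M$; such bounds exist because each $\hN_k$ is continuous and the relevant sets are compact in the finite-dimensional space $L^2_N$. Slightly enlarging, also choose $B_k'$ bounding the outputs of the approximating $\Psi$-FNOs $\cN_k$ (valid once those are taken $\epsilon$-close to $\hN_k$ with $\epsilon \le 1$, say). Then, writing $\cN := \cN_p \circ \cdots \circ \cN_1$ and introducing the hybrid operators $\cN^{(\ell)} := \hN_p \circ \cdots \circ \hN_{\ell+1} \circ \cN_\ell \circ \cdots \circ \cN_1$, one has the telescoping identity $\hN(u) - \cN(u) = \sum_{\ell=1}^p \big( \cN^{(\ell-1)}(u) - \cN^{(\ell)}(u) \big)$, and each summand is bounded by the product of the Lipschitz constants of $\hN_p, \dots, \hN_{\ell+1}$ on the relevant bounded sets, times $\sup_{\Vert v \Vert_{L^2} \le B'_{\ell-1}} \Vert \hN_\ell(v) - \cN_\ell(v) \Vert_{L^2}$. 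Here I would use that each $\hN_\ell$ is continuous on the finite-dimensional $L^2_N$, hence Lipschitz on the compact set where it is evaluated (or one can bypass Lipschitz continuity of the $\hN_\ell$ and argue purely with uniform continuity plus a careful choice of the tolerances, but Lipschitz-on-compacts is the cleanest route given finite dimensions).

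The construction then proceeds by choosing the accuracies in decreasing order: pick $\cN_p$ with accuracy $\epsilon/(2p)$ on the ball of radius $B'_{p-1}$; having fixed $\cN_p$ (hence its Lipschitz constant $\Lambda_p$ on the relevant ball), pick $\cN_{p-1}$ with accuracy $\epsilon/(2p \Lambda_p)$ on the ball of radius $B'_{p-2}$; and so on, so that the $\ell$-th term in the telescoping sum is at most $\epsilon/(2p)$ — wait, one must be slightly careful that the Lipschitz constants used are those of the $\hN_k$, not the $\cN_k$, which removes the circularity since the $\hN_k$ are fixed from the start; the $B'_k$ do depend on the choices but can all be fixed once and for all by taking every $\cN_k$ to be $1$-close to $\hN_k$ at the outset and then refining. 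Summing the $p$ terms gives $\sup_{\Vert u \Vert_{L^2} \le M} \Vert \hN(u) - \cN(u) \Vert_{L^2_N} \le \epsilon$. Finally, by the Composition Lemma, $\cN = \cN_p \circ \cdots \circ \cN_1$ is itself a $\Psi$-FNO with $\depth(\cN) \le \sum_{k=1}^p \depth(\cN_k) \le \sum_{k=1}^p D_k$, $\width(\cN) \le \max_k \width(\cN_k) \le \max_k W_k$, and $\lift(\cN) \le \max_k \lift(\cN_k) \le \max_k L_k$, which is exactly the claimed conclusion.

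The main obstacle I anticipate is the bookkeeping around the bounded sets and Lipschitz constants: one must verify that the radii $B'_k$ on which successive approximations are required can be fixed independently of the final refinement, so that the argument is genuinely non-circular, and one must justify that "continuous operator on $L^2_N$" upgrades to "Lipschitz on the compact balls that actually arise" — this is immediate from finite-dimensionality but should be stated. Everything else (the telescoping, the use of the Composition Lemma, the size bookkeeping) is routine. An alternative, perhaps cleaner, phrasing avoids Lipschitz continuity of the $\hN_k$ entirely by invoking uniform continuity: since each $\hN_p \circ \cdots \circ \hN_{\ell+1}$ is uniformly continuous on the relevant compact set, for any target $\eta > 0$ there is $\delta_\ell > 0$ with which an input perturbation of size $\delta_\ell$ produces an output perturbation of size $\eta$; chaining these moduli of continuity from the outside in, and choosing the approximation accuracy of $\cN_\ell$ below the appropriate $\delta_\ell$, yields the same conclusion. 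I would likely present the Lipschitz version for brevity, noting the modification for readers who prefer to avoid it.
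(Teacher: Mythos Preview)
Your approach is essentially the same as the paper's: the paper proves the lemma by induction on $p$ (base case $p=2$, then reduce $p$ to $2$ by grouping $\hN_p\circ\cdots\circ\hN_2$), using compactness of balls in the finite-dimensional space $L^2_N$ to get bounded images, then choosing the inner approximation tight enough relative to the Lipschitz constant of the outer map, and finally invoking the Composition Lemma for the size bounds. Your direct telescoping over all $p$ terms is just the unrolled version of that induction.

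One correction: your claim that ``continuous on the finite-dimensional $L^2_N$, hence Lipschitz on the compact set where it is evaluated'' is false in general (think of $x\mapsto\sqrt{|x|}$ on $\R$). Continuity on a compact subset of a finite-dimensional space gives you uniform continuity, not Lipschitz continuity. Your alternative phrasing via uniform continuity and moduli of continuity is the mathematically correct route and suffices for the argument; you should present that version rather than the Lipschitz one. (For what it's worth, the paper's proof also asserts Lipschitz continuity of $\hN_2$ with the parenthetical ``for Lipschitz continuous $\sigma$'', which is not justified by the hypotheses as stated since the $\hN_k$ are merely assumed continuous; the fix is the same.)
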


\begin{proof}
We prove the statement by induction on $p\in \N$. We start the induction at $p=2$: Let $\epsilon > 0$ and $M>0$ be given, and let $\hN_1$, $\hN_2$ satisfy the hypotheses of this lemma. Since $B_M(0) \subset L^2_N(\T^d;\R^{d_0})$, the closed ball of radius $M$, is compact in the finite-dimensional space $L^2_N(\T^d;\R^{d_0})$, and since $\hN_1$ is continuous, it follows that the image $\hN_1(B_M(0))$ is also compact; in particular, there exists $M_1> 0$, such that $\hN_1(B_M(0)) \subset B_{M_1}(0)$. By assumption, there exists a $\Psi$-FNO $\cN_2: L^2_N(\T^d;\R^{d_1}) \to L^2_N(\T^d;\R^{d_2})$, such that 
\[
\sup_{\Vert u \Vert_{L^2} \le 2M_1} \Vert \hN_2(u) - \cN_2(u) \Vert_{L^2_N} \le \epsilon/2.
\]
We note that (for Lipschitz continuous $\sigma$) $\hN_2$ is Lipschitz continuous on $B_{2M_1}(0)\subset L^2_N$. Let $\Lip(\hN_2)$ denote the corresponding Lipschitz constant. By assumption on $\hN_1$, there exists a $\Psi$-FNO $\cN_1: L^2_N(\T^d;\R^{d_0}) \to L^2_N(\T^d;\R^{d_1})$, such that 
\[
\sup_{\Vert u \Vert_{L^2} \le M} \Vert \hN_1(u) - \cN_1(u) \Vert_{L^2_N} \le \min\left(\frac{\epsilon}{2\Lip(\hN_2)}, M_1\right).
\]
Note that this estimate implies in particular that 
\[
\Vert \cN_1(u) \Vert_{L^2} \le \Vert\hN_1(u) -  \cN_1(u) \Vert_{L^2} + \Vert \hN_1(u) \Vert_{L^2} \le 2M_1.
\]
Thus, we can estimate
\begin{align*}
\sup_{\Vert u \Vert_{L^2} \le M} \Vert \hN_2\circ \hN_1(u) &- \cN_2 \circ \cN_1(u) \Vert_{L^2}
\\
&\le
\sup_{\Vert u \Vert_{L^2} \le M} \Vert \hN_2\circ \hN_1(u) - \hN_2 \circ \cN_1(u) \Vert_{L^2}
\\
&\qquad +
\sup_{\Vert u \Vert_{L^2} \le M} \Vert \hN_2\circ \cN_1(u) - \cN_2 \circ \cN_1(u) \Vert_{L^2}
\\
&\le
\Lip(\hN_2) \,
\sup_{\Vert u \Vert_{L^2} \le M} \Vert \hN_1(u) - \cN_1(u) \Vert_{L^2}
\\
&\qquad +
\sup_{\Vert v\Vert\le 2M_1} \Vert \hN_2(v) - \cN_2(v) \Vert_{L^2}
\\
&< \epsilon.
\end{align*}
The estimate on the depth, width and lift is immediate (cp. Lemma \ref{lem:composition}). This proves the base case $p=2$ of the induction. 

Let now $p>2$, and assume that the claim holds for the composition of $p-1$ operators. Given $\hN_1, \dots, \hN_p$ as in the statement of the lemma, we can apply the induction hypothesis for the composition of $p-1$ operators, to see that the two continuous operators $\tN_1:=\hN_1$ and $\tN_2 := (\hN_p\circ \dots \circ \hN_2)$ fulfill all assumptions of the lemma (with $p=2$), and with depths, widths and lifts of the approximating FNOs given by $\tilde{D}_1 = D_1$, $\tilde{W}_1 = W_1$, $\tilde{L}_1 = L_1$, and $\tilde{D}_2 = \sum_{k=2}^p D_k$, $\tilde{W}_2 = \max_{k=2,\dots, p} W_k$, $\tilde{L}_2 = \max_{k=2,\dots, p} L_k$. The proof of the induction step now follows from the base case (with $p=2$) already considered above.
\end{proof}
\section{Technical Results and Proofs for Section \ref{sec:darcy}}
\label{app:s3a}
In this section, we collect some technical results and proofs for the material in section \ref{sec:darcy} of the main text. We start with the following Lemma on the contraction property of the map \eqref{eq:Fcontract}.
\begin{lemma} \label{lem:contract}
Let $k\in \N$, $k>d/2+1$, and let $s>d/2+k$ be given. Assume that the coefficient $a\in H^s(\T^d)$ satisfies the $\lambda$-coercivity condition \eqref{eq:lowerbd} for some $\lambda\in (0,1)$, and that the right-hand side of the stationary Darcy equation \eqref{eq:darcy} belongs to the Sobolev space $f\in H^{k-1}$. Then there exists $N_0 = N_0(s,d,\Vert a \Vert_{H^s}, \lambda) \in \N$, such that for any $N\ge N_0$, we have $a_N \ge \lambda/2$ and the mapping $F_N$ defined by \eqref{eq:Fcontract} is a contraction, with
\[
\Lip\left( F_N: \dot{H}^1 \to \dot{H}^1 \right) 
\le
1 - \frac{\lambda}{2}.
\]
\end{lemma}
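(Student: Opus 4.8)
The plan is to verify the two claims --- the lower bound $a_N \ge \lambda/2$ and the contraction estimate for $F_N$ --- by first controlling the perturbation $\tilde a_N$ in $L^\infty$ and then estimating the linear operator $\dot P_N(-\Delta)^{-1}\nabla\cdot(\tilde a_N\nabla\,\cdot\,)$ on $\dot H^1$.

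First I would establish that $\tilde a_N = \dot P_N \cI_{2N}\tilde a \to \tilde a$ in $H^{d/2+\delta}$ (or any fixed $H^\varsigma$ with $d/2 < \varsigma < s$) as $N\to\infty$. This follows by combining the pseudo-spectral approximation estimate (Theorem \ref{thm:psest}), which gives $\Vert \cI_{2N}\tilde a - \tilde a\Vert_{H^\varsigma} \lesssim N^{-(s-\varsigma)}\Vert\tilde a\Vert_{H^s}$, with the spectral projection estimate \eqref{eq:spectral} for $\dot P_N$; since $s > d/2 + k > d/2 + 1$ we have $s - \varsigma > 0$ and the errors vanish. By the Sobolev embedding $H^{d/2+\delta}\embeds L^\infty$ (Theorem \ref{thm:sobolev}), this yields $\Vert \tilde a_N - \tilde a\Vert_{L^\infty}\to 0$, and hence $\Vert\tilde a_N\Vert_{L^\infty}\le \Vert\tilde a\Vert_{L^\infty} + \tfrac{\lambda}{2} \le 1 - \lambda/2$ once $N\ge N_0$ for a suitable $N_0 = N_0(s,d,\Vert a\Vert_{H^s},\lambda)$ (using \eqref{eq:lowerbd}, $\Vert\tilde a\Vert_{L^\infty}\le 1-\lambda$). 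In particular $a_N = 1 + \tilde a_N \ge 1 - (1-\lambda/2) = \lambda/2$, which is the first claim.

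Next I would bound the Lipschitz constant of $F_N$ on $\dot H^1$. Since $F_N(u_N;a_N,f_N) - F_N(v_N;a_N,f_N) = \dot P_N(-\Delta)^{-1}\nabla\cdot(\tilde a_N\nabla(u_N-v_N))$, it suffices to estimate the operator $w \mapsto \dot P_N(-\Delta)^{-1}\nabla\cdot(\tilde a_N\nabla w)$ on $\dot H^1$. The key observation is that on $\dot H^1$ the operator $(-\Delta)^{-1}\nabla\cdot(\,\cdot\,)$ is (up to sign) an isometry onto $\dot L^2$: indeed, writing $g = \tilde a_N\nabla w$, one has $\Vert \dot P_N(-\Delta)^{-1}\nabla\cdot g\Vert_{\dot H^1}^2 = (2\pi)^d\sum_{0<|k|\le N}|k|^2\big||k|^{-2}\langle k, \hat g_k\rangle\big|^2 \le (2\pi)^d\sum_{k}|\hat g_k|^2 = \Vert g\Vert_{L^2}^2$, using $|\langle k,\hat g_k\rangle|\le |k|\,|\hat g_k|$ (Cauchy–Schwarz). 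Hence
\[
\Vert F_N(u_N) - F_N(v_N)\Vert_{\dot H^1} \le \Vert \tilde a_N\nabla(u_N-v_N)\Vert_{L^2} \le \Vert\tilde a_N\Vert_{L^\infty}\Vert\nabla(u_N-v_N)\Vert_{L^2} = \Vert\tilde a_N\Vert_{L^\infty}\Vert u_N-v_N\Vert_{\dot H^1},
\]
and the previous step gives $\Vert\tilde a_N\Vert_{L^\infty}\le 1-\lambda/2$, which is exactly the claimed contraction bound.

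The main obstacle --- though it is more bookkeeping than genuine difficulty --- is ensuring the convergence $\Vert\tilde a_N - \tilde a\Vert_{L^\infty}\to 0$ is quantitative enough, and in particular that the threshold $N_0$ depends only on the stated quantities $s,d,\Vert a\Vert_{H^s},\lambda$; this is handled by the explicit rates in Theorems \ref{thm:psest} and the estimate \eqref{eq:spectral}, noting that $\Vert\tilde a\Vert_{H^s} = \Vert a - 1\Vert_{H^s} \le \Vert a\Vert_{H^s} + (2\pi)^{d/2}$ is controlled by $\Vert a\Vert_{H^s}$. I would also remark that the contraction is in fact uniform in $a$ over the coercive class $\cA^s_\lambda$, since the bound $\Vert\tilde a_N\Vert_{L^\infty}\le 1-\lambda/2$ depends on $a$ only through $\Vert a\Vert_{H^s}$ and the coercivity constant $\lambda$ --- a fact that will be needed later for the FNO complexity estimates.
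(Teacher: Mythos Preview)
Your proposal is correct and follows essentially the same approach as the paper: first control $\Vert \tilde a_N - \tilde a\Vert_{L^\infty}$ via Sobolev embedding and the (pseudo-)spectral approximation estimates to get $\Vert \tilde a_N\Vert_{L^\infty}\le 1-\lambda/2$ and hence $a_N\ge \lambda/2$, then bound the Lipschitz constant of $F_N$ by $\Vert \tilde a_N\Vert_{L^\infty}$ using the elementary Fourier-side estimate $\Vert \dot P_N(-\Delta)^{-1}\nabla\cdot g\Vert_{\dot H^1}\le \Vert g\Vert_{L^2}$. The only cosmetic difference is that the paper writes the latter bound as an equality (which strictly holds only for gradient fields), whereas your Cauchy--Schwarz argument yields the inequality that is actually needed.
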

\begin{proof}
As $u_N \mapsto F_N(u_N)$ is an affine mapping, we can express the Lipschitz constant in terms of the following supremum
\[
\Lip\left(F_N: \dot{H}^1 \to \dot{H}^1 \right) 
=
\sup_{u_N\in \dot{H}^1\setminus\{0\}} 
\frac{
\left\Vert 
\dot{P}_N (-\Delta)^{-1} \nabla \cdot (\tilde{a}_N \nabla u_N) 
\right\Vert_{\dot{H}^1}
}
{
\Vert u_N \Vert_{\dot{H}^1}
}.
\]
We now observe that
\begin{align*}
\left\Vert 
\dot{P}_N (-\Delta)^{-1} \nabla \cdot (\tilde{a}_N \nabla u_N) 
\right\Vert_{\dot{H}^1}
&\le
\left\Vert 
(-\Delta)^{-1} \nabla \cdot (\tilde{a}_N \nabla u_N) 
\right\Vert_{\dot{H}^1}
\\
&=
\left\Vert 
\tilde{a}_N \nabla u_N
\right\Vert_{L^2}
\\
&\le
\Vert \tilde{a}_N \Vert_{L^\infty}
\Vert u_N\Vert_{\dot{H}^1},
\end{align*}
for all $u_N \in \dot{H}^1_N(\T^d)$. To finish the proof, we note that 
\begin{align*}
\Vert \tilde{a}_N \Vert_{L^\infty}
&\le
\Vert \tilde{a} \Vert_{L^\infty}
+
\Vert \tilde{a} - \tilde{a}_N \Vert_{L^\infty},
\end{align*}
where the first term is bounded by $1-\lambda$ by the coercivity assumption. By our definition of $\tilde{a}_N$ (cp. equation \eqref{eq:defan}), we have
\begin{align*}
\Vert \tilde{a} - \tilde{a}_N \Vert_{L^\infty}
&\le
\left\Vert \tilde{a} - \dot{P}_N\tilde{a} \right\Vert_{L^\infty}
+
\left\Vert \dot{P}_N\tilde{a} - \dot{P}_N\cI_{2N}\tilde{a} \right\Vert_{L^\infty}.
\end{align*}
But by the spectral/pseudo-spectral approximation estimates, we now have 
\begin{align*}
\left\Vert \tilde{a} - \dot{P}_N\tilde{a} \right\Vert_{L^\infty}
&\lesssim_{d,\delta} \left\Vert \tilde{a} - \dot{P}_N\tilde{a} \right\Vert_{H^{d/2+\delta}}
\\
&\lesssim_{s,d,\delta} N^{-(s-d/2-\delta)} \Vert a \Vert_{H^s}, 
\end{align*}
and
\begin{align*}
\left\Vert \dot{P}_N\tilde{a} - \dot{P}_N\cI_{2N}\tilde{a} \right\Vert_{L^\infty}
&\lesssim_{d,\delta} \left\Vert \dot{P}_N\tilde{a} - \dot{P}_N\cI_{2N}\tilde{a} \right\Vert_{H^{d/2+\delta}}
\\
&\le \left\Vert \tilde{a} - \cI_{2N}\tilde{a} \right\Vert_{H^{d/2+\delta}}
\\
&\lesssim_{s,d,\delta} N^{-(s-d/2-\delta)} \Vert a \Vert_{H^s}.
\end{align*}
To be definite, let us choose $\delta := (s-d/2)/2$. In particular, it follows that there exists a constant $C = C(s,d)$, and $N_0 = N_0(s,d,\Vert a \Vert_{H^s},\lambda)\in \N$, such that 
\[
\Vert \tilde{a} - \tilde{a}_N \Vert_{L^\infty} \le \frac{\lambda}{2},
\]
for $N \ge N_0$, and hence $a_N = 1+\tilde{a}_N \ge 1 + \tilde{a} - \lambda/2 \ge \lambda/2$, and
\[
\Lip(F_N) 
\le \Vert \tilde{a}_N \Vert_{L^\infty} 
\le \Vert \tilde{a} \Vert_{L^\infty} + \Vert \tilde{a} - \tilde{a}_N \Vert_{L^\infty}  
\le 1-\frac{\lambda}{2},
\]
for all $N \ge N_0$. The claim follows.
\end{proof}

Next, we provide the detailed proof of Theorem \ref{thm:FG} which guarantees convergence of the algorithm \ref{alg:darcy} to the solutions of the Darcy equation \eqref{eq:darcy}.
\subsection{Proof of Theorem \ref{thm:FG}} 
\label{app:FG}

\begin{proof}
Choose $N_0\in \N$ as in Lemma \ref{lem:contract}. 

\step{1}{
Well-posedness and error estimate of the FG approximation \eqref{eq:FG}.
}

We consider the bilinear form $B: \dot{H}^1_N \times \dot{H}^1_N \to \R$, given by 
\[
B(u_N, w_N) 
:=
\int_{\T^d} a_N(x) \nabla u_N(x) \cdot \nabla w_N(x) \, dx.
\]
We note that $u_N$ solves \eqref{eq:FG}, if and only if, 
\begin{align}\label{eq:bil}
B(u_N, w_N) = \langle f_N, w_N \rangle_{L^2},
\end{align}
for all $w_N \in \dot{H}^1_N$. By Lemma \ref{lem:contract}, we have $a_N \ge \lambda/2$ for all $N\ge N_0$, and hence
\[
B(w_N, w_N) \ge \lambda/2 \Vert w_N \Vert_{\dot{H}^1}^2. 
\]
Thus, $B$ is a coercive bilinear form. The existence and uniqueness of a solution $u_N^\ast \in \dot{H}^1_N$ of \eqref{eq:bil} follows from this for any right-hand side $f_N$. Furthermore, we have the estimate
\begin{align*}
\Vert u_N^\ast \Vert_{H^1}^2
\le
2\Vert u_N^\ast \Vert_{\dot{H}^1}^2
\le
\frac{4}{\lambda} B(u_N^\ast,u_N^\ast)
=
\frac{4}{\lambda}\langle u_N^\ast, f_N \rangle_{L^2}
\le
\frac{4}{\lambda}\Vert u_N^\ast \Vert_{{H}^1} \Vert f_N \Vert_{{H}^{-1}},
\end{align*}
and thus, $\Vert u_N^\ast \Vert_{{H}^1} \le 4\lambda^{-1} \Vert f_N \Vert_{{H}^{-1}}$.

It is also straightforward to show that by inductive differentiation of \eqref{eq:darcy}, one can also obtain higher-order estimates for $k\in \N$, $k\ge 1$ for the (non-discretized) elliptic equation \eqref{eq:darcy}, of the form:
\begin{align} \label{eq:reg}
\Vert u \Vert_{H^{k+1}} \le C \Vert f \Vert_{H^{k-1}},
\end{align}
where $C = C(k,d,\Vert a \Vert_{C^k})>0$.

To prove the claimed error estimate, we note that if $u$ solves \eqref{eq:darcy}, then $P_N u \in \dot{H}^1_N$ solves
\begin{align*}
-P_N \nabla \cdot (a_N \nabla P_N u) 
&= P_N f + P_N \nabla \cdot ((a-a_N) \nabla P_N u) 
\\
&\qquad + P_N \nabla \cdot (a \nabla (1-P_N) u).
\end{align*}
It thus follows that $w_N = P_Nu - u_N^\ast$ solves
\begin{align*}
-P_N \nabla \cdot (a_N \nabla w_N) 
&= (P_Nf -f_N) + P_N \nabla \cdot ((a-a_N) \nabla P_N u) 
\\
&\qquad + P_N \nabla \cdot (a \nabla (1-P_N) u)
\\
&=: (I) + (II) + (III).
\end{align*}
The stability estimate then implies that $\Vert w_N\Vert_{{H}^{1}} \le 4\lambda^{-1} \Vert (I) + (II) + (III) \Vert_{H^{-1}}$, can be bounded in terms of the $H^{-1}$-norm of the right-hand side. We can now estimate
\begin{align*}
\Vert (I)\Vert_{H^{-1}} 
&=
\Vert (P_Nf - f_N) \Vert_{H^{-1}}
\\
&\le 
\Vert (1-P_N)f \Vert_{H^{-1}}
+
\Vert f - f_N \Vert_{H^{-1}}
\lesssim_{d,s}
N^{-k} \Vert f \Vert_{H^{k-1}}.
\end{align*}
The last inequality follows from the fact that $f \in H^{k-1}(\T^d)$, for $k-1>d/2$, and the (pseudo-)spectral approximation estimate.
For the second term, we fix a small $\delta > 0$, and obtain
\begin{align*}
\Vert (II)\Vert_{H^{-1}} &\le
\Vert 
(a-a_N)
\nabla P_N u 
\Vert_{L^2}
\\
&\lesssim_{d}
\Vert 
a-a_N
\Vert_{L^\infty}
\Vert
u
\Vert_{H^1}
\\
&\lesssim_{d,\delta,\lambda}
\Vert 
a-a_N
\Vert_{H^{d/2+\delta}}
\Vert
f
\Vert_{H^{-1}},
\end{align*}
where we have used the embedding $H^{d/2+\delta} \embeds L^\infty$ in the last step. Assuming that $\delta>0$ is chosen sufficiently small, so that $s\ge d/2+\delta$, we can then estimate
\[
\Vert (II) \Vert_{H^{-1}}
\lesssim_{s,d}
\lambda^{-1}
N^{-(s-d/2-\delta)}
\Vert a \Vert_{H^s}\Vert f \Vert_{H^{-1}}.
\]
Finally, the third term can be bounded as follows:
\begin{align*}
\Vert (III)\Vert_{H^{-1}}
&\lesssim_{d}
\Vert a \nabla (1-P_N) u \Vert_{L^2}
\\
&\lesssim_{d}
\Vert a \Vert_{L^\infty} \Vert (1-P_N) u \Vert_{H^1}
\\
&\lesssim_{s,d}
N^{-k} \Vert a \Vert_{H^s}  \Vert u \Vert_{H^{k+1}}
\\
&\lesssim_{s,d,\lambda,k,\Vert a \Vert_{C^k}}
N^{-k} \Vert a \Vert_{H^s} \Vert f \Vert_{H^{k-1}}.
\end{align*}
In the last step, we used the higher regularity estimate \eqref{eq:reg}. We note that for this step we assume that $a \in H^s$, $s>d/2+k$, so that we have an embedding $H^s \embeds C^k$. Under this condition, we can further estimate $(II)$ by
\begin{align*}
\Vert (II) \Vert_{H^{-1}}
&\lesssim_{s,d}
N^{-(s-d/2-\delta)} \lambda^{-1} \Vert a \Vert_{H^s} \Vert f \Vert_{H^{-1}}
\\
&\lesssim_{s,d,\lambda, \Vert a \Vert_{H^s}, \Vert f \Vert_{H^{-1}}} N^{-k}.
\end{align*}
Combining the above estimates for $(I)$, $(II)$ and $(III)$, we conclude that for any $s\in \R$ and $k\in \N$ satisfying $s>d/2+k$, we have
\[
\Vert w_N \Vert_{H^1}
=
\Vert 
u_N^\ast - P_N u
\Vert_{H^{1}}
\le C N^{-k},
\]
provided that $a\in H^s$, $f \in H^{k-1}$, where $C = C\left(s,d,\lambda,k,\Vert a \Vert_{H^s}, \Vert f \Vert_{H^{k-1}}\right)>0$ is independent of $N$. 

The proof is now finished by observing that 
\[
\Vert u - u_N^\ast \Vert_{H^1}^2
=
\Vert (1-P_N) u \Vert_{H^1}^2
+
\Vert P_N u - u_N^\ast \Vert_{H^1}^2,
\]
and that, by the higher-regularity estimate \eqref{eq:reg}, we have
\[ 
\Vert (1-P_N) u \Vert_{H^1}
\lesssim_{k,d}
N^{-k} \Vert u \Vert_{H^{k+1}}
\lesssim_{s,k,d,\lambda,\Vert a\Vert_{H^s}, \Vert f \Vert_{H^{k-1}}}
N^{-k}.
\]
Thus, it follows that there exists a constant $C = C(s,k,d,\lambda,\Vert a\Vert_{H^s}, \Vert f \Vert_{H^{k-1}})>0$, such that 
\begin{align} \label{eq:FGexact}
\Vert u - u_N^\ast \Vert_{H^1}
\le
C N^{-k},
\end{align}
where $u^\ast_N$ is the solution of the Fourier-Galerkin discretization \eqref{eq:FG}.

\step{2}{
Picard iteration estimate.
}

The output of Algorithm \ref{alg:darcy} is obtained by Picard iteration, i.e. by $K$-fold application of the mapping $F_N: \dot{H}^1_N \to \dot{H}^1_N$, yielding a recursively defined sequence $u^0_N := 0$, and $u^k_N = F_N(u^{k-1}_N)$, for $k=1,\dots, K$. Since $F_N$ is a contraction with $\Lip(F_N) \le 1-\lambda/2$ for $N\ge N_0$, by Lemma \ref{lem:contract}, and since $u^\ast_N$ is the unique fixed point of $F_N$, this implies that
\[
\Vert 
u^K_N - u^\ast_N
\Vert_{\dot{H}^1}
\le
(1-\lambda/2)^K \Vert u^\ast_N \Vert_{\dot{H}^1}.
\]
Using the definition of $H^1$, $\dot{H}^1$, and the regularity estimate \eqref{eq:reg}, we can further estimate
\[
\Vert 
u^K_N - u^\ast_N
\Vert_{{H}^1}
\le
2
\Vert 
u^K_N - u^\ast_N
\Vert_{\dot{H}^1}
\le
8\lambda^{-1}(1-\lambda/2)^K \Vert f_N \Vert_{H^{-1}}.
\]
By definition of $K$, we have
\[
K \ge \frac{\log(\lambda N^{-k})}{\log(1-\lambda/2)},
\]
and hence $(1-\lambda/2)^K \le \lambda N^{-k}$. This yields
\[
\Vert 
u^K_N - u^\ast_N
\Vert_{{H}^1}
\le
8 N^{-k}\Vert f_N \Vert_{H^{-1}}.
\]

Combining Steps 1 and 2, implies that there exists a constant $C>0$, depending on $s$, $k$, $d$, $\lambda$, $\Vert a \Vert_{H^s}$ and $\Vert f \Vert_{H^{k-1}})>0$, such that 
\[
\Vert u - u_N \Vert_{H^1} \le C N^{-k},
\]
for all $N\ge N_0$, where $u_N := u^K_N$ is the output of Algorithm \ref{alg:darcy}. This is the claimed estimate.
\end{proof}
\subsection{Neural network approximation of quadratic non-linearities}
\label{sec:quad}
Our next aim is to prove Theorem \ref{thm:fno-darcy}. As stated in the main text, the proof relies crucially on the following Lemmas, which show that neural networks can efficiently approximate certain quadratic non-linearities. We start with the following result,
\begin{lemma} \label{lem:quadratic}
Let $\sigma \in C^3$ be a activation function. Let $d\in \N$. There exists a constant $C = C(d) > 0$, such that for any $\epsilon >0$ and $B>0$, there exists 
\begin{enumerate}
\item a neural network $\hN_1: \R^2 \to \R$, such that 
\[
\sup_{|a|,|b|\le B} |\hN_1(a,b) - ab| \le \epsilon,
\]
and $\width(\hN_1) \le C$, $\depth(\hN_1) \le C$,
\item a neural network $\hN_2: \R \times \R^d \to \R^d$, such that 
\[
\sup_{|a|,\Vert v\Vert_{\ell^2}\le B} |\hN_2(a,v) - av| \le \epsilon,
\]
and $\width(\hN_2) \le C$, $\depth(\hN_2) \le C$,
\item a neural network $\hN_3: \R^d \times \R^{d\times d}\to \R^d$, such that 
\[
\sup_{\Vert v\Vert_{\ell^2},\Vert U \Vert_{\ell^2\to \ell^2}\le B} |\hN_3(v,U) - v\cdot U| \le \epsilon,
\]
and $\width(\hN_3) \le C$, $\depth(\hN_3) \le C$.
\end{enumerate}
\end{lemma}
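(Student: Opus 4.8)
The plan is to reduce all three statements to a single building block: the approximation of the scalar squaring map $x \mapsto x^2$ on a bounded interval by a \emph{shallow} neural network, realised via a second-difference quotient of the $C^3$ activation $\sigma$. Since $\sigma$ is non-linear (hence not affine) and $\sigma \in C^2$, there is a point $x_0 \in \R$ with $\sigma''(x_0) \neq 0$. For $h > 0$ I would introduce the single-hidden-layer network
\[
\psi_h(x) := \frac{\sigma(x_0 + hx) - 2\sigma(x_0) + \sigma(x_0 - hx)}{h^2 \sigma''(x_0)},
\]
which uses only two neurons (plus a constant bias), with width and depth independent of $h$. A third-order Taylor expansion of $\sigma$ about $x_0$, together with the boundedness of $\sigma'''$ on compact intervals, yields for every $B > 0$ a constant $c_0 = c_0(\sigma, x_0, B) > 0$ with
\[
\left| \psi_h(x) - x^2 \right| \le c_0\, h, \qquad \forall\, |x| \le 2B,\ 0 < h \le 1,
\]
so that $\psi_h \to (\cdot)^2$ uniformly on $[-2B, 2B]$ as $h \to 0$.

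For part (1) I would use the polarisation identity $ab = \tfrac14\big[(a+b)^2 - (a-b)^2\big]$ and set $\hN_1(a,b) := \tfrac14\big[\psi_h(a+b) - \psi_h(a-b)\big]$, which is a single-hidden-layer network of width $4$ on inputs $(a,b) \in \R^2$. For $|a|, |b| \le B$ one has $|a \pm b| \le 2B$, hence $\sup_{|a|,|b| \le B} |\hN_1(a,b) - ab| \le \tfrac12 c_0 h$, which is $\le \epsilon$ once $h = h(\epsilon, B)$ is chosen small enough; crucially $\width(\hN_1)$ and $\depth(\hN_1)$ are absolute constants, independent of $\epsilon$.

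For parts (2) and (3) I would assemble parallel copies of $\hN_1$ componentwise. Since $\Vert v \Vert_{\ell^2} \le B$ forces $|v_i| \le B$, and since the operator-norm bound $\Vert U \Vert_{\ell^2 \to \ell^2} \le B$ forces $|U_{ij}| = |e_i^\top U e_j| \le B$, every scalar product occurring in $av$ or $v \cdot U$ has both factors in $[-B,B]$, so $\hN_1$ is applicable. Setting $\hN_2(a,v)_i := \hN_1(a, v_i)$ gives a width-$4d$, depth-$1$ network; approximating each component to accuracy $\epsilon/\sqrt d$ then yields the claimed bound. Writing $(v \cdot U)_i = \sum_{j=1}^d v_j U_{ij}$ and setting $\hN_3(v,U)_i := \sum_{j=1}^d \hN_1(v_j, U_{ij})$ gives a parallel composition of $d^2$ copies of $\hN_1$ followed by a fixed linear combination (absorbed into the output layer), hence width $\le 4d^2$ and depth $1$; approximating each product to accuracy $\epsilon / d^{3/2}$ yields $|\hN_3(v,U) - v\cdot U| \le \epsilon$. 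In every case the width and depth depend only on $d$.

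I do not expect a genuine obstacle here. The two points that need a little care are (i) checking that all arguments fed into $\hN_1$ stay inside the interval on which the Taylor estimate for $\psi_h$ is uniform — so that $h$, and therefore the network size, can be chosen independently of the output tolerance — and (ii) the elementary fact that the operator norm of $U$ dominates each of its entries. Everything else is the by-now standard ``multiplication by a shallow network'' construction via the second-difference trick.
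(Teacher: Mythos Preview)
Your proposal is correct and follows essentially the same approach as the paper: the second-difference quotient of $\sigma$ to approximate $y\mapsto y^2$, an algebraic identity to reduce $ab$ to squares, and componentwise parallelisation for (2) and (3). The only cosmetic difference is that you use $ab = \tfrac14[(a+b)^2 - (a-b)^2]$ whereas the paper uses $ab = \tfrac12[(a+b)^2 - a^2 - b^2]$; both work equally well.
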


\begin{proof}
Points (2) and (3) easily follow from (1), by parallelizing multiple networks $\hN_1$. To see the first claim (1), we note that the quadratic function $y \mapsto y^2$ can be approximated for $y\in [-B,B]$, $B>0$, to arbitrary precision by finite differences
\[
y^2 = 
\underbrace{
\frac{\sigma(x+hy) - 2\sigma(x) + \sigma(x-hy)}{h^2 \sigma^{(2)}(x)}
}_{\sq_h(y)}
+ R(h;y),
\]
where we assume that $x$ is chosen so that the second derivative $\sigma^{(2)}(x) \ne 0$, and $|R(h,y)| \le Ch$ for all $y\in [-B,B]$, and $C = C(B)$. Finally, following \cite{Yar1}, we observe that the product $ab$ of two numbers $a$, $b$ can be expressed in the form
\[
ab = \frac12 \left( (a+b)^2 - a^2 - b^2\right)
= \frac12 \left( \sq_h(a+b) - \sq_h(a) - \sq_h(b) \right) + \tilde{R}(h;a,b),
\]
where $\tilde{R}$ is related to $R$, and there exists a constant $C = C(B)$, such that $|\tilde{R}(h;a,b)|\le Ch$, for all $a,b \in [-B,B]$. Since $\sq_h$ is a neural network of finite width and depth (independent of $h$), and since the last expression
\[
\hN_h(a,b) :=  \frac12 \left( \sq_h(a+b) - \sq_h(a) - \sq_h(b) \right),
\]
is simply a linear combination of $\sq_h$, we conclude that $\hN_h$ is a neural network of uniformly bounded width and depth (uniform in $h$), and for sufficiently small $h>0$, we have
\[
\sup_{a,b\in [-B,B]}
|\hN_h(a,b) - ab | \le \epsilon.
\]
This concludes the proof.
\end{proof}
Using the above lemma, one can prove the following result,
\begin{lemma} \label{lem:darcy-quadratic}
Assume that the activation function $\sigma\in C^3$ is three times continuously differentiable and non-linear. There exists a constant $C>0$, such that for any $N\in \N$, and for any $\epsilon, B > 0$, there exists a $\Psi$-FNO $\cN: L^2_{2N}(\T^d;\R)\times L^2_{2N}(\T^d;\R) \to L^2_{2N}(\T^d;\R)$, with 
\[
\depth(\cN), \; \lift(\cN) \le C, 
\quad
\width(\cN) \le C N^d,
\]
such that we have
\[
\left\Vert
P_N\left(
a_N \nabla u_N
\right)
- 
\cN(a_N, u_N)
\right\Vert_{L^2_N}
\le 
\epsilon,
\]
for all trigonometric polynomials $a_N, u_N\in L^2_N(\T^d; \R)\subset L^2_{2N}(\T^d; \R)$ of degree $|k|_\infty \le N$, satisfying the bound $\Vert a_N \Vert_{L^2}, \Vert u_N \Vert_{L^2} \le B$.
\end{lemma}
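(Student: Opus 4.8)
The plan is to reduce the statement to the scalar multiplication lemma already available (Lemma \ref{lem:quadratic}) together with the fact that products of degree-$N$ trigonometric polynomials live in $L^2_{2N}$ and can therefore be represented \emph{exactly} by pseudo-spectral operations on the finer grid $\{x_j\}_{j\in\cJ_{2N}}$ --- the standard de-aliasing observation already used in the construction of the Fourier--Galerkin scheme \eqref{eq:FG}. Concretely, I would build $\cN$ as a composition of three blocks: (i) an $\cF$-layer that lifts $u_N$ to $\nabla u_N$ (taking the exact gradient of a trigonometric polynomial is a diagonal Fourier multiplier, hence an $\cF$-layer of bounded lift/width $\lesssim N^d$, and the input $a_N$ is carried along in parallel channels); (ii) a composition of $\sigma$-layers realizing, pointwise on the grid, the bilinear map $(a_N(x_j),\nabla u_N(x_j))\mapsto a_N(x_j)\nabla u_N(x_j)$ up to accuracy $\delta$ --- this is exactly the network $\hN_2$ of Lemma \ref{lem:quadratic}, parallelized over grid points via Lemma \ref{lem:sigma}; and (iii) the projection $P_N$, which is again a diagonal Fourier multiplier (truncation of modes), hence an $\cF$-layer. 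By the Composition Lemma \ref{lem:composition} the result is a $\Psi$-FNO with $\depth(\cN),\lift(\cN)\le C$ and $\width(\cN)\le CN^d$, as required.

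First I would set up the channel bookkeeping carefully: the $\Psi$-FNO operates at resolution $2N$, so all functions are elements of $L^2_{2N}$; the lifting operator $\cR$ maps the pair $(a_N,u_N)$ into $d+1$ channels $(a_N, u_N, \dots, u_N)$ (or however many copies of scalars are convenient), and the first $\cF$-layer applies $k\mapsto ik$ componentwise to the $u_N$-channels to produce $\nabla u_N$ while leaving the $a_N$-channel unchanged. Since $u_N\in L^2_N$, one has $\nabla u_N\in L^2_N\subset L^2_{2N}$ and the discrete Fourier transform on $\cJ_{2N}$ computes it exactly. The crucial point for the error estimate is that the pointwise product $a_N(x_j)\nabla u_N(x_j)$, interpolated on the $2N$-grid, equals $\cI_{2N}(a_N\nabla u_N)=a_N\nabla u_N$ \emph{exactly}, because $a_N\nabla u_N\in L^2_{2N}$; there is no aliasing error. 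Hence the only approximation error comes from replacing exact multiplication by the finite-difference network $\hN_2$.

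For the quantitative bound, I would note that $\Vert a_N\Vert_{L^\infty}\lesssim N^{d/2}\Vert a_N\Vert_{L^2}\le CN^{d/2}B$ and $\Vert\nabla u_N\Vert_{L^\infty}\lesssim N\cdot N^{d/2}\Vert u_N\Vert_{L^2}\le CN^{d/2+1}B$ by the inverse (Bernstein) inequality on $L^2_N$, so the inputs to $\hN_2$ lie in a ball of radius $B':=CN^{d/2+1}B$. By Lemma \ref{lem:quadratic}(2) there is a network $\hN_2$ of width and depth bounded by a constant $C$ (independent of everything) with $\sup_{|a|,\Vert v\Vert\le B'}|\hN_2(a,v)-av|\le\delta$ for any prescribed $\delta>0$. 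Parallelizing over the $(2N+1)^d$ grid points (Lemma \ref{lem:sigma}) gives a $\Psi$-FNO block whose grid-value output differs from $\{a_N(x_j)\nabla u_N(x_j)\}_j$ by at most $\delta$ in $\ell^\infty$, hence by $\lesssim N^{d/2}\delta$ in $L^2_{2N}$; composing with $P_N$ (norm $1$) and choosing $\delta:=c\,N^{-d/2}\epsilon$ yields the claimed $L^2_N$-bound of $\epsilon$. The depth, width, lift accounting follows from Lemmas \ref{lem:composition} and \ref{lem:sigma}: the $\cF$-layers have lift $O(1)$ and width $O(N^d)$, the $\sigma$-block has lift $O(1)$ and width $(2N+1)^d\cdot O(1)=O(N^d)$, and depths add to $O(1)$.

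The main obstacle --- and it is a mild one, mostly bookkeeping --- is tracking how the $N$-dependence of the $L^\infty$ and $L^2$ norm equivalences on $L^2_N$ propagates through the construction, and confirming that $\delta$ can be taken to depend on $N$ without enlarging $\width$, $\depth$, $\lift$ beyond the stated bounds (it cannot, since by Lemma \ref{lem:quadratic} these are uniform in the target accuracy $\delta$ and the input bound $B'$). A secondary subtlety is making sure the ``lift $a_N$ to the right number of channels, apply gradient, multiply, project'' pipeline genuinely fits the $\Psi$-FNO template with layers alternating $\cI_N$; this is precisely what the structural Lemmas of Appendix \ref{app:fnoprop} (Lemmas \ref{lem:sigma}, \ref{lem:composition}) are designed to handle, so invoking them discharges this obligation. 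No genuinely new idea beyond the de-aliasing trick and the finite-difference emulation of products is needed.
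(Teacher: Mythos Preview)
Your proposal is correct and follows essentially the same route as the paper: an $\cF$-layer to compute $\nabla u_N$ exactly, the pointwise multiplication network $\hN_2$ from Lemma \ref{lem:quadratic} parallelized over the grid via Lemma \ref{lem:sigma}, and a final $\cF$-layer realizing $P_N$, all glued together by Lemmas \ref{lem:linear} and \ref{lem:composition}. One minor remark: your conversion from the $\ell^\infty$ grid error to the $L^2_{2N}$ error is overly pessimistic --- by discrete Parseval on the $(4N+1)^d$-point grid one has $\Vert v\Vert_{L^2}^2=(2\pi)^d(4N+1)^{-d}\sum_j|v(x_j)|^2\le(2\pi)^d\delta^2$ for $v\in L^2_{2N}$, so the factor is $O(1)$ rather than $N^{d/2}$; but since the size of $\hN_2$ is independent of $\delta$ anyway, this has no effect on the argument.
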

\begin{proof}
First, we observe that there exists a linear FNO layer $\cL: L^2_{2N} \to L^2_{2N}$, with a suitable choice of the Fourier multiplier matrix $P$, such that 
\[
\cL(u_N) 
= \cF^{-1}_{2N} (P \cF(u_N)) 
= \cF^{-1}_{2N} \left(\sum_{|k|_\infty\le 2N} ik \hat{u}_k e^{i\langle k, x\rangle} \right)
= \nabla u_N,
\]
is satisfied \emph{exactly} for all $u_N\in L^2_{2N}(\T^d;\R)$. We also note that if $\Vert u_N \Vert_{L^2} \le B$, then $\Vert \nabla u_N \Vert_{L^2}\le N B$. By the fact that all norms are equivalent on the finite-dimensional space $L^2_{2N}(\T^d;\R^{d\times d})$, there exists a constant $C'>0$ (depending on $N$), such that 
\[
\sup_{x\in \T^d} \Vert \nabla u_N(x) \Vert_{\ell^2} 
\le
C' \Vert \nabla u_N \Vert_{L^2} \le C'NB,
\]
whenever $\Vert u_N \Vert_{L^2}\le B$. We similarly see that by norm equivalence on $L^2_{2N}(\T^d;\R)$, there exists a constant $C''>0$ (depending on $N$), such that we also have
\[
\sup_{x\in \T^d} | a_N(x) | \le C''\Vert a_N \Vert_{L^2} \le C''B,
\]
for any $\Vert a_N \Vert_{L^2}\le B$. Let $\bar{B} := \max(C'NB, C''B)$. By Lemma \ref{lem:quadratic}, there exists an ordinary neural network $\hN: \R\times \R^d \to \R^d$ with $\width(\hN), \depth(\hN) \le C(d)$ (with $C=C(d)$ \emph{independent} of $N$), such that 
\[
\sup_{|a|, \Vert v \Vert_{\ell^2}\le \bar{B}}
\Vert 
\hN(a,v) - av
\Vert_{\ell^2}
\le \epsilon.
\]
By Lemmas \ref{lem:sigma}, \ref{lem:linear} and the composition lemma \ref{lem:composition}, the composition 
\[
(a_N,u_N) \mapsto (a_N, \cL(u_N))=(a_N, \nabla u_N) \mapsto \hN(a_N,\nabla u_N),
\]
can be represented by a $\Psi$-FNO $\tN: L^2_{2N}(\T^d;\R^2) \to L^2_{2N}(\T^d;\R^d)$, and by construction, we have 
\begin{align} \label{eq:tN}
\sup_{x\in \T^d}
\Vert
a_N(x) \nabla u_N(x) - \tN(a_N,u_N)(x)
\Vert_{\ell^2}
\le 
\epsilon,
\end{align}
for all $a_N$, $u_N \in L^2_{2N}$, with $ \Vert a_N \Vert_{L^2}$, $\Vert u_N \Vert_{L^2} \le B$. Furthermore, since $\cL$ is a linear layer, and $\hN$ is an ordinary neural network with $\width(\hN), \depth(\hN)\le C= C(d)$, we in fact conclude that for some new constant $C = C(d)>0$, we have
\[
\width(\tN) \le C N^d, 
\quad
\depth(\tN) \le C, 
\quad
\lift(\tN) \le C.
\]
Finally, we note that the projection $P_N: L^2_{2N} \to L^2_{2N}$ onto Fourier modes with wavenumbers $|k|_\infty \le N$ can again be represented exactly by a linear $\Psi$-FNO layer $\hL$, and by Lemmas \ref{lem:linear}, \ref{lem:composition}, there exists a $\Psi$-FNO $\cN: L^2_{2N}(\T^d;\R^2) \to L^2_{2N}(\T^d;\R^d)$, such that $\Vert \cN(a_N,u_N) - \hL \circ \tN(a_N,u_N) \Vert_{L^2} \le \epsilon$, for all $\Vert a_N \Vert_{L^2}$, $\Vert u_N \Vert_{L^2} \le B$, and such that 
\[
\width(\cN) \le C N^d, 
\quad
\depth(\cN) \le C, 
\quad
\lift(\cN) \le C,
\]
where $C = C(d)>0$ depends only on $d$. Combining this with \eqref{eq:tN}, we conclude that $\cN$ satisfies
\begin{align*}
\Vert P_N(a_N\nabla u_N) - \cN(a_N,u_N) \Vert_{L^2} 
&\le
\Vert \hL \circ \tN(a_N,u_N) - \cN(a_N,u_N) \Vert_{L^2} 
\\
&\qquad
+\Vert P_N(a_N\nabla u_N)  - \hL \circ \tN(a_N,u_N) \Vert_{L^2} 
\\
&=
\Vert \hL \circ \tN(a_N,u_N) - \cN(a_N,u_N) \Vert_{L^2} 
\\
&\qquad
+\Vert P_N(a_N\nabla u_N)  - P_N \tN(a_N,u_N) \Vert_{L^2} 
\\
&\le 
\epsilon + \Vert a_N\nabla u_N - \tN(a_N,u_N) \Vert_{L^2} 
\\
&\le 
\epsilon + |\T^d|^{1/2} \Vert a_N\nabla u_N - \tN(a_N,u_N) \Vert_{L^\infty} 
\\
&\le
\left( 1 + (2\pi)^{d/2} \right) \epsilon,
\end{align*}
for all $\Vert a_N \Vert_{L^2}$, $\Vert u_N \Vert_{L^2} \le B$. Since $\epsilon > 0$ was arbitrary, the claim follows.
\end{proof}
\subsection{Proof of Theorem \ref{thm:fno-darcy}}
\label{app:fno-darcy}
The stage is now set for the proof of Theorem \ref{thm:fno-darcy} in the following,
\begin{proof}[Proof of Theorem \ref{thm:fno-darcy}]
Since the claim is an asymptotic statement, it suffices to consider $N\ge N_0$, where $N_0$ is the constant of Lemma \ref{lem:contract}. Indeed, the exceptional cases $N< N_0$ can be handled by suitably enlarging the constant $C$. We will thus assume that $N\ge N_0$, and $f\in \dot{H}^{k-1}$ are given. We fix $F_N := (-\Delta)f_N \in L^2_{N}(\T^d)$ for the rest of this proof, where $f_N:= \dot{P}_N \cI_{2N}f$ is defined as in Algorithm \ref{alg:darcy}. 

We define two operators $\hN_1$, $\hN_2$ as follows: We let
\[
\hN_1: L^2_{2N}(\T^d;\R^2) \to L^2_{2N}(\T^d;\R\times \R^{d}),
\quad
\hN_1(a,u) := (a,P_N(a\nabla u)),
\]
and define $\hN_2: L^2_{2N}(\T^d;\R\times \R^d) \to L^2_{2N}(\T^d;\R^2)$ by
\[
\hN_2(a,U) := \left(a,\dot{P}_N (-\Delta)^{-1} \nabla \cdot U + F_N\right).
\]
In terms of $\hN_1$, $\hN_2$, Algorithm \ref{alg:darcy}, which defines a mapping $a \mapsto u_N = \hN(a)$, can be written in the form 
\[
\hN(a) 
=
\hQ \circ \underbrace{\hN_2 \circ \hN_1 \circ \dots \circ \hN_2 \circ \hN_1}_{\text{$K$-fold composition}} \circ \hR(a),
\]
where $\hR(a) := (a,0)$, $\hQ(a,u) := u$ and where $K \lesssim\log(N)$. By the composition lemma \ref{lem:composition}, to prove the claim of this theorem, it therefore suffices to show the following\\

\noindent
\textbf{Claim:} For any $B>0$, there exists $C>0$, such that for any $\epsilon > 0$, there exist $\Psi$-FNOs $\cN_1$ and $\cN_2$, with $\width(\cN_1)$, $\width(\cN_2) \le CN^d$, $\depth(\cN_1)$, $\depth(\cN_2)$, $\lift(\cN_1)$, $\lift(\cN_2) \le C$, and such that 
\[
\Vert \hN_1(u) - \cN_1(u) \Vert_{L^2}, \; \Vert \hN_2(u) - \cN_2(u) \Vert_{L^2} \le \epsilon,
\]
for all $u\in L^2_{2N}(\T^d)$ with bounded norm $\Vert u \Vert_{L^2}\le B$.\\

\noindent
For $\hN_1$, the claim follows from Lemma \ref{lem:darcy-quadratic}. For $\hN_2$, we note that $\hN_2$ can be represented exactly by a linear FNO layer with $W = \begin{pmatrix} 1 & 0 \\ 0 & 0 \end{pmatrix}$, bias $b_j = (0,F_{N}(x_j))$, and with Fourier multiplier matrix $P(k) = \begin{pmatrix} 0 & 0 \\ 0 & \tilde{P}(k) \end{pmatrix}$, where $\tilde{P}(k) := 1_{[|k|_\infty \le N]} \frac{ik^T}{|k|^2}$, so that
\[
\hN_2(v)_j = \hN_2(v)(x_j)
=
Wv_j + b_j + \cF_N^{-1}( P\cF_N v)_j, 
\]
for $v_j = (a(x_j), U(x_j))$. The claim for $\hN_2$ thus follows from the linear approximation lemma \ref{lem:linear}.
\end{proof}
\section{Technical Results and proofs from Section {\ref{sec:NS}}}
\subsection{Properties of the Pseudo-spectral scheme {\eqref{eq:scheme1}}}
\label{app:s1}
Our first aim is to show that the implicit operator equation that defines the scheme \eqref{eq:scheme1}. To this end, we have following lemmas,
\begin{lemma} \label{lem:advect1}
If $v\in L^2_N(\T^d;\R^d)$ and $\dt \Vert v \Vert_{L^\infty} N  \le \frac12$, then we have \[
\Vert \dt \P_N(v \cdot \nabla w) \Vert_{L^2}
\le
\frac12 \Vert w \Vert_{L^2},
\]
for all $w\in L^2_N(\T^d;\R^d)$. In particular, this estimate holds provided that 
\[
\dt \Vert v \Vert_{L^2} N^{d/2+1}\le \frac12.
\]
\end{lemma}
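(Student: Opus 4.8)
The plan is to combine three elementary facts: the $L^2$-boundedness of the Leray--Fourier projection $\P_N$, a pointwise Cauchy--Schwarz estimate for the advection term $v\cdot\nabla w$, and a Bernstein-type inverse inequality on the finite-dimensional space $L^2_N(\T^d;\R^d)$ of trigonometric polynomials.

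First I would record that $\P_N$ from \eqref{eq:lerayn} acts on a Fourier series $\sum_k \hat g_k e^{i\langle k,x\rangle}$ as a matrix Fourier multiplier whose symbol at each $k$ with $0<|k|_\infty\le N$ is the orthogonal projection matrix $1-k\otimes k/|k|^2$, and is the zero matrix for $k=0$ or $|k|_\infty>N$. Since each of these symbol matrices has operator norm $\le 1$, Parseval's identity yields $\Vert\P_N g\Vert_{L^2}\le\Vert g\Vert_{L^2}$ for all $g\in L^2(\T^d;\R^d)$, hence
\[
\Vert\dt\,\P_N(v\cdot\nabla w)\Vert_{L^2}\le\dt\,\Vert v\cdot\nabla w\Vert_{L^2}.
\]
Here $v\cdot\nabla w$ is a well-defined element of $L^2(\T^d;\R^d)$ --- in fact a trigonometric polynomial of degree $\le 2N$, being a product of degree-$N$ polynomials --- so applying $\P_N$ causes no integrability issue.

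Next, writing the $j$-th component of $(v\cdot\nabla)w$ as $\sum_i v_i\,\partial_i w_j$ and applying Cauchy--Schwarz in the summation index $i$, one obtains the pointwise bound $|(v\cdot\nabla)w(x)|\le|v(x)|\,|\nabla w(x)|$ (Euclidean norms), and therefore $\Vert v\cdot\nabla w\Vert_{L^2}\le\Vert v\Vert_{L^\infty}\Vert\nabla w\Vert_{L^2}$. Since $w\in L^2_N(\T^d;\R^d)$, the Fourier coefficients $ik\,\hat w_k$ of $\nabla w$ are supported on $|k|_\infty\le N$, which gives the inverse inequality $\Vert\nabla w\Vert_{L^2}\le N\Vert w\Vert_{L^2}$. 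Substituting these two bounds into the previous display and using the hypothesis $\dt\Vert v\Vert_{L^\infty}N\le\tfrac12$ produces the claimed estimate $\Vert\dt\,\P_N(v\cdot\nabla w)\Vert_{L^2}\le\tfrac12\Vert w\Vert_{L^2}$. For the ``in particular'' clause I would invoke a second inverse inequality: for $v\in L^2_N(\T^d;\R^d)$, Cauchy--Schwarz over the at most $(2N+1)^d$ nonzero Fourier modes gives $\Vert v\Vert_{L^\infty}\le\sum_{|k|_\infty\le N}|\hat v_k|\le(2N+1)^{d/2}\big(\sum_k|\hat v_k|^2\big)^{1/2}\le N^{d/2}\Vert v\Vert_{L^2}$ for $N\ge 1$, once the $(2\pi)^d$ factor in the $L^2$-normalization is accounted for (since $(2N+1)/(2\pi)\le N$). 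Then $\dt\Vert v\Vert_{L^2}N^{d/2+1}\le\tfrac12$ implies $\dt\Vert v\Vert_{L^\infty}N\le\dt\,N^{d/2}\Vert v\Vert_{L^2}\cdot N\le\tfrac12$, so the first hypothesis is satisfied and the estimate follows.

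I do not anticipate any genuine difficulty here; the one point requiring care is the bookkeeping of the dimension-dependent constants in the two inverse inequalities, so that the numerical factor $\tfrac12$ and the exponent $d/2+1$ in the statement come out exactly --- in particular, whether the Bernstein estimate for $\nabla w$ should carry a factor such as $\sqrt d$ depends on the precise convention for $|\nabla w|$, and any such factor is absorbed into the normalizations fixed earlier in the paper.
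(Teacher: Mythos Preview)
Your argument is correct and follows essentially the same route as the paper: bound $\P_N$ by $1$ in $L^2$, use $\Vert v\cdot\nabla w\Vert_{L^2}\le\Vert v\Vert_{L^\infty}\Vert\nabla w\Vert_{L^2}$, apply the Bernstein inequality $\Vert\nabla w\Vert_{L^2}\le N\Vert w\Vert_{L^2}$ on $L^2_N$, and for the ``in particular'' clause use $\Vert v\Vert_{L^\infty}\le N^{d/2}\Vert v\Vert_{L^2}$. The paper's proof is in fact terser than yours and simply asserts these inequalities; your added bookkeeping on the constants (and your caveat about a possible $\sqrt d$ factor) is more careful than what the paper writes, but the strategy is identical.
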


\begin{proof}
We first note that for any $v\in L^2_N$, we have
\[
\dt \Vert v\Vert_{L^\infty} N
\le
\dt \Vert v\Vert_{L^2} N^{d/2+1}
\le \frac12.
\]
The claim is now an immediate consequence of the fact that 
\begin{align*}
\Vert \dt \P_N(v \cdot \nabla w) \Vert_{L^2}
&\le
\Vert \dt v \cdot \nabla w \Vert_{L^2}
\\
&\le
\dt \Vert v \Vert_{L^\infty} \Vert \nabla w \Vert_{L^2}
\\
&\le
\dt \Vert v \Vert_{L^\infty}N \Vert w \Vert_{L^2},
\end{align*}
for any $w\in L^2_N(\T^d;\R^d)$. 
\end{proof}

We can now state the following lemma on the well-posedness:

\begin{lemma} \label{lem:welldef1}
Let $U>0$. If $\Vert u^n_N\Vert_{L^2}\le U$ for $n=0,\dots, n_T$, and if the CFL condition 
\begin{align}\label{eq:CFL1}
\dt U N^{d/2+1} \le \frac12,
\end{align}
is satisfied, then the recursion \eqref{eq:scheme1} is well-defined.
\end{lemma}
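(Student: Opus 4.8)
The plan is to rewrite the implicit scheme \eqref{eq:scheme1} as a fixed-point equation for the affine map $F$ of \eqref{eq:Frec} on the finite-dimensional Hilbert space $\dot{L}^2_N(\T^d;\div)$, and then apply the Banach fixed-point theorem. First I would fix the step index $n$ and assume $u^n_N\in\dot{L}^2_N(\T^d;\div)$ with $\Vert u^n_N\Vert_{L^2}\le U$. The operator $1-\nu\dt\Delta$ is diagonal in the Fourier basis with eigenvalues $1+\nu\dt|k|^2\ge 1$; hence it is a bounded, boundedly invertible, positive self-adjoint operator on $\dot{L}^2_N(\T^d;\div)$, its inverse $(1-\nu\dt\Delta)^{-1}$ has operator norm $\Vert(1-\nu\dt\Delta)^{-1}\Vert\le 1$ (and equals the identity when $\nu=0$), and both preserve the divergence-free subspace, since on each mode $k\ne 0$ the scalar multiplier $1/(1+\nu\dt|k|^2)$ commutes with the Leray projector $1-k\otimes k/|k|^2$. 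Applying $(1-\nu\dt\Delta)^{-1}$ to \eqref{eq:scheme1} shows that $u^{n+1}_N$ solves \eqref{eq:scheme1} if and only if $u^{n+1}_N$ is a fixed point of $F$; moreover $F$ maps $\dot{L}^2_N(\T^d;\div)$ into itself because $\P_N$ lands in divergence-free trigonometric polynomials of degree $\le N$ and $(1-\nu\dt\Delta)^{-1}$ preserves this space.

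Next I would bound the Lipschitz constant of $F$. Since $F$ is affine, $\Lip(F)$ equals the operator norm of its linear part $w_N\mapsto -\dt(1-\nu\dt\Delta)^{-1}\P_N(u^n_N\cdot\nabla w_N)$. By Lemma \ref{lem:advect1} applied with $v=u^n_N$ — whose hypothesis is exactly $\Vert u^n_N\Vert_{L^2}\le U$ together with the CFL condition \eqref{eq:CFL1} $\dt U N^{d/2+1}\le\frac12$ — we have $\Vert\dt\P_N(u^n_N\cdot\nabla w_N)\Vert_{L^2}\le\frac12\Vert w_N\Vert_{L^2}$, and combining this with $\Vert(1-\nu\dt\Delta)^{-1}\Vert\le 1$ gives $\Lip(F)\le\frac12<1$. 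The Banach fixed-point theorem on the complete space $\dot{L}^2_N(\T^d;\div)$ then produces a unique $u^{n+1}_N\in\dot{L}^2_N(\T^d;\div)$ solving \eqref{eq:scheme1}. Iterating over $n=0,\dots,n_T-1$, with the a priori bound $\Vert u^n_N\Vert_{L^2}\le U$ available at every step by hypothesis, shows that the recursion \eqref{eq:scheme1} is well-defined.

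I do not anticipate a genuine obstacle: all the analytic content has already been isolated in Lemma \ref{lem:advect1}, so the remaining work is the routine verification that $(1-\nu\dt\Delta)^{-1}$ is non-expansive on $\dot{L}^2_N(\T^d;\div)$ (with the degenerate case $\nu=0$ included) and the identification of \eqref{eq:scheme1} with the fixed-point problem for \eqref{eq:Frec}. The only mild subtlety worth stating explicitly is that the solvability claim is \emph{conditional} on the energy bound $\Vert u^n_N\Vert_{L^2}\le U$, which is part of the hypothesis rather than something established within this lemma.
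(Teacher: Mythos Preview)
Your proof is correct. The paper takes a slightly different but closely related route: instead of passing to the fixed-point formulation \eqref{eq:Frec} and invoking Banach's fixed-point theorem, it writes \eqref{eq:scheme1} as a linear equation $\cT_n u^{n+1}_N = u^n_N$ with $\cT_n w_N := w_N + \dt\,\P_N(u^n_N\cdot\nabla w_N) - \nu\dt\Delta w_N$, uses Lemma \ref{lem:advect1} together with the positivity of $-\nu\dt\Delta$ to obtain the lower bound $\Vert \cT_n w_N\Vert_{L^2}\ge \tfrac12\Vert w_N\Vert_{L^2}$, and concludes invertibility from injectivity on the finite-dimensional space $\dot L^2_N(\T^d;\div)$.

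Both arguments rest on the same analytic input (Lemma \ref{lem:advect1}) and are essentially equivalent here. Your contraction argument has the minor advantage that it does not use finite-dimensionality and dovetails directly with the Picard iteration of Lemma \ref{lem:iter1} and Algorithm \ref{alg:NS1}; the paper's injectivity argument is a touch more direct since it avoids explicitly verifying that $(1-\nu\dt\Delta)^{-1}$ is non-expansive and preserves $\dot L^2_N(\T^d;\div)$.
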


\begin{proof}
The recursion \eqref{eq:scheme1} can be written in the form 
\begin{align} \label{eq:recL}
\cT_n
u^{n+1}_N
=
u^n,
\end{align}
where the operator $\cT_n: \dot{L}^2_N(\T^d;\div)\to \dot{L}^2_N(\T^d;\div)$ is given by
\[
\cT_n w_N
:=
w_N + \dt \P_N \left(u^n_N  \cdot \nabla w_N\right) - \nu \dt \Delta w_N.
\]
The claimed well-posedness of the recursion follows from the fact that, under the CFL assumption \eqref{eq:CFL1}, the operator $\cT_n$ is invertible: Indeed, by Lemma \ref{lem:advect1}, this implies that for any $w_N \in L^2_N(\T^d;\div)$, we have
\[
\Vert \dt \P_N\left(u^n_N \cdot \nabla w_N\right) \Vert_{L^2} 
\le
\frac12 \Vert w_N \Vert_{L^2}.
\]
But then, we have for any $w_N \in L^2_N(\T^d;\div)$, that 
\begin{align*}
\Vert \cT_n w_N \Vert_{L^2}
&\ge 
\Vert (1-\dt \nu \Delta) w_N \Vert_{L^2} 
- 
\Vert \dt \P_N\left(u^n_N \cdot \nabla w_N\right) \Vert_{L^2_N} 
\\
&\ge
\Vert w_N \Vert_{L^2} - \frac{1}{2} \Vert w_N \Vert_{L^2}
\\
&=
\frac{1}{2} \Vert w_N \Vert_{L^2},
\end{align*}
where, in the first step, we have used the fact that $-\dt \nu \Delta$ is a non-negative operator. In particular, the estimate $\Vert \cT_n w_N\Vert_{L^2} \ge \frac12 \Vert w_N\Vert_{L^2}$ implies that $\cT_n: L^2_N(\T^d;\div) \to L^2_N(\T^d;\div)$ is injective. Since $L^2_N(\T^d;\div)$ is finite-dimensional, we conclude that $\cT_n$ is actually invertible, and hence the recursion \eqref{eq:scheme1} is well-defined, i.e. it can be solved for $u^{n+1}_N$, given $u^n_N$.
\end{proof}

By Lemma \ref{lem:welldef1}, to prove the well-posedness of the scheme \eqref{eq:scheme1}, it remains to be shown that with a suitable choice of the time-step $\dt$, we have a uniform $L^2$-energy bound of the form $\Vert u^n_N \Vert_{L^2} \le U$, for some $U>0$. This is the content of the following lemma:

\begin{lemma} \label{lem:energyest1}
Assume that $\Vert u^0_N \Vert_{L^2} \le U$, then the first-order scheme \eqref{eq:scheme1} is well-defined for any time-step satisfying the CFL condition $\dt U N^{d/2+1} \le \frac12$, and we have $\Vert u^{n}_N \Vert_{L^2} \le \Vert u^0_N \Vert_{L^2}\le U$ for all $n=0,\dots, n_T$.
\end{lemma}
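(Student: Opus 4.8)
The plan is to argue by induction on $n$, using the standard $L^2$-energy estimate for the implicit scheme \eqref{eq:scheme1}. The base case $n=0$ is the hypothesis $\Vert u^0_N \Vert_{L^2} \le U$. For the inductive step, suppose $\Vert u^n_N \Vert_{L^2} \le U$ has already been established. Then the CFL hypothesis yields $\dt \Vert u^n_N \Vert_{L^2} N^{d/2+1} \le \dt U N^{d/2+1} \le \frac12$, so Lemma \ref{lem:welldef1} (via the invertibility of the operator $\cT_n$) guarantees that $u^{n+1}_N \in \dot{L}^2_N(\T^d;\div)$ is well-defined. It then remains to show the energy bound $\Vert u^{n+1}_N \Vert_{L^2} \le \Vert u^n_N \Vert_{L^2}$.

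To obtain this, I would test \eqref{eq:scheme1} against $u^{n+1}_N$ in $L^2(\T^d)$, i.e. take the inner product of $u^{n+1}_N - u^n_N + \dt\,\P_N(u^n_N\cdot\nabla u^{n+1}_N) = \nu\dt\,\Delta u^{n+1}_N$ with $u^{n+1}_N$. Three facts then come into play: (i) the polarization identity $\langle a-b,a\rangle_{L^2} = \frac12\left(\Vert a\Vert_{L^2}^2 - \Vert b\Vert_{L^2}^2 + \Vert a-b\Vert_{L^2}^2\right)$ applied to the discrete time-derivative; (ii) the viscous term is non-positive, $\nu\dt\langle \Delta u^{n+1}_N, u^{n+1}_N\rangle_{L^2} = -\nu\dt\,\Vert\nabla u^{n+1}_N\Vert_{L^2}^2 \le 0$ (and vanishes identically when $\nu = 0$, so the Euler case is covered); and (iii) the key cancellation, namely that the nonlinear term vanishes. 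For (iii) I would use that $u^{n+1}_N \in \dot{L}^2_N(\T^d;\div)$ lies in the range of the $L^2$-orthogonal (hence self-adjoint) projection $\P_N$, so $\langle \P_N(u^n_N\cdot\nabla u^{n+1}_N), u^{n+1}_N\rangle_{L^2} = \langle u^n_N\cdot\nabla u^{n+1}_N, u^{n+1}_N\rangle_{L^2} = \frac12\int_{\T^d} u^n_N\cdot\nabla|u^{n+1}_N|^2\,dx = -\frac12\int_{\T^d}(\div u^n_N)\,|u^{n+1}_N|^2\,dx = 0$, the last step using $\div u^n_N = 0$ together with integration by parts on the torus. Combining (i)--(iii) gives $\frac12\left(\Vert u^{n+1}_N\Vert_{L^2}^2 - \Vert u^n_N\Vert_{L^2}^2 + \Vert u^{n+1}_N - u^n_N\Vert_{L^2}^2\right) \le -\nu\dt\,\Vert\nabla u^{n+1}_N\Vert_{L^2}^2 \le 0$, whence $\Vert u^{n+1}_N\Vert_{L^2} \le \Vert u^n_N\Vert_{L^2} \le U$, closing the induction. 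This simultaneously re-establishes that the CFL condition $\dt\Vert u^n_N\Vert_{L^2}N^{d/2+1}\le\frac12$ holds at every step, so Lemma \ref{lem:welldef1} applies throughout and the recursion is well-defined for $n=0,\dots,n_T$.

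I do not expect a genuine obstacle here; the only point requiring care is the nonlinear cancellation in (iii), specifically the two facts that $\P_N$ is the $L^2$-orthogonal projection onto $\dot{L}^2_N(\T^d;\div)$ (so it is self-adjoint and acts as the identity on its range, which contains $u^{n+1}_N$) and that the iterates are genuinely divergence-free, which is what makes the convective integral drop out after integration by parts. Everything else is a routine finite-dimensional energy computation; in particular no regularity beyond $u^n_N, u^{n+1}_N \in \dot{L}^2_N(\T^d;\div)$ is needed, since all terms live in this finite-dimensional space.
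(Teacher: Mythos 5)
Your proof is correct and follows essentially the same route as the paper: induction on $n$, testing \eqref{eq:scheme1} against $u^{n+1}_N$, cancelling the nonlinear term via self-adjointness of $\P_N$ together with $\div u^n_N = 0$, and noting that the viscous term is non-positive. The only cosmetic difference is that you handle the inner product $\langle u^{n+1}_N - u^n_N, u^{n+1}_N\rangle$ by polarization whereas the paper keeps $\langle u^n_N, u^{n+1}_N\rangle$ on the right and closes with Cauchy--Schwarz; the two are algebraically equivalent.
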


\begin{proof}
We show inductively that if $\Vert u^n_N \Vert_{L^2} \le U$, then $\Vert u^{n+1}_N \Vert_{L^2} \le \Vert u^n_N \Vert_{L^2} \le U$. The well-posedness then follows from Lemma \ref{lem:welldef1}. To see that $\Vert u^{n+1}_N \Vert_{L^2} \le \Vert u^n_N \Vert_{L^2}$, we integrate \eqref{eq:scheme1} against $u^{n+1}_N$, to find
\begin{align*}
\Vert u^{n+1}_N \Vert_{L^2}^2
&=
\langle u^{n+1}_N, u^n_N \rangle
-
\dt \left\langle u^{n+1}_N, \P_N\left(u^n_N \cdot \nabla u^{n+1}_N\right) \right\rangle
-
\nu \dt \Vert \nabla u^{n+1}_N \Vert_{L^2}^2
\\
&=
\langle u^{n+1}_N, u^n_N \rangle
-
\dt \underbrace{\left\langle u^{n+1}_N, u^n_N \cdot \nabla u^{n+1}_N \right\rangle}_{=0}
-
\nu \dt \Vert \nabla u^{n+1}_N \Vert_{L^2}^2
\\
&\le
\left|
\langle u^{n+1}_N, u^n_N \rangle
\right|
\le
\Vert u^{n+1}_N \Vert_{L^2} \Vert u^n_N \Vert_{L^2}.
\end{align*}
This proves the claim.
\end{proof}
Next, we have the following Lemma on the convergence of the iterations in algorithm \ref{alg:NS1},
\begin{lemma} \label{lem:iter1}
Given $\Vert u^n_N \Vert_{L^2} \le U$, assume that the CFL condition $\dt U N^{d/2+1}\le \frac12$ is satisfied. Define a recursive sequence $w^{n,k}_N \in \dot{L}^2_N(\T^d;\div)$, $k\in \N$, by $w^{n,0}_N := 0$, and $w^{n,k+1}_N := F(w^{n,k}_N)$, where $F$ is defined by \eqref{eq:Frec}. Then, we have $\Vert u^{n+1}_N - w^{n,k}_N \Vert_{L^2} \le 2^{-k} \Vert u^n_N \Vert_{L^2}$, for $k \in \N$, and $\Vert w^{n,k}_N \Vert_{L^2} \le (1+2^{-k}) \Vert u^n_N \Vert_{L^2}$.
\end{lemma}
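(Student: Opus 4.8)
The plan is to recognise $u^{n+1}_N$ as the unique fixed point of the affine map $F$ defined in \eqref{eq:Frec} and to show that $F$ is a contraction on $\dot{L}^2_N(\T^d;\div)$ with Lipschitz constant at most $\tfrac12$; geometric convergence of the Picard iterates $w^{n,k}_N$ to $u^{n+1}_N$ then follows from the Banach fixed point estimate, and the energy bound of Lemma \ref{lem:energyest1} converts the resulting $\Vert u^{n+1}_N\Vert_{L^2}$ into $\Vert u^n_N\Vert_{L^2}$.

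First I would check the fixed point property. Under the stated CFL condition and $\Vert u^n_N\Vert_{L^2}\le U$, the iterate $u^{n+1}_N$ exists by Lemma \ref{lem:welldef1}, and rearranging \eqref{eq:scheme1} shows $(1-\nu\dt\Delta)u^{n+1}_N = u^n_N - \dt\,\P_N(u^n_N\cdot\nabla u^{n+1}_N)$. Since $-\Delta$ is non-negative, $(1-\nu\dt\Delta)^{-1}$ is a well-defined bounded operator on $\dot{L}^2_N(\T^d;\div)$ with $L^2$-operator norm at most $1$ (its Fourier symbol $(1+\nu\dt|k|^2)^{-1}$ lies in $(0,1]$; for $\nu=0$ it is the identity). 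Applying $(1-\nu\dt\Delta)^{-1}$ to the above identity gives exactly $u^{n+1}_N = F(u^{n+1}_N)$, so $u^{n+1}_N$ is a fixed point of $F$, and one checks that $F$ indeed maps $\dot{L}^2_N(\T^d;\div)$ into itself (all operations involved preserve the band-limited, mean-zero, divergence-free subspace).

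Next I would establish the contraction bound. For $w_N, w'_N\in\dot{L}^2_N(\T^d;\div)$, linearity gives
\[
F(w_N) - F(w'_N) = -\dt\,(1-\nu\dt\Delta)^{-1}\P_N\big(u^n_N\cdot\nabla(w_N - w'_N)\big).
\]
Using $\Vert(1-\nu\dt\Delta)^{-1}\Vert_{L^2\to L^2}\le 1$ together with Lemma \ref{lem:advect1} applied with $v=u^n_N$ — whose hypothesis $\dt\Vert u^n_N\Vert_{L^2}N^{d/2+1}\le \dt U N^{d/2+1}\le\tfrac12$ holds — we get $\Vert F(w_N)-F(w'_N)\Vert_{L^2}\le \dt\Vert\P_N(u^n_N\cdot\nabla(w_N-w'_N))\Vert_{L^2}\le\tfrac12\Vert w_N-w'_N\Vert_{L^2}$, that is, $\Lip(F)\le\tfrac12$. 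Since $w^{n,0}_N=0$, $w^{n,k}_N=F(w^{n,k-1}_N)$ and $u^{n+1}_N=F(u^{n+1}_N)$, induction on $k$ yields $\Vert u^{n+1}_N - w^{n,k}_N\Vert_{L^2}\le 2^{-k}\Vert u^{n+1}_N - w^{n,0}_N\Vert_{L^2} = 2^{-k}\Vert u^{n+1}_N\Vert_{L^2}$. Testing \eqref{eq:scheme1} against $u^{n+1}_N$ and using the orthogonality $\langle u^{n+1}_N, u^n_N\cdot\nabla u^{n+1}_N\rangle=0$, exactly as in the proof of Lemma \ref{lem:energyest1}, gives $\Vert u^{n+1}_N\Vert_{L^2}\le\Vert u^n_N\Vert_{L^2}$, so $\Vert u^{n+1}_N - w^{n,k}_N\Vert_{L^2}\le 2^{-k}\Vert u^n_N\Vert_{L^2}$. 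The second claim then follows from the triangle inequality, $\Vert w^{n,k}_N\Vert_{L^2}\le\Vert u^{n+1}_N\Vert_{L^2}+\Vert u^{n+1}_N - w^{n,k}_N\Vert_{L^2}\le(1+2^{-k})\Vert u^n_N\Vert_{L^2}$.

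I do not expect a genuine obstacle here: this is a textbook Banach fixed point argument. The only points requiring a little care are that $F$ maps $\dot{L}^2_N(\T^d;\div)$ into itself and that $(1-\nu\dt\Delta)^{-1}$ is a contraction uniformly in $\nu\ge 0$ (including the inviscid case $\nu=0$, where it is the identity), both of which are immediate from the explicit action on Fourier coefficients.
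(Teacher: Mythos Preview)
Your proof is correct and follows essentially the same approach as the paper: both use Lemma \ref{lem:advect1} together with the contractivity of $(1-\nu\dt\Delta)^{-1}$ to obtain $\Lip(F)\le\tfrac12$, apply the standard Picard iteration estimate, invoke the energy bound $\Vert u^{n+1}_N\Vert_{L^2}\le\Vert u^n_N\Vert_{L^2}$ from Lemma \ref{lem:energyest1}, and finish with the triangle inequality. Your version is somewhat more detailed in verifying that $u^{n+1}_N$ is indeed a fixed point of $F$ and that $F$ preserves $\dot{L}^2_N(\T^d;\div)$, but the argument is otherwise identical.
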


\begin{proof}
By Lemma \ref{lem:advect1}, and the fact that $(1-\nu \dt \Delta)^{-1}$ is a contraction, it follows that $\Lip(F) \le \frac12$. By Picard iteration, and recalling that $u^{n+1}_N$ is the unique fixed point of the recursion $w^{n,k}_N$ ($k\in \N$), it immediately follows that 
\[
\Vert u^{n+1}_N - w^{n,k}_N \Vert_{L^2}
\le
\frac1{2^k} \Vert u^{n+1}_N - w^{n,0}_N \Vert_{L^2}
\le
\frac{\Vert u^n_N \Vert_{L^2}}{2^k}
.
\]
The last step is a consequence of the a priori $L^2$-bound $\Vert u^{n+1}_N \Vert_{L^2}\le \Vert u^n_N \Vert_{L^2}$ proven in Lemma \ref{lem:energyest1}. In particular, this estimate implies that 
\[
\Vert w^{n,k}_N \Vert_{L^2} 
\le  
\Vert u^{n+1}_N \Vert_{L^2} + \Vert w^{n,k}_N - u^{n+1}_N \Vert_{L^2} 
\le
(1+2^{-k}) \Vert u^n_N \Vert_{L^2}.
\]
\end{proof}
\begin{remark} \label{rem:bound1}
Note that as a consequence of Lemma \ref{lem:iter1}, we recursively find that
\begin{align*}
\Vert u^{n+1}_N \Vert_{L^2}
&=
\Vert w^{n,\kappa_0}_N \Vert_{L^2}
\le 
(1+2^{-\kappa_0}) \Vert u^n_N \Vert_{L^2}
\le
\left(1+\frac{\dt^2}{T^2}\right) \Vert u^n_N \Vert_{L^2}
\\
&\le
\dots
\le
\left(1+\frac{\dt^2}{T^2}\right)^n \Vert u^0_N \Vert_{L^2}
\le
\exp\left(\frac{\dt}{T}\right) U
\le
e U,
\end{align*}
for $n=0,\dots, n_T$. In particular, this ensures that the CFL condition \eqref{eq:CFL1} is satisfied for all $u^n_N$, generated by Algorithm \ref{alg:NS1}.
\end{remark}

Finally, we provide the proof of the convergence Theorem \ref{thm:scheme1}. 
\subsection{Proof of Theorem \ref{thm:scheme1}} \label{app:s1pf}
In this appendix, we provide a detailed proof of the convergence estimate of Theorem \ref{thm:scheme1}, for the first-order scheme defined by Algorithm \ref{alg:NS1}. 

To this end, let $u(t)$ be an exact solution of \eqref{eq:NS}, satisfying the assumptions of Theorem \ref{thm:scheme1}, and let $u^0_N, \dots, u^{n_T}_N$ denote the sequence generated by Algorithm \ref{alg:NS1}. For any $n=0,\dots, n_T-1$, we denote by $u^{n+1,\ast}_N$ the solution of a single time-step with the semi-implicit scheme \eqref{eq:scheme1}, starting from $u^{n}_N$, i.e. satisfying
\begin{align} \label{eq:scheme-ast1}
\frac{u^{n+1,\ast}_N-u^n_N}{\dt}
+
\P_N\left(
u^n_N \cdot \nabla u^{n+1,\ast}_N
\right)
=
\nu \Delta u^{n+1,\ast}_N.
\end{align}
We recall that by Lemma \ref{lem:iter1} and Remark \ref{rem:bound1}, we have a uniform bound $\Vert u^n_N \Vert_{L^2}\le eU$. Since by assumption, the time-step $\dt e U N^{d/2+1}\le \frac1{2}$ satisfies the relevant CFL condition, it follows that a unique solution $u^{n+1,\ast}_N$ exists for all $n$. We also recall that by Lemma \ref{lem:iter1}, and by our choice of the number of iteration steps $\kappa$ in Algorithm \ref{alg:NS1}, we have
\[
\Vert u^{n+1,\ast}_N - u^{n+1}_N \Vert_{L^2}
\le
2^{-k} eU
\le
C \dt^2,
\]
where $C>0$ depends only on $U$ and the final time $T$. 

Our first goal is to derive an estimate on the magnification of the approximation error due to a single timestep $u^n_N \mapsto u^{n+1}_N$. Let $u(t)$ be the exact solution of \eqref{eq:NS}. Observing that 
\begin{align} \label{eq:single-dt}
\begin{aligned}
\Vert u^{n+1}_N - u(t^{n+1}) \Vert_{L^2}
&\le
\Vert u^{n+1}_N - u^{n+1,\ast}_N \Vert_{L^2} + \Vert u^{n+1,\ast}_N - u(t^{n+1}) \Vert_{L^2}
\\
&\le
C \dt^2 + \Vert u^{n+1,\ast}_N - u(t^{n+1}) \Vert_{L^2},
\end{aligned}
\end{align}
we only need to consider the error introduced by a single time-step $u^n_N \mapsto u^{n+1,\ast}_N$ of the semi-implicit scheme \eqref{eq:scheme1}. To this end, we can write 
\[
\frac{u(t^{n+1}) - u(t^n)}{\dt}
+ \P_N\left( u(t^{n})\cdot \nabla u(t^{n+1})\right) = \nu \Delta u(t^{n+1}) + \mathcal{E}^n,
\]
where $\mathcal{E}^n$ collects all error terms:
\[
\mathcal{E}^n 
=
\mathcal{E}^n_{\tau} 
+
\mathcal{E}^n_{NL}
+
\mathcal{E}^n_{P}
+
\mathcal{E}^n_{\nu},
\]
where
\begin{align*}
\mathcal{E}^n_{\tau}
&=
\frac{u(t^{n+1})-u(t^n)}{\dt}
-
\partial_t u(t^n),
\\
\mathcal{E}^n_{NL}
&=
\P \left( u(t^n) \cdot \nabla \left(u(t^{n+1}) - u(t^n)\right)\right), 
\\
\mathcal{E}^n_{P}
&=
(1-\P_N)\left( u(t^n)\cdot \nabla u(t^n)\right),
\\
\mathcal{E}^n_{\nu}
&=
-
\nu \Delta \left( u(t^{n+1}) - u(t^n) \right).
\end{align*}

Subtracting \eqref{eq:scheme-ast1}, and introducing the short-hand notation $e^n := u(t^n) - u_N^n$, $e^{n+1} := u(t^{n+1}) - u^{n,\ast}_N$, we find
\[
\frac{e^{n+1} - e^{n}}{\dt}
=
-\P_N\left(
e^n \cdot \nabla u(t^{n+1}) 
\right)
-
\P_N \left(
u^n_N \cdot \nabla e^{n+1}
\right)
+
\nu \Delta e^{n+1}
+
\mathcal{E}^n.
\]
Next, integrate against $e^{n+1}_N := P_N e^{n+1}$ to find
\begin{align*}
\frac1{2\dt}
&\left(
\Vert e_N^{n+1} \Vert_{L^2}^2 
+ 
\Vert e_N^{n+1} - e_N^n \Vert_{L^2}^2
- 
\Vert e_N^n \Vert_{L^2}^2
\right)
\\
&\quad \le
\Vert e^n \Vert_{L^2} \Vert \nabla u \Vert_{L^\infty_{t,x}} \Vert e_N^{n+1} \Vert_{L^2}
-
\langle \P_N(u^n_N \cdot \nabla e^{n+1}), e^{n+1} \rangle
-
\nu \Vert \nabla e^{n+1} \Vert_{L^2}^2 
+
\langle \cE^n, e^{n+1} \rangle.
\end{align*}
We note that 
\begin{align}
\label{eq:11}
\begin{aligned} 
\langle \P_N(u^n_N \cdot \nabla e^{n+1}), e^{n+1} \rangle
&=
\langle P_N(u^n_N \cdot \nabla e^{n+1}), e^{n+1} \rangle
\\
&=
\langle u^n_N \cdot \nabla e^{n+1}, e^{n+1} \rangle
-
\langle (1-P_N) u^n_N \cdot \nabla e^{n+1}, e^{n+1} \rangle
\\
&=
0 -
\langle u^n_N \cdot \nabla e^{n+1}, (1-P_N) e^{n+1} \rangle
\\
&=
-
\langle u^n_N \cdot \nabla e^{n+1}, (1-P_N) u(t^{n+1}) \rangle
\\
&=
\langle e^{n+1},  u^n_N \cdot \nabla (1-P_N) u(t^{n+1}) \rangle
\\
&=
-\langle e^{n+1},  e^n \cdot \nabla (1-P_N) u(t^{n+1}) \rangle
\\
&\qquad
+ \langle e^{n+1},  u(t^n) \cdot \nabla (1-P_N) u(t^{n+1}) \rangle
\end{aligned}
\end{align}
We now note that we can rewrite the last term as follows:
\begin{align*}
\langle e^{n+1},  u(t^n) \cdot \nabla (1-P_N) u(t^{n+1}) \rangle
&=
\left\langle
(1-P_N)\left(e^{n+1} \otimes  u(t^n)\right),
\nabla u(t^{n+1})
\right\rangle
\end{align*}
Using the fact that 
\[
(1-P_N)\left(e^{n+1} \otimes  u(t^n)\right)
=
(1-P_N)\left( (1-P_{N/2}) e^{n+1} \otimes  (1-P_{N/2}) u(t^n)\right),
\]
it then follows that we have
\begin{align*}
\langle P_N(u^n_N \cdot \nabla e^{n+1}), e^{n+1} \rangle
&= -\langle e^{n+1},  e^n \cdot \nabla (1-P_N) u(t^{n+1}) \rangle
\\
&\qquad
\left\langle
(1-P_N)\left(e^{n+1} \otimes  u(t^n)\right),
\nabla u(t^{n+1})
\right\rangle
\\
&\le
\Vert e^{n+1} \Vert_{L^2} \Vert e^n \Vert_{L^2} \Vert (1-P_N) \nabla u(t^{n+1}) \Vert_{L^\infty}
\\
&\qquad
+ \Vert e^{n+1} \Vert_{L^2} \Vert (1-P_{N/2}) u(t^n) \Vert_{L^2} \Vert (1-P_N)\nabla u(t^{n+1}) \Vert_{L^\infty}.
\end{align*}
For $r>d/2+1$, we have a continuous embedding $H^r \embeds W^{1,\infty}$, and an inequality of the form
\[
\Vert (1-P_{N/2}) u(t^n) \Vert_{L^2}
\lesssim_{r,d}
\frac{1}{N^{r}} \Vert u(t^n) \Vert_{H^r}.
\]
Hence we can estimate 
\begin{align*}
\langle P_N(u^n_N \cdot \nabla e^{n+1}), e^{n+1} \rangle
&\lesssim_{r,d}
\Vert e^{n+1} \Vert_{L^2} \Vert e^n \Vert_{L^2} \Vert u(t^n) \Vert_{H^r}
\\
&\qquad
+ \Vert e^{n+1} \Vert_{L^2} \frac{\Vert u(t^n) \Vert_{H^r}^2}{N^{2r}} .
\end{align*}

Estimating the products on the right-hand side using the inequality $ab \le \epsilon a^2 + \frac1{4\epsilon} b^2$ with suitable $\epsilon > 0$, it follows that there exists a constant $C>0$ (independent of $\nu>0$, $N$ and $n$), such that
\begin{gather} \label{eq:est0}
\begin{aligned} 
\Vert e^{n+1} \Vert^2_{L^2}
&\le
\left(
1
+
C \dt \Vert u \Vert_{C_t(H^{r}_x)}^2
\right)
\Vert e^n \Vert^2 _{L^2}
-
\nu \Vert \nabla e^{n+1} \Vert_{L^2}^2
\\
&\qquad + 
C\dt N^{-2r} \Vert u \Vert_{C_t(H^r_x)}^4
+
C \dt |\langle \cE^n, e^{n+1} \rangle|.
\end{aligned}
\end{gather}

\subsubsection{Time-differencing error}
We note that 
\begin{align*}
\langle \cE^n_{\dt}, e^{n+1}\rangle
&=
\frac1\dt \int_{t^n}^{t^{n+1}}
\int_{t^n}^t
\left
\langle
\partial_t^2 u(s)
,
e^{n+1}
\right
\rangle
\, ds
\, dt
\\
&=
\frac1\dt \int_{t^n}^{t^{n+1}}
\int_{t^n}^t
\left
\langle
-P
\left\{
\partial_t u(s)\cdot \nabla u(s) + u(s)\cdot \nabla \partial_t u(s)
\right\}
,
e^{n+1}
\right
\rangle
\, ds
\, dt
\\
&\qquad + 
\frac1\dt \int_{t^n}^{t^{n+1}}
\int_{t^n}^t
\left
\langle
\nu \Delta u(s)
,
e^{n+1}
\right
\rangle
\, ds
\, dt
\\
&=: (I)_\dt + (II)_\dt.
\end{align*}
The first term can be bounded from above by
\begin{align*}
(I)_\dt &\lesssim
\dt \left(
\Vert u \Vert_{C^1_t(L^2_x)} \Vert u \Vert_{C_t(W^{1,\infty}_x)} + \Vert u \Vert_{C_t(L^\infty_x)} \Vert u \Vert_{C^1_t(H^1_x)}
\right) \Vert e^{n+1} \Vert_{L^2}
\\
&\lesssim_{r,d}
\dt \Vert u \Vert_{C^1_t(H^{r-2})} \Vert u \Vert_{C_t(H^r)} \Vert e^{n+1}\Vert_{L^2},
\end{align*}
provided that $r > d/2+2 \ge 3$ (the last bound is automatic for $d\ge 2$). For the second term, we derive the bound
\[
(II)_\dt\lesssim
\dt \nu \Vert u \Vert_{C_t(H^1_x)} \Vert \nabla e^{n+1} \Vert_{L^2}.
\]
Thus, for any $\epsilon> 0$ (to be specified later), we have for some constant $C = C(r,d)> 0$:
\begin{gather} \label{eq:err-dt}
\begin{aligned}
|\langle \cE^n_{\dt} , e^{n+1} \rangle |
&\le
\epsilon\left( \Vert e^{n+1} \Vert_{L^2} + \nu \Vert \nabla e^{n+1} \Vert_{L^2} \right)
\\
&\qquad 
+
\frac{C \dt^2 }{ \epsilon } \left( 
\Vert u \Vert_{C^1_t(H^{r-2})}^2 \Vert u \Vert_{C_t(H^r)}^2
+
\nu\Vert u \Vert_{C_t(H^1_x)}^2
\right).
\end{aligned}
\end{gather}

\subsubsection{Non-linear time-differencing error}

For the error associated with $\cE^{n}_{NL}$, we simply estimate (for $\epsilon > 0$ to be determined later)
\[
|\langle \cE^n_{NL}, e^{n+1} \rangle |
\le
\epsilon \Vert e^{n+1} \Vert_{L^2}^2
+
\epsilon^{-1} \Vert \cE^n_{NL} \Vert_{L^2}^2,
\]
and we observe that 
\[
\Vert \cE^n_{NL} \Vert_{L^2} 
\le
\dt
\Vert u \Vert_{C_t(L^\infty_x)}
\Vert u \Vert_{C^1_t(H^1_x)}
\le
\dt
\Vert u \Vert_{C_t(H^r_x)}
\Vert u \Vert_{C^1_t(H^{r-2}_x)},
\]
assuming that $r > d/2+2 \ge 3$. This yields
\begin{align} \label{eq:err-NL}
|\langle \cE^n_{NL}, e^{n+1} \rangle |
\le
\epsilon \Vert e^{n+1} \Vert_{L^2}^2
+
\frac{\dt^2}{\epsilon}
\Vert u \Vert_{C_t(H^r_x)}^2
\Vert u \Vert_{C^1_t(H^{r-2}_x)}^2
\end{align}

\subsubsection{Nonlinear projection error}

Again, we estimate the error $\cE^{n}_{P}$ using the simple estimate 

\[
|\langle \cE^n_{P}, e^{n+1} \rangle |
\le
\epsilon \Vert e^{n+1} \Vert_{L^2}^2
+
\epsilon^{-1} \Vert \cE^n_{P} \Vert_{L^2}^2,
\]
with $\epsilon > 0$ to be specified later.
We furthermore note that 
\[
(1-P_N) \left\{u(t^n) \cdot \nabla u(t^{n+1})\right\}
=
(1-P_N) \left\{ (1-P_{N/2})u(t^n) \cdot (1-P_{N/2})\nabla u(t^{n+1})\right\},
\]
which implies that
\begin{align*}
\Vert \cE^n_{P} \Vert_{L^2}
&\le
\Vert (1-P_{N/2})u(t^n) \Vert_{L^2}
\Vert (1-P_{N/2}) \nabla u(t^{n+1}) \Vert_{L^\infty}
\lesssim_{r,d}
N^{-r} \Vert u \Vert_{C_t(H^r)}^2.
\end{align*}
In the last step, we have used the fact that $\Vert (1-P_{N/2}) v \Vert_{L^2} \lesssim_{r,d} N^{-r} \Vert v \Vert_{H^r}$ for $r \ge 0$, and that by Sobolev embedding
\begin{align*}
\Vert (1-P_{N/2}) \nabla u(t^{n+1}) \Vert_{L^\infty}
&\lesssim_d
\Vert (1-P_{N/2}) u(t^{n+1}) \Vert_{H^{r}}
\le
\Vert u \Vert_{C_t(H^r_x)},
\end{align*}
for any $r > d/2+1$. Thus, there exists $C = C(d,r)>0$, such that
\begin{gather} \label{eq:err-P}
|\langle \cE^n_{P}, e^{n+1} \rangle |
\le
\epsilon \Vert e^{n+1} \Vert_{L^2}^2
+
\frac{C}{\epsilon N^{2r}} \Vert u \Vert_{C_t(H^r_x)}^2.
\end{gather}

\subsubsection{Viscosity error}

Finally, we note that 
\begin{align}
|\langle \cE^n_{\nu}, e^{n+1}\rangle|
&\le
\nu \Vert \nabla (u(t^{n+1}) - u(t^n))\Vert_{L^2} \Vert \nabla e^{n+1}\Vert_{L^2}
\notag
\\
&\le
\nu \epsilon \Vert \nabla e^{n+1}\Vert_{L^2}^2 + \frac{\nu \dt^2}{\epsilon} \Vert u \Vert_{C^1_t(H^1_x)}^2
\notag
\\
&\le
\nu \epsilon \Vert \nabla e^{n+1}\Vert_{L^2}^2 + \frac{\nu \dt^2}{\epsilon} \Vert u \Vert_{C^1_t(H^{r-2}_x)}^2, 
\label{eq:err-nu}
\end{align}
for any $r>d/2+2\ge 3$.

\subsubsection{The final stability estimate}
Choosing $\epsilon = 1/4$, it follows from \eqref{eq:err-dt}, \eqref{eq:err-NL}, \eqref{eq:err-P} and \eqref{eq:err-nu}, that the total error term $\langle \cE^n, e^{n+1}\rangle$ can be estimated by
\begin{align} \label{eq:err}
|\langle \cE^n, e^{n+1}\rangle|
\le
\Vert e^{n+1} \Vert^2_{L^2}
+
\nu \Vert \nabla e^{n+1} \Vert_{L^2}^2
+
C^\ast \left(\dt^2 + N^{-2r}\right).
\end{align}
for some constant $C^\ast > 0$, depending only on $r > d/2+2$, the spatial dimension $d$ and the norms $\Vert u \Vert_{C_t(H^r)}$, $\Vert u \Vert_{C^1_t(H^r)}$ of the exact solution $u$.

Substitution of the error estimate \eqref{eq:err} in \eqref{eq:est0} finally yields
\begin{align*} 
(1-\dt) \Vert e^{n+1} \Vert^2 _{L^2}
\le
\left(
1 + C^\ast \dt
\right)
\Vert e^n \Vert^2 _{L^2}
+
\dt C^\ast \left( \dt^2 + N^{-2r}\right),
\end{align*}
where the constant $C^\ast$ depends only on $r\ge d/2+2$, the dimension $d$ and the norms $\Vert u \Vert_{C_t(H^r_x)}$ and $\Vert u \Vert_{C^1_t(H^{r-2}_x)}$. Assuming that $\dt \le 1/2$, dividing by $(1-\dt)$, and noting that
\[
\frac{1+C^\ast \dt}{1-\dt}
=
1 + \dt \left(C^\ast + \frac{(1+C^\ast \dt)}{1-\dt}\right)
\le
1 + \dt 2\left(C^\ast + 1\right),
\]
and
\[
\frac{C^\ast}{1-\dt} \le 2C^\ast,
\]
we can clearly absorb the additional factor of $(1-\dt)^{-1}$ by increasing the constant $C^\ast$, if necessary. 

From this, we conclude that for a time-step $\dt \le 1/2$ satisfying the CFL condition \eqref{eq:CFL1}, there exists a constant $C^\ast = C^\ast(r,d,\Vert u \Vert_{C_t(H^r_x)}, \Vert u \Vert_{C^1_t(H^{r-2}_x)}) > 0$, such that
\begin{align} \label{eq:E}
\begin{aligned}
\Vert u^{n+1,\ast}_N - u(t^{n+1}) \Vert^2_{L^2}
&\le
\left(
1 + C^\ast\dt
\right)
\Vert u^{n}_N - u(t^{n}) \Vert^2_{L^2}
 +
\dt C^\ast \left( \dt^2 + N^{-2r}\right).
\end{aligned}
\end{align}
In fact, recalling also that $\Vert u^{n+1}_N - u^{n+1,\ast}_N \Vert_{L^2} \le C \dt^2$ by \eqref{eq:single-dt}, we find that an inequality of the form \eqref{eq:E} remains true with $u^{n+1,\ast}_N$ replaced by $u^{n+1}_N$. Indeed, we have 
\begin{align} 
\begin{aligned}
\Vert u^{n+1}_N - u(t^{n+1}) \Vert^2_{L^2}
&\le
\Vert u^{n+1,\ast}_N - u(t^{n+1}) \Vert^2_{L^2}
+ 2C\dt^2 \Vert u^{n+1,\ast}_N - u(t^{n+1}) \Vert_{L^2}
+
C^2 \dt^4
\\
&\le
\left(1 + \epsilon\right)\Vert u^{n+1,\ast}_N - u(t^{n+1}) \Vert^2_{L^2}
+
C^2(1+4\epsilon^{-1}) \dt^4
\\
&\explain{=}{(\epsilon := \dt)}
\left(1 + \dt\right)\Vert u^{n+1,\ast}_N - u(t^{n+1}) \Vert^2_{L^2}
+
C^2(1+4\dt^{-1}) \dt^4
\\
&\le
\left(
1 + C^\ast\dt
\right)
\Vert u^{n}_N - u(t^{n}) \Vert^2_{L^2}
 +
\dt C^\ast \left( \dt^2 + N^{-2r}\right),
\end{aligned}
\end{align}
where the last estimate follows from \eqref{eq:E}, and $C^\ast$ has been suitably enlarged (but still only depends on $r,d,\Vert u \Vert_{C_t(H^r_x)}, \Vert u \Vert_{C^1_t(H^{r-2}_x)}$). In particular, denoting $E^n := \Vert u^n_N - u(t^n)\Vert_{L^2}^2$, we have obtained
\[
E^{n+1} \le (1+C^\ast \dt) E^{n} + \dt C^\ast \left( \dt^2 + N^{-2r}\right),
\]
and from Gronwall's inequality it now follows that
\[
E^n \le e^{C^\ast T}\left[E^0 + C^\ast T \left( \dt^2 + N^{-2r} \right)\right].
\]
We note that $E^0 = \Vert (1-\dot{P}_N\cI_{2N}) u(t=0)\Vert^2_{L^2} \le N^{-2r} \Vert u \Vert_{H^r}^2$. And hence, we finally find, for $n=0,\dots, n_T$, that
\begin{align} \label{eq:stability}
\Vert u^n_N - u(t^n) \Vert_{L^2}
= \sqrt{E^n} \le C\left( \dt + N^{-r}\right),
\end{align}
where $C>0$ depends only on $T$, $r$, $d$, $\Vert u \Vert_{C_t(H^r_x)}$ and $\Vert u \Vert_{C^1_t(H^{r-2}_x)}$.
\subsection{Proof of Theorem \ref{thm:fno-NS1}}
\label{app:NS1}
In this appendix, we provide a proof for Theorem \ref{thm:fno-NS1}. At this proof relies on Lemma \ref{lem:fno-NSnonlin}, we prove this lemma below. 
\begin{proof}[Proof of Lemma \ref{lem:fno-NSnonlin}]
The proof of Lemma \ref{lem:fno-NSnonlin} is almost identical to the proof of Lemma \ref{lem:darcy-quadratic}; where in the present case, we replace $a_N \to u_N$ and refer to Lemma \ref{lem:quadratic} point (3), rather than point (2). The only main difference being that in the last step of the proof, the projection $P_N$ is now replaced by the Leray projection $\P_N$. However, also for $\P_N$, we observe that $\P_N: L^2_{2N}(\T^d;\R^d) \to L^2_{2N}(\T^d;\R^d)$ can again be represented \emph{exactly} by a linear $\Psi$-FNO layer. Thus, replacing the linear $\Psi$-FNO layer $\hL$ which represents $P_N$ in the proof of Lemma \ref{lem:darcy-quadratic} by a layer representing $\P_N$, an almost identical argument also applies in this case, and yields for any $\epsilon, \, B > 0$, a $\Psi$-FNO $\cN: L^2_{2N}(\T^d;\R^d) \to L^2_{2N}(\T^d;\R^d)$, such that
\[
\Vert \P_N(u_N \cdot \nabla u_N) - \cN(u_N)\Vert_{L^2}\le \epsilon,
\]
for all $\Vert u_N \Vert_{L^2} \le B$, with
\[
\width(\cN) \le C N^d, 
\quad
\depth(\cN), \, \lift(\cN) \le C,
\]
where $C = C(d)$.
\end{proof}
\begin{proof}[Proof of Theorem \ref{thm:fno-NS1}]
Given $N\in \N$, choose $\dt \sim N^{-r}$, such that the CFL condition $\dt N^{d/2+1} U \le \frac12$ is satisfied. This is possible, since $r \ge d/2+2$, by assumption. It then follows that $n_T \sim N^r$, and we note that Algorithm \ref{alg:NS1} can be written as the composition of $O(n_T \log(n_T)) = O(N^r \log(N))$ mappings of the form 
\[
\hN_1(u_N^n,w_N^{n,k})
:=
\begin{bmatrix}
u_N^n \\
(1-\nu\dt \Delta)^{-1} u_N^n
-
\dt (1-\nu \dt \Delta)^{-1} \P_N\left(u^n_N \cdot \nabla w^{n,k}_N\right)
\end{bmatrix},
\]
where $\Vert u_N^n \Vert_{L^2}, \Vert w^{n,k}_N \Vert_{L^2} \le 2U$ for all $k,n$.
Applying the replacement lemma, Lemma \ref{lem:replacement}, the claim will thus follow if we can show that there exists a constant $C>0$ independent of $N$, such that for any $\epsilon > 0$, there exists a $\Psi$-FNO $\cN_1: L^2_{2N}(\T^d;\div) \to L^2_{2N}(\T^d;\div)$, such that $\Vert \cN_1(u_N,w_N) - \hN_1(u_N,w_N) \Vert_{L^2} \le \epsilon$, and 
\[
\width(\cN_1) \le C N^d \sim C \epsilon^{-d/r}, 
\quad
\depth(\cN_1) \le C,
\quad
\lift(\cN_1) \le C.
\]
This is immediate for the approximation of the first component of $\hN_1$. For the second component, we note that we can write it as a composition:
\[
\begin{bmatrix}
u_N^n \\
w_N^n
\end{bmatrix}
\mapsto 
\begin{bmatrix}
u_N^n \\
\P_N\left(u^n_N \cdot \nabla w^{n,k}_N\right)
\end{bmatrix}
\mapsto 
(1-\nu\dt \Delta)^{-1} 
\left\{
u_N^n
-
\dt \P_N\left(u^n_N \cdot \nabla w^{n,k}_N\right)
\right\}.
\]
To finish the proof, we note that the first mapping can be approximated to arbitrary accuracy $\epsilon>0$ with a $\Psi$-FNO of width $\lesssim N^d$, and uniformly bounded depth and lift, by Lemma \ref{lem:fno-NSnonlin}. The second mapping can be represented \emph{exactly} by a linear FNO layer. Thus, $\cN_1$ can be obtained as the composition of a $\Psi$-FNO approximating the quadratic non-linearity, and a linear FNO layer, implying the claimed complexity estimate.
\end{proof}
\subsection{A second-order in time accurate pseudo-spectral method for approximating the Navier-Stokes equation {\eqref{eq:NS}} and its emulation by {$\Psi$}-FNOs.}
\label{app:scheme2}
Our aim is to describe a second-order accurate (in time) version of the pseudo-spectral scheme \eqref{eq:scheme1}. To this end, we propose the following scheme,
\begin{gather} \label{eq:scheme2}
\begin{gathered}
\frac{u^{n+1}_N - u^n_N}{\dt}
+
\P_N\left(
\left[
\frac32 u^n - \frac12 u^{n-1} 
\right]
\cdot
\nabla \frac12\left[
 u^{n+1} + u^{n}
\right]
\right)
=
\nu \Delta \frac 12 
\left[
u^{n+1} + u^n
\right],
\end{gathered}
\end{gather}
In contrast to the first-order method \eqref{eq:scheme1} of the last section, to start the scheme \eqref{eq:scheme1}, we now require two starting values $u^0_N \approx u(t_0)$ and $u^1_N \approx u(t_1)$. Given initial data $u_0 \in \dot{H}^r(\T^d;\div)$, $r\ge d/2$, we propose to define $u^0_N := \cI_N u_0$, where $\cI_N$ denotes the pseudo-spectral projection, and to generate $u^1_N$ by the first-order accurate Algorithm \ref{alg:NS1} applied on the time-interval $[0,\dt]$, with reduced time-step of size $\sim \dt^2$.

As in the case of \eqref{eq:scheme1}, one needs to solve an implicit operator equation in the time update for \eqref{eq:scheme2}. Analogously, we will use a fixed point iteration to approximate this implicit equation, resulting in the following algorithm,
\begin{algorithm}[Second-order in time approximation of \eqref{eq:NS}] 
\label{alg:NS2}
\phantom{a} \\
\noindent
\begin{tabular}{lp{.8\textwidth}}
\textbf{Input:} & 
$U>0$, $N\in \N$, $T>0$, $\nu \ge 0$, a time-step $\dt>0$, such that $n_T = T/\dt \in \N$, and $\dt U N^{d/2+1} \le \frac1{2e}$, initial data $u_N^0 \in L^2_N(\T^d;\div)$, such that $\Vert u_N^0 \Vert_{L^2} \le U$.
\\
\textbf{Output:} & 
$u_N^{n_T} \in L^2_N(\T^d;\div)$ an approximation of the solution $S_T(u_N^0)$ of \eqref{eq:NS} at time $t=T$.
\end{tabular}
\begin{enumerate}
\item Set 
\[
\kappa := 
\left\lceil
\frac{\log\left(T^3 /\dt^3\right)}{\log(2)}
\right\rceil
\in \N.
\]
\item Compute $u^1_N \approx u(t_1)$ by applying Algorithm \ref{alg:NS1} on the time-interval $[0,\dt]$, with $n_T$ steps and with time-step $\dt' = \dt/n_T$.
\item For $n=1,\dots, n_T-1$:
\begin{enumerate}
\item Set $w^{n,0}_N := 0$,
\item For $k=1,\dots, \kappa_0$: Given the values $u^{n-1}_N, u^n_N$ from the previous steps, compute
\[
\hspace{40pt}
w^{n,k}_N := F_2^n(w^{n,k-1}_N),
\]
\item Set $u^{n+1}_N := w^{n,\kappa_0}_N$,
\end{enumerate}
\end{enumerate}
\end{algorithm}
Next, we have the following convergence theorem for the algorithm \ref{alg:NS2},
\begin{theorem} \label{thm:scheme2}
Let $U,T>0$. Consider the Navier-Stokes equations on $\T^d$, for $d\ge 2$. Assume that $r\ge d +2$, and let $u \in C([0,T]; H^r) \cap C^1([0,T];H^{r-2}) \cap C^2([0,T];H^1)$ be a solution of the Navier-Stokes equations \eqref{eq:NS}, such that $\Vert u \Vert_{L^2}\le U$. Choose a time-step $\dt$, such that $\dt U N^{d/2+1}\le (2e)^{-1}$. There exists a constant 
\[
C = C(T,d,r,\Vert u \Vert_{C_t(H^r_x)}, \Vert u \Vert_{C^1_t(H^{r-2}_x)}, \Vert u \Vert_{C^2_t(H^{1}_x)}) > 0,
\]
such that with $u^0_N := \cI_N u(0)$, if $\Vert u(t_1) - u^1_N \Vert_{L^2} \le \delta$, and for the sequence $u^2_N, \dots, u^{n_T}_N \in L^2_N(\T^d;\div)$ generated by Algorithm \ref{alg:NS1}, we have
\[
\max_{n=0,\dots, n_T}
\Vert u^n_N - u(t^n) \Vert_{L^2}
\le
C \left(\delta + \dt^2 + N^{-r}\right),
\]
where $n_T\dt = T$. In particular, choosing a suitable time-step $\dt \sim N^{-r/2}$, and assuming that $\delta \le C N^{-r}$, we have
\[
\max_{n=0,\dots, n_T}
\Vert u^n_N - u(t^n) \Vert_{L^2}
\le
3C N^{-r},
\]
with $n_T \sim N^{r/2}$.
\end{theorem}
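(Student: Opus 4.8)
\textbf{Proof proposal for Theorem \ref{thm:scheme2}.}

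The plan is to follow the same strategy as in the proof of Theorem \ref{thm:scheme1} (Appendix \ref{app:s1pf}), adapting each step to the second-order scheme \eqref{eq:scheme2} and carefully tracking the two extra ingredients: (i) the Adams--Bashforth extrapolation $\frac32 u^n - \frac12 u^{n-1}$ in the advecting velocity, and (ii) the Crank--Nicolson midpoint $\frac12(u^{n+1}+u^n)$ in the advected quantity and in the viscous term, which together are responsible for the improvement from $O(\dt)$ to $O(\dt^2)$ in the consistency error. First I would establish the analogue of Lemmas \ref{lem:advect1}, \ref{lem:welldef1}, \ref{lem:energyest1}, \ref{lem:iter1}: under the CFL condition $\dt U N^{d/2+1}\le (2e)^{-1}$ the implicit map $F_2^n$ defining \eqref{eq:scheme2} is well-defined and is a contraction with Lipschitz constant $\le \tfrac12$ (the extrapolated velocity has $L^2$-norm $\le 2U$ up to the small geometric growth, so the same bound $\dt\,\Vert\tfrac32u^n-\tfrac12u^{n-1}\Vert_{L^\infty}N\le \tfrac12$ applies after slightly shrinking the CFL constant, and the factor $(1-\nu\dt\Delta/2)^{-1}$ is still a contraction). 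The choice $\kappa = \lceil \log(T^3/\dt^3)/\log 2\rceil$ then gives $\Vert u^{n+1}_N - u^{n+1,\ast}_N\Vert_{L^2}\lesssim \dt^3$ per step, where $u^{n+1,\ast}_N$ solves one exact step of \eqref{eq:scheme2}; this is the perturbation analogue of \eqref{eq:single-dt}.

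Next I would carry out the consistency/stability estimate. Plug the exact solution $u(t)$ into \eqref{eq:scheme2} and collect the error terms: a time-differencing term $\cE^n_\tau = \frac{u(t^{n+1})-u(t^n)}{\dt}-\partial_t u(t^{n+1/2})$, which is $O(\dt^2)$ in $L^2$ by Taylor expansion and uses $u\in C^2_t(H^1_x)$ (hence the stronger hypothesis $r\ge d+2$ is not strictly needed for this; but it guarantees all the Sobolev embeddings below); an extrapolation error $\cE^n_{AB}$ from replacing $u(t^{n+1/2})$ by $\frac32 u(t^n)-\frac12 u(t^{n-1})$ in the advecting slot, again $O(\dt^2)$; a midpoint error $\cE^n_{CN}$ from $\frac12(u(t^{n+1})+u(t^n)) - u(t^{n+1/2})$, which is $O(\dt^2)$; a spectral projection error $\cE^n_P = (1-\P_N)(\cdots)$ handled exactly as in \eqref{eq:err-P} via the $(1-P_{N/2})\otimes(1-P_{N/2})$ trick, giving $O(N^{-r})$; and a viscous error $O(\nu\dt^2)$. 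Then, with $e^n := u(t^n)-u^n_N$, subtract one exact step of the scheme from \eqref{eq:scheme2}, test against $e^{n+1}_N := P_N e^{n+1}$, and use the antisymmetry of the advection term together with the same manipulation as in \eqref{eq:11} to kill the leading nonlinear contribution and bound the remainder by $\Vert e^{n+1}\Vert_{L^2}\Vert e^n\Vert_{L^2}\Vert u\Vert_{H^r} + \Vert e^{n+1}\Vert_{L^2} N^{-2r}\Vert u\Vert_{H^r}^2$. The only new wrinkle compared to Theorem \ref{thm:scheme1} is that the advecting field is now $\tfrac32 u^n_N - \tfrac12 u^{n-1}_N$, so the error feeding into the nonlinear term involves both $e^n$ and $e^{n-1}$; this just means the Gronwall recursion becomes a two-step recursion $E^{n+1}\le (1+C^\ast\dt)(E^n + E^{n-1}) + \dt C^\ast(\dt^4 + N^{-2r})$ with $E^n := \Vert e^n\Vert_{L^2}^2$, which still closes to $E^n \le e^{C^\ast T}(E^0 + E^1 + C^\ast T(\dt^4 + N^{-2r}))$ by the standard discrete Gronwall argument for two-step schemes.

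Finally I would assemble the pieces: $E^0 = \Vert(1-\cI_N)u_0\Vert_{L^2}^2 \le N^{-2r}\Vert u_0\Vert_{H^r}^2$ by the pseudo-spectral estimate Theorem \ref{thm:psest}, and $E^1 \le \delta^2$ by hypothesis on the starter value $u^1_N$ (which is produced by Algorithm \ref{alg:NS1} on $[0,\dt]$ with time-step $\dt/n_T \sim \dt^2$, so Theorem \ref{thm:scheme1} indeed delivers $\delta \lesssim \dt^2 + N^{-r}$). Taking square roots gives $\max_n \Vert u^n_N - u(t^n)\Vert_{L^2} \le C(\delta + \dt^2 + N^{-r})$, and the choice $\dt \sim N^{-r/2}$ (compatible with the CFL bound since $r/2 \ge d/2+1$, i.e. $r\ge d+2$ — this is exactly where the strengthened regularity hypothesis is used) yields the rate $CN^{-r}$ with $n_T \sim N^{r/2}$. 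The main obstacle I anticipate is the bookkeeping of the two-step Gronwall recursion together with the interaction between the iteration error $O(\dt^3)$, the starter error $\delta$, and the consistency error $O(\dt^2)$ — ensuring that the $\dt^3$ per-step iteration error accumulated over $n_T\sim \dt^{-1}$ steps contributes only $O(\dt^2)$, so that it is genuinely subdominant and the overall second-order rate is preserved; this is the analogue of the estimate following \eqref{eq:single-dt} but requires one extra power of $\dt$ in the definition of $\kappa$, which is why $\kappa \sim \log(T^3/\dt^3)$ rather than $\log(T^2/\dt^2)$.
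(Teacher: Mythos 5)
The paper omits the proof of this theorem, saying only that it follows the same techniques as Theorem \ref{thm:scheme1}; your proposal reconstructs that omitted argument, and the overall strategy --- well-posedness/contraction of the implicit map $F_2^n$, $O(\dt^3)$ per-step Picard iteration error from the choice $\kappa\sim\log(T^3/\dt^3)$, and a consistency/stability estimate with second-order time-differencing, Adams--Bashforth extrapolation, Crank--Nicolson midpoint, spectral projection and viscous error terms --- is the right one. You also correctly identify the two places where the strengthened hypothesis $r\ge d+2$ is genuinely needed: compatibility of the CFL condition $\dt U N^{d/2+1}\le (2e)^{-1}$ with $\dt\sim N^{-r/2}$, and (implicitly, via the Sobolev embeddings) the nonlinear projection estimate.

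There is, however, a concrete error in the stated form of the two-step recursion. You write
\[
E^{n+1}\le (1+C^\ast\dt)\bigl(E^n + E^{n-1}\bigr) + \dt\,C^\ast\bigl(\dt^4 + N^{-2r}\bigr),
\]
and claim this closes by a standard two-step discrete Gronwall argument. It does not: the leading coefficient on $E^n+E^{n-1}$ is $\approx 1$, so the homogeneous part obeys $E^{n+1}\ge E^n+E^{n-1}$, which grows like the Fibonacci sequence, i.e.\ exponentially in $n$, not in $n\dt$. The recursion that actually comes out of testing the error equation against $\tfrac12(e^{n+1}_N+e^n_N)$ (the Crank--Nicolson test function, which makes the leading telescoping term $\tfrac1{2\dt}(\Vert e^{n+1}_N\Vert^2-\Vert e^n_N\Vert^2)$) and applying Young's inequality is of the form
\[
E^{n+1}\le (1+C^\ast\dt)\,E^n + C^\ast\dt\,E^{n-1} + \dt\,C^\ast\bigl(\dt^4+N^{-2r}\bigr),
\]
where the factor $1$ multiplies only $E^n$, and the dependence on $E^{n-1}$ (coming from the Adams--Bashforth extrapolation error $\tfrac32 e^n-\tfrac12 e^{n-1}$ in the advecting field) enters only at order $\dt$. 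This version does close by the usual two-step Gronwall argument and gives $E^n\lesssim e^{2C^\ast T}\bigl(E^0+E^1+T(\dt^4+N^{-2r})\bigr)$. The fix is local and the rest of your proposal --- including the balancing of the iteration error ($O(\dt^3)$ per step accumulating to $O(\dt^2)$), the initialization $E^0\lesssim N^{-2r}$, $E^1\le\delta^2$, and the final choice $\dt\sim N^{-r/2}$ --- is consistent with it.
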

The proof uses very similar techniques as the proof of Theorem \ref{thm:scheme1} and we omit it here. 

Finally and in complete analogy with the proof of Theorem \ref{thm:fno-NS1}, one can prove the following Theorem for the approximation of scheme \eqref{eq:scheme2} with a $\Psi$-FNO,
\begin{theorem} \label{thm:fno-NS2}
Let $U,T>0$ and viscosity $\nu \ge 0$. Consider the Navier-Stokes equations on $\T^d$, for $d\ge 2$. Assume that $r\ge d+2$, and let $\cV \subset C([0,T]; H^r) \cap C^1([0,T];H^{r-2}) \cap C^2([0,T];H^1)$ be a set of solutions of the Navier-Stokes equations \eqref{eq:NS}, such that $\sup_{u\in \cV} \Vert u \Vert_{L^2}\le U$, and 
\[
\bar{U}:= \sup_{u\in \cV} \left\{
\Vert u \Vert_{C_t(H^r_x)} + \Vert u \Vert_{C^1_t(H^{r-2}_x)} + \Vert u \Vert_{C^2_t(H^{1}_x)}
\right\}.
\]
For $t\in [0,T]$, denote $\cV_t := \set{u(t)}{u\in \cV}$. Let $\G: \cV_0 \to \cV_T$ denote the solution operator of \eqref{eq:NS}, mapping initial data $u_0 = u(t=0)$, to the solution $u(T)$ at $t=T$ of the incompressible Navier-Stokes equations. Then there exists a constant 
\[
C = C(d,r,U,\bar{U},T) > 0,
\]
such that for any $N\in \N$ there exists a $\Psi$-FNO $\cN: L^2_{2N}(\T^d;\R^d) \to L^2_{2N}(\T^d;\R^d)$, such that 
\[
\sup_{u\in \cV_0}
\Vert \G(u) - \cN(u) \Vert_{L^2}
\le
C N^{-r},
\]
and such that 
\[
\width(\cN) \le C N^{d}, 
\quad
\depth(\cN) \le C N^{r/2} \log(N),
\quad
\lift(\cN) \le C.
\]
\end{theorem}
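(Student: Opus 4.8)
The plan is to mimic the proof of Theorem~\ref{thm:fno-NS1} almost verbatim, replacing the first-order convergence input Theorem~\ref{thm:scheme1} by the second-order convergence result Theorem~\ref{thm:scheme2}, and replacing the single-step fixed-point map \eqref{eq:Frec} by the map $F_2^n$ appearing in Algorithm~\ref{alg:NS2}. Concretely, given $N\in\N$, I would first choose a time-step $\dt\sim N^{-r/2}$; since $r\ge d+2$ this is compatible with the CFL restriction $\dt U N^{d/2+1}\le(2e)^{-1}$, and it forces $n_T=T/\dt\sim N^{r/2}$ and $\kappa\sim\log(T^3/\dt^3)\sim\log N$. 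With $u^1_N$ produced by Algorithm~\ref{alg:NS1} on $[0,\dt]$ with reduced step $\dt'=\dt/n_T\sim\dt^2\sim N^{-r}$, Theorem~\ref{thm:scheme1} gives $\Vert u(t_1)-u^1_N\Vert_{L^2}\lesssim\dt'+N^{-r}\lesssim N^{-r}$, so the hypothesis $\delta\le CN^{-r}$ of Theorem~\ref{thm:scheme2} is met and the output of Algorithm~\ref{alg:NS2} satisfies $\sup_{u\in\cV_0}\Vert\G(u)-u^{n_T}_N\Vert_{L^2}\le CN^{-r}$. Thus it remains only to emulate the purely algorithmic map $u^0_N\mapsto u^{n_T}_N$ by a $\Psi$-FNO of the stated size.

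Next I would write Algorithm~\ref{alg:NS2} as a long composition of elementary blocks and invoke the Replacement Lemma~\ref{lem:replacement}. The algorithm consists of (i) an initialization phase implementing Algorithm~\ref{alg:NS1} on $[0,\dt]$ to pass from $u^0_N=\cI_N u_0$ to the pair $(u^0_N,u^1_N)$, which by the proof of Theorem~\ref{thm:fno-NS1} is already a composition of $O(n_T\log n_T)=O(N^{r/2}\log N)$ elementary blocks, and (ii) the main loop of $n_T-1$ steps, each a $\kappa\sim\log N$-fold composition of the fixed-point map $F_2^n$ acting on the state vector $(u^{n-1}_N,u^n_N,w_N)$, followed by a relabelling $(u^{n-1}_N,u^n_N,w_N)\mapsto(u^n_N,u^{n+1}_N)$. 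Altogether the depth is $O(n_T\log n_T)=O(N^{r/2}\log N)$, and, since everything is carried out on the de-aliased grid $L^2_{2N}$, each block has a fixed number of components. The map $F_2^n(w_N)=(1-\tfrac12\nu\dt\Delta)^{-1}\bigl[(1+\tfrac12\nu\dt\Delta)u^n_N-\dt\,\P_N\bigl((\tfrac32 u^n_N-\tfrac12 u^{n-1}_N)\cdot\nabla\tfrac12(w_N+u^n_N)\bigr)\bigr]$ decomposes, exactly as in the first-order case, as: a linear (Fourier-diagonal) layer forming the Adams--Bashforth extrapolant $\tfrac32 u^n_N-\tfrac12 u^{n-1}_N$ and the Crank--Nicolson average $\tfrac12(w_N+u^n_N)$ from the stored state; a block approximating $(v_N,\tilde w_N)\mapsto\P_N(v_N\cdot\nabla\tilde w_N)$, which by Lemma~\ref{lem:fno-NSnonlin} can be realized by a $\Psi$-FNO of width $\lesssim N^d$ and uniformly bounded depth and lift; and a final linear layer applying the Fourier multiplier $(1-\tfrac12\nu\dt\Delta)^{-1}$, the remaining linear combination with $(1+\tfrac12\nu\dt\Delta)u^n_N$, and the identity copies of the carried states. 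The first and last are represented \emph{exactly} by $\cF$-layers (Lemmas~\ref{lem:sigma}, \ref{lem:linear}, \ref{lem:composition}), and the state copies and relabellings are local linear maps. Feeding the resulting per-block complexity constants $D_k,W_k,L_k$ with $W_k\lesssim N^d$, $D_k,L_k\lesssim1$ into Lemma~\ref{lem:replacement} yields a single $\Psi$-FNO $\cN:L^2_{2N}(\T^d;\R^d)\to L^2_{2N}(\T^d;\R^d)$ with $\width(\cN)\le CN^d$, $\depth(\cN)\le CN^{r/2}\log N$, $\lift(\cN)\le C$, and the required $L^2$ error bound, proving the theorem.

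Two points need care. First, to invoke Lemma~\ref{lem:fno-NSnonlin} with a \emph{single} bound $B$ uniform over all $n_T\sim N^{r/2}$ time-steps, all iterates $u^n_N$, $u^{n-1}_N$, $w^{n,k}_N$ must be shown to lie in a fixed $L^2$-ball; this follows from energy estimates for \eqref{eq:scheme2} and the contraction property of $F_2^n$, exactly analogous to Lemma~\ref{lem:iter1} and Remark~\ref{rem:bound1} for the first-order scheme (one can take $B\sim eU$), and these bounds are already implicit in the proof of Theorem~\ref{thm:scheme2}. Second, one must check that the initialization phase (i) does not dominate: it contributes $O(N^{r/2}\log N)$ layers, the same order as the main loop, so the total depth remains $O(N^{r/2}\log N)$. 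The main obstacle is therefore not analytic — the nonlinear approximation is identical to that in Theorem~\ref{thm:fno-NS1} — but organizational: correctly packaging the two-step (multistep) recursion as a state-propagating $\Psi$-FNO, implementing the extrapolation and averaging by exact $\cF$-layers, and embedding the first-order sub-algorithm used to generate $u^1_N$ as a prefix of the same network while tracking uniform norm bounds throughout.
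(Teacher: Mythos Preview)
Your proposal is correct and matches the paper's approach; in fact the paper does not write out a proof of Theorem~\ref{thm:fno-NS2} at all, stating only that it follows ``in complete analogy with the proof of Theorem~\ref{thm:fno-NS1}'', which is precisely the argument you have carried out in detail. Your identification of the two delicate points (uniform $L^2$-bounds on the iterates to apply Lemma~\ref{lem:fno-NSnonlin} with a single $B$, and the depth of the first-order initialization phase not dominating) is exactly what the analogy requires and what the paper leaves implicit.
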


\section{Proof of Theorem \ref{thm:deeponet}}
\label{app:E}
\begin{proof}
First, we note that each layer 
\[
v(x_j) \mapsto 
\sigma\left(
Wv(x_j) + b_j + \cF^{-1}_N \left( P \cF_N v \right)
\right),
\]
is simply the composition of 
\begin{itemize}
\item an affine mapping $\R^{d_v \times \cJ_N} \to \R^{d_v\times \cJ_N}$, and 
\item a componentwise application of the activation function $\sigma$.
\end{itemize}
In particular, the $\Psi$-FNO $\cN$, interpreted as a mapping 
\[
\R^{\cJ_N \times d_a} 
\to 
\R^{\cJ_N \times d_v}
\to 
\dots
\to
\R^{\cJ_N \times d_v}
\to 
\R^{\cJ_N \times d_u},
\]
can be represented by an ordinary neural network $\tilde{\beta}: \R^{\cJ_N \times d_a} \to \R^{\cJ_N \times d_u}$, with 
\[
\width(\tilde{\beta}) = |\cJ_N| d_v = \width(\cN),
\quad
\depth(\tilde{\beta}) = \depth(\cN).
\]
In fact, by suitably modifying the linear output layer of $\tilde{\beta}$, we can map the grid values encoded in the output $\tilde{\beta}_j(a) = \cN(a)(x_j)$ to the corresponding coefficients in a (real) trigonometric basis $\{\fb_k\}_{k\in \cK_N}$ with $\Span\{\fb_k\}_{k\in \cK_N} = \Span\{e^{i\langle k, x\rangle}\}_{k\in \cK_N}$; i.e. by modifying the linear output layer of $\tilde{\beta}$ (and re-indexing the components of the output), we obtain another neural network $\beta: \R^{\cJ_N\times d_a} \to \R^{\cK_N \times d_u}$, such that
\begin{align}\label{eq:ptwise}
\cN(a)(x_j) = \sum_{k\in \cK_N} \beta_k(a) \fb_k(x_j), \quad 
\forall \, j\in \cJ_N.
\end{align}
Since $\tilde{\beta}$ and $\beta$ only differ in their output layers, we clearly have
\[
\width({\beta}) =\width(\tilde{\beta}) = \width(\cN),
\quad
\depth({\beta}) =\depth(\tilde{\beta}) = \depth(\cN).
\]
Since $\{\fb_k\}_{k\in \cK_N}$ has the same span as $\{e^{i\langle k, x\rangle}\}_{k\in \cK_N}$, it follows form \eqref{eq:ptwise}, that 
\[
\cN(a)(x) = \sum_{k\in \cK_N} \beta_k(a) \fb_k(x), \quad 
\forall \, x\in \T^d.
\]
To prove the claim, it thus suffices to observe that there exists a neural network 
\[
\tau: \R^d \to \R^{\cK_N},
\quad
x \mapsto \{\tau_k(x)\}_{k\in \cK_N},
\]
such that the DeepOnet defined by $(\beta,\tau)$ satisfies
\begin{align*}
\sup_{\Vert a \Vert_{L^\infty} \le B}
\sup_{x\in \T^d}
&\left|
\cN(a)(x) - \sum_{k\in \cK_N} \beta_k(a) \tau_k(x)
\right|
\\
&=
\sup_{\Vert a \Vert_{L^\infty} \le B}\sup_{x\in \T^d}
\left|
\sum_{k\in \cK_N} \beta_k(a) \left[\fb_k(x) - \tau_k(x) \right]
\right|
\\
&\le
(2N+1)^d
\left(
\sup_{\Vert a \Vert_{L^\infty} \le B} \max_{k\in \cK_N}
|\beta_k(a) |
\right)
\sup_{x\in \T^d}
\max_{k\in \cK_N}
\left|\fb_k(x) - \tau_k(x) \right|
\\
&\le
(2N+1)^d
\left(
\sup_{\Vert a \Vert_{L^\infty} \le B} \Vert \cN(a) \Vert_{L^2}
\right)
\sup_{x\in \T^d}
\max_{k\in \cK_N}
\left|\fb_k(x) - \tau_k(x) \right|
\\
&=
\bar{B}
\sup_{x\in \T^d}
\max_{k\in \cK_N}
\left|\fb_k(x) - \tau_k(x) \right|
.
\end{align*}
\end{proof}

\end{document}